\documentclass[11pt]{amsart}
\usepackage{setspace}
\usepackage{epigamath}


\usepackage[english]{babel}


\numberwithin{equation}{section}


\usepackage[shortlabels]{enumitem}

\usepackage{amsmath,amsthm,graphicx,mathtools,multicol}

\input{insbox}
\usepackage{float}
\usepackage{subcaption}
\usepackage{amssymb}

\usepackage{tikz}
\usetikzlibrary{patterns,patterns.meta,arrows.meta,calc,3d,perspective,fadings,shadows,cd}


\theoremstyle{plain}
	\newtheorem{thm}{Theorem}[section]
	\newtheorem{cor}[thm]{Corollary}
	\newtheorem{lem}[thm]{Lemma}
	\newtheorem{prop}[thm]{Proposition}

	\newtheorem*{lem_casc_poinca_dual}{Lemma~\ref{lm:casc_poinca_dual}}
	\newtheorem*{thm_symmetry}{Theorem~\ref{thm:symmetry}}
	\newtheorem*{thm_charact_vanishing}{Theorem~\ref{thm:charact_vanishing}}
	\newtheorem*{cor_charact_degeneracy}{Corollary~\ref{cor:charact_degeneracy}}
	\newtheorem*{cor_inequalities}{Corollary~\ref{cor:inequalities}}
	\newtheorem*{cor_odd_deg}{Corollary~\ref{cor:odd_deg}}
	\newtheorem*{cor_dege_viro_even}{Corollary~\ref{cor:dege_viro_even}}
	\newtheorem*{thm_rank_max}{Theorem~\ref{thm:rank_max}}
	\newtheorem*{prop_ell_iota}{Proposition~\ref{prop:ell_iota}}
    \newtheorem*{conj}{Conjecture}
        
\theoremstyle{definition}
	\newtheorem{dfn}[thm]{Definition}

	\newtheorem*{ntns}{Notation}
	\newtheorem*{dfn_dege_index}{Definition~\ref{dfn:dege_index}}
	\newtheorem*{dfn_rank_X}{Definition~\ref{dfn:rank_X}}
        
\theoremstyle{remark}        
	\newtheorem{rem}[thm]{Remark}
	\newtheorem{rems}[thm]{Remarks}
	\newtheorem{ex}[thm]{Example}
	
	\newtheorem{exs}[thm]{Examples}


\newcommand{\bigslant}[2]{{\raisebox{.2em}{$#1$}\left/\raisebox{-.2em}{$#2$}\right.}}
\newcommand{\smallslant}[2]{{\raisebox{.1em}{$#1$}\left/\raisebox{-.1em}{$#2$}\right.}}
\newcommand{\C}{\mathbb{C}}
\newcommand{\R}{\mathbb{R}}

\newcommand{\Z}{\mathbb{Z}}
\newcommand{\N}{\mathbb{N}}
\newcommand{\T}{\mathbb{T}}
\newcommand{\F}{\mathbb{F}}
\renewcommand{\P}{\mathrm{P}}

\newcommand{\Sd}{\mathrm{Sd}\,}
\renewcommand{\O}{\mathcal{O}}
\newcommand{\K}{\mathcal{K}}
\newcommand{\df}{\mathrm{d}}
\renewcommand{\D}{\mathrm{D}}
\newcommand{\X}{{\R X_\varepsilon}}
\newcommand{\m}{\mathfrak{m}}
\newcommand{\x}{\mathrm{x}}
\newcommand{\vect}{\mathbf{Vect.}^\mathbf{f}_{\F_2}}
\newcommand{\hopf}{\mathbf{Hopf}^\mathbf{f}_{\F_2}}
\newcommand{\gralg}{\mathbf{gr.Alg.}^\mathbf{f}_{\F_2}}
\newcommand{\grvect}{\mathbf{gr.Vect.}^\mathbf{f}_{\F_2}}
\newcommand{\gr}{\mathrm{gr}} 

\newcommand{\op}{\mathrm{op}}
\newcommand{\bv}{\mathrm{bv}}
\newcommand{\aug}{\mathrm{aug}}
\newcommand{\ext}{\mathrm{ext}}
\newcommand{\Sph}{\mathrm{S}}

\DeclareMathOperator{\Sed}{Sed}
\DeclareMathOperator{\Hom}{Hom}
\DeclareMathOperator{\Spec}{Spec}
\DeclareMathOperator{\Arg}{Arg}
\DeclareMathOperator{\rk}{rk}

\DeclareMathOperator{\im}{im}

\setlist[enumerate,1]{label={\rm(\arabic*)}, ref={\rm\arabic*}}
\newcommand{\relmiddle}[1]{\mathrel{}\middle#1\mathrel{}}

\newcommand{\supth}[1]{\ensuremath{#1^{\mathrm{th}}}}


\EpigaVolumeYear{10}{2026} \EpigaArticleNr{9} \ReceivedOn{December 19, 2023}
\InFinalFormOn{October 22, 2025}
\AcceptedOn{December 19, 2025}

\title{Poincar\'e duality, degeneracy, and real Lefschetz property for T-hypersurfaces}
\titlemark{Poincar\'e duality, degeneracy, and real Lefschetz property for T-hypersurfaces}

\author{Jules Chenal}
\address{Universitetet i Oslo, Moltke Moes vei 35, Niels Henrik Abels hus, 0851 Oslo, Norway}
\email{julesche@math.uio.no}

\authormark{J.~Chenal}

\AbstractInEnglish{In this article, we present two structural results about the Renaudineau--Shaw spectral sequence that computes the cohomology of T-hypersurfaces. The first is a Poincar\'e duality satisfied by all its pages of positive index. The second is a vanishing criterion. It reformulates the vanishing of the boundary operators of the spectral sequence as the injectivity of some morphisms induced in cohomology by the inclusion of the T-hypersurface in its surrounding toric variety. It implies that the Renaudineau--Shaw spectral sequence of a T-hypersurface degenerates at the second page if and only if the T-hypersurface satisfies a real version of the Lefschetz hyperplane section theorem.}

\MSCclass{14P25, 14T90}
\KeyWords{T-hypersurfaces, toric geometry, Poincar\'e duality, real Lefschetz property}


\acknowledgement{This research was funded, in whole or in part, by l'Agence Nationale de la Recherche (ANR), project ANR-22-CE40-0014. The author was also supported by the Trond Mohn Foundation project ``Algebraic and Topological Cycles in Complex and Tropical Geometries''. }

\begin{document}



\maketitle

\begin{prelims}

\DisplayAbstractInEnglish

\bigskip

\DisplayKeyWords

\medskip

\DisplayMSCclass

\end{prelims}


\newpage

\setcounter{tocdepth}{1}

\tableofcontents


\section{Introduction}

Let $M$ be a free Abelian group of rank $n$. A smooth polytope of $M\otimes\R$ is a full-dimensional polytope $P$ whose vertices lie in the lattice $M$ and whose associated toric variety $Y$ is non-singular. Let $N:=\Hom(M;\Z)$ be the dual lattice. Its reduction modulo $2$ acts on $Y(\R)$ as the $2$-torsion of the real locus of the torus $\Spec(\R[M])$. The moment map 
\begin{equation*}
	\mu\colon Y(\R)\longrightarrow P
\end{equation*}
induces a homeomorphism between the quotient $Y(\R)/(N/2N)$ and $P$. Since $P$ is contractible, $\mu$ admits a section $s$. The surjective map  
\begin{equation*}
	\begin{array}{rcl}
		\left(\smallslant{N}{2N}\right)\times P& \longrightarrow & Y(\R)\\
		(v;p) & \longmapsto & v\cdot s(p),
	\end{array}
\end{equation*}
allows us to see $Y(\R)$ as the gluing of $2^n$ copies of $P$ along their faces;  \textit{cf.} \cite[Theorem 5.4]{Gel-Kap-Zel_dis_res} and Definition~\ref{dfn:RP_CW}. It endows $Y(\R)$ with a regular CW-complex structure $\R P$. It does not depend on the choice of section $s$ as two such sections differ by the action of an element of $N/2N$. Likewise, any triangulation~$K$ of $P$ can be uniquely lifted as an invariant subdivision $\R K$ of $\R P$. This subdivision might fail to be a triangulation but is always a $\Delta$-complex.\footnote{A  $\Delta$-complex is a regular CW-complex in which every closed cell is isomorphic to a simplex as CW-complexes; \textit{cf.} \cite[Chapter 2]{Hat_alg_top}.} By construction, the moment map $\mu\colon \R K\rightarrow K$  is cellular. A cellular formulation of Poincar\'e duality associates with any closed cochain $\alpha\in Z^1(\R K;\F_2)$ a subcomplex of pure codimension $1$ of the barycentric subdivision of $\R K$; \textit{cf.} \cite[Section 8.68]{Munk_ele_alg}. This ``hypersurface'' determines an $(n-1)$-cycle whose homology class is Poincar\'e dual to the cohomology class of $\alpha$. For every edge $\sigma^1=[p;q]$ of $K$, we denote by $\omega(\sigma^1)$ the image of $p-q\in M$ in $M/2M$. If, for every $v\in N/2N$, $\alpha$ satisfies
\begin{equation*}
	v^*\alpha \colon \sigma^1 \in \R K \longmapsto \alpha(\sigma^1)+\omega\left(\mu_*(\sigma^1)\right)(v),
\end{equation*}
we say that $\alpha$ is \emph{symmetric}. The hypersurfaces associated to symmetric cocycles are called T-hypersurfaces. The difference of two symmetric cocycles is always of the form $\df(\mu^*\varepsilon)$ for some $\varepsilon\in C^0(K;\F_2)$. Thus, two T-hypersurfaces are always homologous in $\R P$. There is a canonical symmetric cocycle $\omega_{\R X}$ associated with every primitive triangulation $K$. Its T-hypersurface is sometimes referred to as the T-hypersurface with constant signs. As such, T-hypersurfaces are parametrised by cochains $\varepsilon\in C^0(K;\F_2)$ called sign distributions. We denote by $\X$ the T-hypersurface associated with the cocycle $\omega_{\R X}+ \df(\mu^*\varepsilon)$. They are always PL-smooth; \textit{cf.} \cite[Proposition~4.11]{Brug-LdM-Rau_Comb_pac}. By construction, the image of their fundamental class in the homology of $Y(\R)$ is Poincar\'e dual to the class $[\omega_{\R X}]$, regardless of $\varepsilon$. This property is a way of saying that the T-hypersurfaces $(\X)_\varepsilon$ have ``the same degree''. The image of the hypersurface $\X$ by the moment map $\mu$ does not depend on $\varepsilon$ and is denoted by $X$. When $K$ is convex,\footnote{The triangulation $K$ is said to be convex when it is obtained as the projection of the compact faces of the epigraph of a convex and piecewise-affine function $\nu\colon P\rightarrow \R$.} O.~Viro's patchworking theorem, \textit{cf.} \cite[Theorem 4.3.A]{Vir_pat_rea} and \cite[Th\'eor\`eme 4.2]{Ris_con_hyp}, asserts that $\X$ is isotopic inside $Y(\R)$ to the real locus of a generic member of a family of non-singular algebraic hypersurfaces $(X_t)_{t\geq 1}$ of $Y$. In this case, $X$ represents the tropical limit of the family $(X_t)_{t\geq 1}$. A.~Renaudineau and K.~Shaw used a version of I.\,O.~Kalinin's spectral sequence, \textit{cf.} \cite{Kal_coh_cha} or \cite{Kal_coh_rea}, to derive upper bounds on the Betti numbers of $\X$ in terms of quantities associated with the triangulation; \textit{cf.} \cite[Theorem 1.4]{Ren-Sha_bou_bet}. We denote by $(E^r_{p,q}(\X))_{p,q,r\in\N}$ the Renaudineau--Shaw spectral sequence of $\X$. When $K$ is convex, these quantities are the tropical Hodge numbers of the tropical hypersurface $X$ that is dual to $K$; see \cite{Ite-Kat-Mik-Zha_tro_hom}. They correspond, regardless of the convexity of $K$, to the Hodge numbers of the zero locus of a non-singular section of the line bundle of $Y$ associated with the polytope $P$; \textit{cf.} \cite[Corollary 1.9]{Arn-Ren-Sha_Lef_sec} and \cite[Theorem 1.6]{Brug-LdM-Rau_Comb_pac}. These inequalities, when added together, specialise to the Smith--Thom inequality; \textit{cf.} \cite[Theorem 2.3.1]{Deg-Kha_top_pro}.

As pointed out by E.~Brugall\'e, L.~Lopez de Medrano, and J.~Rau in \cite{Brug-LdM-Rau_Comb_pac}, almost none of these results about $\X$ depend on the convexity of the triangulation $K$. Here we study the spectral sequence $(E^r_{p,q}(\X))_{p,q,r\in\N}$ and its dual $(E_r^{p,q}(\X))_{p,q,r\in\N}$ for an arbitrary primitive triangulation $K$, and tackle the following conjecture of A.~Renaudineau and K.~Shaw.

\begin{conj}[\textit{cf.} {\cite[Conjecture 1.10]{Ren-Sha_bou_bet}}]\label{conj:degenerate}
	The spectral sequence $(E^r_{p,q}(\X))_{p,q,r\in\N}$ degenerates at the second page.
\end{conj}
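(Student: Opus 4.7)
The plan is to reduce the conjecture to a more geometric statement about the inclusion $\X \hookrightarrow Y(\R)$, rather than attacking the differentials $d_r$ head on. The Renaudineau--Shaw spectral sequence arises from a Kalinin-type filtration on the cellular complex of $\X$, and since $\X$ is a PL-manifold of dimension $n-1$, one expects the cellular Poincaré duality to descend to a non-degenerate pairing on each page $E^r$ with $r \geq 1$, matching $E^r_{p,q}$ with some $E^r_{p',q'}$ in a way compatible with $d_r$. Setting this up rigorously should halve the work of proving degeneracy, since vanishing of $d_r$ on one half of the page would force it on the other.

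With duality in place, I would try to identify the vanishing of $d_r$ for $r \geq 2$ with homological properties of the inclusion $\ibar\colon \X \hookrightarrow Y(\R)$. The $E^1$- and $E^2$-pages involve combinatorial data on the triangulation $K$ that, by the tropical Hodge-number correspondence recalled in the introduction, compute (up to suitable Tate-like twists) the $\F_2$-cohomology of the ambient $Y$ together with a ``tropical'' approximation to that of $\X$. One then hopes to rephrase the condition ``$d_r = 0$ for all $r \geq 2$'' as a real analogue of the Lefschetz hyperplane section theorem for $\ibar$: roughly, that $\ibar^*$ is injective and the Gysin map $\ibar_!$ is surjective in the expected range of degrees modulo~2.

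Once this reformulation is established, the conjecture reduces to verifying such a real Lefschetz property for every T-hypersurface. In the special case of an odd-degree hypersurface in $\Proj^n$, one can hope to prove it by hand: the degree being odd forces the mod-2 fundamental class $[\X]$ to be Poincaré dual to the generator of $H^1(\R\Proj^n;\F_2)$, and the structure of the $\F_2$-cohomology ring of $\R\Proj^n$ then forces the injectivity/surjectivity of $\ibar^*$ required by the reformulation.

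The main obstacle, which I do not expect to surmount in full generality, is the real Lefschetz property itself. Unlike over $\C$, where Lefschetz is a theorem, it is a genuine restriction over $\R$ on the sign distribution $\varepsilon$ and on $K$; the conjecture essentially asserts that this restriction always holds, and I see no a~priori reason for this. My strategy would therefore be to first test the predicted equivalence on small non-convex primitive triangulations, then attempt the general case either by induction on the number of vertices of $K$ (adding one vertex at a time and controlling the change in the inclusion $\ibar$) or by a deformation argument starting from a T-hypersurface for which the Lefschetz property can be verified directly, such as the one associated with the standard primitive triangulation of the simplex.
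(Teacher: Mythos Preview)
Your overall strategy is essentially the one the paper follows, and you are right that the conjecture is \emph{not} proved in full generality: the paper treats it as an open problem and establishes exactly the partial results you outline. Specifically, the paper carries out your steps (1)--(4) almost verbatim: it proves a Poincar\'e duality on every page $E_r$ for $r\geq 1$ via a general ``chain reaction'' lemma for filtered differential algebras together with tropical Poincar\'e duality on $E_1$; it then proves a vanishing criterion showing that the differentials on the relevant $r$-th page vanish if and only if $i^q\colon H^q(\R P;\F_2)\to H^q(\X;\F_2)$ is injective for $q=\tfrac{n-r}{2}$; and from this it deduces the odd-degree projective case exactly by the argument you sketch (the mod~$2$ class $[\omega_{\R X}]$ generates $H^1(\R\P^n;\F_2)$, so cupping with it is injective in the required range).

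Where the paper diverges from your proposal is in the attack on the remaining cases. You suggest induction on the number of vertices of $K$ or a deformation argument from a standard triangulation. The paper does neither: instead it singles out a specific recursive family of convex primitive triangulations of simplices (called Viro triangulations $V^n_d$) and proves the real Lefschetz property for \emph{every} sign distribution on these triangulations by a direct geometric construction. The key is to exhibit, by induction on dimension, an embedded $\R\P^{\lfloor (n-1)/2\rfloor}\subset\X$ that is homologous to a linear subspace of $\R\P^n$; this forces $i_q$ to be surjective (equivalently $i^q$ injective) in the needed range. The induction step uses a suspension/cone picture together with an explicit isotopy moving an equatorial ball into $\X$. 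This buys an unconditional result for a concrete infinite family (all degrees, all dimensions, all sign distributions on $V^n_d$), whereas your induction-on-vertices or deformation ideas would, if they worked, presumably give the full conjecture but face the obstacle you already identified: there is no known mechanism forcing the real Lefschetz property to persist under such moves. The paper in fact ends by noting that the full conjecture (equivalently $\ell(\X)\geq\lfloor n/2\rfloor-1$) remains open, and even suggests the stronger bound $\ell(\X)\geq\lfloor(n-1)/2\rfloor$.
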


By establishing a Poincar\'e duality for the spectral sequence $(E_r^{p,q}(\X))_{p,q,r\in\N}$, we are able to show that the degeneracy of the spectral sequence is linked to a property that could be interpreted as a real analogue of the Lefschetz hyperplane section theorem.

\subsubsection*{Poincar\'e duality} First of all, we prove a general lemma about Poincar\'e duality in spectral sequences.

\begin{lem_casc_poinca_dual}[Propagation of Poincar\'e duality]
	Let $(A;d)$ be an increasingly filtered, graded, differential algebra of finite dimension over a field $\F$, and let $(E_r^{p,q}(A))_{p,q,r\geq 0}$ denote its associated spectral sequence. If there are integers $r_0\geq 1$, $m,n\geq0$ such that
	\begin{enumerate}
		\item the vector space $E^{p,q}_{r_0}(A)$ vanishes whenever $p>m$ or $q>n$,
		\item the vector space $E^{m,n}_{r_0}(A)$ has dimension~$1$,
		\item the bilinear pairing $E^{p,q}_{r_0}(A)\otimes_\F E^{m-p,n-q}_{r_0}(A)\rightarrow E^{m,n}_{r_0}(A)$ is non-degenerate,
	\end{enumerate}
	then for all $r\geq r_0$, 
	\begin{enumerate}[label=\rm{(\alph*)}]
		\item the vector space $E^{p,q}_{r}(A)$ vanishes whenever $p>m$ or $q>n$,
		\item the vector space $E^{m,n}_{r}(A)$ has dimension~$1$,
		\item the bilinear pairing $E^{p,q}_{r}(A)\otimes_\F E^{m-p,n-q}_{r}(A)\rightarrow E^{m,n}_{r}(A)$ is non-degenerate.
	\end{enumerate}
\end{lem_casc_poinca_dual}

We now describe the spectral ring $(E_r^{p,q}(\X))_{p,q,r\in\N}$. Using its structure of algebra as well as the tropical Poincar\'e duality, see \cite[Theorem 5.3]{Jel-Rau-Sha_Lef_11} and \cite[Theorem 3.3]{Brug-LdM-Rau_Comb_pac}, we show that it satisfies Poincar\'e duality.

\begin{thm_symmetry}[Poincar\'e duality]
	For all $r\geq 1$, the $E_r$-pages of the Renaudineau--Shaw spectral sequence computing the cohomology of\, $\X$ satisfy the Poincar\'e duality. That is to say, 
	\begin{enumerate}
		\item the vector space $E^{p,q}_{r}(\X)$ vanishes whenever $p>n-1$ or $q>n-1$,
		\item the vector space $E^{n-1,n-1}_{r}(\X)$ has dimension~$1$,
		\item the bilinear pairing $\cup\colon E^{p,q}_{r}(\X)\otimes E^{n-1-p,n-1-q}_{r}(\X)\rightarrow E^{n-1,n-1}_{r}(\X)$ is non-degenerate.
	\end{enumerate}%
	In particular, $E^{p,q}_{r}(\X)$ is isomorphic to $E^{n-1-p,n-1-q}_{r}(\X)$, and $\df^{p,q}_r$ and $\df^{n-1-p+r,n-2-q}_r$ have the same rank and kernel dimension.
\end{thm_symmetry}

\subsubsection*{Degeneracy and real Lefschetz property} Recall the Lefschetz hyperplane section theorem:\textit{ if $X$ is a generic section of the line bundle of $Y$ associated with the moment polytope $P$, then the restriction
\begin{equation*}
	H^q(Y(\C);\Z)\longrightarrow H^q(X(\C);\Z)
\end{equation*} 
is an isomorphism for all $q<n-1$ and is injective for $q=n-1$.} There is no real equivalent of this theorem. But if we want to define a real analogue of this property, asking that
\begin{equation*}
	i^q\colon H^q(Y(\R);\F_2)\longrightarrow H^q(X(\R);\F_2)
\end{equation*} 
is an isomorphism for all $q<\frac{n-1}{2}$ and is injective for $q=\frac{n-1}{2}$ would be extremely restrictive on the cohomology of $X(\R)$. For instance, if $X$ is obtained as a patchwork, which implies that $X(\R)$ is isotopic to a $T$-hypersurface $\X$, then it can almost never satisfy the statement and be \emph{maximal}. Maximality here means that the total Betti number of $X(\R)$ equals the total Betti number of $X(\C)$; the Smith--Thom inequality states that the former is always at most equal to the latter. Instead, we say that $X$ satisfies the \emph{real Lefschetz property}~if 
\begin{equation*}
	i^q\colon H^q(Y(\R);\F_2)\longrightarrow H^q(X(\R);\F_2) 
\end{equation*} 
is injective for all $q\leq \left\lfloor\frac{n-1}{2}\right\rfloor$. We can note that hypersurfaces of odd degree in projective spaces always satisfy the real Lefschetz property. If $r\geq 2$, the differentials of the $\supth{r}$ page of the spectral sequence $(E_r^{p,q}(\X))_{p,q,r\in\N}$ are all zero if $r$ is not congruent to $n$ modulo $2$. This is a consequence of the shape of its first page. We call these pages of index not congruent to $n$ modulo $2$ \emph{irrelevant}. Using Theorem~\ref{thm:symmetry} and the tropical Lefschetz hyperplane section theorem, \textit{cf.} \cite[Theorem 1.1]{Arn-Ren-Sha_Lef_sec} and \cite[Proposition 3.2]{Bru-Man_she_dec}, we are able link the vanishing of the differentials of relevant pages of the spectral sequence to the injectivity of the maps $i^q$.

\begin{thm_charact_vanishing}[Vanishing criterion]
	Let $r\geq 2$ be an integer congruent to $n$ modulo $2$. The differentials of the page $E_r(\X)$ vanish if and only if\, ${i^q\colon H^q(\R P;\F_2)\rightarrow H^q(\X;\F_2)}$ is injective when $q$ equals $\frac{n-r}{2}$. 
\end{thm_charact_vanishing}

And we derive the following corollary.

\begin{cor_charact_degeneracy}[Degeneracy criterion]
  Let $r\geq 2$ be an integer. The Renaudineau--Shaw spectral sequence of\, $\X$ degenerates at the $\supth{r}$ page if and only if the maps ${i^q\colon H^q(\R P;\F_2)\rightarrow H^q(\X;\F_2)}$ are injective for all ${q\leq \left\lfloor\frac{n-r}{2}\right\rfloor}$.
\end{cor_charact_degeneracy}

Corollary~\ref{cor:charact_degeneracy} establishes the equivalence between degeneracy at the second page and a property slightly weaker than the real Lefschetz property. It can be reformulated as the comparison of two quantities associated with the pair ${\X\subset \R P}$.

\begin{dfn_dege_index}
	We define the \emph{degeneracy index} of $\X$ as 
	\begin{equation*}
		r(\X):=\min\left\{r_0\geq 0\relmiddle| \df^{p,q}_r=0,\,\forall p,q\in\N,\,\forall r\geq r_0\right\}.
	\end{equation*}
\end{dfn_dege_index}

\begin{dfn_rank_X}
	The \emph{rank} of $\X$ is defined as
	\begin{equation*}
		\ell(\X):=\max\left\{q_0\geq 0\mid i^q\colon H^q(\R P;\F_2)\rightarrow H^q(\X;\F_2) \textnormal{ is injective for all }q\leq q_0\right\}.
	\end{equation*}
\end{dfn_rank_X}

Using these definitions, Corollary~\ref{cor:charact_degeneracy} can be expressed by two inequalities.

\begin{cor_inequalities}
	We have the inequalities 
	\begin{equation*}
		\ell(\X) \geq \left\lfloor \frac{n-r(\X)}{2}\right\rfloor,
	\end{equation*}
	 with equality if $r(\X)\geq 3+\frac{1-(-1)^n}{2}$, and
	\begin{equation*}
		r(\X) \leq \max\left(2;n-2\ell(\X)-1\right),
	\end{equation*}
	 with equality if\, $\ell(\X)\leq\frac{n-5}{2}$.
\end{cor_inequalities}
We should point out that Corollary~\ref{cor:inequalities} is slightly stronger than Corollary~\ref{cor:charact_degeneracy} as it incorporates the case of degeneracy at the first page in the first inequality. We can remark that the T-hypersurfaces $\X$ whose spectral sequence degenerates at the first page are precisely those that are maximal. Therefore, every maximal T-hypersurface satisfies the real Lefschetz property.

\subsubsection*{Rank and degree} The quantity $\ell$ was previously introduced for subsets of real projective spaces by I.\,O.~Kalinin. ``The rank is involved in many restrictions on topology of real algebraic hypersurfaces of a given degree''; see \cite{Vir_mut_pos}. It was studied by I.\,O.~Kalinin \cite{Kal_coh_cha,Kal_coh_rea}, V.~Nikulin \cite{Nik_int_sym}, and V.~Kharlamov \cite{Kha_add_cong} for real projective hypersurfaces. Let $V$ be a real algebraic hypersurface of the projective space~$\mathbb{P}^n$. Then, $\ell(V(\R))=n-1$ if $V$ is of odd degree, and $\ell(V(\R))\leq\left\lfloor \frac{n-1}{2} \right\rfloor$ if $V$ is of even degree; \textit{cf.} \cite[Corollary 4.2]{Kal_coh_cha}. The same holds for $\ell(\X)$ even when $K$ is non-convex. As a consequence, the spectral sequences $(E^r_{p,q}(\X))_{p,q,r\in\N}$ and $(E_r^{p,q}(\X))_{p,q,r\in\N}$ both degenerate at the second page when $P$ is an odd dilatation of a primitive simplex.

\begin{cor_odd_deg}
	The Renaudineau--Shaw spectral sequence of a T-hypersurface of odd degree in a projective space degenerates at the second page.
\end{cor_odd_deg}

 We generalise this property to a broader variety of polytopes. We define the degree of $\X$ as the cohomology class $[\omega_{\R X}]\in H^1(\R P;\F_2)$ (see Definition~\ref{dfn:T-hypersurface}) and a number $\iota[\omega_{\R X}]$ (see Definition~\ref{dfn:iota}).

\begin{prop_ell_iota}
	We have the inequality $\ell(\X)\geq \iota[\omega_{\R X}]$. 
\end{prop_ell_iota}

In particular, it follows from Corollary~\ref{cor:charact_degeneracy2} that, if $\iota[\omega_{\R X}]\geq \left\lfloor\frac{n}{2}\right\rfloor-1$, then for all $r\geq 2$, the spectral sequence degenerates at the second page. However, this proposition has some limitations since, in a cube, $\iota[\omega_{\R X}]$ is always $0$, regardless of the primitive triangulations $K$.

\subsubsection*{Viro triangulations} In Section~\ref{sec7}, we give the construction of particular triangulations $(V_d^n)_{n,d\geq 1}$ of the $d$-dilatations of primitive $n$-simplices for which we prove the following theorem. 

\begin{thm_rank_max}[Real Lefschetz property]
	If\, $K$ is a Viro triangulation $(V_d^n)_{n,d\geq 1}$ of the corresponding dilatation of the simplex $\P^n$, then the homological inclusion 
	\begin{equation*}
		i_q\colon H_q(\X;\F_2) \longrightarrow H_q(\R\P^n;\F_2) 
	\end{equation*}
	 is surjective for all $q\leq\left\lfloor\frac{n-1}{2}\right\rfloor$ and $\varepsilon\in C^0(K;\F_2)$. That is to say, a projective hypersurface obtained from a primitive patchwork on the Viro triangulation always satisfies the real Lefschetz property.
\end{thm_rank_max}

This implies the following corollary. 

\begin{cor_dege_viro_even}
	The Renaudineau--Shaw spectral sequences computing the homology and the cohomology of the hypersurface $\X\subset\R\P^n$ constructed from a Viro triangulation $K\in(V_d^n)_{n,d\geq 1}$ and a sign distribution $\varepsilon\in C^0(K;\F_2)$ degenerate at the second page. 
\end{cor_dege_viro_even}

A.~Renaudineau and K.~Shaw's conjecture can be rephrased as $r(\X)\leq 2$ or equivalently $\ell(\X)\geq\left\lfloor \frac{n}{2}\right\rfloor-1$. However, as in the case of Theorem~\ref{thm:rank_max}, we believe the stronger statement $\ell(\X)\geq\left\lfloor \frac{n-1}{2}\right\rfloor$ might even be true in full generality, that is to say that every T-hypersurface satisfies the real Lefschetz property.

\section{Duality in spectral sequences}

Let $\F$ be a field.

\begin{dfn}
	An \emph{increasingly filtered, graded, differential algebra} of finite dimension over the field $\F$ is an associative $\F$-algebra $A$ of finite dimension over $\F$ together with%
	\begin{enumerate}
		\item a graduation $A=\bigoplus_{q\geq 0} A^q$ compatible with the product in the following way: for all $\alpha\in A^q$ and all $\alpha'\in A^{q'}$, $\alpha\alpha'$ equals $(-1)^{qq'}\alpha'\alpha$ and belongs to $A^{q+q'}$;
		\item an increasing filtration $F^0A\subset F^1A \subset \cdots \subset A$ compatible with the product: for all $p,p'\in \N$, the product  $F^pAF^{p'}A$ is included in $F^{p+p'}A$; 
		\item a differential $\df\colon A\rightarrow A$ which is a morphism of graded vector spaces of degree $1$, whose square vanishes, that is compatible with the filtration, \textit{i.e.}~$\df F^pA\subset F^pA$ for all $p\in\N$, and for which the Leibniz rule holds: for all $\alpha\in A^q$ and $\alpha'\in A^{q'}$, $\df(\alpha\alpha')=(\df \alpha) \alpha'+(-1)^q \alpha\df \alpha'$.
	\end{enumerate}
\end{dfn}	
	
Such an object is in particular a finite-dimensional cochain complex. The compatibility of the filtration with the differential allows us to use the techniques of spectral sequences to compute the cohomology of $A$. We denote the spectral sequence associated with the filtration $(F^pA)_{p\in\N}$ by $(E^{p,q}_r(A))_{p,q,r\in\N}$. We adopt the following index convention. For all integers $p,q,r\geq 0$, 
\begin{equation*}
	E^{p,q}_r(A)\coloneqq\frac{Z^{p,q}_r(A)+F^{p-1}A^q}{\df Z^{p+r-1,q-1}_{r-1}(A)+F^{p-1}A^q},
\end{equation*}
where 
\begin{equation*}
	Z^{p,q}_r(A)\coloneqq\left\{\alpha \in F^pA^q\relmiddle| \df\alpha\in F^{p-r}A^{q+1}\right\} 
\end{equation*}
and $Z^{p,q}_{-1}(A)\coloneqq0$. In this setting, the differential $\df_r$ of the $\supth{r}$ page has bidegree $(-r;+1)$ and is the factorisation of the restriction of $\df$. By the definition of the objects we consider here, we deal with first-quadrant spectral sequences arising from bounded filtrations, hence, the spectral sequence $(E^{p,q}_r(A))_{p,q,r\geq 0}$ converges towards the cohomology of $A$; \textit{cf.} \cite[Theorem 2.6]{McC_use_gui}. The compatibility of the filtration with the product ensures that $(E^{p,q}_r(A))_{p,q,r\in\N}$ is even a \emph{spectral ring}\index{Spectral Ring}. This means that there is a well-defined product 
\begin{equation*}
	E^{p,q}_r(A)\otimes_\F E^{p',q'}_r(A) \longrightarrow E^{p+p',q+q'}_r(A)
\end{equation*}
which satisfies, for all $\alpha\in E^{p,q}_r(A)$ and $\alpha'\in E^{p',q'}_r(A)$, the graded commutativity%
\begin{equation*}
	\alpha\alpha'=(-1)^{qq'}\alpha'\alpha
\end{equation*}%
 and  the Leibniz rule%
\begin{equation*}
	\df_r^{p+p',q+q'}(\alpha\alpha')=	\df_r^{p,q}(\alpha)\alpha'+(-1)^q\alpha\df_r^{p',q'}(\alpha').
\end{equation*}%
 These constructions are explained in \cite[Section 2.3]{McC_use_gui},  although we should emphasise that we did not adopt the same index convention as J.~McCleary. We justify our choice by the degree of generality of the objects we discuss here. Since we do not have much context about the arising of the spectral sequence, we do not find meaningful to perform the change of index often used with double complexes for which it is particularly adapted.
\begin{dfn}
	Let A be an increasingly filtered, graded, differential algebra of finite dimension over the field $\F$. We denote by $A^*$ the dual chain complex. By the universal coefficient theorem, \textit{cf.} \cite[Theorem~3.3a]{Car-Eil_hom_alg}, the homology of $A^*$ is dual to the cohomology of $A$. 
\end{dfn}

P.~Deligne gave a definition of the dual filtration of a filtered object of an Abelian category. It is a filtration of the same object in the opposite category. It is defined in such a way that the graded objects of the dual filtration are dual to the graded objects of the initial filtration. This construction is designed to be used with contravariant functors. 

\begin{dfn}[\textit{cf.} {\cite[Equation~(1.1.6)]{Del_the_hod}}]\label{dfn:dual_filtration}
	Let $\mathbf{C}$ be an Abelian category. We denote by $\mathbf{C}^\op$ the opposite category and by $(-)^\op\colon (\mathbf{C}^\op)^\op\rightarrow\mathbf{C}$ the ``identical contravariant functor''. Let $V$ (resp.\ $W$) be an object of~$\mathbf{C}$ endowed with an increasing (resp.\ decreasing) filtration $(V^{(k)})_{k\in \Z}$ (resp.\ $(W_{(k)})_{k\in \Z}$). The dual decreasing (resp.\ increasing) filtration of $V^\op$ (resp.\ $W^\op$) is defined, for all $k\in\Z$, by the formula%
	\begin{equation*}
		(V^\op)_{(k)}\coloneqq\im\left(\left( V \twoheadrightarrow \bigslant{V}{V^{(k-1)}_{\;}} \right)^\op\right)\quad\left(\text{resp.}\; (W^\op)^{(k)}\coloneqq\im\left(\left( W \twoheadrightarrow \bigslant{W}{W_{(k+1)}} \right)^\op\right) \right),
	\end{equation*}%
	 so that $(V^{(k)}/V^{(k-1)})^\op$ is naturally isomorphic to $(V^\op)_{(k)}/(V^\op)_{(k+1)}$; \textit{cf.} \cite[Equation~(1.1.7)]{Del_the_hod}. If $\mathbf{C}$ is the category of vector spaces for instance (or even of cochain complexes) over a field $\F$, applying the usual duality functor $\Hom_\F(-;\F)$ to this abstract construction yields a filtration, in the usual sense, of opposite growth of the dual vector space (or chain complex).  
\end{dfn}

Following this definition, the dual chain complex $A^*$ is naturally endowed with a dual decreasing filtration of chain complexes $0\subset \cdots \subset F_1A^*\subset F_0A^*=A^*$. We denote by $(E^r_{p,q}(A^*))_{r,p,q\geq 0}$ the associated spectral sequence. It is defined, for all integers $p,q,r\geq 0$, by%
\begin{equation*}
	E_{p,q}^r(A^*)\coloneqq\frac{Z_{p,q}^r(A^*)+F_{p+1}A^*_q}{\partial Z_{p-r+1,q+1}^{r-1}(A^*)+F_{p+1}A^*_q},
\end{equation*}%
where $\partial=\Hom_\F(\df;\F)$ and%
\begin{equation*}
	Z_{p,q}^r(A^*)\coloneqq\left\{a \in F_pA^*_q\mid \partial a\in F_{p+r}A^*_{q-1}\right\},
\end{equation*}%
and we have the convention $Z_{p,q}^{-1}(A^*)\coloneqq 0$. The induced boundary operators $\partial^r$ have bidegree $(+r;-1)$.

\begin{prop}\label{prop:dual_spec_seq}
	The spectral sequences $(E^r_{p,q}(A^*))_{p,q,r\geq 0}$ and $(E_r^{p,q}(A))_{p,q,r\geq0}$ are dual to each other through a collection of duality pairings%
	\begin{equation*}
		\langle-\,;-\rangle\colon E_r^{p,q}(A)\otimes_\F E^r_{p,q}(A^*) \longrightarrow \F 
	\end{equation*}%
	 defined for all integers $p,q,r\geq 0$. Moreover, $\df_r^{p,q}$ is the adjoint of\, $\partial^r_{p-r,q+1}$ relatively to the pairing.
\end{prop}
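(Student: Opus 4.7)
The plan is to exhibit an explicit pairing by evaluation, then verify in turn well-definedness, adjointness, and non-degeneracy. The first task is to unpack Definition~\ref{dfn:dual_filtration} in the category of vector spaces: the dual filtration takes the concrete form $F_pA^*_q = \{a \in A^*_q : a|_{F^{p-1}A^q} = 0\}$, the annihilator of $F^{p-1}A^q$. The evaluation pairing $A^q \otimes A^*_q \to \F$ therefore restricts to $F^pA^q \otimes F_pA^*_q \to \F$, whose left and right kernels are exactly $F^{p-1}A^q$ and $F_{p+1}A^*_q$. This induces a perfect pairing between $F^pA^q/F^{p-1}A^q$ and $F_pA^*_q/F_{p+1}A^*_q$, giving the required duality at $r=0$.

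For arbitrary $r$, I would define $\langle [\alpha] ; [a] \rangle := a(\alpha)$ on representatives $\alpha \in Z^{p,q}_r(A)$ and $a \in Z_{p,q}^r(A^*)$. Well-definedness reduces to four checks, one per generating family of numerator and denominator. First, if $\beta \in F^{p-1}A^q$ then $a(\beta) = 0$ since $a$ annihilates $F^{p-1}A^q$. Second, if $\gamma \in Z^{p+r-1, q-1}_{r-1}(A) \subset F^{p+r-1}A^{q-1}$, then $a(\df\gamma) = \partial a(\gamma)$ vanishes because $\partial a \in F_{p+r}A^*_{q-1} = \textnormal{Ann}(F^{p+r-1}A^{q-1})$. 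Third, if $b \in F_{p+1}A^*_q = \textnormal{Ann}(F^pA^q)$, then $b(\alpha) = 0$ since $\alpha \in F^pA^q$. Fourth, if $c \in Z_{p-r+1,q+1}^{r-1}(A^*) \subset F_{p-r+1}A^*_{q+1} = \textnormal{Ann}(F^{p-r}A^{q+1})$, then $\partial c(\alpha) = c(\df\alpha)$ vanishes since $\df\alpha \in F^{p-r}A^{q+1}$. These four checks explain why the ``$r-1$'' shift in the boundary terms of each page is exactly what the pairing requires. Adjointness of $\df_r^{p,q}$ and $\partial^r_{p-r, q+1}$ is then tautological from the defining identity $a(\df\alpha) = \partial a(\alpha)$.

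The main step is non-degeneracy, which I would prove by induction on $r$. The base case $r=0$ is the perfect pairing above. For the inductive step, assume that $E_r^{p,q}(A) \otimes E^r_{p,q}(A^*) \to \F$ is non-degenerate for all $p,q$. Adjointness of $\df_r$ and $\partial^r$ combined with non-degeneracy implies that $\ker(\df_r^{p,q})$ is the orthogonal complement of $\textnormal{im}(\partial^r_{p-r, q+1})$ inside $E_r^{p,q}(A)$, and symmetrically $\textnormal{im}(\df_r^{p+r, q-1})$ is the orthogonal complement of $\ker(\partial^r_{p,q})$. Hence the pairing descends to a non-degenerate pairing
\[
\bigslant{\ker(\df_r^{p,q})}{\textnormal{im}(\df_r^{p+r, q-1})} \otimes \bigslant{\ker(\partial^r_{p,q})}{\textnormal{im}(\partial^r_{p-r, q+1})} \longrightarrow \F,
\]
that is, $E_{r+1}^{p,q}(A) \otimes E^{r+1}_{p,q}(A^*) \to \F$. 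One then checks that this induced pairing agrees with evaluation on representatives taken in $Z^{p,q}_{r+1}(A)$ and $Z_{p,q}^{r+1}(A^*)$, and that the induced differential on cohomology coincides with $\df_{r+1}$ and $\partial^{r+1}$ respectively; both are standard identifications in the theory of spectral sequences of filtered complexes.

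The main obstacle, in my view, is purely bookkeeping. One must correctly unravel Deligne's quotient-to-subobject construction to recover the annihilator description, and one must keep careful track of the asymmetric $r$ versus $r-1$ shifts that make the four well-definedness checks balance. Once these formal identifications are in place, the construction reduces to routine finite-dimensional linear algebra, and no new ideas are needed beyond the formal structure of the spectral sequence of a filtered differential complex.
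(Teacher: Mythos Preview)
Your proof is correct and follows essentially the same route as the paper: define the pairing by evaluation on representatives, verify well-definedness via the annihilator description of the dual filtration together with adjointness of $\df$ and $\partial$, and prove non-degeneracy by induction on $r$ starting from the perfect pairing on the $E_0$-page. The only cosmetic difference is that the paper packages your orthogonal-complement argument for the inductive step as an appeal to the Universal Coefficients Theorem, which in this finite-dimensional setting amounts to exactly the computation you wrote out.
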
 

\begin{proof}
  Let $q\in\N$, and denote by $\langle-\,;-\rangle\colon A^q\otimes A^*_q\rightarrow \F$ the usual duality pairing. By definition $\df$ and $\partial$ are adjoints of each other; \textit{i.e.} ${\langle \df \alpha; a\rangle=\langle \alpha; \partial a\rangle}$ for all ${\alpha\in A^q}$ and all ${a\in A^*_{q+1}}$. Since $F_{p+1}A^*_q$ is the image in $A_q^*$ of $(A^q/F^pA^q)^*$, it is precisely the vector space ${\{a\in A_q^*\mid\langle \alpha\,;a\rangle=0,\,\forall \alpha\in F^pA^q\}}$. It follows that the restriction of ${\langle-\,;-\rangle}$ to the sum of the subspaces%
	\begin{equation*}
		\left(Z^{p,q}_r(A)+F^{p-1}A^q\right)\otimes_\F\left(\partial Z_{p-r+1,q+1}^{r-1}(A^*)+F_{p+1}A^*_q\right)
	\end{equation*}%
	 and%
	\begin{equation*}
		\left(\df Z^{p+r-1,q}_{r-1}(A)+F^{p-1}A^q\right)\otimes_\F\left(Z_{p,q}^r(A^*)+F_{p+1}A^*_q\right) 
	\end{equation*}	%
	 vanishes. Thus $\langle-;-\rangle$ factors through the quotient map to give rise to a well-define bilinear product%
	\begin{equation*}
		\langle-\,;-\rangle\colon E_r^{p,q}(A)\otimes_\F E^r_{p,q}(A^*) \longrightarrow \F.
	\end{equation*}%
	The morphisms $(\df^{p,q}_r)_{p,q,r\in\N}$ and $(\partial_{p,q}^r)_{p,q,r\in\N}$ are factorisations of $\df$ and $\partial$, respectively. Hence, they are adjoints of each others for the new pairings. By the definition of the dual filtration, the pairings ${\langle-\,;-\rangle}$ defined on ${E_0^{p,q}(A)\otimes_\F E^0_{p,q}(A^*)}$ are non-degenerate, for all $p,q\in\N$. Moreover, the fundamental property of spectral sequences, namely the one that enables us to compute a page as the cohomology (or homology) of the preceding one, allows us to repetitively use the universal coefficients theorem, \textit{cf.}  \cite[Theorem 3.3a]{Car-Eil_hom_alg}, and deduce that every pairing is non-degenerate. We should emphasise that we only considered finite-dimensional chain and cochain complexes of vector spaces. 
\end{proof}

\begin{lem}[Propagation of Poincar\'e duality]\label{lm:casc_poinca_dual}
	Let $(A;d)$ be an increasingly filtered, graded, differential algebra of finite dimension over the field $\F$, and let $(E_r^{p,q}(A))_{p,q,r\geq 0}$ denote its associated spectral sequence. If there are integers $r_0\geq 1$, $m,n\geq0$ such that%
	\begin{enumerate}
		\item the vector space $E^{p,q}_{r_0}(A)$ vanishes whenever $p>m$ or $q>n$, 
		\item the vector space $E^{m,n}_{r_0}(A)$ has dimension~$1$, 
		\item the bilinear pairing $E^{p,q}_{r_0}(A)\otimes_\F E^{m-p,n-q}_{r_0}(A)\rightarrow E^{m,n}_{r_0}(A)$ is non-degenerate,
	\end{enumerate}
	then for all $r\geq r_0$, 
	\begin{enumerate}[label={\rm (\alph*)}]
		\item the vector space $E^{p,q}_{r}(A)$ vanishes whenever $p>m$ or $q>n$, 
		\item the vector space $E^{m,n}_{r}(A)$ has dimension~$1$, 
		\item the bilinear pairing $E^{p,q}_{r}(A)\otimes_\F E^{m-p,n-q}_{r}(A)\rightarrow E^{m,n}_{r}(A)$ is non-degenerate.
	\end{enumerate}
\end{lem}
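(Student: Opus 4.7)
The plan is to induct on $r \geq r_0$, the base case being supplied by the hypotheses. Assuming the three properties at page $r$, I will promote them to page $r+1$. The first two are essentially automatic. Since $E^{p,q}_{r+1}$ is a subquotient of $E^{p,q}_r$, the vanishing outside the rectangle $0 \leq p \leq m$, $0 \leq q \leq n$ persists. For the top corner, both differentials touching $E^{m,n}_r$, namely $\df_r^{m,n} \colon E^{m,n}_r \to E^{m-r,\,n+1}_r$ and $\df_r^{m+r,\,n-1} \colon E^{m+r,\,n-1}_r \to E^{m,n}_r$, are forced to vanish because their target or source lies outside the rectangle. Hence $E^{m,n}_{r+1} = E^{m,n}_r$ is still one-dimensional.

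The crux is to propagate non-degeneracy of the pairing to page $r+1$. The key is that Leibniz, combined with the vanishing just recorded, forces $\df_r$ to be self-adjoint for the cup pairing. Concretely, for $\alpha \in E^{p,q}_r$ and $\beta \in E^{m-p+r,\,n-q-1}_r$ the product $\alpha \cdot \beta$ lies in $E^{m+r,\,n-1}_r = 0$, so
$$0 \;=\; \df_r(\alpha \cdot \beta) \;=\; (\df_r \alpha) \cdot \beta \;+\; (-1)^q\,\alpha \cdot (\df_r \beta),$$
which yields $\langle \df_r \alpha,\, \beta \rangle = -(-1)^q \langle \alpha,\, \df_r \beta \rangle$ in $E^{m,n}_r$. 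Equivalently, under the Poincaré isomorphism $\Phi_r \colon E^{p,q}_r \xrightarrow{\sim} \mathrm{Hom}_\F\bigl(E^{m-p,\,n-q}_r,\, E^{m,n}_r\bigr)$ furnished by hypotheses (2)–(3), the operator $\df_r$ coincides, up to a uniform sign, with its own $\F$-linear transpose.

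Consequently the three-term complex $E^{p+r,\,q-1}_r \to E^{p,q}_r \to E^{p-r,\,q+1}_r$ is, through $\Phi_r$, $\F$-dual to $E^{m-p-r,\,n-q+1}_r \leftarrow E^{m-p,\,n-q}_r \leftarrow E^{m-p+r,\,n-q-1}_r$. Passing to cohomology on both sides and invoking finite-dimensionality (so that taking duals commutes with taking cohomology, via the Universal Coefficient Theorem already used in Proposition~\ref{prop:dual_spec_seq}), $\Phi_r$ descends to an isomorphism $E^{p,q}_{r+1} \xrightarrow{\sim} \mathrm{Hom}_\F\bigl(E^{m-p,\,n-q}_{r+1},\, E^{m,n}_{r+1}\bigr)$. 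Using the spectral-ring axiom that the product on page $r+1$ is computed as the class of the product of representative cycles, this descended map is precisely the one induced by the cup pairing, which is therefore non-degenerate.

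The one piece of bookkeeping requiring care is verifying that the descended isomorphism on cohomology truly coincides with the adjoint of the cup product on page $r+1$ — that no implicit shift or sign intervenes in the identification of the two complexes as $\F$-duals of each other. This is the main obstacle I anticipate, and it will be handled by chasing explicit representatives through the Leibniz identity derived above; everything else reduces to routine linear algebra once the self-adjointness relation is in hand.
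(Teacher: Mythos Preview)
Your proposal is correct and follows essentially the same route as the paper: induction on $r$, with parts (1) and (2) handled identically, and part (3) via the Leibniz identity showing $\df_r$ is self-adjoint (up to sign) for the cup pairing, so that the musical isomorphism is a (co)chain map and descends to the next page as the musical morphism of the page-$(r+1)$ pairing. The only cosmetic difference is that the paper packages the dual side using the dual spectral sequence $E^r_{m-p,n-q}(A^*)$ from Proposition~\ref{prop:dual_spec_seq}, while you work directly with $\Hom_\F(E^{m-p,n-q}_r,E^{m,n}_r)$; these are the same object, and the verification you flag as ``bookkeeping'' is exactly the closing paragraph of the paper's proof.
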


\begin{proof}
	By recursion, we only need to prove the statement for $r_0+1$. The first assertion is a consequence of $E^{p,q}_{r_0+1}(A)$ being a sub-quotient of $E^{p,q}_{r_0}(A)$. The second follows from the isomorphism between $E^{m,n}_{r_0+1}(A)$ and the $(m,n)$-cohomology group of $\df_{r_0}$. Indeed, around $E_{r_0}^{n,m}$ the cochain complex is the following:%
	\begin{equation*}
		\begin{tikzcd}[column sep=2cm]
			E^{m+r_0,n-1}_{r_0}(A) \ar[r,"d^{m+r_0,n-1}_{r_0}"] & E^{m,n}_{r_0}(A) \ar[r,"d^{m,n}_{r_0}"] & E^{m-r_0,n+1}_{r_0}(A).
		\end{tikzcd}
	\end{equation*}%
	 By assumption, both the leftmost group and the rightmost group vanish. Hence, $E^{m,n}_{r_0+1}(A)$ is isomorphic to $E^{m,n}_{r_0}(A)$ and has dimension~$1$. For the last part, we choose an isomorphism $\int\colon E^{m,n}_{r_0}(A)\rightarrow \F$ and consider the unique isomorphism%
	\begin{equation*}
	\D_{r_0}^{\,p,q}\colon E^{p,q}_{r_0}(A)\longrightarrow \left(E^{m-p,n-q}_{r_0}(A)\right)^* \cong E^{r_0}_{m-p,n-q}(A^*) 
	\end{equation*}%
satisfying $\alpha\left(\D_{r_0}^{\,p,q}(\beta)\right)=\int \alpha \beta$ for all $\alpha\in E^{m-p,n-q}_{r_0}(A)$ and $\beta\in E^{p,q}_{r_0}(A)$. This is, up to sign, a (co-)chain complex isomorphism since%
	\begin{equation*}
		\begin{split}
			\left\langle \alpha\,;\D_{r_0}^{\,p-r_0,q+1}\left(\df^{p,q}_{r_0}\beta\right)\right\rangle&=\int \alpha \,\df^{p,q}_{r_0}\beta\\
			&= (-1)^{q}\int \left(\df^{m-p+r_0,n-q-1}_{r_0}\alpha\right)\beta\\
			&= (-1)^{q} \left\langle\alpha\,;\partial^{r_0}_{m-p,n-q}\D_{r_0}^{\,p,q}(\beta)\right\rangle.
		\end{split}
	\end{equation*}%
	 It induces, nonetheless, an isomorphism $(\D^{p,q}_{r_0})^*$ between $E^{p,q}_{r_0+1}(A)$ and $E^{r_0+1}_{m-p,n-q}(A^*)$. We now observe  that the isomorphism between $E^{m,n}_{r_0+1}(A)$ and $E^{m,n}_{r_0}(A)$ induces an isomorphism $\int\colon E^{m,n}_{r_0+1}(A)\rightarrow \F$ such that for any two $\alpha\in E^{m-p,n-q}_{r_0+1}(A)$ and ${\beta\in E^{p,q}_{r_0+1}(A)}$, respectively represented by the two cocyles $\alpha'\in E^{m-p,n-q}_{r_0}(A)$ and ${\beta'\in E^{p,q}_{r_0}(A)}$, we have%
	\begin{equation*}
		\int\alpha\beta = \int\alpha'\beta'.
	\end{equation*}%
	 Let us consider the bilinear pairing constructed from the product and the induced integration
	\begin{equation*}
		{E^{m-p,n-q}_{r_0+1}(A)\otimes_\F E^{p,q}_{r_0+1}(A)\longrightarrow\F}.
	\end{equation*}
	Its associated morphism%
	\begin{equation*}
		\D^{p,q}_{r_0+1}\colon E^{p,q}_{r_0+1}(A)\longrightarrow E_{m-p,n-q}^{r_0+1}(A^*)
	\end{equation*}
	is induced in cohomology by $\D^{p,q}_{r_0}$. It is an isomorphism of chain complexes, so the third assertion follows.
\end{proof}

A direct consequence of Lemma~\ref{lm:casc_poinca_dual} is the symmetry of the pages of the spectral sequence following the $\supth{r_0}$ page. 

\begin{prop}\label{prop:symm_spec_seq}
	Let $(A;d)$ be a increasingly filtered, graded, differential algebra of finite dimension over the field $\F$, and let $(E_r^{p,q}(A);\df^{p,q}_r)_{p,q,r\geq 0}$ denote its associated spectral sequence. If the spectral sequence of $A$ satisfies the Poincar\'e duality at the $\supth{r_0}$ page, then for all $r\geq r_0$ and all $p,q\geq 0$, %
	\begin{equation*}
		\D_r^{\,p,q}\colon E^{p,q}_r(A) \overset{\cong}{\longrightarrow} E_{m-p,n-q}^r(A^*) 
	\end{equation*}%
	 and%
	\begin{equation*} 
		\df^{p,q}_{r}=(-1)^q\left(\D_{r}^{p-r,q+1}\right)^{-1}\circ\left(\df^{m-p+r,n-q-1}_{r}\right)^*\circ\D^{p,q}_{r}.
	\end{equation*}%
	 In particular, $E^{p,q}_r(A)$ and $E^{m-p,n-q}_r(A)$ have the same dimension, and $\df^{p,q}_{r}$ and $\df^{m-p+r,n-q-1}_{r}$ have same rank and kernel dimension.
\end{prop}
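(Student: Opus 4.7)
The plan is to deduce this proposition by iterating the Chain Reaction lemma and combining it with the adjointness of Proposition~\ref{prop:dual_spec_seq}. First, Lemma~\ref{lm:casc_poinca_dual} propagates the three duality hypotheses from page $r_0$ to every subsequent page $r\geq r_0$: the first-quadrant bounds $p\leq m$, $q\leq n$ are preserved, the corner $E^{m,n}_r(A)$ stays one-dimensional, and the cup-product pairing into it remains non-degenerate. I fix once and for all an isomorphism $\int:E^{m,n}_{r_0}(A)\to\F$. Because both $\df_r^{m,n}$ (whose target $E^{m-r,n+1}_r(A)$ vanishes) and $\df_r^{m+r,n-1}$ (whose source vanishes) are zero for $r\geq r_0$, the canonical epimorphism $E^{m,n}_r(A)\twoheadrightarrow E^{m,n}_{r+1}(A)$ is in fact an isomorphism, so $\int$ transports unambiguously to every subsequent page.

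Second, the musical morphism $\D_r^{\,p,q}$ is defined exactly as in the proof of Chain Reaction, namely $\beta\mapsto\big(\alpha\mapsto\int\alpha\beta\big)$, the target identification $\big(E^{m-p,n-q}_r(A)\big)^*\cong E^r_{m-p,n-q}(A^*)$ being precisely the content of Proposition~\ref{prop:dual_spec_seq}. The non-degeneracy provided by Chain Reaction is then exactly the bijectivity of $\D_r^{\,p,q}$, which is the first assertion. For compatibility with the differentials, I would apply the Leibniz rule on the spectral ring to a product $\gamma\beta$ with $\gamma\in E^{m-p+r,n-q-1}_r(A)$ and $\beta\in E^{p,q}_r(A)$. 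Since $\gamma\beta$ lives in $E^{m+r,n-1}_r(A)$, which vanishes by the propagated bounds, Leibniz yields
\begin{equation*}
    0=\df_r^{m-p+r,n-q-1}(\gamma)\,\beta+(-1)^{n-q-1}\gamma\,\df_r^{p,q}(\beta)\,.
\end{equation*}
Both summands lie in $E^{m,n}_r(A)$; applying $\int$ and translating the resulting identity through Proposition~\ref{prop:dual_spec_seq} produces the adjunction $(\df_r^{m-p+r,n-q-1})^*\circ\D_r^{\,p,q}=(-1)^{n-q}\D_r^{\,p-r,q+1}\circ\df_r^{p,q}$, which is the displayed formula once the global sign $(-1)^n$ (which is independent of $p$ and $q$) is absorbed into the chosen orientation $\int$.

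Finally, the dimension, rank, and kernel equalities follow formally. Bijectivity of $\D_r^{\,p,q}$ yields $\dim E^{p,q}_r(A)=\dim E^r_{m-p,n-q}(A^*)=\dim E^{m-p,n-q}_r(A)$. The rank and kernel dimension of $\df_r^{p,q}$ coincide with those of $(\df_r^{m-p+r,n-q-1})^*$ since the displayed formula realises the former as the composition of the latter with two isomorphisms; because a linear map between finite-dimensional vector spaces and its transpose have equal rank, the same two invariants agree with those of $\df_r^{m-p+r,n-q-1}$ itself. The only non-routine ingredient is the sign bookkeeping in the Leibniz computation, which I expect to be the main point requiring care; however, the fact that the ambient degree $n$ is fixed makes the $q$-dependent part of the sign unambiguously $(-1)^q$ up to the global constant folded into $\int$.
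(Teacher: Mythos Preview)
Your argument is essentially the paper's own: the proposition is stated there without a separate proof, as an immediate consequence of the Chain Reaction lemma, and the Leibniz computation you perform reproduces the one carried out inside the proof of that lemma.

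One point deserves care. Your claim that the residual factor $(-1)^n$ can be ``absorbed into the chosen orientation $\int$'' does not work as stated: rescaling $\int$ multiplies both $\D_r^{\,p,q}$ and $\D_r^{\,p-r,q+1}$ by the same scalar, so the composite $(\D_r^{p-r,q+1})^{-1}\circ(\df_r^{m-p+r,n-q-1})^*\circ\D_r^{\,p,q}$ is unchanged and no sign is absorbed. The honest output of your Leibniz step is $\df_r^{p,q}=(-1)^{n-q}(\D_r^{p-r,q+1})^{-1}\circ(\df_r^{m-p+r,n-q-1})^*\circ\D_r^{\,p,q}$, whereas the paper records $(-1)^q$ in its Chain Reaction proof; the discrepancy is therefore already present upstream rather than introduced by you. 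In any case the sign is immaterial to the ``In particular'' clause, which is the only part invoked later in the paper, so your proof of the substantive assertions (equal dimensions, equal ranks and kernel dimensions of the paired differentials) is complete.
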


\section{Basic objects and notation}

\begin{dfn}
	Let $M$ denote a free Abelian group of finite rank $n\geq1$, and let $N$ denote its dual $\Hom_\Z(M;\Z)$. Let $P$ be a polytope of $M\otimes\R$.\begin{enumerate}
	\item For a face $Q$ of $P$, denoted by $Q\leq P$, we denote its tangent space by $TQ$.
	\item We say that $P$ is \emph{full-dimensional} if $TP=M\otimes \R$. 
	\item We say that $P$ is \emph{integral} if its vertices lie in the lattice $M$. In this case, the tangent vector space $TQ$ of every face $Q$ of $P$ is rational.
	\item If $P$ is full-dimensional and integral, and $Q$ is a $1$-codimensional face of $P$, we denote by $v_Q$ the generator of the orthogonal lattice $\{v\in N\mid \alpha(v)=0,\,\forall \alpha\in TQ\}\cong \Z$ that is non-negative over $P-p$, for any point $p\in Q$.
 	\item We say that $P$ is \emph{smooth} if it is full-dimensional, integral, and if, for every vertex $p\in P$, the set $\{v_Q\colon \dim Q=n-1 \textnormal{ and } p\in Q\}$ is a basis of $N$.
	\end{enumerate}
\end{dfn}
 The polytope $P$ is smooth if and only if it is the moment polytope of a non-singular projective toric variety; \textit{cf.} \cite[Chapter 2]{Ful_tor_var}. We note that for algebraic moment maps, the image polytope is necessarily integral; \textit{cf.} \cite[Section 4.2]{Ful_tor_var}.

\begin{figure}[!ht]
	\centering
	\begin{tikzpicture}[scale=2]
		\coordinate (a) at (0,0);
		\coordinate (b) at (2,0);
		\coordinate (c) at ($(a)+(60:2)$);
		\coordinate (a1) at ($(a)+(4,0)$);
		\coordinate (b1) at ($(b)+(4,0)$);
		\coordinate (c1) at ($(c)+(4,0)$);
		\coordinate (center) at ($(a)!0.33333333!(a1)+(a)!0.33333333!(b1)+(a)!0.33333333!(c1)$);
		\filldraw[pattern=dots, thick] (a) -- (b)-- (c) -- cycle;
		\draw (a1) -- ($(a1)!(b1)!(c1)$) -- (center) -- ($(b1)!(c1)!(a1)$)-- cycle;
		\draw (b1) -- ($(b1)!(c1)!(a1)$) -- (center) -- ($(b1)!(a1)!(c1)$) -- cycle;
		\filldraw[pattern=dots,thick] (c1) -- ($(b1)!(a1)!(c1)$) -- (center) -- ($(c1)!(b1)!(a1)$) -- cycle;
		\fill (a) circle (.05);
		\fill (a1) circle (.05);
		\fill (b) circle (.05);
		\fill (b1) circle (.05);
		\fill (c) circle (.05);
		\fill (c1) circle (.05);
		\fill ($(a1)!.5!(b1)$) circle (.05);
		\fill ($(c1)!.5!(b1)$) circle (.05);
		\fill ($(a1)!.5!(c1)$) circle (.05);
		\fill (center) circle (0.05);
		\draw[->,thick] ($(b)+(0,1)$) -- ($(a1)+(0,1)$);
		\draw (a) node[anchor=north east]{$0$};
		\draw (b) node[anchor=north west]{$1$};
		\draw (c) node[anchor=south]{$2$};
		\draw ($(a)!.5!(b)$) node[anchor=north]{$[0;1]$};
		\draw ($(b)!.5!(c)$) node[anchor=south west]{$[1;2]$};
		\draw ($(a)!.5!(c)$) node[anchor=south east]{$[0;2]$};
		\draw (1,.666) -- (-.25,.5) node[anchor=east]{$[0;1;2]$};
		\draw (a1) node[anchor=north east]{$0\leq 0$};
		\draw ($(a1)+(1.5,0)$) node[anchor=north]{$1\leq [0;1]$};
		\draw ($(a1)+(1,1)$) -- ($(a1)+(1.5,1.5)$) node[anchor=west]{$2\leq [0;1;2]$};
		\draw[thick] ($(a1)!.5!(b1)$) -- (b1);
	\end{tikzpicture}
	\caption{The cubical subdivision of the triangle. Some cubical cells are marked by the pair of cells that represents them.}
	\label{fig:cubical_triang}
\end{figure}

\begin{dfn}
	Let $P$ be a smooth polytope of $M\otimes\R$. A \emph{primitive} triangulation $K$ of $P$ is a triangulation whose vertices are integral, \textit{i.e.}~lie in the lattice $M$, and whose $n$-simplices have minimal normalised lattice volume, \textit{i.e.}~$\frac{1}{n!}$. For a $p$-simplex $\sigma^p$ of $K$, we denote its tangent space by $T\sigma^p\subset M\otimes\R$. It is the vector direction of the affine space spanned by its vertices. 
\end{dfn}

\begin{dfn}
	We say that a regular CW-complex $K$ is a \emph{$\Delta$-complex} if all of its closed cells are isomorphic to simplices as CW-complexes. 
\end{dfn}

\begin{dfn}
	Let $K$ be a $\Delta$-complex and $K''$ be its barycentric subdivision. Every flag of simplices of~$K$ defines an open simplex of $K''$. For a pair $\sigma^p\leq \sigma^q$ of simplices of $K$, the \emph{open cubical cell} associated with $\sigma^p\leq \sigma^q$ is the union of every open barycentric simplex whose indexing flag starts with $\sigma^p$ and ends with $\sigma^q$. The \emph{cubical subdivision} of $K$ is the regular cellular complex $K'$ on the support of $K$ whose open cells are given by the open cubical cells. In $K'$, the cell indexed by $\sigma^p\leq\sigma^q$ is a face of the cell indexed by $\sigma^{p'}\leq\sigma^{q'}$ if and only if $\sigma^{p'}\leq\sigma^p\leq\sigma^q\leq\sigma^{q'}$.
\end{dfn}
	
	By construction, $K''$ is finer than $K'$, and $K'$ is finer than $K$. The name ``cubical subdivision'' comes from the fact that every closed cell of $K'$, seen as a subcomplex of $K''$, has the combinatorics of a triangulation of a cube. An example of cubical subdivision is depicted in Figure~\ref{fig:cubical_triang}: the triangle is subdivided into three squares.

\begin{dfn}
	Let $P$ be a smooth polytope endowed with a primitive triangulation $K$. The \emph{dual hypersurface} of $K$ is the subcomplex $X$ of the cubical subdivision of $K$ made of the closed cells indexed by the pairs of simplices $\sigma^p\leq\sigma^q$ for which $p\geq 1$. See Figure~\ref{fig:dual_hypersurface} for examples. 
\end{dfn}

\begin{figure}[!ht]
	\centering
	\begin{subfigure}[t]{0.45\textwidth}
		\centering
		\begin{tikzpicture}[scale=2]
			\coordinate (a) at (0,0);
			\coordinate (b) at (1.5,0);
			\coordinate (c) at ($(a)+(60:1.5)$);
			\coordinate (d) at ($(a)+(-120:1.5)$);
			\coordinate (e) at ($(d)+(1.5,0)$);
			\coordinate (f) at ($(e)+(1.5,0)$);
			\coordinate (center) at ($(a)!0.33333333!(a)+(a)!0.33333333!(b)+(a)!0.33333333!(c)$);
			\coordinate (center1) at ($(a)!0.33333333!(a)+(a)!0.33333333!(b)+(a)!0.33333333!(e)$);
			\coordinate (center2) at ($(a)!0.33333333!(a)+(a)!0.33333333!(d)+(a)!0.33333333!(e)$);
			\coordinate (center3) at ($(a)!0.33333333!(b)+(a)!0.33333333!(e)+(a)!0.33333333!(f)$);
			\draw (c) -- (d) -- (f) -- cycle;
			\draw (a) -- (b) -- (e) -- cycle;
			\fill ($(a)!.5!(b)$) circle (.05);
			\fill ($(c)!.5!(b)$) circle (.05);
			\fill ($(a)!.5!(c)$) circle (.05);
			\fill ($(a)!.5!(d)$) circle (.05);
			\fill ($(a)!.5!(e)$) circle (.05);
			\fill ($(e)!.5!(b)$) circle (.05);
			\fill ($(e)!.5!(f)$) circle (.05);
			\fill ($(b)!.5!(f)$) circle (.05);
			\fill ($(d)!.5!(e)$) circle (.05);
			\fill (center) circle (0.05);
			\fill (center1) circle (0.05);
			\fill (center2) circle (0.05);
			\fill (center3) circle (0.05);
			\draw[very thick] ($(d)!.5!(e)$) -- (center2) -- (center1) -- (center3) -- ($(e)!.5!(f)$) ;
			\draw[very thick] (center3) -- ($(b)!.5!(f)$);
			\draw[very thick] (center2) -- ($(a)!.5!(d)$);
			\draw[very thick] ($(a)!.5!(c)$) -- (center) -- (center1);
			\draw[very thick] ($(b)!.5!(c)$) -- (center);
		\end{tikzpicture}
		\caption{The dual hypersurface of a triangulation of a triangle.}
	\end{subfigure}
	\hfill
	\begin{subfigure}[t]{0.45\textwidth}
		\centering
		\begin{tikzpicture}[scale=1.9]
			\draw (0,0,0) -- (2,0,0) -- (0,0,2) -- (0,0,0) -- (0,2,0) -- (2,0,0) ;
			\draw (0,2,0) -- (0,0,2);
			\fill (1,0,0) circle (.05);
			\fill (0,1,0) circle (.05);
			\fill (0,0,1) circle (.05);
			\fill (1,1,0) circle (.05);
			\fill (1,0,1) circle (.05);
			\fill (0,1,1) circle (.05);
			\fill ($1/3*(2,2,0)$) circle (.05);
			\fill ($1/3*(2,0,2)$) circle (.05);
			\fill ($1/3*(0,2,2)$) circle (.05);
			\fill ($1/3*(2,2,2)$) circle (.05);
			\fill ($1/4*(2,2,2)$) circle (.05);
			\filldraw[pattern={Lines[angle=60, yshift=4pt , line width=.5pt]},very thick] (1,0,0) -- ($1/3*(2,2,0)$) -- ($1/4*(2,2,2)$) -- ($1/3*(2,0,2)$) -- cycle;
			\filldraw[pattern={Lines[angle=60, yshift=4pt , line width=.5pt]},very thick] (0,1,0) -- ($1/3*(2,2,0)$) -- ($1/4*(2,2,2)$) -- ($1/3*(0,2,2)$) -- cycle;
			\filldraw[pattern={Lines[angle=60, yshift=4pt , line width=.5pt]},very thick] (0,0,1) -- ($1/3*(2,0,2)$) -- ($1/4*(2,2,2)$) -- ($1/3*(0,2,2)$) -- cycle;
			\filldraw[pattern={Lines[angle=60, yshift=4pt , line width=.5pt]},very thick] (0,1,1) -- ($1/3*(0,2,2)$) -- ($1/4*(2,2,2)$) -- ($1/3*(2,2,2)$) -- cycle;
			\filldraw[pattern={Lines[angle=60, yshift=4pt , line width=.5pt]},very thick] (1,0,1) -- ($1/3*(2,0,2)$) -- ($1/4*(2,2,2)$) -- ($1/3*(2,2,2)$) -- cycle;
			\filldraw[pattern={Lines[angle=60, yshift=4pt , line width=.5pt]},very thick] (1,1,0) -- ($1/3*(2,2,0)$) -- ($1/4*(2,2,2)$) -- ($1/3*(2,2,2)$) -- cycle;
			\coordinate (v) at (2,1.5,0);
			\begin{scope}[scale=.45, 3d view={160}{55}]
				\draw ($(v)+(0,0,0)$) -- ($(v)+(2,0,0)$) -- ($(v)+(0,0,2)$) -- ($(v)+(0,0,0)$) -- ($(v)+(0,2,0)$) -- ($(v)+(2,0,0)$) ;
				\draw ($(v)+(0,2,0)$) -- ($(v)+(0,0,2)$);
				\filldraw[pattern={Lines[angle=60, yshift=4pt , line width=.5pt]},very thick] ($(v)+(1,0,0)$) -- ($(v)+1/3*(2,2,0)$) -- ($(v)+1/4*(2,2,2)$) -- ($(v)+1/3*(2,0,2)$) -- cycle;
				\filldraw[pattern={Lines[angle=60, yshift=4pt , line width=.5pt]},very thick] ($(v)+(0,1,0)$) -- ($(v)+1/3*(2,2,0)$) -- ($(v)+1/4*(2,2,2)$) -- ($(v)+1/3*(0,2,2)$) -- cycle;
				\filldraw[pattern={Lines[angle=60, yshift=4pt , line width=.5pt]},very thick] ($(v)+(0,0,1)$) -- ($(v)+1/3*(2,0,2)$) -- ($(v)+1/4*(2,2,2)$) -- ($(v)+1/3*(0,2,2)$) -- cycle;
				\filldraw[pattern={Lines[angle=60, yshift=4pt , line width=.5pt]},very thick] ($(v)+(0,1,1)$) -- ($(v)+1/3*(0,2,2)$) -- ($(v)+1/4*(2,2,2)$) -- ($(v)+1/3*(2,2,2)$) -- cycle;
				\filldraw[pattern={Lines[angle=60, yshift=4pt , line width=.5pt]},very thick] ($(v)+(1,0,1)$) -- ($(v)+1/3*(2,0,2)$) -- ($(v)+1/4*(2,2,2)$) -- ($(v)+1/3*(2,2,2)$) -- cycle;
				\filldraw[pattern={Lines[angle=60, yshift=4pt , line width=.5pt]},very thick] ($(v)+(1,1,0)$) -- ($(v)+1/3*(2,2,0)$) -- ($(v)+1/4*(2,2,2)$) -- ($(v)+1/3*(2,2,2)$) -- cycle;
			\end{scope}
		\end{tikzpicture}
		\caption{Two views of the dual hypersurface of a tetrahedron.}
	\end{subfigure}
	\caption{Two examples of dual hypersurfaces.}
	\label{fig:dual_hypersurface}
\end{figure}
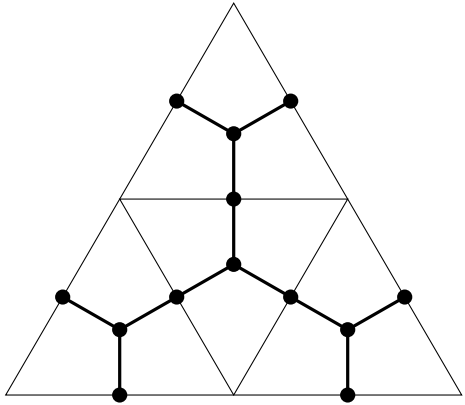
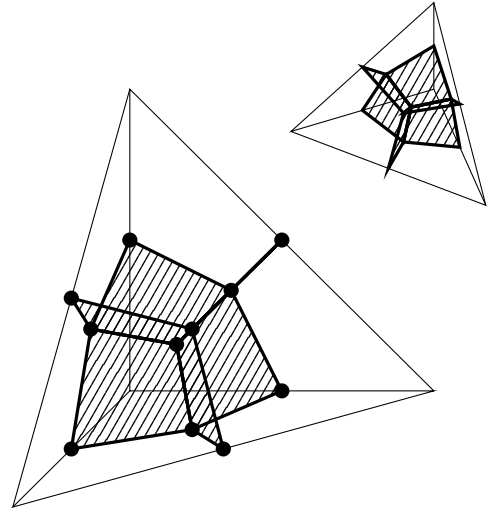

\begin{rem}
	Let $P$ be a smooth polytope and $Y$ denote the toric variety associated with $P$. The tropical locus $Y(\T)$ of $Y$ is a manifold with corner that is isomorphic to $P$. A tropical section $s$ of the line bundle associated with $P$ defines a tropical hypersurface $H$ of $Y(\T)$. When $H$ is smooth in the tropical sense, the section $s$ induces a primitive triangulation $K$ of $P$. This triangulation is necessarily convex, \textit{i.e.}~is made of the projections of the bottom faces of a polytope of $(M\otimes\R)\oplus \R$ living above $P$. In this case, the pair $(X;P)$, where $X$ denotes the dual hypersurface of $K$, is homeomorphic to the pair $(H;Y(\T))$. See \cite[Section~3.4]{Mik-Rau_tro_geo} for the appropriate definitions of tropical geometry.
\end{rem}

\begin{dfn}[\textit{cf.} {\cite[Section 1.1]{She_cel_des}}]
	Any CW-complex $E$ can be seen as a partially ordered set and thus as a small category. If we denote this category by $\mathbf{Cell}\;E$ and $R$ is a commutative ring, a \emph{cellular cosheaf of $R$-modules} $F$ on $E$ is a contravariant functor
	\begin{equation*}
		F\colon \mathbf{Cell}\;E^\op \longrightarrow \mathbf{Mod}_R.
	\end{equation*}
	 The morphism associated with an adjacent pair $e^p\leq e^q$, denoted by $-|^{e^q}_{e^p}\colon F(e^q)\rightarrow F(e^p)$, is called an \emph{extension} morphism of $F$. Dually, a \emph{cellular sheaf of $R$-modules} $G$ on $K$ is a covariant functor 
	\begin{equation*}
		G\colon \mathbf{Cell}\;E \longrightarrow \mathbf{Mod}_R.
	\end{equation*}
	 The morphism associated with an adjacent pair $e^p\leq e^q$, denoted by $-|^{e^q}_{e^p}\colon G(e^p)\rightarrow G(e^q)$, is called a \emph{restriction} morphism of $G$. If $\mathbf{C}$ is any category and one has two functors $A$, from $\mathbf{Cell }\; E$ to $\mathbf{C}$, and $B$, from $\mathbf{C}$ to $\mathbf{Mod}_R$, one gets either a cellular sheaf or a cellular cosheaf by composing $A$ with $B$. If $E$ is a $\Delta$-complex, a \emph{cubical sheaf} (resp.\ \emph{cubical cosheaf}\,) on $E$ is a cellular sheaf (resp.\ cosheaf) on its cubical subdivision.
\end{dfn}

\begin{dfn}[Morphisms of cellular sheaves and cosheaves, \textit{cf.} {\cite[Section 1.1]{She_cel_des}}]
	A \emph{morphism of cellular sheaves} (resp.\ \emph{cosheaves}) $f\colon F\rightarrow F'$ is a natural transformation. Such morphism is said to be \emph{injective} (resp.\ \emph{surjective},  \emph{invertible}) if the associated morphisms $f_e\colon F(e)\rightarrow F'(e)$ are injective (resp.\ surjective, invertible) for all cells $e$. The \emph{kernel}, \emph{image}, and \emph{cokernel} of such a morphism $f$ are the ``cell-wise'' kernel, image, and cokernel. They are themselves sheaves (resp.\ cosheaves) for which  the induced restriction (resp.\ extension) morphisms for $f$ is a natural transformation. 
\end{dfn}

We now give the definitions of the tropical sheaves and cosheaves associated with a primitive triangulation~$K$ of a smooth polytope $P$. All these objects usually take values in the category of Abelian groups; \textit{cf.} \cite[Definition 13]{Ite-Kat-Mik-Zha_tro_hom}. The definition in the given reference is the first occurrence of such objects. However, it would only apply to convex triangulations. Their definition was extended to all triangulations in \cite[Section 2.2]{Brug-LdM-Rau_Comb_pac}. Here, we will only consider their reduction modulo $2$. Therefore, we only set the notation for their reduced version.

\begin{dfn}\label{dfn:trop_cosheaves} Let $P$ be a smooth polytope of $M\otimes\R$ endowed with a primitive triangulation $K$. The first cellular cosheaf below is defined on $K$; all the other cellular sheaves and cosheaves are defined on the cubical subdivision of $K$.
	\begin{enumerate}
		\item The \emph{sedentarity} cosheaf $\Sed$ is defined as follows: for a simplex $\sigma^p$ of $K$, the group $\Sed(\sigma^p)$ is given by
		\begin{equation*}
			\Sed(\sigma^p):=\left\{v\in N\otimes\F_2\relmiddle| \alpha(v)=0,\, \forall \alpha\in(TQ\cap M)\right\},
		\end{equation*}
		 where $Q$ is the smallest face of $P$ containing the relative interior\footnote{The relative interior is the interior of $\sigma^p$ in the affine space it spans.} of $\sigma^p$. If $\sigma^p\leq\sigma^q$, the extension morphism ${\Sed(\sigma^q)\rightarrow\Sed(\sigma^p)}$ is the inclusion.
		\item The cubical subdivision of the sedentarity, which we denote by the same symbol $\Sed$, is defined as follows: for a cube of $K$ indexed by a pair of simplices $\sigma^p\leq\sigma^q$, the group $\Sed(\sigma^p;\sigma^q)$ is given by 
		\begin{equation*}
			\Sed\left(\sigma^p;\sigma^q\right):=\Sed(\sigma^q).
		\end{equation*}
		A cube indexed by $\sigma^p\leq\sigma^q$ is a face of the cube indexed by $\sigma^{p'}\leq\sigma^{q'}$ if and only if we have ${\sigma^{p'}\leq \sigma^p\leq\sigma^q \leq\sigma^{q'}}$. In this case the extension morphism $\Sed(\sigma^{p'};\sigma^{q'})\rightarrow \Sed(\sigma^p;\sigma^q)$ is also given by the inclusion.
		\item The cosheaf $F_1^P$ is defined as the quotient
		\begin{equation*}
			F_1^P:=\bigslant{N\otimes\F_2}{\Sed},
		\end{equation*}
		 where $N\otimes\F_2$ is understood as the constant cosheaf. 
		\item The cosheaves $F_p^P$, for all $p\in \N$, are the exterior powers of $F_1^P$:
		\begin{equation*}
			F_p^P:=\bigwedge^p F_1^P;
		\end{equation*}
		 in particular, $F_0^P$ is the constant cosheaf $\F_2$. 
		\item For all $p\in \N$, the sheaf $F^p_P$ is the dual of $F^P_p$; that is to say, 
		\begin{equation*}
			F^p_P:=\Hom_{\F_2}\left(F^P_p;\F_2\right).
		\end{equation*}
		\item\label{def3.9-6} For all $k\in \N$, the groups of the cosheaf $F_k^X$ are defined on cubes indexed by $\sigma^1\leq \sigma^q$ by the formula 
		\begin{equation*}
			F_k^X(\sigma^1;\sigma^q):=\bigwedge^k\left\{v\in\bigslant{N\otimes\F_2}{\Sed(\sigma^1;\sigma^q)} \relmiddle| \alpha(v)=0\right\},
		\end{equation*}
		 where $\alpha$ is a generator of $T\sigma^1\cap M$. For a more general cube $\sigma^p\leq\sigma^q$, the group is given by 
		\begin{equation*}
			F_k^X(\sigma^p;\sigma^q):=\sum_{\sigma^1\leq\sigma^p}F_k^X(\sigma^1;\sigma^q).
		\end{equation*}
		 See Figure~\ref{fig:F1} for an example. The morphisms between these groups are given by quotients and inclusions. Note that even though $F_k^X$ is defined on the whole cubical subdivision of $K$, its support\footnote{The support of $F_k^X$ is the union of the closed cubes carrying non-trivial groups.} is contained in $X$. It is a sub-cosheaf of $F^P_p$, and we denote the inclusion by $i_p:F^X_p\subset F^P_p$. 
		\item For all $p\in \N$, the sheaf $F^p_X$ is the dual of $F^X_p$; that is to say, 
		\begin{equation*}
			F^p_X:=\Hom_{\F_2}\left(F^X_p;\F_2\right).
		\end{equation*}
		 We denote the adjoint projection of $i_p$ by $i^p\colon F_P^p\rightarrow F_X^p$.
	\end{enumerate}
\end{dfn}

\begin{figure}[!ht]
	\centering
	\begin{tikzpicture}[scale=1.5]
		\foreach \p in {(0,0), (3,0), (0,3)}{
			\fill \p circle (.05);}
	
		\draw (0,0) -- (3,0) -- (0,3) -- cycle;
		\draw[very thick] (0,3) -- (0,1.5) -- (1,1) -- (1.5,1.5) -- cycle;
		\fill (1,1) circle (.05);
		\fill ($3/2*(1,1)$) circle (.05);
		\fill ($1/2*(3,0)$) circle (.05);
		\fill ($1/2*(0,3)$) circle (.05);
		\draw ($3/2*(1,1)$) -- (1,1) -- ($1/2*(3,0)$);
		\draw ($1/2*(0,3)$) -- (1,1);
		\fill[pattern={Lines[angle=22.5, yshift=4pt , line width=.5pt]}] (0,3) -- ($3/2*(0,1)$) -- (1,1) -- ($1/2*(3,3)$) -- cycle;
		\draw (-1,3.2) node[left]{$0$} -- (0,3);
		\draw (-1,2.5) node[left]{$0$} -- (0,2.25);
		\draw (-1,1.8) node[left]{$\displaystyle \bigslant{\F_2e_1}{\F_2e_1}=0$} -- (0,1.5);
		\draw (-1,1.1) node[left]{$\F_2e_1$} .. controls ($1/2*(-1,1.1)+1/4*(1,1)+1/4*(0,1.5)+(0,-.2)$) and ($1/2*(-1,1.1)+1/4*(1,1)+1/4*(0,1.5)$) .. ($1/2*(1,1)+1/2*(0,1.5)$) ;
		\draw ($1/2*(3,0)+10/4*(0,1)$) node[right]{$0$} .. controls ($(0,3)!.75!(1.5,1.5)$) and ($(0,3)!.75!(1.5,1.5)$) .. (0.5,2);
		\draw ($1/2*(3,0)+10/4*(0,1)+(.5,-.5)$) node[right]{$\displaystyle 0=\bigslant{\F_2(e_1-e_2)}{\F_2(e_1-e_2)}$} -- ($3/2*(1,1)$);
		\draw ($1/2*(3,0)+10/4*(0,1)+(1,-1)$) node[right]{$\F_2(e_1-e_2)$} .. controls (2,1) and (2,1) .. ($3/2*(1,1)+(-.25,-.25)$);
		\draw ($1/2*(3,0)+10/4*(0,1)+(1.5,-1.5)$) node[right]{$\F_2^2$} .. controls (2,.75) and (2,.75) .. (1,1);
	\end{tikzpicture}
	\caption{A triangle and the groups associated by $F_1^{P}$ with some of its cubical cells. We denote by $(e_1^*;e_2^*)$ the canonical basis of $M\cong\Z^2$ and by $(e_1;e_2)$ the dual basis of $N$.}
	\label{fig:F1}
\end{figure}

\begin{rem}
	Let $p\geq 0$ be an integer. In the notation of \cite{Brug-LdM-Rau_Comb_pac}, $F^P_p$ corresponds to $\mathcal{F}^0_p$, and $F^X_p$ corresponds to $\mathcal{F}^1_p$.
\end{rem}

\begin{dfn}
	Let $\sigma^p$ be a simplex of $K$. Its tangent space is a $p$-dimensional rational subspace of $M\otimes\R$. We denote by $\omega(\sigma^p)$ the generator of the line $\bigwedge^p(T\sigma^p\cap M)\otimes_\Z\F_2$.
\end{dfn}

We recall a basic construction of multilinear algebra in order to give a different definition of the cosheaves $(F^X_p)_{p\geq 0}$.

\begin{dfn}[Contraction]
	Let $V$ be a finite-dimensional vector space over a field $\F$ and $k,l\in\N$ be integers. For all $\alpha\in \bigwedge^l V^*$ and $v\in\bigwedge^{l+k}V$, the contraction $\alpha\cdot v$ is the only element of $\bigwedge^{k} V$ satisfying 
	\begin{equation*}
	\beta(\alpha\cdot v)=(\beta\wedge\alpha)(v) 
	\end{equation*}
	 for all $\beta\in \bigwedge^kV^*$. This construction is dual to the interior product.
\end{dfn}

\begin{prop}\label{prop:F_and_contraction}
	For all pairs of adjacent simplices $\sigma^p\leq\sigma^q$ of $K$, we have the exact sequence 
	\begin{equation*}
		\begin{tikzcd}[column sep =3em]
			0 \ar[r] & F_k^X(\sigma^p;\sigma^q) \ar[r] & F_k^P(\sigma^p;\sigma^q) \ar[r,"\omega(\sigma^p)\cdot-" above] & F_{k-p}^P(\sigma^p;\sigma^q).
		\end{tikzcd}
	\end{equation*}
\end{prop}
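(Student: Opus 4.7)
The plan is to work inside $V := F_1^P(\sigma^p;\sigma^q) = \t(\F_2)/\Sed(\sigma^q)$ and identify both sides of the claimed equality as subspaces of $\bigwedge^k V$ by means of a single basis computation. First I would check that $\omega(\sigma^p)$ descends to an element of $\bigwedge^p V^*$: because $\sigma^p \leq \sigma^q$ are both contained in the smallest face $Q$ of $P$ meeting $\relint(\sigma^q)$, we have $T\sigma^p \subset TQ$, so every form in $T\sigma^p \cap \t^*(\Z)$ annihilates $\Sed(\sigma^q)$. Fixing a vertex $v_0$ of $\sigma^p$, primitivity makes $\alpha_i := v_i - v_0$ a generator of $T[v_0;v_i] \cap \t^*(\Z)$ for each $i = 1, \ldots, p$, and makes $(\alpha_1,\ldots,\alpha_p)$ a $\Z$-basis of $T\sigma^p \cap \t^*(\Z)$, so that $\omega(\sigma^p) = \alpha_1 \wedge \cdots \wedge \alpha_p$ in $\bigwedge^p V^*$.

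The base case $p = 1$ reduces to the elementary statement that for a nonzero $\alpha \in V^*$ one has $\ker(\alpha \cdot - : \bigwedge^k V \to \bigwedge^{k-1} V) = \bigwedge^k \ker \alpha$, which I would verify on a basis $(e_1,\ldots,e_n)$ of $V$ with $\alpha(e_1)=1$ and $e_i \in \ker\alpha$ for $i \geq 2$: on the induced basis of $\bigwedge^k V$ the wedge $e_S$ contracts to $0$ against $\alpha$ precisely when $1 \notin S$.

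The inclusion $F_k^X(\sigma^p;\sigma^q) \subset \ker(\omega(\sigma^p) \cdot -)$ then follows from the associativity identity $(\alpha \wedge \alpha'') \cdot u = \alpha'' \cdot (\alpha \cdot u)$, which is a direct consequence of the defining property $\beta(\omega \cdot u) = (\beta \wedge \omega)(u)$. Indeed, for any edge $\sigma^1 \leq \sigma^p$ with generator $\alpha$, extending $\alpha$ to a $\Z$-basis of $T\sigma^p \cap \t^*(\Z)$ writes $\omega(\sigma^p) = \alpha \wedge \alpha''$ for some $\alpha'' \in \bigwedge^{p-1} V^*$, so any $u \in \bigwedge^k \ker(\alpha) = F_k^X(\sigma^1;\sigma^q)$ satisfies $\omega(\sigma^p) \cdot u = \alpha'' \cdot 0 = 0$. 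Summing over the edges of $\sigma^p$ concludes.

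For the reverse inclusion I would extend $(\alpha_1,\ldots,\alpha_p)$ to a basis $(\alpha_1,\ldots,\alpha_n)$ of $V^*$ with dual basis $(a_1,\ldots,a_n)$ of $V$, giving the standard basis $\{a_I \wedge a_J\}$ of $\bigwedge^k V$ indexed by $I \subset \{1,\ldots,p\}$ and $J \subset \{p+1,\ldots,n\}$ with $|I|+|J| = k$. Since $\alpha_i \wedge \omega(\sigma^p) = 0$ for $i \leq p$, the defining formula forces $\omega(\sigma^p) \cdot (a_I \wedge a_J) = 0$ whenever $I \subsetneq \{1,\ldots,p\}$, while $\omega(\sigma^p) \cdot (a_1 \wedge \cdots \wedge a_p \wedge a_J)$ equals $\pm a_J \neq 0$. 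Hence any element of $\ker(\omega(\sigma^p) \cdot -)$ is a combination of basis vectors $a_I \wedge a_J$ with $I \subsetneq \{1,\ldots,p\}$, and each such vector belongs to $\bigwedge^k \ker(\alpha_i) = F_k^X([v_0;v_i];\sigma^q) \subset F_k^X(\sigma^p;\sigma^q)$ for any $i \in \{1,\ldots,p\} \setminus I$. The main point of care is the descent of $\omega(\sigma^p)$ to $V^*$ together with the identification of the edge-generators with a basis of $T\sigma^p \cap \t^*(\Z)$ coming from primitivity; the rest is routine multilinear algebra on well-chosen bases.
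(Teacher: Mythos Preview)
Your proof is correct and follows essentially the same approach as the paper's: both pick a vertex of $\sigma^p$, use primitivity to turn the edge-generators through that vertex into a basis of $T\sigma^p(\F_2)$, extend to a basis of $V^*$, and compute the contraction by $\omega(\sigma^p)=\alpha_1\wedge\cdots\wedge\alpha_p$ on the induced basis $a_I\wedge a_J$ of $\bigwedge^k V$, finding that it kills exactly the vectors with $I\subsetneq\{1,\dots,p\}$. The paper compresses your three steps (the $p=1$ case, the inclusion via associativity, and the reverse inclusion) into a single direct observation that $F_k^X(\sigma^p;\sigma^q)$ is spanned by those $e_I\wedge f_J$ with $|I|<p$ and that the contraction sends $e_I\wedge f_J$ to $f_J$ or $0$ accordingly; your more modular presentation makes the role of the non-vertex edges (which the paper leaves implicit) a little more transparent, but the argument is the same.
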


\begin{proof}
	Let us denote by $V$ the quotient of $N\otimes\F_2$ by $\Sed(\sigma^p;\sigma^q)$. For all simplices $\sigma^1\leq\sigma^p$, the linear form $\omega(\sigma^1)$ vanishes on $\Sed(\sigma^p;\sigma^q)$. We abuse the notation and also denote by $\omega(\sigma^1)$ the factorisation of $\omega(\sigma^1)$ by the quotient map $N\otimes\F_2\rightarrow V$. From point~\eqref{def3.9-6} of Definition~\ref{dfn:trop_cosheaves},  we have 
	\begin{equation*}
		F^X_k(\sigma^p;\sigma^q)=\sum_{\sigma^1\leq\sigma^q}\bigwedge^k\ker(\omega(\sigma^1))\subset\bigwedge^k V.
	\end{equation*}
	 Let us choose a vertex $\sigma^0$ of $\sigma^p$. Since $\sigma^p$ is primitive, the set $\{\omega(\sigma^1)\colon \sigma^0\leq\sigma^1\leq\sigma^p\}$ is, up to ordering, a basis of the vector space $\left(T\sigma^p\cap M\right)\otimes_\Z\F_2\subset V^*$. We order this set $\{\omega_1,\ldots,\omega_p\}$ and add  some vectors $\alpha_1,\ldots,\alpha_s\in V^*$ in order to form a basis of $V^*$. We denote by $\{e_1,\ldots,e_p,f_1,\ldots,f_s\}$ the dual basis. We see that $F^X_k(\sigma^p;\sigma^q)$ is spanned by the vectors\footnote{We use the standard notation $e_I:=e_{i_1}\wedge \cdots\wedge e_{i_k}$, where $I=\{i_1<\cdots<i_k\} $.} $e_I\wedge f_J$ for all $I\subset\{1,\ldots,p\}$ and $J\subset\{1,\ldots,s\}$ satisfying $|I|+|J|=k$ and $|I|<p$. We can observe that $\omega(\sigma^p)=\omega_1\wedge\cdots\wedge\omega_p$ and that 
	\begin{equation*}
		\omega(\sigma^p)\cdot e_I\wedge f_J=\left\{\begin{array}{cl} 
		f_J & \textnormal{if }I=\{1,\ldots,p\}, \\
		0 & \textnormal{otherwise}, \end{array}\right. 
	\end{equation*}
	 from which the proposition follows.
\end{proof}

\begin{dfn}\label{dfn:co_chain}
	Let $F$ be a cellular cosheaf of $\F_2$-vector spaces on a regular CW-complex $E$. We denote by $(C_k(E;F);\partial)_{k\geq 0}$ the \emph{chain complex of cellular chains with coefficients in $F$}. For all $k\geq 0$, its $\supth{k}$ group of chains is defined by the formula 
	\begin{equation*}
		C_k(E;F):=\bigoplus_{e^k\in E}F\left(e^k\right).
	\end{equation*}
	 We write an element of $C_k(E;F)$ in the following way: 
	\begin{equation*}
		c=\sum_{e^k\in E} c_{e_k}\otimes e^k.
	\end{equation*}
	 The \emph{boundary operator} $\partial\colon C_{k+1}(E;F)\rightarrow C_{k}(E;F)$ is defined by the formula 
	\begin{equation*}
		\partial c=\sum_{e^{k+1}\in E} \;\sum_{e^k\leq e^{k+1}} c_{e^{k+1}}\Big |^{e^{k+1}}_{e^k} \otimes e^k,
	\end{equation*}
	where $-|^{e^{k+1}}_{e^k}\colon F(e^{k+1})\rightarrow F(e^{k})$ denotes the extension morphism of $F$. In a regular CW-complex, if $e^k\leq e^{k+2}$, then there are exactly two distinct $e^{k+1}$ between $e^k$ and $e^{k+2}$. This ensures that $\partial\circ\partial=0$. A morphism of cosheaves $f\colon F\rightarrow F'$ induces a morphism between the associated chain complexes. This associated morphism of chain complexes is injective, surjective, or invertible if and only if $f\colon F\rightarrow F'$ is. If dually $G$ is a cellular sheaf of $\F_2$-vector spaces on $E$, we denote by $(C^k(E;G);\df)_{k\geq 0}$ the \emph{cochain complex of cellular cochains with coefficients in $G$}. For all $k\geq 0$, its $\supth{k}$ group of cochains is defined by the formula 
	\begin{equation*}
		C^k(E;G):=\prod_{e^k\in E}G\left(e^k\right).
	\end{equation*}
	 The \emph{coboundary operator}, or \emph{differential}, $\df\colon C^{k}(E;G)\rightarrow C^{k+1}(E;G)$ is defined, for all $\alpha\in C^k(E;G)$ and all cells $e^{k+1}$, by the formula 
	\begin{equation*}
		\df \alpha\left(e^{k+1}\right)=\sum_{e^k\leq e^{k+1}} \alpha\left(e^{k}\right)\Big |^{e^{k+1}}_{e^k}, 
	\end{equation*}
	 where $-|^{e^{k+1}}_{e^k}\colon G(e^{k})\rightarrow G(e^{k+1})$ denotes the restriction morphism of $G$. As in the case of chain complexes, we have $\df\circ\df=0$. A morphism of sheaves $f\colon F\rightarrow F'$ induces a morphism between the associated chain complexes. This associated chain complexes morphism is injective, surjective, or invertible if and only $f\colon F\rightarrow F'$ is. If $F^*$ is the cellular sheaf obtained as $F^*=\Hom_{\F_2}(F;\F_2)$ from a cellular cosheaf $F$, then the complex $(C^k(E;F^*);\df)_{k\geq 0}$ is the dual complex of $(C_k(E;F);\partial)_{k\geq 0}$. If $E$ is the cubical subdivision of a simplicial complex $K$ and $F$ is a cubical cosheaf (resp.\ sheaf), we write $(\Omega_k(K;F);\partial)_{k\geq 0}$ instead of $(C_k(E;F);\partial)_{k\geq 0}$ (resp.\ $(\Omega^k(K;F);\df)_{k\geq 0}$ instead of $(C^k(E;F);\df)_{k\geq 0}$).
\end{dfn}

\begin{dfn}
	The homology of a cellular cosheaf $F$ (resp.\ cohomology of a cellular sheaf) of a regular CW-complex $E$ is the homology (resp.\ cohomology) of the associated chain complex (resp.\ cochain complex) from Definition~\ref{dfn:co_chain}. It is denoted by $(H_k(E;F))_{k\geq 0}$ (resp.\ $(H^k(E;F))_{k\geq 0}$). 
\end{dfn}

\begin{rems}\label{rems:subdiv} Let $E_0$ be a regular $CW$-complex, $E_1$ be a subcomplex of $E_0$, and $E_0'$ be a subdivision of $E_0$.\begin{enumerate}
	\item Given a cellular sheaf (resp.\ cosheaf) $F_1$ on $E_1$, there is a unique sheaf (resp.\ cosheaf) $F_0$ on $E_0$ whose restriction to $E_1$ is $F_1$, and whose groups vanish on cells of $E_0$ not contained in $E_1$. Moreover, there is a natural isomorphism of cochain complexes between $(C^k(E_1;F_1);\df)_{k\geq 0}$ and $(C^k(E_0;F_0);\df)_{k\geq 0}$. We also have a natural isomorphism of the appropriate chain complexes in the case of a cosheaf. In addition, if $G$ is a cellular sheaf (resp.\ cosheaf) on $E_0$ whose groups vanish on cells of $E_0$ not contained in $E_1$, then $G$ is obtained by extending its restriction to $E_1$. 
	\item There is a subdivision functor $F\mapsto F'$ that makes a cellular sheaf (resp.\ cosheaf) on $E_0'$ out of a cellular sheaf (resp.\ cosheaf) on $E_0$. If $e'$ is a cell of $E_0'$ contained in the cell $e$ of $E_0$, then $F'(e')=F(e)$. For a cellular sheaf $F$, there is a natural quasi-isomorphic projection $(C^k(E_0';F');\df)_{k\geq 0}\rightarrow (C^k(E_0;F);\df)_{k\geq 0}$. For a cellular cosheaf $F$, there is a natural quasi-isomorphic injection $(C_k(E_0;F);\partial)_{k\geq 0}\rightarrow (C_k(E_0';F');\partial)_{k\geq 0}$; \textit{cf.} \cite[Section 1.5]{She_cel_des}.
\end{enumerate}
\end{rems}

The sheaves and cosheaves of Definition~\ref{dfn:trop_cosheaves} are sometimes defined on other cellular structures of $P$ and $|X|$, especially in the context of tropical geometry. However, following  Remarks~\ref{rems:subdiv} we may write, for all $p,q\geq 0$, 
\begin{itemize}
	\item $H_{p,q}(P;\F_2)$ instead of $H_{q}(K;F^P_p)$, respectively $H^{p,q}(P;\F_2)$ instead of $H^{q}(K;F_P^p)$; 
	\item $H_{p,q}(X;\F_2)$ instead of $H_{q}(K;F^X_p)$, respectively $H^{p,q}(X;\F_2)$ instead of $H^{q}(K;F_X^p)$.
	\end{itemize}
	In this article, every computation will be performed on the cubical subdivision of $K$. Using the universal coefficient theorem, \cite[Theorem 3.3a]{Car-Eil_hom_alg}, we see that the vector spaces $H_{p,q}(P;\F_2)$ and $H^{p,q}(P;\F_2)$ are dual to each other, and so are $H_{p,q}(X;\F_2)$ and $H^{p,q}(X;\F_2)$. We introduce the notation, for all $p,q\geq 0$, ${i_{p,q}\colon H_{p,q}(X;\F_2)\rightarrow H_{p,q}(P;\F_2)}$ and ${i^{p,q}\colon H^{p,q}(P;\F_2)\rightarrow H^{p,q}(X;\F_2)}$ for the morphisms induced in homology and cohomology by the inclusions $i_p:F^X_p\subset F^P_p$ and there adjoint projections. By construction and the universal coefficient theorem, the morphism $i^{p,q}$ is the adjoint of $i_{p,q}$.

\begin{dfn}[Cup product]
	Let $F$ be a cubical sheaf of algebras over $\F_2$ on a simplicial complex $K$. The cochain complex $(\Omega^k(K;F);\df)_{k\geq 0}$ is canonically endowed with the structure of a graded algebra:
	\begin{equation*}
		\begin{array}{rcl} \cup\colon \Omega^k(K;F)\otimes_{\F_2} \Omega^l(K;F) & \longrightarrow & \Omega^{k+l}(K;F) \\
		\alpha \otimes \beta & \longmapsto & \displaystyle \left[ (\sigma^p; \sigma^{p+k+l}) \mapsto \sum_{\sigma^p\leq \sigma^{p+k} \leq \sigma^{p+k+l}} \alpha(\sigma^p; \sigma^{p+k})\beta(\sigma^{p+k}; \sigma^{p+k+l})\right],
		\end{array}
	\end{equation*}
	 where the product $\alpha(\sigma^p; \sigma^{p+k})\beta(\sigma^{p+k} ;\sigma^{p+k+l})$ is understood as the product of $\alpha(\sigma^p; \sigma^{p+k})\big|_{(\sigma^p; \sigma^{p+k})}^{(\sigma^p; \sigma^{p+k+l})}$ and $\beta(\sigma^{p+k};\sigma^{p+k+l})\big|_{(\sigma^{p+k}; \sigma^{p+k+l})}^{(\sigma^p; \sigma^{p+k+l})}$ in $F(\sigma^p; \sigma^{p+k+l})$. It satisfies the Leibniz rule 
	\begin{equation*}
		\df(\alpha\cup\beta)=(\df\alpha)\cup\beta+\alpha\cup\df\beta 
	\end{equation*}
	 and therefore induces a well-defined product in cohomology. In particular, $(\Omega^k(K;F);\df;\cup)_{k\geq 0}$ is a graded differential algebra over $\F_2$.
\end{dfn}

Both the sheaves $\bigoplus_{p\in\N} F_P^p$ and $\bigoplus_{p\in\N} F_X^p$ are cubical sheaves of graded algebras for the wedge product. Moreover, the projection $\bigoplus_{p\in\N} F_P^p\rightarrow \bigoplus_{p\in\N} F_X^p$ is a morphism of sheaves of graded algebras. Therefore, the collections of vector spaces ${(H^{p,q}(P;\F_2))_{p,q\in \N}}$ and ${(H^{p,q}(X;\F_2))_{p,q\in\N}}$ both have the structure of bigraded algebras over $\F_2$, and the restriction ${\bigoplus_{p,q\in\N}i^{p,q}}$ is a morphism of bigraded algebras.

\section{Construction of T-hypersurfaces}

\begin{ntns}
	Let $M$ be a free Abelian group of rank $n$. Let $P$ be a smooth, full-dimensional, integral polytope of the $n$-dimensional vector space $M\otimes\R$  and $K$ be a primitive triangulation of $P$ with dual hypersurface $X$.
\end{ntns}

\begin{dfn}\label{dfn:RP_CW}
	We denote by $\R P$ the topological space obtained as the quotient of $(N\otimes\F_2)\times P$ by the equivalence relation %
	\begin{equation*}
		(v;x)\sim(v';x') \quad\text{if and only if}\quad x=x' \textnormal{ and } v-v'\in \Sed(x),
	\end{equation*}%
	 where $\Sed(x)$ denotes the space $\{v\in N\mid\alpha(v)=0,\,\forall \alpha \in TQ\}\otimes\F_2$, $Q$ being the only face of $P$ containing $x$ in its relative interior.
This space is obtained by gluing together $2^n$ copies of $P$ and is naturally a regular CW-complex. 
\end{dfn}

Let $Y$ denote the toric variety associated with $P$. The moment map $\mu\colon Y(\R)\rightarrow P$ induces a homeomorphism between $Y(\R_+)$ and $P$; \textit{cf.} \cite[Chapter 4]{Ful_tor_var}. Therefore, the moment map has a natural section $s$ that takes its values in $Y(\R_+)$. Moreover, $(N\otimes\F_2)$ acts on $Y(\R)$ as the $2$-torsion of the real locus of the torus of~$Y$. The map
\begin{equation*}\label{eq:pre_homeo}
	\begin{array}{rcl}
		G\colon (N\otimes\F_2)\times P & \longrightarrow & Y(\R) \\
		(v;p) & \longmapsto & v\cdot s(p) 
	\end{array}
\end{equation*}
induces an homeomorphism between $\R P$ and $Y(\R)$. Hence, $\R P$ can be thought of as a cellular structure on $Y(\R)$; see \cite[Theorem 5.4]{Gel-Kap-Zel_dis_res}. Using the homeomorphism induced by $G$, the projection of $\R P$ onto $P$, denoted by $|\cdot|\colon \R P \rightarrow P$, corresponds to the moment map.

\begin{dfn}
	We denote by $\R K$ the lift of $K$ to a subdivision of $\R P$.
\end{dfn}

The complex $\R K$ might not be a triangulation of $\R P$. However, it is always a $\Delta$-complex. To distinguish the cells of $K$ from those of $\R K$, we will denote the former by the symbols $\sigma^p$ and the latter by the symbols $\sigma^p_\R$. With this definition, the projection $|\cdot|\colon \R P \rightarrow P$ is automatically a cellular map. For a simplex $\sigma^p\in K$, the set of $p$-simplices of $\R K$ above $\sigma^p$ is canonically in bijection with the vector space%
	\begin{equation*}
		\bigslant{N\otimes\F_2}{\Sed(\sigma^p)}.
	\end{equation*}%
	\begin{dfn}
		We define the \emph{argument} of a simplex $\sigma_\R^p\in\R K$, denoted by $\arg(\sigma_\R^p)$, to be the element of $(N\otimes\F_2)/\Sed(\sigma^p)$ to which $\sigma_\R^p$ corresponds.
	\end{dfn} A cubical cell of $\R K$ is indexed by a pair of simplices $\sigma_\R^p\leq \sigma_\R^q$ and is projected by $|\cdot|$ on the cubical cell of $K$ indexed by $|\sigma_\R^p|\leq |\sigma_\R^q|$. Reciprocally, if $\sigma^p\leq\sigma^q$ is a pair of simplices of $K$, its set of lifting cells is in bijection with the vector space%
	\begin{equation*}
		\bigslant{N\otimes\F_2}{\Sed(\sigma^p;\sigma^q)}.
	\end{equation*}%
	\begin{dfn}
		If the cubical cell indexed by $\sigma_\R^p\leq \sigma_\R^q$ lifts the cell $\sigma^p\leq \sigma^q$, we define $\arg(\sigma_\R^p; \sigma_\R^q)$ to be the element of $(N\otimes\F_2)/\Sed(\sigma^p;\sigma^q)$ to which it corresponds.
	\end{dfn}
	
	One can show that $\arg(\sigma_\R^p; \sigma_\R^q)$ equals $\arg(\sigma_\R^q)$.

\begin{dfn}(Canonical cocycle)\label{canonical cocycle}
	The cellular complex $\R K$ possesses a canonical cocycle of degree 1 denoted by ${\omega_{\R X}\in Z^1(\R K;\F_2)}$. Its value on an edge $\sigma_\R^1$ is given by the following formula:%
	\begin{equation*}
		\omega_{\R X}\left(\sigma_\R^1\right)\coloneqq \omega\left(\left|\sigma^1_\R\right|\right)\left(\arg\left(\sigma_\R^1\right)\right).
	\end{equation*}%
\end{dfn}

	The projection map $|\cdot|\colon \R P \rightarrow P$ being cellular, it induces both a morphism of chain complexes and a morphism of cochain complexes respectively denoted by 
	\begin{equation*}
		|\cdot|\colon \left(C_p(\R K;\F_2)\longrightarrow C_p(K;\F_2)\right)_{p\geq0} \quad\text{and}\quad |\cdot|^*\colon\left(C^p(K;\F_2)\longrightarrow C^p(\R K;\F_2)\right)_{p\geq 0},
	\end{equation*}
       	for all primitive triangulations $K$ of $P$.

\begin{dfn}[T-hypersurface]\label{dfn:T-hypersurface}
	Let $\varepsilon\in C^0(K;\F_2)$. We may call such a cochain a \emph{sign distribution}\index{Sign Distribution} on $K$. We denote by $\X$ the subcomplex of the cubical subdivision of $\R K$ that is Poincar\'e dual to the cocycle $\df|\varepsilon|^*+\omega_{\R X}$. That is to say, $\X$ is the union of the closed cubes of $\R K$ indexed by the pairs of simplices $\sigma_\R^1\leq \sigma_\R^n$ for which $\df\varepsilon(|\sigma_\R^1|)+\omega_{\R X}(\sigma_\R^1)=1$. We say that $\X$ is the \emph{T-hypersurface} constructed from the sign distribution $\varepsilon$. See Figure~\ref{fig:T-curve_torus} for an example. We  say that the cohomology class $[\omega_{\R X}]\in H^1(\R P;\F_2)$ is the \emph{degree}\index{T-Hypersurface!Degree} of $\X$.
\end{dfn}

The set $\X$ is a submanifold of $\R P$ of codimension 1; \textit{cf.} \cite[Proposition 4.11]{Brug-LdM-Rau_Comb_pac}. Following the terminology of O.~Viro, we  call the hypersurface $\X$ a primitive \emph{patchwork}\index{Patchwork} when the primitive triangulation~$K$ is convex.

\begin{rem}\label{rem:poinc_dual_X}
	The image of the fundamental class of $\X$ in the homology of $\R P$ is Poincar\'e dual to the cohomology class of $\df|\varepsilon|^*+\omega_{\R X}$. See \cite[Theorem 67.1]{Munk_ele_alg} for instance. Since $\df|\varepsilon|^*$ is exact, the class of $\X$ in $\R P$ is Poincar\'e dual to the degree $[\omega_{\R X}]$ of $\X$. We can also note that $\omega_{\R X}$ is the cocycle that defines the ``positive'' T-hypersurface, \textit{i.e.}~$\R X_0$.
\end{rem}

\begin{ex}
	In Figure~\ref{fig:T-curve_torus}, we give an example of a T-curve in $\mathbb{P}^1(\R)\times\mathbb{P}^1(\R)$ from a triangulation of the square of size 3. It is made of two connected components: one contractible, and one non-contractible whose homology class is $(1;1)$ in the canonical basis ($[\mathbb{P}^1(\R)\times\{0\}]$, $[\{0\}\times\mathbb{P}^1(\R)]$). 
\end{ex}

\begin{figure}[!ht]
	\centering
	\begin{subfigure}[t]{0.45\textwidth}
		\centering
		\begin{tikzpicture}[scale=2]
			\draw[white] (.5,0) -- +(0,-.375);
			\draw[very thick] (0,0) rectangle (3,3);
			\begin{scope}
				\draw (0,0) -- (2,2) -- (0,3);
				\draw (2,2) -- (1,3);
				\draw (2,2) -- (2,3);
				\draw (2,2) -- (3,3);
				\draw (0,3) -- (1,2) -- (2,2);
				\draw (1,2) -- (0,2);
				\draw (1,2) -- (0,1);
				\draw (1,2) -- (1,1) -- (0,1);
				\draw (1,0) -- (3,2);
				\draw (2,0) -- (3,1);
				\draw (2,2) -- (2,0);
				\draw (1,1) -- (1,0);
				\draw (1,0) -- (2,2);
				\draw (2,1) -- (3,3);
				\draw (2,1) -- (3,2);
				\draw (2,1) -- (3,1);
			\end{scope}
			
			\draw (0,0) node[anchor= north east] {$\mathbf{0}$};
			\draw (1,0) node[anchor= north] {$\mathbf{1}$};
			\draw (2,0) node[anchor= north] {$\mathbf{1}$};
			\draw (3,0) node[anchor= north west] {$\mathbf{1}$};
			\draw (0,1) node[anchor= east] {$\mathbf{0}$};
			\draw (1,1) node[anchor= west] {$\mathbf{0}$};
			\draw (2,1) node[anchor= north west] {$\mathbf{1}$};
			\draw (3,1) node[anchor= west] {$\mathbf{1}$};
			\draw (0,2) node[anchor= east] {$\mathbf{1}$};
			\draw (1,2) node[anchor= south west] {$\mathbf{0}$};
			\draw (2,2) node[anchor= west] {$\mathbf{0}$};
			\draw (3,2) node[anchor= west] {$\mathbf{1}$};
			\draw (0,3) node[anchor= south east] {$\mathbf{0}$};
			\draw (1,3) node[anchor= south] {$\mathbf{0}$};
			\draw (2,3) node[anchor= south] {$\mathbf{1}$};
			\draw (3,3) node[anchor= south west] {$\mathbf{1}$};	
		\end{tikzpicture}
		\caption{The triangulation $K$ and the sign distribution $\varepsilon$.}
	\end{subfigure}
	\hfill
	\begin{subfigure}[t]{0.45\textwidth}
		\centering
		\begin{tikzpicture}
			\draw[very thick] (-3,-3) rectangle (3,3);
			\draw[very thick, dashed] (-3,0) -- (3,0);
			\draw[very thick, dashed] (0,-3) -- (0,3);
			
			\begin{scope}[very thin]
				\draw (0,0) -- (2,2) -- (0,3);
				\draw (2,2) -- (1,3);
				\draw (2,2) -- (2,3);
				\draw (2,2) -- (3,3);
				\draw (0,3) -- (1,2) -- (2,2);
				\draw (1,2) -- (0,2);
				\draw (1,2) -- (0,1);
				\draw (1,2) -- (1,1) -- (0,1);
				\draw (1,0) -- (3,2);
				\draw (2,0) -- (3,1);
				\draw (2,2) -- (2,0);
				\draw (1,1) -- (1,0);
				\draw (1,0) -- (2,2);
				\draw (2,1) -- (3,3);
				\draw (2,1) -- (3,2);
				\draw (2,1) -- (3,1);
			\end{scope}
			
			\begin{scope}[ultra thick]
				\draw (.5,0) -- (1,.5) -- ($(1,0)!.5!(2,2)$) -- ($(2,1)!.5!(2,2)$) -- ($(3,3)!.5!(2,2)$) -- ($(2,3)!.5!(2,2)$) -- ($(2,3)!.5!(1,3)$);
				\draw (0,2.5) -- (.5,2) -- (0,1.5);
			\end{scope}
			
			\begin{scope}[very thin]
				\draw (0,0) -- (-2,2) -- (0,3);
				\draw (-2,2) -- (-1,3);
				\draw (-2,2) -- (-2,3);
				\draw (-2,2) -- (-3,3);
				\draw (0,3) -- (-1,2) -- (-2,2);
				\draw (-1,2) -- (0,2);
				\draw (-1,2) -- (0,1);
				\draw (-1,2) -- (-1,1) -- (0,1);
				\draw (-1,0) -- (-3,2);
				\draw (-2,0) -- (-3,1);
				\draw (-2,2) -- (-2,0);
				\draw (-1,1) -- (-1,0);
				\draw (-1,0) -- (-2,2);
				\draw (-2,1) -- (-3,3);
				\draw (-2,1) -- (-3,2);
				\draw (-2,1) -- (-3,1);
			\end{scope}
			
			\begin{scope}[ultra thick]
				\draw (0,2.5) -- ($(-1,2)!.5!(0,3)$) -- ($(-1,2)!.5!(-2,2)$) -- ($(-2,2)!.5!(-1,1)$) -- ($(-1,1)!.5!(-1,0)$) -- ($(-1,1)!.5!(0,0)$) -- ($(0,1)!.5!(-1,2)$) -- (0,1.5) ;
				\draw (-1.5,0) -- ($(-2,1)!.5!(-1,0)$) -- ($(-2,2)!.5!(-2,1)$) -- ($(-2,1)!.5!(-3,3)$) -- ($(-2,1)!.5!(-3,2)$) -- ($(-2,1)!.5!(-3,1)$) -- ($(-2,0)!.5!(-3,1)$) -- ($(-2,0)!.5!(-3,0)$);
				
				\draw (-2.5,3) -- (-2,2.5) -- ($(-2,2)!.5!(-1,3)$) -- (-.5,3);
			\end{scope}
			
			\begin{scope}[very thin]
				\draw (0,0) -- (-2,-2) -- (0,-3);
				\draw (-2,-2) -- (-1,-3);
				\draw (-2,-2) -- (-2,-3);
				\draw (-2,-2) -- (-3,-3);
				\draw (0,-3) -- (-1,-2) -- (-2,-2);
				\draw (-1,-2) -- (0,-2);
				\draw (-1,-2) -- (0,-1);
				\draw (-1,-2) -- (-1,-1) -- (0,-1);
				\draw (-1,0) -- (-3,-2);
				\draw (-2,0) -- (-3,-1);
				\draw (-2,-2) -- (-2,0);
				\draw (-1,-1) -- (-1,0);
				\draw (-1,0) -- (-2,-2);
				\draw (-2,-1) -- (-3,-3);
				\draw (-2,-1) -- (-3,-2);
				\draw (-2,-1) -- (-3,-1);
			\end{scope}
			
			\begin{scope}[very thin]
				\draw (0,0) -- (2,-2) -- (0,-3);
				\draw (2,-2) -- (1,-3);
				\draw (2,-2) -- (2,-3);
				\draw (2,-2) -- (3,-3);
				\draw (0,-3) -- (1,-2) -- (2,-2);
				\draw (1,-2) -- (0,-2);
				\draw (1,-2) -- (0,-1);
				\draw (1,-2) -- (1,-1) -- (0,-1);
				\draw (1,0) -- (3,-2);
				\draw (2,0) -- (3,-1);
				\draw (2,-2) -- (2,0);
				\draw (1,-1) -- (1,0);
				\draw (1,0) -- (2,-2);
				\draw (2,-1) -- (3,-3);
				\draw (2,-1) -- (3,-2);
				\draw (2,-1) -- (3,-1);
			\end{scope}
			
			\begin{scope}[ultra thick]
				\draw (-3,-.5) -- (-2.5,0);
				\draw (-3,-1.5) -- (-1.5,0);
				\draw (-3,-2.5) -- ($(-3,-3)!.5!(-2,-1)$) -- ($(-3,-3)!.5!(-2,-2)$) -- ($(-3,-3)!.5!(-2,-3)$);
				\draw (-.5,-3) -- ($(0,-3)!.5!(-2,-2)$) -- ($(-1,-2)!.5!(-2,-2)$) -- ($(0,0)!.5!(0,-1)$) -- ($(0,0)!.5!(1,-1)$) -- ($(0,0)!.5!(1,0)$);
				\draw (1.5,-3) -- ($(1,-3)!.5!(2,-2)$) -- ($(0,-3)!.5!(2,-2)$) -- ($(0,-3)!.5!(1,-2)$) -- ($(0,-2)!.5!(1,-2)$) -- ($(0,-1)!.5!(1,-2)$) -- ($(1,-1)!.5!(1,-2)$) -- ($(1,-1)!.5!(2,-2)$) -- ($(1,0)!.5!(2,-2)$) -- ($(1,0)!.5!(2,-1)$) -- ($(2,0)!.5!(2,-1)$) -- ($(2,0)!.5!(3,-1)$) -- (3,-.5);
				\draw (3,-1.5) -- (2.5,-1.5) -- (3,-2.5);
			\end{scope}
			
			\begin{scope}[ultra thick]
				\draw (-2.5,3) -- +(0,0.25) node[anchor=south] {$\mathbf{a}$};
				\draw (-.5,3) -- +(0,0.25) node[anchor=south] {$\mathbf{b}$};
				\draw (1.5,3) -- +(0,0.25) node[anchor=south] {$\mathbf{c}$};
				\draw (-2.5,-3) -- +(0,-0.25) node[anchor=north] {$\mathbf{a}$};
				\draw (-.5,-3) -- +(0,-0.25) node[anchor=north] {$\mathbf{b}$};
				\draw (1.5,-3) -- +(0,-0.25) node[anchor=north] {$\mathbf{c}$};
				\draw (-3,-.5) -- +(-0.25,0) node[anchor=east] {$\mathbf{d}$};
				\draw (3,-.5) -- +(0.25,0) node[anchor=west] {$\mathbf{d}$};
				\draw (-3,-1.5) -- +(-0.25,0) node[anchor=east] {$\mathbf{e}$};
				\draw (3,-1.5) -- +(0.25,0) node[anchor=west] {$\mathbf{e}$};
				\draw (-3,-2.5) -- +(-0.25,0) node[anchor=east] {$\mathbf{f}$};
				\draw (3,-2.5) -- +(0.25,0) node[anchor=west] {$\mathbf{f}$};
			\end{scope}
		\end{tikzpicture}
		\caption{The associated T-curve $\X$.}
	\end{subfigure}
	\caption{An example of T-curve in $\mathbb{P}^1(\R)\times \mathbb{P}^1(\R)$.}
	\label{fig:T-curve_torus}
\end{figure}
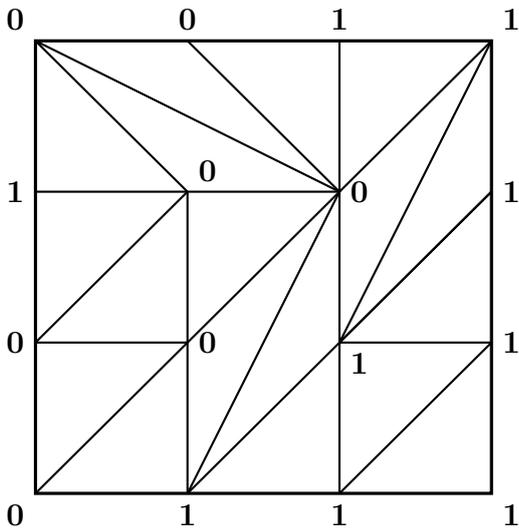
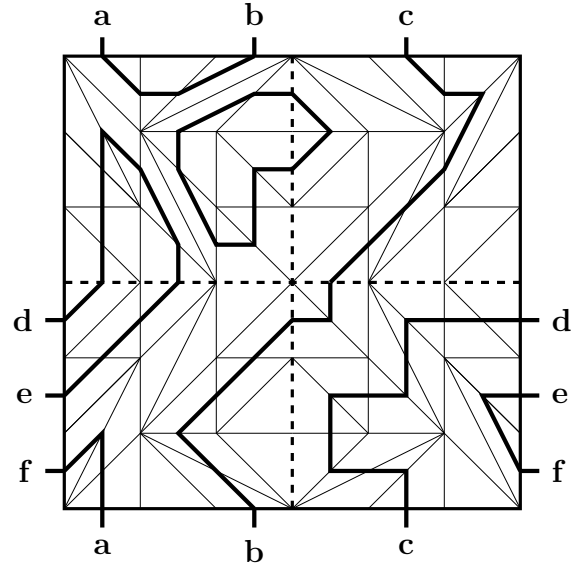

The image of $\X$ under the projection $|\cdot|\colon \R P\rightarrow P$ does not depend on $\varepsilon$. It is always the dual hypersurface $X$. Consider a cube of $K$ indexed by $\sigma^p\leq\sigma^q$. We denote by $\Arg_\varepsilon(\sigma^p;\sigma^q)$ the set of arguments $\arg(\sigma^p_\R;\sigma^q_\R)\in \smallslant{N\otimes\F_2}{\Sed(\sigma^q;\sigma^q)}$ of the lifts $\sigma^p_\R\leq \sigma^q_\R$ of $\sigma^p\leq\sigma^q$ belonging to $\X$. This is a cubical cosheaf of sets.

\begin{lem}\label{lem:desc_arg}
	For all pairs $\sigma^p\leq\sigma^q$ of simplices of $K$, the set $\Arg_\varepsilon(\sigma^p;\sigma^q)$ is the complementary subset in $V=(N\otimes\F_2)/\Sed(\sigma^q;\sigma^q)$ of an affine subspace parallel to $\{v\in V\mid \alpha(v)=0,\,\forall \alpha\in T\sigma^p(\F_2)\}$, so in particular of codimension $p$.
\end{lem}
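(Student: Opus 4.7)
The plan is to unravel the definition and rewrite the condition ``the cube $\sigma^p_\R\leq\sigma^q_\R$ belongs to $\X$'' as an affine $\F_2$-linear system in the parametrising space $V=\t(\F_2)/\Sed(\sigma^q)$. Since $\Sed(\sigma^p;\sigma^q)=\Sed(\sigma^q)$ and $\arg(\sigma^p_\R;\sigma^q_\R)=\arg(\sigma^q_\R)$, a lift of $\sigma^p\leq\sigma^q$ is completely determined by its argument $v\in V$. Such a lift sits in $\X$ if and only if it is a face of some top-dimensional cube of $\X$, which, because $K$ is pure of dimension $n$, amounts to the existence of an edge $\sigma^1\leq\sigma^p$ whose lift $\sigma^1_\R\leq\sigma^p_\R$ satisfies $\df\varepsilon(\sigma^1)+\omega_{\R X}(\sigma^1_\R)=1$. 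Using that $\arg(\sigma^1_\R)$ is the image $\bar v$ of $v$ under $V\twoheadrightarrow\t(\F_2)/\Sed(\sigma^1)$, and the chain of inclusions $T\sigma^1\subseteq T\sigma^p\subseteq TQ_q$ which guarantees $\omega(\sigma^1)$ vanishes on $\Sed(\sigma^q)$ and therefore descends to $V^*$, I can rewrite the complement as
\begin{equation*}
V\setminus\Arg_\varepsilon(\sigma^p;\sigma^q)=\big\{v\in V\;\big|\;\omega(\sigma^1)(v)=\df\varepsilon(\sigma^1),\;\forall\,\sigma^1\leq\sigma^p\big\}.
\end{equation*}

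Next I would identify the direction of this affine set. Fixing a vertex $\sigma^0$ of $\sigma^p$, primitivity of $\sigma^p$ makes $\{\omega(\sigma^1):\sigma^0\leq\sigma^1\leq\sigma^p\}$ an $\F_2$-basis of $T\sigma^p(\F_2)\subseteq V^*$, and every other edge-form, say the one joining vertices $v_i$ and $v_j$ (with $i,j>0$), equals $\omega(\sigma^1_{0i})+\omega(\sigma^1_{0j})$ in characteristic two. Consequently the homogeneous version of the system cuts out exactly $\{v\in V:\alpha(v)=0,\,\forall\alpha\in T\sigma^p(\F_2)\}$, which is of codimension $p$.

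It then remains to verify consistency of the affine system, which I expect to be immediate: the $p$ equations indexed by the edges out of $\sigma^0$ can be solved simultaneously because their coefficient forms are linearly independent in $V^*$, and any other equation, indexed by an edge $\sigma^1_{ij}$ with $i,j>0$, follows automatically by combining the $\F_2$-identity $\omega(\sigma^1_{ij})=\omega(\sigma^1_{0i})+\omega(\sigma^1_{0j})$ with the matching coboundary identity $\df\varepsilon(\sigma^1_{ij})=\df\varepsilon(\sigma^1_{0i})+\df\varepsilon(\sigma^1_{0j})$ on vertices. Hence the complement is a non-empty affine subspace of codimension $p$ parallel to the claimed kernel. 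The main bookkeeping obstacle will be the first paragraph, namely confirming that the condition $\df\varepsilon(\sigma^1)+\omega_{\R X}(\sigma^1_\R)=1$ really depends on nothing beyond $v\in V$ and the edge $\sigma^1$, which rests entirely on the tangent-space inclusions invoked above.
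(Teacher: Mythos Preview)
Your argument is correct and follows essentially the same route as the paper. The paper packages the computation by introducing the linear map $z:V\to Z^1(\sigma^p;\F_2)$, $v\mapsto[\sigma^1\mapsto\omega(\sigma^1)(v)]$, and identifies the complement of $\Arg_\varepsilon(\sigma^p;\sigma^q)$ with the fibre $z^{-1}(\df\varepsilon|_{\sigma^p})$; primitivity makes $z$ surjective, hence the fibre is a non-empty affine translate of $\ker z=\{v:\alpha(v)=0,\,\forall\alpha\in T\sigma^p(\F_2)\}$. Your explicit consistency check via the relations $\omega(\sigma^1_{ij})=\omega(\sigma^1_{0i})+\omega(\sigma^1_{0j})$ and $\df\varepsilon(\sigma^1_{ij})=\df\varepsilon(\sigma^1_{0i})+\df\varepsilon(\sigma^1_{0j})$ is exactly what underlies the paper's observation that $\df\varepsilon|_{\sigma^p}$ lies in the image $Z^1(\sigma^p;\F_2)$ of $z$, so the two arguments differ only in presentation.
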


\begin{proof}
	The set $\Arg_\varepsilon(\sigma^p;\sigma^q)$ is the set of the vectors $v\in V$ for which there exists a $\sigma^1\subset\sigma^p$ such that%
	\begin{equation*}
		\df\varepsilon(\sigma^1)+\omega(\sigma^1)(v)=1.
	\end{equation*}%
	 We can see a vector $v\in V$ as a closed $1$-cocycle on $\sigma^p$ using the following morphism:%
	\begin{equation*}
		\begin{array}{rcl}
			z\colon V & \longrightarrow & Z^1(\sigma^p;\F_2) \\
			v & \longmapsto & \left[ \sigma^1 \mapsto \omega(\sigma^1)(v) \right].
		\end{array}
	\end{equation*}%
	 From that point of view, $\Arg_\varepsilon(\sigma^p;\sigma^q)$ is the set $\{v\in V\mid z(v)\neq \df\varepsilon|_{\sigma^p}\}$, and it is the complementary subset in~$V$ of the fibre $z^{-1}(\df\varepsilon|_{\sigma^p})$. This fibre is an affine space parallel to the kernel of $z$. Since $K$ is primitive, $z$ is onto, and its kernel has codimension $p$. Furthermore, the family of linear forms $\{ v\mapsto z(v)(\sigma^1)\colon \sigma^1\leq\sigma^p\}$ spans $T\sigma^p(\F_2)$, so the kernel of $z$ is precisely $\{v\in V\mid \alpha(v)=0,\,\forall \alpha\in T\sigma^p(\F_2)\}$.
\end{proof}

\begin{rem}
	Let $\sigma^1$ be an edge of $K$. The sets $\Arg_\varepsilon(\sigma^1;\sigma^n)$ are all identical for all $n$-simplices $\sigma^n\geq\sigma^1$. A.~Renaudineau and K.~Shaw denoted a member of this collection by $\mathcal{E}$ and called it the real phase structure. 
\end{rem}

\begin{dfn}[Sign sheaves and cosheaves]\label{dfn:real_sheaves}
	We set the notation of two cubical cosheaves and two cubical sheaves on $K$:
	\begin{enumerate}
		\item $\K^{\R P}$ is the cosheaf of group algebras $(\sigma^p;\sigma^q)\mapsto \F_2[F^P_1(\sigma^p;\sigma^q)]$ (we will denote by $\x^v$ the generator associated with $v\in F^P_1(\sigma^p;\sigma^q)$). Its dual, the sheaf $\O_{\R P}$,  associates the $\F_2$-algebra of functions from $F^P_1(\sigma^p;\sigma^q)$ to $\F_2$ with the pair $(\sigma^p;\sigma^q)$.
		\item Likewise, we set $\K^\X$ to be the sub-cubical cosheaf of $\K^{\R P}$ that associates with $(\sigma^p;\sigma^q)$ the subspace of $\F_2[F^P_1(\sigma^p;\sigma^q)]$ freely generated by the elements of $\Arg_\varepsilon(\sigma^p;\sigma^q)$. Its dual, $\O_\X$, is the quotient of $\O_{\R P}$ that associates with $(\sigma^p;\sigma^q)$ the algebra of functions from $\Arg_\varepsilon(\sigma^p;\sigma^q)$ to $\F_2$.
		\item We denote by $i_*\colon\K^{\X}\rightarrow \K^{\R P}$ the inclusion, and by $i^*\colon\O_{\R P}\rightarrow \O_\X$ the adjoint projection. The latter is a morphism of sheaves of algebras.
	\end{enumerate}
\end{dfn}

\begin{rem}
	The cosheaf $\K^{\X}$ is the cubical subdivision of the sign cosheaves of \cite[Definition 3.12]{Ren-Sha_bou_bet} and \cite[Definition 4.12]{Brug-LdM-Rau_Comb_pac}. We did not use the same letter $\mathcal{S}$ as we will be mainly be interested in the Kalinin filtration of the cosheaf, which will lead to the introduction of the letter $\K$. We preferred using the same letter from the start. 
\end{rem}

With this definition, we can recall \cite[Proposition~3.17]{Ren-Sha_bou_bet} and its translation for the dual sheaves.

\begin{prop}
	There are chain-complex isomorphisms $\Phi_{\R P}$ and $\Phi_{\X}$ that make the following diagram commute:%
	\begin{equation*}
		\begin{tikzcd}
			\left(\Omega_q(K;\K^{\R P})\right)_{q\geq 0} \arrow[r,"\cong" below, "\Phi_{\R P}" above] & \left(\Omega_q(\R K;\F_2)\right)_{q\geq 0}\\%
			\left(\Omega_q(K;\K^\X)\right)_{q\geq 0} \arrow[r,"\cong" above, "\Phi_\X" below] \arrow[u,"i_*"] & \left(C_q(\X;\F_2)\right)_{q\geq 0}\rlap{.} \arrow[u] %
		\end{tikzcd}
	\end{equation*}%
	 As a consequence, the dual isomorphisms make the following dual diagram commute:%
	\begin{equation*}
		\begin{tikzcd}
			\left(\Omega^q(K;\O_{\R P})\right)_{q\geq 0} \arrow[d,"i^*"] & \arrow[l,"\cong" below, "\Phi^*_{\R P}" above] \left(\Omega^q(\R K;\F_2)\right)_{q\geq 0} \arrow[d] \\%
			\left(\Omega_q(K;\O_\X)\right)_{q\geq 0} & \arrow[l,"\cong" above, "\Phi^*_\X" below] \left(C^q(\X;\F_2)\right)_{q\geq 0}\rlap{.} %
		\end{tikzcd}
	\end{equation*}%
\end{prop}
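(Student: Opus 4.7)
The plan is to construct $\Phi_{\R P}$ cube-by-cube from the canonical bijection between lifts and arguments, verify that it commutes with the boundary operators, observe that it restricts to a map $\Phi_\X$ between the subobjects indicated, and finally transpose everything over $\F_2$ to obtain the dual diagram.

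For the construction of $\Phi_{\R P}$: on a cube $(\sigma^p;\sigma^q)$ of the cubical subdivision of $K$, the vector space $\K^{\R P}(\sigma^p;\sigma^q)$ is freely generated by the symbols $\x^v$ for $v\in F_1^P(\sigma^p;\sigma^q)=\t(\F_2)/\Sed(\sigma^p;\sigma^q)$, and Definition~\ref{dfn:RP_CW} puts this set in canonical bijection with the set of lifting cubes of $(\sigma^p;\sigma^q)$ in the cubical subdivision of $\R K$ via the argument map. I send $\x^v\otimes(\sigma^p;\sigma^q)$ to the unique lift $(\sigma^p_\R;\sigma^q_\R)$ of argument $v$. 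Assembled over all cubes this defines a degree-wise $\F_2$-linear isomorphism $\Phi_{\R P}$.

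The main verification is that $\Phi_{\R P}$ is a chain map. I apply both $\partial\circ\Phi_{\R P}$ and $\Phi_{\R P}\circ\partial$ to a generator $\x^v\otimes(\sigma^{p'};\sigma^{q'})$ and compare. On the $\R K$ side, $\partial$ expands the lift $(\sigma_\R^{p'}(v);\sigma_\R^{q'}(v))$ as the sum of its codimension-one faces $(\sigma_\R^p;\sigma_\R^q)$, which are precisely the lifts of codimension-one faces $(\sigma^p;\sigma^q)$ of $(\sigma^{p'};\sigma^{q'})$ whose arguments are the image of $v$ under the canonical quotient $\t(\F_2)/\Sed(\sigma^{q'})\to\t(\F_2)/\Sed(\sigma^q)$. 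On the coefficient side, the extension morphism of $\K^{\R P}$ is exactly the group-algebra map induced by this quotient, so $\partial(\x^v\otimes(\sigma^{p'};\sigma^{q'}))$ produces $\x^{[v]}\otimes(\sigma^p;\sigma^q)$ over the same range of codimension-one faces. The two expressions match term-by-term after applying $\Phi_{\R P}$, which establishes the chain-map property.

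To obtain $\Phi_\X$ I restrict $\Phi_{\R P}$: by definition $\K^\X(\sigma^p;\sigma^q)$ is freely generated by those $\x^v$ with $v\in\Arg_\varepsilon(\sigma^p;\sigma^q)$, which are precisely the arguments of the lifts belonging to $\X$. Hence $\Phi_{\R P}$ sends $\Omega_q(K;\K^\X)$ bijectively onto the subgroup $C_q(\X;\F_2)\subset\Omega_q(\R K;\F_2)$. The fact that this restriction is still a chain map reduces to the fact that $\X$ is a genuine subcomplex of the cubical subdivision of $\R K$, so that every face of a cube in $\X$ lies in $\X$; this is exactly what makes $\K^\X$ a sub-cosheaf of $\K^{\R P}$ with the induced extensions. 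Commutativity with $i_*$ and with the subcomplex inclusion is then visible on generators.

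The dual diagram follows formally. Since $\O_{\R P}$ and $\O_\X$ are the cellwise $\F_2$-duals of $\K^{\R P}$ and $\K^\X$, and since $i^*$ is by construction the cellwise adjoint of $i_*$, the cellular cochain complexes $\Omega^q(K;\O_{\R P})$ and $\Omega^q(K;\O_\X)$ coincide with the $\F_2$-duals of $\Omega_q(K;\K^{\R P})$ and $\Omega_q(K;\K^\X)$; likewise $\Omega^q(\R K;\F_2)$ and $C^q(\X;\F_2)$ are the duals of $\Omega_q(\R K;\F_2)$ and $C_q(\X;\F_2)$. Taking $\Hom_{\F_2}(-;\F_2)$ of the first diagram therefore yields the second, with $\Phi^*_{\R P}$ and $\Phi^*_\X$ being the transpose isomorphisms. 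The main obstacle is the chain-map verification in the middle paragraph, which is essentially the content of Proposition~3.17 of \cite{Ren-Sha_bou_bet}, and which I can either cite directly or re-derive as above by matching the extension morphisms of $\K^{\R P}$ to the face maps of the cubical subdivision of $\R K$.
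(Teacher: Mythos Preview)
Your proposal is correct and follows essentially the same route as the paper: the paper defines $\Phi_{\R P}$ by sending $\x^v\otimes(\sigma^p;\sigma^q)$ to the lift $(v;\sigma^p;\sigma^q)$, cites Proposition~3.17 of \cite{Ren-Sha_bou_bet} for the chain-map property and the restriction $\Phi_\X$, and then invokes duality for the cochain diagram. You simply unpack the chain-map verification that the paper delegates to the citation, which is fine and consistent with the paper's construction.
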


\begin{proof}
	The first part of the statement is given by \cite[Proposition 3.17]{Ren-Sha_bou_bet}. The second part is a consequence of the duality. We can however remind ourselves of the construction of $\Phi_{\R P}$ ($\Phi_{\X}$ is its restriction). For a cube $\sigma^p\leq\sigma^q$ of $K$ and $v\in(N\otimes\F_2)/\Sed(\sigma^q)$, let us write $(v;\sigma^p;\sigma^q)$ for the corresponding lift in $\R K$. The isomorphism $\Phi_{\R P}$ sends the chain $\x^v\otimes(\sigma^p;\sigma^q)$ to the chain $(v;\sigma^p;\sigma^q)$.
\end{proof}

\section{Renaudineau--Shaw spectral sequences}

We recall the construction of the filtration given by A.~Renaudineau and K.~Shaw in \cite{Ren-Sha_bou_bet} and describe the dual filtration in cohomology. 

\subsubsection*{The augmentation filtration in group algebras} We denote by, respectively, $\vect$, $\hopf$, and $\gralg$ the categories of finite-dimensional vector spaces over $\F_2$, of finite-dimensional Hopf algebras over $\F_2$, and of finite-dimensional graded algebras over $\F_2$.

\begin{dfn}\label{dfn:functor_group_alg}
	We denote by $\F_2[-]\colon \vect \rightarrow \hopf$ the functor that associates with a finite-dimensional vector space its group ring over $\F_2$. For all finite-dimensional vector spaces $V$ over $\F_2$, we denote by $\x^v\in\F_2[V]$ the generator associated with the vector $v\in V$. The generator $\x^0$ is the unit of $\F_2[V]$. Hence, it equals $1$. We use the notation $\m_V$ for the augmentation ideal of $\F_2[V]$. This is the kernel of the augmentation of $\F_2[V]$: the morphism of algebras $\aug_V\colon \F_2[V]\rightarrow \F_2$ that sends every generator $\x^v$ to~$1$. Note that this is the only algebra morphism $\F_2[V]\rightarrow \F_2$. This ideal is known to be generated by the elements of the form $1+\x^v$ for all $v\in V$; \textit{cf.} \cite[Lemma 6.2]{Bih-Fra-Mcc-Ham_eve_tor} or \cite[Lemma 4.1]{Ren-Sha_bou_bet}. We denote by $\gr\colon \hopf \rightarrow \gralg$ the functor that associates with a Hopf algebra $A$ the graded algebra built from the filtration of iterated powers of its augmentation ideal. For instance, if $V$ is a finite-dimensional vector space over $\F_2$, we have%
	\begin{equation*}
	\gr \F_2[V]= \bigoplus_{k\geq 0} \left(\bigslant{\m^k_V}{\m_V^{k+1}}\right).
	\end{equation*}%
	 Moreover, for all $k\in\N$, we denote the functor $V\mapsto \m^k_V$ by $\m^k\colon \vect \rightarrow \vect$. These functors are linked by natural inclusions $\m^{l}\rightarrow \m^k$ for all $l\geq k$.
\end{dfn}

\begin{lem}[\textit{cf.} {\cite[Lemma 6.2]{Bih-Fra-Mcc-Ham_eve_tor}} and {\cite[Lemma 4.1]{Ren-Sha_bou_bet}}]
	For all $k\in\N$, the ideal $\m^k_V$ is spanned, as a vector subspace, by the  elements%
	\begin{equation*}
		\sum_{w\in W} \x^w 
	\end{equation*}%
	 for all $k$-dimensional vector subspaces $W$ of\, $V$.
\end{lem}

\begin{prop}[\textit{cf.} {\cite[Proposition 6.3]{Bih-Fra-Mcc-Ham_eve_tor}}]\label{prop:nat_iso_group_ring}
	We denote by ${\bigwedge \colon \vect \rightarrow \gralg}$ the exterior algebra functor. There is a natural isomorphism $\eta \colon \bigwedge \rightarrow \gr \F_2[-]$ defined, for all finite-dimensional $\F_2$-vector spaces $V$ and all $k\in\N$, by the formula%
	\begin{equation*}
		\begin{array}{rcl}
		\displaystyle \eta_V^k \colon \bigwedge^k V & \longrightarrow & \displaystyle \bigslant{\m_V^k}{\m_V^{k+1}} \\
		v_1\wedge \ldots \wedge v_k & \longmapsto & \displaystyle \prod_{i=1}^k(1+\x^{v_i}) \quad\left(\bmod \m_V^{k+1}\right).
		\end{array}
	\end{equation*}%
\end{prop}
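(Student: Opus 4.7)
The plan is to check three things in turn: that the formula gives a well-defined morphism, that it is natural, and that it is an isomorphism.

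First I would verify that the map $(v_1,\ldots,v_k)\mapsto \prod_{i=1}^k(1+\x^{v_i}) \pmod{\m_V^{k+1}}$ is multilinear and alternating, hence factors through $\bigwedge^k V$. For multilinearity, the identity $(1+\x^v)(1+\x^{v'}) = (1+\x^v)+(1+\x^{v'})+(1+\x^{v+v'})$, valid in $\F_2[V]$, rearranges into
\begin{equation*}
	(1+\x^{v+v'}) - (1+\x^v) - (1+\x^{v'}) = (1+\x^v)(1+\x^{v'}) \in \m_V^2,
\end{equation*}
so after multiplying by the remaining $k-1$ factors $(1+\x^{v_i})\in\m_V$ one lands in $\m_V^{k+1}$. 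For the alternating property, $(1+\x^v)^2=1+\x^{2v}=0$ in characteristic $2$ since $2v=0$ in $V$, so whenever $v_i=v_j$ the whole product already vanishes in $\F_2[V]$.

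Naturality is immediate: for a linear map $f:V\to W$, the induced morphism $\F_2[f]$ sends $\x^v$ to $\x^{f(v)}$, hence preserves augmentation ideals and commutes with the formula defining $\eta$.

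To show $\eta$ is an isomorphism I would exploit the algebra structure. Observe that $\eta_V := \bigoplus_k \eta_V^k$ is a morphism of graded $\F_2$-algebras from $\bigwedge V$ to $\gr\F_2[V]$: indeed, the multiplication on $\gr\F_2[V]$ is induced by that of $\F_2[V]$ via $\m_V^k\cdot\m_V^l\subset\m_V^{k+l}$, and by construction
\begin{equation*}
	\eta_V^k(v_1\wedge\cdots\wedge v_k)\cdot\eta_V^l(w_1\wedge\cdots\wedge w_l) = \eta_V^{k+l}(v_1\wedge\cdots\wedge v_k\wedge w_1\wedge\cdots\wedge w_l).
\end{equation*}
Since $\bigwedge V$ is generated in degree $1$ as an algebra, it suffices to check that $\eta_V^1:V\to\m_V/\m_V^2$, $v\mapsto 1+\x^v$, is an isomorphism, and that the graded algebra $\gr\F_2[V]$ is generated in degree $1$. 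The latter holds because the lemma recalled just above the proposition shows $\m_V$ is spanned by the elements $1+\x^v$. For the former, expanding in a basis $\{e_1,\ldots,e_n\}$ of $V$ yields $\F_2[V]\cong\F_2[y_1,\ldots,y_n]/(y_i^2)$ with $y_i=1+\x^{e_i}$, so $\m_V/\m_V^2$ has basis $\overline{y_1},\ldots,\overline{y_n}$, and dimension counting gives the isomorphism. Surjectivity of $\eta_V$ in each degree then follows from generation in degree $1$, and since $\dim\bigwedge^k V = \binom{n}{k} = \dim(\m_V^k/\m_V^{k+1})$ (both sides sum to $2^n=\dim\F_2[V]$ and the dimensions of graded pieces of $\F_2[y_1,\ldots,y_n]/(y_i^2)$ are visibly the binomials), $\eta_V^k$ is an isomorphism in each degree.

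The only mildly delicate step is the verification of multilinearity modulo $\m_V^{k+1}$, which hinges on the key identity above; everything else is formal once one identifies $\F_2[V]$ with the truncated polynomial ring $\F_2[y_1,\ldots,y_n]/(y_i^2)$ via the substitution $y_i = 1+\x^{e_i}$.
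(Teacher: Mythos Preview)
Your proof is correct. The paper does not actually give its own argument for this proposition: it simply notes that the statement is a special case of Quillen's description of the graded ring associated with the augmentation filtration of a group algebra, and refers to \cite{Ren-Sha_bou_bet}, Proposition~4.3, for the details. Your argument---checking multilinearity via the identity $(1+\x^{v+v'})+(1+\x^v)+(1+\x^{v'})=(1+\x^v)(1+\x^{v'})$, alternation via $(1+\x^v)^2=0$, and then identifying $\F_2[V]\cong\F_2[y_1,\ldots,y_n]/(y_i^2)$ to read off the graded dimensions---is a clean, self-contained version of what those references contain. One small remark: the fact that $\gr\F_2[V]$ is generated in degree $1$ is immediate from the definition of $\m_V^k$ as the $k$-th power of $\m_V$, so you do not really need to invoke the preceding lemma for that step; the lemma is rather a consequence.
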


This description is a special case of D.~Quillen's description of graded rings associated with the augmentation filtration of a group algebras; \textit{cf.} \cite{Qui_ass_gra}. The proof of the special case of interest here is recalled in \cite[Proposition 4.3]{Ren-Sha_bou_bet}.

\begin{prop}\label{prop:mult_generator}
	Let $V$ be a finite-dimensional vector space over $\F_2$. For all $v\in V$, the graded endomorphism of $\bigwedge V\cong \gr \F_2[V]$ induced by the multiplication by $\x^v$ is the identity. 
\end{prop}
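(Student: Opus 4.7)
The plan is to prove the statement by a direct computation on the explicit generators of $\m_V^k$ provided by the isomorphism $\eta_V$ of Proposition~\ref{prop:nat_iso_group_ring}. Since $\gr\F_2[V]$ is, by Proposition~\ref{prop:nat_iso_group_ring}, spanned in degree $k$ by the classes of products $\prod_{i=1}^k(1+\x^{v_i})$ (for $v_1,\dots,v_k\in V$), it suffices to show that multiplication by $\x^v$ fixes each such class modulo $\m_V^{k+1}$.

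First I would verify that multiplication by $\x^v$ is compatible with the augmentation filtration, so that it indeed induces a graded endomorphism of $\gr\F_2[V]$. This follows from the decomposition
\begin{equation*}
	\x^v=1+(1+\x^v),
\end{equation*}
together with the fact that $(1+\x^v)$ lies in $\m_V$. Indeed, for any $\alpha\in \m_V^k$, one has $\x^v\cdot\alpha=\alpha+(1+\x^v)\alpha\in\m_V^k+\m_V^{k+1}=\m_V^k$. Hence the multiplication by $\x^v$, which is an invertible map of $\F_2[V]$, is a filtered isomorphism and its associated graded is a well-defined graded endomorphism of $\gr\F_2[V]$.

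The main step is then the computation on a generator. Let $v_1,\dots,v_k\in V$ and consider the element $\prod_{i=1}^k(1+\x^{v_i})\in\m_V^k$, whose class modulo $\m_V^{k+1}$ represents $\eta_V^k(v_1\wedge\dots\wedge v_k)$. Multiplying by $\x^v$ and applying the same decomposition gives
\begin{equation*}
	\x^v\prod_{i=1}^k(1+\x^{v_i})=\prod_{i=1}^k(1+\x^{v_i})+(1+\x^v)\prod_{i=1}^k(1+\x^{v_i}).
\end{equation*}
The second summand on the right is a product of $k+1$ elements of $\m_V$, hence it belongs to $\m_V^{k+1}$ and vanishes modulo $\m_V^{k+1}$. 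Therefore the class of $\x^v\prod_{i=1}^k(1+\x^{v_i})$ in $\m_V^k/\m_V^{k+1}$ coincides with the class of $\prod_{i=1}^k(1+\x^{v_i})$, which is exactly $\eta_V^k(v_1\wedge\dots\wedge v_k)$.

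Since these generators span $\bigwedge^k V\cong\m_V^k/\m_V^{k+1}$ in each degree, the induced endomorphism is the identity on each graded piece, which proves the claim. No serious obstacle is expected here; the whole argument rests on the trivial identity $\x^v=1+(1+\x^v)$ and on the explicit form of $\eta_V$.
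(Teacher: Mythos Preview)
Your proof is correct and essentially identical to the paper's: both rest on the identity $\x^v=1+(1+\x^v)$ applied to the generators $\prod_{i=1}^k(1+\x^{v_i})$, observing that $(1+\x^v)\prod_{i=1}^k(1+\x^{v_i})\in\m_V^{k+1}$. Your extra remark that multiplication by $\x^v$ is a filtered map is a small but welcome clarification that the paper leaves implicit.
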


\begin{proof}
	Let $v$ be a vector of $V$ and $v_1,\ldots,v_k\in V$ be a linearly independent family of $V$. We have%
	\begin{equation*}
	 	\prod_{i=1}^k(1+\x^{v_i})=\x^v\prod_{i=1}^k(1+\x^{v_i})+(1+\x^v)\prod_{i=1}^k(1+\x^{v_i}).
	\end{equation*}%
	 Hence, $\prod_{i=1}^k(1+\x^{v_i})$ and $\x^v\prod_{i=1}^k(1+\x^{v_i})$ correspond to the same element in $\m_V^k/\m_V^{k+1}$.
\end{proof}

\begin{prop}\label{prop:intersection_linear_sub-space}
	Let $V$ be a finite-dimensional vector space over $\F_2$. For all linear subspaces $W\subset V$ and all $k\in\N$, the intersection of the subalgebra $\F_2[W]\subset\F_2[V]$ with $\m_V^k$ is the ideal $\m_W^k$.  
\end{prop}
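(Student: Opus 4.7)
The plan is built around the naturality of the isomorphism $\eta:\bigwedge\to\gr\F_2[-]$ furnished by Proposition~\ref{prop:nat_iso_group_ring}. The inclusion $\m_W^k\subset\F_2[W]\cap\m_V^k$ is immediate: the canonical inclusion $\iota:\F_2[W]\hookrightarrow\F_2[V]$ is compatible with the augmentations, so it sends $\m_W$ into $\m_V$, whence $\m_W^k$ into $\F_2[W]\cap\m_V^k$. The substance lies in the reverse inclusion.

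To establish it, I would invoke the naturality of $\eta$ applied to the linear inclusion $W\hookrightarrow V$. This yields, in each degree $k$, a commutative square:
\begin{equation*}
    \begin{tikzcd}[column sep=3em]
        \bigwedge^k W \ar[r, hook] \ar[d, "\eta_W^k"] & \bigwedge^k V \ar[d, "\eta_V^k"] \\
        \m_W^k/\m_W^{k+1} \ar[r] & \m_V^k/\m_V^{k+1}
    \end{tikzcd}
\end{equation*}
whose vertical arrows are isomorphisms and whose top arrow is the natural inclusion of exterior algebras, hence injective. Consequently, the bottom arrow---the map on associated gradeds induced by $\iota$---is itself injective for every $k$.

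The reverse inclusion then follows from a standard filtration argument. Let $a\in\F_2[W]\cap\m_V^k$ be non-zero. Because $\F_2[W]$ is finite-dimensional and $\m_W$ is nilpotent, the decreasing filtration $(\m_W^j)_{j\geq 0}$ eventually vanishes, so there is a largest integer $j$ with $a\in\m_W^j$. If one had $j<k$, then $[a]$ would be a non-zero class in $\m_W^j/\m_W^{j+1}$, yet $a\in\m_V^k\subset\m_V^{j+1}$ would force its image in $\m_V^j/\m_V^{j+1}$ to vanish, contradicting the injectivity just established. Therefore $j\geq k$, that is, $a\in\m_W^k$.

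I do not anticipate a serious obstacle: the delicate point is to notice that naturality of $\eta$ already encodes all the needed compatibility, so no explicit choice of complement of $W$ in $V$ nor any combinatorial manipulation of the generators $1+\x^w$ is required. Once that observation is made, the remainder is a purely formal contradiction argument using the largest filtration index containing $a$.
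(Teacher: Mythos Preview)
Your argument is correct. Interestingly, the paper explicitly acknowledges your route at the outset of its own proof---writing that the statement ``is a consequence of the functoriality of $\F_2[-]$ and the natural transformation $\eta$''---but then opts to present a direct combinatorial argument instead. The paper's proof first treats the hyperplane case $W=H$ by an explicit rewriting: given $P=\sum_i\sum_{u\in U_i}\x^u\in\F_2[H]\cap\m_V^k$ with each $U_i$ a $k$-dimensional subspace of $V$, it constructs replacement subspaces $U_i'\subset H$ (or $\varnothing$) so that the same element is expressed using only subspaces of $H$; the general case is then obtained by writing $W$ as an intersection of hyperplanes and iterating. Your approach bypasses all of this: once the naturality square shows that the graded map $\m_W^k/\m_W^{k+1}\to\m_V^k/\m_V^{k+1}$ is injective (because $\bigwedge^kW\hookrightarrow\bigwedge^kV$ is), the ``largest $j$ with $a\in\m_W^j$'' contradiction is purely formal. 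The payoff of your route is economy and conceptual clarity; the payoff of the paper's route is that it is self-contained at the level of generators and does not lean on the isomorphism $\eta$, which may be useful for a reader who wants an explicit recipe for rewriting elements.
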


\begin{proof}
	This is a consequence of the functoriality of $\F_2[-]$ and the natural transformation $\eta$ described in Proposition~\ref{prop:nat_iso_group_ring}.
\end{proof}

In the category of finite-dimensional Hopf algebras, there is a duality contravariant functor. We denote it by ${(-)^*\colon \left(\hopf\right)^\op\rightarrow \hopf}$. If we only remember the vector space structure, it corresponds to the classical functor $(-)^*=\Hom_{\F_2}(-;\F_2)$. If $A$ is a finite-dimensional Hopf algebra, the product of $A^*$ is the adjoint of the coproduct of $A$, the unit of $A^*$ is the adjoint of the counit of $A$, and \textit{vice-versa}. 

\begin{dfn}\label{dfn:functor_func_algebra}
	We denote by $\O$ the functor ${\left(\F_2[-]\right)^*\colon\left(\vect\right)^\op \rightarrow \hopf}$. The functor $\O$ is naturally isomorphic to the functor of $\F_2$-valued function algebras ${V\mapsto \{f\colon V\rightarrow \F_2\}}$ with pointwise addition and multiplication. We will not describe the coalgebra structure as we will not need it further. The natural isomorphism is realised by means of a duality pairing. For all finite-dimensional vector spaces $V$, all functions $f\in \O(V)$, and all elements $P=\sum_{v\in V}p_v\x^v\in\F_2[V]$, we define such a pairing by%
	\begin{equation*}
		\langle \,f\, ; P\,\rangle \coloneqq  \sum_{v\in V} f(v)p_v.
	\end{equation*}%
\end{dfn}

\begin{rem}\label{rem:polynomial}
	We justify the notation $\O(V)$ by the known fact that, for all finite-dimensional vector spaces $V$ over a finite field $\F$, every function $f\colon V\rightarrow \F$ is polynomial. It is only a matter of expressing the indicator functions as polynomials in coordinates of $V$. If one chooses coordinates on $V$, one has a surjective algebra morphism $\F_2[X_1,\ldots,X_n]\rightarrow \O(V)$. We can push forward the degree filtration on $\O(V)$. One can easily check that the pushforward filtration does not depend on the choice of coordinates and has length $n$.
\end{rem}

\begin{dfn}
	The functor $\O^{k}\colon \left(\vect\right)^\op \rightarrow \vect$ is given, for all $k\in\N$ and all finite-dimensional vector spaces $V$ over $\F_2$, by the formula%
	\begin{equation*}
		\O^{k}(V)\coloneqq \im\left( \left(\bigslant{\F_2[V]}{\m^{k+1}}\right)^* \hookrightarrow \O(V) \right).
	\end{equation*}%
	See Definition~\ref{dfn:dual_filtration} or \cite[Equation~(1.1.6)]{Del_the_hod}. Furthermore, these spaces are naturally included in one another: $\O^{k}\hookrightarrow \O^{l}$ for all $l\geq k$. We denote by $\gr\O\colon \left(\vect\right)^\op \rightarrow \grvect$ the functor $\bigoplus_{k\geq 0}\O^{k}/\O^{k-1}$.
\end{dfn}

By construction and Proposition~\ref{prop:nat_iso_group_ring}, we have the following. 

\begin{prop}\label{prop:natural_iso_O_1}
	The functor $\gr\O$ is naturally isomorphic to $\bigwedge(-)^*$ through $\eta^*$.
\end{prop}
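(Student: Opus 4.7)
The plan is to unwind the definition of the dual filtration, pair it with the classical duality of associated graded vector spaces, and then simply dualize the natural isomorphism $\eta$ already constructed in Proposition~\ref{prop:nat_iso_group_ring}.

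First, I would make the identification of $\O^{(k)}(V)$ concrete. Using the duality pairing $\langle -\,;-\rangle$ of Definition~\ref{dfn:functor_func_algebra}, the inclusion $(\F_2[V]/\m_V^{k+1})^* \hookrightarrow \O(V)$ realises $\O^{(k)}(V)$ as the annihilator of $\m_V^{k+1}$. Consequently, the pairing factors to give, for every $k\geq 0$, a natural perfect pairing
\begin{equation*}
    \O^{(k)}(V)/\O^{(k-1)}(V) \otimes_{\F_2} \m_V^k/\m_V^{k+1} \longrightarrow \F_2\,,
\end{equation*}
which is precisely the content of Deligne's (1.1.7) \cite{Del_the_hod} cited after Definition~\ref{dfn:dual_filtration}. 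Equivalently, this gives a natural isomorphism $\gr\O(V) \overset{\cong}{\longrightarrow} (\gr\F_2[V])^*$ of graded vector spaces.

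Then I would combine this with the natural isomorphism $\eta_V:\bigwedge V \overset{\cong}{\longrightarrow} \gr\F_2[V]$ of Proposition~\ref{prop:nat_iso_group_ring}. Taking its $\F_2$-dual piece by piece, and using the classical natural isomorphism $(\bigwedge^k V)^* \cong \bigwedge^k V^*$ valid for finite dimensional vector spaces, I obtain a natural isomorphism
\begin{equation*}
    \eta_V^*\colon \gr\O(V) \overset{\cong}{\longrightarrow} (\gr\F_2[V])^* \overset{\cong}{\longrightarrow} \bigwedge V^*\,,
\end{equation*}
which is degree-preserving by construction. Naturality in $V$ (contravariant) follows term by term from the naturality of $\eta$, of Deligne's dualisation construction, and of the finite dimensional duality $(\bigwedge^k -)^* \cong \bigwedge^k(-)^*$.

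The only point requiring any care will be bookkeeping the directions of the arrows: $\eta$ is a natural transformation of covariant functors $\vect \to \gralg$, so its dual $\eta^*$ is a natural transformation of contravariant functors $(\vect)^{\op} \to \grvect$, which matches the source and target categories of $\gr\O$ and $\bigwedge(-)^*$. No genuine difficulty is expected, as every ingredient is already established earlier in the section.
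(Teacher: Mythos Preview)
Your proposal is correct and takes essentially the same approach as the paper: the paper states the proposition as an immediate consequence of the construction of the dual filtration and Proposition~\ref{prop:nat_iso_group_ring}, and your argument simply unwinds those two ingredients explicitly. You have spelled out in detail what the paper leaves as ``by construction''.
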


Since we did not define a product on $\gr\O$, we should emphasise that in Proposition~\ref{prop:natural_iso_O_1} the two functors $\gr\O$ and $\bigwedge(-)^*$ are naturally isomorphic as functors to the category of graded vector spaces. However, Proposition~\ref{prop:alge_filtration} will ensure that we can endow $\gr\O$ with a product that is compatible with $\eta^*$.

\begin{lem}\label{lem:surjective_map}
	For all functions $u\colon \{1,\ldots,l\}\rightarrow \{1,\ldots,k\}$, the number%
	\begin{equation*}
		\sum_{\beta\in\F_2^k}\prod_{j=1}^l \beta_{u(j)}\in\F_2 
	\end{equation*}%
	 equals $1$ if and only if $u$ is surjective.
\end{lem}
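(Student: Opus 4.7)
The plan is to turn the sum into a simple parity count by rewriting the product as an indicator function and then enumerating the $\beta$'s on which it does not vanish.

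First I would observe that, since we work over $\F_2$, the factor $\beta_{u(j)}$ is in $\{0,1\}$, so the product $\prod_{j=1}^l \beta_{u(j)}$ equals $1$ if and only if $\beta_{u(j)}=1$ for every $j\in\{1,\dots,l\}$. Crucially this condition depends only on the image $I\coloneqq u(\{1,\dots,l\})\subset\{1,\dots,k\}$, not on $u$ itself, since $\prod_{j=1}^l\beta_{u(j)}=\prod_{i\in I}\beta_i$. In other words the product is the indicator function of the affine subspace
\begin{equation*}
    S_I\coloneqq\{\beta\in\F_2^k \;|\; \beta_i=1\text{ for all }i\in I\}\subset \F_2^k.
\end{equation*}

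Next I would count $|S_I|$. The coordinates of $\beta$ indexed by $I$ are forced to equal $1$, while the coordinates indexed by $\{1,\dots,k\}\setminus I$ are free in $\F_2$, so $|S_I|=2^{k-|I|}$. Consequently
\begin{equation*}
    \sum_{\beta\in\F_2^k}\prod_{j=1}^l \beta_{u(j)}=|S_I|\pmod 2=2^{k-|I|}\pmod 2.
\end{equation*}

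Finally I would conclude: the right-hand side equals $1$ in $\F_2$ if and only if $k-|I|=0$, i.e.\ $|I|=k$, which is exactly the condition that $u$ be surjective onto $\{1,\dots,k\}$. This gives both directions simultaneously. There is no real obstacle here; the argument is a one-line parity check once the product is recognised as an indicator function, and the only care needed is to note that the product depends on $u$ only through its image.
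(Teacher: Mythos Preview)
Your proof is correct and rests on the same observation as the paper's: the product $\prod_{j=1}^l\beta_{u(j)}$ equals $1$ precisely when $\beta_i=1$ for every $i$ in the image of $u$. The paper then splits into cases (if $u$ is onto only $\beta=(1,\dots,1)$ contributes; if $u$ misses some $i_0$ the sum pairs off over $\beta_{i_0}\in\{0,1\}$), whereas you package both directions at once via the count $|S_I|=2^{k-|I|}$; the content is the same.
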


\begin{proof}
	If $u$ is onto, the only non-zero term in the sum is the one for which $\beta_i=1$ for all $i$. If, on the contrary, $u$ is not surjective, it misses an index $i_0$. We have%
	 \begin{equation*}
	 	\sum_{\beta\in\F_2^k}\prod_{j=1}^l \beta_{u(j)}=\sum_{\substack{\beta\in\F_2^k\\\beta_{i_0}=0}}\prod_{j=1}^l \beta_{u(j)}+\sum_{\substack{\beta\in\F_2^k\\\beta_{i_0}=1}}\prod_{j=1}^l \beta_{u(j)}=0 
	\end{equation*}%
	 since the two terms are equal. 
\end{proof}

\begin{prop}\label{prop:alge_filtration}
	The composition $\O^{k}\otimes\O^{l} \rightarrow \O\otimes \O \rightarrow \O$, where the first natural transformation is given by the tensor product of the inclusions and the second is given by the product of\, $\O$, takes values in $\O^{k+l}$. In other words, $\O^{k}\O^{l}\subset \O^{k+l}$.
\end{prop}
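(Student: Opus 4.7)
The plan is to exploit the Hopf-algebraic duality between $\F_2[-]$ and $\O$. By construction, the product on $\O(V)$ is the adjoint of the coproduct on $\F_2[V]$, which, as $\F_2[V]$ is a group algebra, is given on generators by $\Delta(\x^v)=\x^v\otimes\x^v$. Via the duality pairing of Definition~\ref{dfn:functor_func_algebra}, $\langle fg;P\rangle=\langle f\otimes g;\Delta P\rangle$ for all $f,g\in\O(V)$ and $P\in\F_2[V]$. Since, by definition, $\O^{(k)}(V)$ is the annihilator of $\m_V^{k+1}$, it suffices to show
\begin{equation*}
\Delta\bigl(\m_V^{k+l+1}\bigr)\;\subset\;\m_V^{k+1}\otimes\F_2[V]\;+\;\F_2[V]\otimes\m_V^{l+1}.
\end{equation*}

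The key observation is the identity
\begin{equation*}
\Delta(1+\x^v)\;=\;1\otimes(1+\x^v)\;+\;(1+\x^v)\otimes\x^v,
\end{equation*}
in which each summand has a distinguished tensor factor in $\m_V$. By the generating lemma recalled just before Proposition~\ref{prop:nat_iso_group_ring}, $\m_V^{k+l+1}$ is spanned by products $\prod_{i=1}^{k+l+1}(1+\x^{v_i})$; applying $\Delta$ (which is a ring morphism) and expanding yields a sum indexed by subsets $S\subset\{1,\ldots,k+l+1\}$, the set $S$ recording for each $i$ which of the two summands in $\Delta(1+\x^{v_i})$ is selected. A direct computation shows that the term indexed by $S$ equals $\prod_{i\in S}(1+\x^{v_i})\,\otimes\,\x^{\sum_{i\in S}v_i}\prod_{i\notin S}(1+\x^{v_i})$, which, since $\x^{\sum_{i\in S}v_i}$ is a unit in $\F_2[V]$, lies in $\m_V^{|S|}\otimes\m_V^{k+l+1-|S|}$.

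A pigeonhole dichotomy then concludes the argument: either $|S|\geq k+1$, placing the summand in $\m_V^{k+1}\otimes\F_2[V]$, or $|S|\leq k$, forcing $k+l+1-|S|\geq l+1$ and placing it in $\F_2[V]\otimes\m_V^{l+1}$. These two cases exhaust all subsets, so the required inclusion holds. I do not expect any real obstacle here, the core reducing to the coproduct identity above together with this elementary dichotomy; an alternative and slightly more computational route would be to identify $\O^{(k)}(V)$ with the space of multilinear polynomial functions of degree at most $k$ on $V$ and then multiply degrees, Lemma~\ref{lem:surjective_map} playing the role of the main vanishing input.
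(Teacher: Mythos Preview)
Your argument is correct and takes a genuinely different route from the paper. The paper proceeds by first \emph{identifying} $\O^{(k)}(V)$ with the space of polynomial functions of degree at most $k$ in a fixed system of linear coordinates: it computes $\big\langle x^\alpha;\prod_{i=1}^k(1+\x^{w_i})\big\rangle$ explicitly, uses Lemma~\ref{lem:surjective_map} to show vanishing when $|\alpha|<k$, and recognises the result as a minor when $|\alpha|=k$; multiplicativity of the filtration then follows from additivity of polynomial degree. Your approach sidesteps this identification entirely by dualising: you reduce the statement to the inclusion $\Delta(\m_V^{k+l+1})\subset \m_V^{k+1}\otimes\F_2[V]+\F_2[V]\otimes\m_V^{l+1}$, which follows cleanly from the coproduct identity $\Delta(1+\x^v)=1\otimes(1+\x^v)+(1+\x^v)\otimes\x^v$ and a pigeonhole split. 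Your route is shorter and more conceptual; the paper's route is more computational but delivers as a by-product the explicit description of $\O^{(k)}(V)$ as the degree-$\leq k$ polynomials, which is then reused in the subsequent propositions (compatibility of $\eta^*$ with products, and the contraction in Proposition~\ref{prop:contraction}). You in fact already anticipated the paper's line as your ``alternative and slightly more computational route''.
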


\begin{proof}
	Let $V$ be a finite-dimensional vector space over $\F_2$. We need to show that for all $k,l\in\N$, all $f\in\O^{k}(V)$, and all $g\in\O^{l}(V)$, the product $fg$ belongs to $\O^{k+l}(V)$. To do so we will show that a function is an element of $\O^{k}(V)$ if and only if it can be represented by a polynomial of degree at most $k$ (\textit{cf.} Remark~\ref{rem:polynomial}). The functions of $\O^{k}(V)$ are those which vanish against elements of $\m_V^{k+1}$. Following \cite[Lemma~4.1]{Ren-Sha_bou_bet}, the vector space $\m_V^{k+1}$ is generated by the elements $\sum_{v\in W}\x^v$ for all linear subspaces $W$ of $V$ of dimension $k+1$. Such an element can be written as $\prod_{i=1}^{k+1}(1+\x^{w_i})$, where $\{w_1,\ldots,w_{k+1}\}$ is a basis of $W$. Let us fix a set of linear coordinates $x_1,\ldots,x_n$ of $V$, \textit{i.e.}~a basis of $V^*$. Because of Fermat's little theorem,\footnote{$a^2=a$ for all $a\in\F_2$} a function $f\in\O(V)$ can be uniquely written as%
	\begin{equation*}
		f=\sum_{\alpha\in \{0,1\}^n} f_\alpha x^\alpha 
	\end{equation*}%
	 in the usual multi-index notation: $x^\alpha\coloneqq \prod_{i=1}^nx_i^{\alpha_i}$ and $|\alpha|\coloneqq \sum_{i=1}^n\alpha_i$. For all integers $k\geq 1$, let us denote by $[k]$ the integer interval $\{1,\ldots,k\}$. Let $\alpha\in\F_2^n$ with $|\alpha|=l$, and let $W=\langle w_1,\ldots,w_k\rangle$ be a $k$-dimensional subspace of $V$. We have%
	\begin{equation*}
		\begin{split}
			\left\langle x^\alpha\; ;\prod_{i=1}^k(1+\x^{w_i})\right\rangle &=\sum_{w\in W} \langle x^\alpha\;;\;\x^w\rangle \\
			&= \sum_{w\in W} \prod_{j=1}^{n} x_j(w)^{\alpha_j} \\
			&= \sum_{\beta\in\F_2^k}\prod_{j=1}^{n} \left(x_j\left(\sum_{i=1}^k \beta_iw_i\right)\right)^{\alpha_j}\\
			&= \sum_{\beta\in\F_2^k} \sum_{u:[n]\rightarrow[k]} \left(\,\prod_{j=1}^n \beta_{u(j)}^{\alpha_j}\, \right)\left(\,\prod_{j=1}^n \left(x_j(w_{u(j)})\right)^{\alpha_j}\,\right).
		\end{split}
	\end{equation*}%
	 Up to reordering, we can assume that $\alpha_i=1$ if and only if $1\leq i\leq l$; thus %
	\begin{equation*}
		\begin{split}
			\sum_{\beta\in\F_2^k} \sum_{u:[n]\rightarrow[k]} \left(\,\prod_{j=1}^n \beta_{u(j)}^{\alpha_j} \right)\left(\,\prod_{j=1}^n \left(x_j(w_{u(j)})\right)^{\alpha_j}\right)&=\sum_{\beta\in\F_2^k} \sum_{u:[l]\rightarrow[k]} \left(\,\prod_{j=1}^l \beta_{u(j)}\right)\left(\,\prod_{j=1}^l x_j(w_{u(j)})\right) \\
			&= \sum_{u:[l]\rightarrow[k]} \left(\,\sum_{\beta\in\F_2^k}\prod_{j=1}^l \beta_{u(j)}\right)\left(\,\prod_{j=1}^l x_j(w_{u(j)})\right).
		\end{split}
	\end{equation*}%
	 By Lemma~\ref{lem:surjective_map}, the number $\sum_{\beta\in\F_2^k}\prod_{j=1}^l \beta_{u(j)}$ equals $1$ if and only if $u$ is onto. Therefore, the number%
	\begin{equation*}
		\left\langle x^\alpha\; ;\prod_{i=1}^k(1+\x^{w_i})\right\rangle
	\end{equation*}%
	 vanishes whenever $k$ is bigger than $l$. Thus the polynomial functions of degree at most $k$ belong to $\O^{k}(V)$. If $k$ equals $l$ and $\mathfrak{S}_k$ denotes the $\supth{k}$ symmetric group, the number%
	\begin{equation*}
		\left\langle x^\alpha\; ;\prod_{i=1}^k(1+\x^{w_i})\right\rangle=\sum_{u\in\mathfrak{S}_k} \prod_{j=1}^l x_j(w_{u(j)}) 
	\end{equation*}%
	 is the minor of the matrix of the coordinates of $w_1,\ldots,w_k$ in the dual basis of $x_1,\ldots,x_n$ with the row set selected by the multi-index $\alpha$. As a consequence, we can find, for all polynomials of degree $k$, a subspace~$W$ of dimension $k$ such that the evaluation of $f$, as a linear form, against the generator of $\m^k_V$ associated with~$W$ is $1$. This implies that $\{f\colon \deg f=k\}$ is included in $\O^{k}(V)\setminus \O^{k-1}(V)$. Thus, the two partitions of $\O(V)$, by the degree on one hand, and by the sets $\O^{k}(V)\setminus\O^{k-1}(V)$ on the other hand, coincide.
\end{proof}

As a direct consequence of Proposition~\ref{prop:alge_filtration}, we have the following. 

\begin{prop}\label{prop:filt_alg_fun}
	The functor $\O$ together with the filtration $(\O^{k})_{k\in\N}$ takes values in the category of filtered finite-dimensional algebras over $\F_2$, and therefore $\gr\O$ takes values in the category of graded algebras.
\end{prop}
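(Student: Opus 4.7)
The plan is to unpack what a filtered finite dimensional algebra over $\F_2$ requires and verify each condition value by value. For every finite dimensional $\F_2$-vector space $V$, the algebra $\O(V)$ is finite dimensional, and the family $\big(\O^{(k)}(V)\big)_{k\in\N}$ is an increasing filtration by sub-vector spaces: the inclusions $\m_V^{l+1}\subset\m_V^{k+1}$ for $k\leq l$ induce surjections $\smallslant{\F_2[V]}{\m_V^{l+1}}\twoheadrightarrow\smallslant{\F_2[V]}{\m_V^{k+1}}$ whose dual inclusions give $\O^{(k)}(V)\subset\O^{(l)}(V)$. That these sub-spaces are compatible with the product of $\O(V)$ is precisely the content of Proposition~\ref{prop:alge_filtration}. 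Naturality with respect to $V$ follows from the fact that a linear map $f:V\rightarrow W$ induces a morphism of Hopf algebras $\F_2[f]:\F_2[V]\rightarrow\F_2[W]$ sending $\m_V^{k+1}$ into $\m_W^{k+1}$; dualising, $\O(f):\O(W)\rightarrow\O(V)$ sends $\O^{(k)}(W)$ into $\O^{(k)}(V)$ and is an algebra morphism respecting the filtration.

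The statement about $\gr\O$ is then the general categorical fact that any filtered algebra gives rise to a graded algebra by passing to successive quotients. From the compatibility $\O^{(k)}(V)\cdot\O^{(l)}(V)\subset\O^{(k+l)}(V)$ we deduce in particular $\O^{(k-1)}(V)\cdot\O^{(l)}(V)+\O^{(k)}(V)\cdot\O^{(l-1)}(V)\subset\O^{(k+l-1)}(V)$, which guarantees that the product of $\O(V)$ factors through the quotient maps to yield a well defined bilinear product
\begin{equation*}
    \bigslant{\O^{(k)}(V)}{\O^{(k-1)}(V)}\otimes_{\F_2}\bigslant{\O^{(l)}(V)}{\O^{(l-1)}(V)}\longrightarrow\bigslant{\O^{(k+l)}(V)}{\O^{(k+l-1)}(V)}.
\end{equation*}
Associativity, commutativity, and unitality of this induced product are inherited from those of $\O(V)$, so $\gr\O(V)$ is a graded algebra. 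Functoriality in $V$ follows from that of the filtered version.

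The only real content lies in Proposition~\ref{prop:alge_filtration}, already established; the remainder amounts to routine bookkeeping, so I do not anticipate any obstacle. One could alternatively deduce both assertions at once by transporting the classical fact that the graded algebra associated with the augmentation filtration of a group ring is a functor $\vect\rightarrow\gralg$ (essentially Proposition~\ref{prop:nat_iso_group_ring}) to the dual category through the contravariant Hopf-algebra duality of Definition~\ref{dfn:functor_func_algebra}, but the direct verification above is shorter.
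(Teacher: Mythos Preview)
Your proof is correct and follows the same approach as the paper, which simply states that the proposition is ``a direct consequence of Proposition~\ref{prop:alge_filtration}'' without further elaboration. You have supplied the routine verification that the paper omits, correctly identifying that the only substantive ingredient is the multiplicativity $\O^{(k)}\cdot\O^{(l)}\subset\O^{(k+l)}$ established there.
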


Moreover, we have the following.

\begin{prop}
  The natural isomorphism $\eta^*\colon \gr\O \rightarrow \bigwedge(-)^*$ is compatible with the products.
\end{prop}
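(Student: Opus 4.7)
The plan is to verify, for representatives $f \in \O^{(k)}(V)$ and $g \in \O^{(l)}(V)$, the identity
\begin{equation*}
\eta^*(fg)(v_1 \wedge \cdots \wedge v_{k+l}) = \bigl(\eta^*(f) \wedge \eta^*(g)\bigr)(v_1 \wedge \cdots \wedge v_{k+l})
\end{equation*}
for all $v_1,\ldots,v_{k+l}\in V$, by unfolding $\eta^*$ through the duality pairing of Definition~\ref{dfn:functor_func_algebra} and exploiting the Hopf-algebraic structure of $\F_2[V]$. Since the product of $\O$ is the adjoint of the coproduct $\Delta$ of $\F_2[V]$, and $\Delta(\x^v) = \x^v \otimes \x^v$, one gets
\begin{equation*}
(fg)\!\left(\prod_{i=1}^{k+l}(1+\x^{v_i})\right) = (f \otimes g)\!\left(\prod_{i=1}^{k+l}(1\otimes 1 + \x^{v_i} \otimes \x^{v_i})\right).
\end{equation*}

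The essential computational input is the characteristic-two identity
\begin{equation*}
1 \otimes 1 + \x^v \otimes \x^v = (1+\x^v)\otimes 1 + \x^v \otimes (1+\x^v),
\end{equation*}
which I would use to expand the product above as a sum indexed by subsets $S \subset [k+l]$:
\begin{equation*}
\prod_{i=1}^{k+l}(1\otimes 1 + \x^{v_i} \otimes \x^{v_i}) = \sum_{S \subset [k+l]} \left(\x^{\sum_{i \in S} v_i} \prod_{i \notin S}(1+\x^{v_i})\right) \otimes \prod_{i \in S}(1+\x^{v_i}).
\end{equation*}
I then argue that only the terms with $|S| = l$ contribute against $f \otimes g$. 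When $|S| \geq l+1$ the right factor lies in $\m_V^{l+1}$ and is annihilated by $g \in \O^{(l)}$. When $|S| \leq l-1$, Proposition~\ref{prop:mult_generator} gives $\x^{\sum_{i \in S} v_i} \prod_{i \notin S}(1+\x^{v_i}) \equiv \prod_{i \notin S}(1+\x^{v_i}) \pmod{\m_V^{k+l-|S|+1}}$, so the left factor lies in $\m_V^{k+l-|S|} \subset \m_V^{k+1}$ and is annihilated by $f \in \O^{(k)}$. For the surviving terms $|S|=l$, the same proposition yields $f\bigl(\x^{\sum_{i \in S} v_i} \prod_{i \notin S}(1+\x^{v_i})\bigr) = f\bigl(\prod_{i \notin S}(1+\x^{v_i})\bigr)$, since $\prod_{i \notin S}(1+\x^{v_i}) \in \m_V^k$ and the error term lies in $\m_V^{k+1}$.

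Setting $J = S$ and $I = [k+l] \setminus S$ for the $|S|=l$ terms, one obtains
\begin{equation*}
\eta^*(fg)(v_1 \wedge \cdots \wedge v_{k+l}) = \sum_{\substack{I \sqcup J = [k+l] \\ |I|=k,\,|J|=l}} f\!\left(\prod_{i \in I}(1+\x^{v_i})\right) g\!\left(\prod_{j \in J}(1+\x^{v_j})\right),
\end{equation*}
which, as signs are trivial in characteristic two, is precisely $(\eta^*(f) \wedge \eta^*(g))(v_1 \wedge \cdots \wedge v_{k+l})$. The main obstacle is the filtration bookkeeping that collapses the sum over all subsets of $[k+l]$ into the sum over $(k,l)$-shuffles; this hinges entirely on the absorption law $\x^w \cdot P \equiv P$ modulo higher powers of $\m_V$ supplied by Proposition~\ref{prop:mult_generator}, which is exactly what forces $|S|=l$.
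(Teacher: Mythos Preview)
Your proof is correct and takes a genuinely different route from the paper's. The paper instead identifies the inverse map explicitly: using the coordinate computation already carried out in the proof of Proposition~\ref{prop:alge_filtration} together with Lemma~\ref{lem:surjective_map}, it shows that $(\eta_V^k)^{*}$ carries the class of $f$ to its degree-$k$ homogeneous part $\sum_{|I|=k} f_I\bigwedge_{i\in I}x_i$, so that $(\eta^*)^{-1}$ sends $\alpha_1\wedge\cdots\wedge\alpha_k$ to the class of the polynomial $\alpha_1\cdots\alpha_k$. Multiplicativity is then immediate, since the wedge product on one side corresponds to polynomial multiplication modulo lower degree on the other. Your argument bypasses this identification entirely: you work directly with the adjunction between pointwise multiplication on $\O(V)$ and the group-like coproduct $\Delta(\x^v)=\x^v\otimes\x^v$ on $\F_2[V]$, expand $\Delta\bigl(\prod_i(1+\x^{v_i})\bigr)$ via the characteristic-two identity, and let the filtration (through Proposition~\ref{prop:mult_generator}) single out the $(k,l)$-shuffle terms. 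The paper's approach has the advantage of producing a concrete formula for the isomorphism in coordinates; yours is coordinate-free and makes the Hopf-algebraic origin of the compatibility transparent.
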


\begin{proof}
	Let $V$ be a finite-dimensional vector space over $\F_2$, and let $k\in\N$. Let us consider $f\in \O^{k}(V)$ and express it in a system of linear coordinates $x_1,\ldots,x_n$:%
	\begin{equation*}
		f=\sum_{|\alpha|\leq k} f_\alpha x^\alpha = \sum_{\substack{I\subset \{1,..,n\} \\ |I|\leq k}} f_I\prod_{i\in I}x_i\, .
	\end{equation*}%
	 Using Lemma~\ref{lem:surjective_map}, we find that, for all $v_1\wedge\ldots\wedge v_k \in \bigwedge^k V$, %
	\begin{equation*}
		\begin{split}
			\left\langle f\;\left(\bmod \O^{k-1}(V)\right)\;;\;\eta_V^k(v_1\wedge\ldots\wedge v_k)\right\rangle &= \sum_{|I|=k} f_I\left\langle \prod_{i\in I}x_i\;;\prod_{j=1}^k(1+\x^{v_j})\right\rangle \\
			&= \sum_{|I|=k} f_I \sum_{u\in\mathfrak{S}_k}\prod_{i=1}x_{I(i)}(v_{u(i)}) \\
			&= \sum_{|I|=k} f_I \left(\,\bigwedge_{i\in I}x_i\,\right)\left(\,\bigwedge_{i=1}^nv_i\,\right),
		\end{split}
	\end{equation*}%
	 where $I(i)$ is the $\supth{i}$ element of $I$. The inverse of $(\eta_V^{k})^*$ sends a wedge product of linear forms $\alpha_1\wedge\cdots\wedge\alpha_k$ to the polynomial function $\alpha_1\cdots\alpha_k$ modulo $\O^{k-1}(V)$. It respects the product structures.
\end{proof}

\begin{dfn}\label{dfn:contract_grp_alg}
	Let $V$ be a finite-dimensional vector space over $\F_2$. For all $f\in\O(V)$ and all $P\in\F_2[V]$ written as $P=\sum_{v\in V}p_v\x^v$, we denote by $f\cdot P$ the element of $\F_2[V]$ defined by the formula%
	\begin{equation*}
		f\cdot P\coloneqq \sum_{v\in V}f(v)p_v\x^v.
	\end{equation*}%
\end{dfn}

\begin{prop}\label{prop:contraction}
	Let $V$ be a finite-dimensional vector space over $\F_2$. For all $f\in\O^{l}(V)$ and all $P\in\m^{k+l}_V$, $f\cdot P$ belongs to $\m^k_V$. Moreover, the induced product between the graded pieces%
	\begin{equation*}
		\bigwedge^lV^*\otimes \bigwedge^{k+l} V \longrightarrow \bigwedge^{k} V,
	\end{equation*}%
	 is the contraction.
\end{prop}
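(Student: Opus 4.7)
The plan is to reduce everything to the adjunction identity
\begin{equation*}
\langle g;\, f\cdot P\rangle \;=\; \langle gf;\, P\rangle
\end{equation*}
valid for all $f,g\in\O(V)$ and $P\in\F_2[V]$; this is immediate from Definition~\ref{dfn:contract_grp_alg} and the pairing in Definition~\ref{dfn:functor_func_algebra}. The other inputs I need are Proposition~\ref{prop:alge_filtration} (which says $\O^{(k-1)}\cdot\O^{(l)}\subset \O^{(k+l-1)}$), the product-compatibility of $\eta^*$ just established, and the dual characterization
\begin{equation*}
P\in\m^k_V \;\Longleftrightarrow\; \langle g;P\rangle=0\ \text{for every }g\in\O^{(k-1)}(V),
\end{equation*}
which holds in this finite-dimensional setting because by construction $\O^{(k-1)}(V)=(\F_2[V]/\m^k_V)^*$.

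For the first claim, fix $g\in\O^{(k-1)}(V)$ and compute $\langle g;f\cdot P\rangle = \langle gf;P\rangle$. By Proposition~\ref{prop:alge_filtration}, $gf\in\O^{(k+l-1)}(V)$, so $gf$ annihilates $\m^{k+l}_V\ni P$ and the pairing vanishes. Since $g$ was arbitrary, the dual characterization yields $f\cdot P\in\m^k_V$.

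For the second claim, let $\alpha\in\bigwedge^l V^*$ correspond under $\eta^*$ to the class $[f]\in\O^{(l)}(V)/\O^{(l-1)}(V)$, and let $v\in\bigwedge^{k+l}V$ correspond under $\eta$ to $[P]\in\m^{k+l}_V/\m^{k+l+1}_V$. To identify $[f\cdot P]\in\m^k_V/\m^{k+1}_V$ with $\alpha\cdot v$, I test against an arbitrary $\beta\in\bigwedge^k V^*$ represented by $g\in\O^{(k)}(V)/\O^{(k-1)}(V)$. Because $\eta^*$ is built from the duality $\langle-;-\rangle$, the induced pairing on graded pieces coincides with the canonical evaluation pairing $\bigwedge^k V^*\otimes\bigwedge^k V\to\F_2$, so the pairing of $\beta$ with $[f\cdot P]$ is exactly $\langle g;f\cdot P\rangle$. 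Rewriting with the adjunction gives $\langle gf;P\rangle$, and by the product-compatibility of $\eta^*$ the class of $gf$ in $\O^{(k+l)}(V)/\O^{(k+l-1)}(V)$ represents $\beta\wedge\alpha$; hence $\langle gf;P\rangle=(\beta\wedge\alpha)(v)$. By the defining property of contraction this equals $\beta(\alpha\cdot v)$, and since $\beta$ was arbitrary we conclude $[f\cdot P]=\alpha\cdot v$.

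No single step is a real obstacle; the only care required is bookkeeping with the filtration indices (remembering that $\O^{(j)}$ vanishes on $\m^{j+1}$, not $\m^j$) and explicitly verifying that the graded pairing is the evaluation pairing, which is forced by how $\eta^*$ was defined from $\eta$ via the pairing between $\O(V)$ and $\F_2[V]$.
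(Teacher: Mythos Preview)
Your proof is correct. The second half (identifying the induced map with contraction via the adjunction $\langle g;f\cdot P\rangle=\langle gf;P\rangle$) is essentially the same argument the paper gives.

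For the first half, however, you take a genuinely different and cleaner route. The paper proceeds hands-on: it establishes a Leibniz-type identity $a\cdot(PQ)=(a\cdot P)Q+P(\df a\cdot Q)$ for affine $a\in\O^{(1)}(V)$, uses it inductively to show $a\cdot\m_V^{k+1}\subset\m_V^k$, then iterates via $(fg)\cdot P=f\cdot(g\cdot P)$ to handle products of affine functions, and finally sums to cover all of $\O^{(l)}(V)$. You instead go straight through the duality: since $\O^{(k-1)}(V)$ is by construction the annihilator of $\m_V^k$, it suffices to check $\langle g;f\cdot P\rangle=0$ for $g\in\O^{(k-1)}(V)$, and the adjunction plus Proposition~\ref{prop:alge_filtration} gives this immediately. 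Your approach is shorter and exploits more fully the work already done in proving the multiplicativity of the $\O^{(\bullet)}$ filtration; the paper's approach has the minor advantage of being self-contained (it would work even without Proposition~\ref{prop:alge_filtration}) and of exhibiting the Leibniz formula, which has some independent interest.
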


\begin{proof}
	Let $a\in\O^{1}(V)$ be an affine function. We denote its linear part by $\df a$. A straightforward computation yields for all $P,Q\in \F_2[V]$%
	\begin{equation*}
		a\cdot(PQ)=(a\cdot P)Q+P(\df a\cdot Q).
	\end{equation*}%
	 Using this formula, we can show, by induction, that $a\cdot\m^{k+1}_V$ is included in $\m_V^k$ for all $k\geq 0$. From the definition, $(fg)\cdot P$ equals $f\cdot(g\cdot P)$ for all $f,g\in\O(V)$ and all $P\in\F_2[V]$. Consequently, $(a_1\cdots a_l)\cdot\m^{k+l}_V$ is included in $\m^{k}_V $ for all $a_1,\ldots,a_l\in\O^{1}(V)$. Finally, we find the following inclusions:
	\begin{equation*}
		\O^{l}(V)\cdot\m_V^{k+l}\subset \sum_{i=0}^l\m_V^{k+l-i}\subset \m_V^{k}.
	\end{equation*}
	 The duality pairing $\langle f;P\rangle$ of $f\in\O(V)$ with $P\in\F_2[V]$ can be written as $\aug_V(f\cdot P)$. Hence, the operation $P\mapsto f\cdot P$ is adjoint to $g\mapsto gf$. This remains true in the graded algebras, and it is precisely the definition of the contraction. 
\end{proof}

\begin{lem}\label{lem:contraction}
	Let $V$ be a finite-dimensional vector space over $\F_2$ and $A$ be an affine subspace of\, $V$ of codimension~$l$. Let $\chi_A$ denote the indicator function of $A$. We have the following exact sequence:%
	\begin{equation}\label{F}
		\begin{tikzcd}
		0 \ar[r] & \langle\,\x^v\colon v\in V\setminus A\,\rangle \ar[r] & \F_2[V] \ar[r,"\chi_A\cdot-"] & \langle\,\x^v\colon v\in A\,\rangle \ar[r] & 0.
		\end{tikzcd}
	\end{equation}%
	 Moreover, if $\omega$ denotes the generator of $\bigwedge^l\{\alpha~|~\alpha(v)=0,\,\forall v\in TA\}$, $K$ denotes $\langle\,\x^v\colon v\in V\setminus A\,\rangle$, and $o$ is a point of $A$, then for all $k\in\N$, we have the following diagram with exact rows and columns:%
	\begin{equation}\label{F$k$}
		\begin{tikzcd}[column sep=1.25cm, row sep=.5cm]
			 & 0 & 0 & 0 \\
			0 \ar[r] & \bigslant{K\cap\m^{k}_V}{K\cap\m^{k+1}_V} \ar[r] \ar[u] & \displaystyle \bigwedge^k V \ar[u] \ar[r,"\omega\cdot-" above] & \displaystyle \bigwedge^{k-l} TA \ar[r] \ar[u] & 0  \\
			0 \ar[r] & K\cap\m^{k}_V \ar[r] \ar[u] & \displaystyle \m^k_V \ar[r,"\x^o(\chi_A\cdot-)" above] \ar[u] & \displaystyle \m^k_{TA} \ar[r] \ar[u] & 0  \\
			0 \ar[r] & K\cap\m^{k+1}_V \ar[r] \ar[u] & \displaystyle \m^{k+1}_V \ar[r,"\x^o(\chi_A\cdot-)" above] \ar[u] & \displaystyle \m^{k+1}_{TA} \ar[r] \ar[u] & 0  \\
			 & 0 \ar[u] & 0 \ar[u] & 0\rlap{.} \ar[u] 
		\end{tikzcd}
	\end{equation}%
\end{lem}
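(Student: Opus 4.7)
The first exact sequence (\ref{F}) is immediate from Definition~\ref{dfn:contract_grp_alg}: we have $\chi_A\cdot\bigl(\sum_v p_v\x^v\bigr) = \sum_{v\in A} p_v\x^v$, so the kernel is $K = \langle\x^v\,|\, v\in V\setminus A\rangle$ and the image is $\langle \x^v\,|\, v\in A\rangle$, exact on both ends. For the diagram (\ref{F$k$}), I plan to package the composite into a single linear map
\[
\phi \coloneqq \x^o(\chi_A\cdot-)\colon \F_2[V]\longrightarrow \F_2[V]\,,
\]
and to analyze its interaction with the augmentation filtration. Since $o\in A$ and $2o = 0$ over $\F_2$, the set $o + A$ equals $o + (o + TA) = TA$, so $\phi$ actually takes values in $\F_2[TA]$, with $\ker\phi = K$.

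The key observation is that $\chi_A$ is the product of the $l$ affine indicators $1+\alpha_i+c_i$, where $\alpha_1,\ldots,\alpha_l$ are linear forms cutting out $TA$ and $c_i=\alpha_i(o)$; hence $\chi_A$ belongs to $\O^{(l)}(V)$ with leading class $\omega=\alpha_1\wedge\cdots\wedge\alpha_l$. Proposition~\ref{prop:contraction} then gives $\chi_A\cdot\m_V^k\subset\m_V^{k-l}$ with induced graded map the contraction by $\omega$, and a basis-adapted check (split $V = \langle e_1,\ldots, e_l\rangle\oplus TA$ with $\alpha_i$ dual to $e_i$) shows that $\omega\cdot-$ in fact lands in $\bigwedge^{k-l}TA$, because $\omega$ annihilates $\bigwedge^l TA$. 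By Proposition~\ref{prop:mult_generator} the factor $\x^o$ preserves this filtration and induces the identity on the associated graded. Combining with Proposition~\ref{prop:intersection_linear_sub-space}, $\phi(\m_V^k)\subset \F_2[TA]\cap\m_V^{k-l} = \m_{TA}^{k-l}$, and the induced map on graded pieces is exactly $\omega\cdot-\colon \bigwedge^k V\to\bigwedge^{k-l}TA$.

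With this in hand I assemble the diagram row by row. The kernel of $\phi|_{\m_V^k}$ is $K\cap\m_V^k$ (inherited from $\ker\phi = K$), and surjectivity of $\phi\colon\m_V^k\twoheadrightarrow\m_{TA}^{k-l}$ is obtained by descending induction on the $\m_{TA}$-filtration: given $Q\in\m_{TA}^{k-l}$, use surjectivity of $\omega\cdot-$ to find $\bar P\in\bigwedge^k V$ with $\omega\cdot\bar P$ equal to the class of $Q$ in $\bigwedge^{k-l}TA$; lift $\bar P$ to $P\in\m_V^k$, so that $Q-\phi(P)\in\m_{TA}^{k-l+1}$; iterate. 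The process terminates since $\m_V^{\bullet}$ vanishes beyond $\dim V$. The same argument with $k$ replaced by $k+1$ produces the bottom row. The three columns are exact by the definition of the associated graded, and the squares commute because $\phi$ respects the filtration. The top row then follows either by the snake lemma or by the analogous descending procedure on kernels: any class in $\ker(\omega\cdot-)\subset\bigwedge^k V$ is represented by some $P_0\in\m_V^k$ with $\chi_A\cdot P_0\in\m_V^{k-l+1}$, and $P_0$ can be successively corrected by elements of $\m_V^{k+1}$ (without changing its class in $\bigwedge^k V$) until its $\chi_A$-contraction vanishes, producing a representative in $K\cap\m_V^k$.

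The delicate step is precisely this surjectivity and lifting argument, which appears both in the middle rows and in the identification of $(K\cap\m_V^k)/(K\cap\m_V^{k+1})$ with $\ker(\omega\cdot-)$; both instances are resolved by the same finite-dimensional graded descent powered by the surjectivity of $\omega\cdot-$. Once the filtration behavior of $\phi$ is pinned down through Propositions~\ref{prop:mult_generator}, \ref{prop:intersection_linear_sub-space}, and \ref{prop:contraction}, the remainder is routine diagram chasing.
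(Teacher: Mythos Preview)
Your proposal is correct and follows essentially the same route as the paper: you identify $\phi=\x^o(\chi_A\cdot-)$ as a filtered map of degree $-l$ via Propositions~\ref{prop:mult_generator}, \ref{prop:intersection_linear_sub-space}, and~\ref{prop:contraction}, recognise the induced graded map as contraction by $\omega$, and deduce exactness of the rows from surjectivity of $\omega\cdot-$. The only cosmetic difference is that the paper packages the surjectivity step abstractly (introducing the cokernels $C_{(k)}$ and observing that vanishing of all graded cokernels $C_k$ forces vanishing of the $C_{(k)}$), whereas you spell it out as an explicit iterative lifting; these are the same argument.
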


\begin{proof}
	The space $\F_2[V]$ is the direct sum of $\langle\,\x^v\colon v\in A\,\rangle$ and $\langle\,\x^v\colon v\in V\setminus A\,\rangle$. A direct computation yields that the operation $P\mapsto \chi_A\cdot P$ is the projection onto $\langle\,\x^v\colon v\in A\,\rangle$ parallel to $\langle\,\x^v\colon v\in V\setminus A\,\rangle$. Therefore, we have the exact sequence \eqref{F}:%
	\begin{equation*}
		\begin{tikzcd}
		0 \ar[r] & \langle\,\x^v\colon v\in V\setminus A\,\rangle \ar[r] & \F_2[V] \ar[r,"\chi_A\cdot-"] & \langle\,\x^v\colon v\in A\,\rangle \ar[r] & 0.
		\end{tikzcd}
	\end{equation*}%
	 Since $A$ has codimension $l$, it is the intersection of $l$ affine hyperplanes. The indicator functions of affine hyperplanes have degree $1$. Hence, $\chi_A$ has degree $l$. Following Proposition~\ref{prop:contraction}, the linear map $P\mapsto \chi_A\cdot P$ has degree $-l$ as a morphism of filtered vector spaces. Moreover, because of Proposition~\ref{prop:mult_generator}, choosing a point $o\in A$ yields an isomorphism of filtered vector spaces ${P\in\F_2[TA]\mapsto \x^o P\in \langle\,\x^v\colon v\in A\,\rangle}$. Therefore, ${P\mapsto\x^o(\chi_A\cdot P)}$ is a surjective filtered morphism of degree $-l$ from $\F_2[V]$ onto $\F_2[TA]$. Consequently, for all $k\in \N$, we obtain, by partial use of the snake lemma, see \cite[Lemma 9.1]{Lan_alg}, the following commutative diagram with exact rows and columns:%
	\begin{equation*}
		\begin{tikzcd}[column sep=4em, row sep=1.5em]
			0 & 0 & 0\\
			\displaystyle \bigwedge^k V \ar[u] \ar[r,"f_k" above] & \displaystyle \bigwedge^{k-l} TA \ar[r] \ar[u] & C_k \ar[r] \ar[u] & 0  \\
			\displaystyle \m^k_V \ar[r,"\x^o(\chi_A\cdot-)" above] \ar[u] & \displaystyle \m^{k-l}_{TA} \ar[r] \ar[u] & C_{(k)} \ar[r] \ar[u] & 0  \\
			\displaystyle \m^{k+1}_V \ar[r,"\x^o(\chi_A\cdot-)" above] \ar[u] & \displaystyle \m^{k-l+1}_{TA} \ar[r] \ar[u] & C_{(k+1)} \ar[r] \ar[u] & 0  \\
			 0 \ar[u] & 0\rlap{,} \ar[u] 
		\end{tikzcd}
	\end{equation*}%
	 where $f_k$ is induced by $\x^o(\chi_A\cdot-)$ between the $\supth{k}$ graded pieces and the rightmost column is made of the cokernels of the associated morphisms. Using Propositions~\ref{prop:mult_generator} and~\ref{prop:contraction}, we find that the morphism $f_k$ is the contraction against $\omega$. It is a surjective morphism, and $C_k$ vanishes. Since all the $C_k$ vanish, the $C_{(k)}$ must vanish as well. Using now the snake lemma in its complete form, we find the commutative diagram \eqref{F$k$}.  
\end{proof}

\begin{prop}\label{prop:same_filtrations}
	Let $V$ be a finite-dimensional vector space over $\F_2$ and ${A\subset V}$ be an affine subspace of codimension $l$. For all families $H_1,\ldots,H_l\subset V$ of affine hyperplanes that intersect along $A$, and all $k\in \N$, we have%
	\begin{equation*}
		\langle\, \x^v\colon v\in V\setminus A\,\rangle\cap\m_V^k=\sum_{i=1}^l\left(\langle\, \x^v\colon v\in V\setminus H_i\,\rangle\cap\m_V^k\right).
	\end{equation*}%
\end{prop}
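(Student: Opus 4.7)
My plan is to prove the non-trivial inclusion $\subset$ by descending induction on $k$, reducing the equality to a linear-algebra identity on the associated graded pieces that can be checked in a well-chosen basis. Write $K := \langle \x^v : v \in V \setminus A \rangle$ and $K_i := \langle \x^v : v \in V \setminus H_i \rangle$. The inclusion $\supset$ is immediate since $A \subset H_i$ gives $K_i \subset K$; the whole content lies in the reverse direction.

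Because $\m_V^k = 0$ for $k > \dim V$, the induction starts from a trivial base. Suppose the equality holds at level $k+1$ and pick $P \in K \cap \m_V^k$. By Lemma~\ref{lem:contraction} applied to $A$, the graded piece $(K \cap \m_V^k)/(K \cap \m_V^{k+1})$ is identified, via $\m_V^k/\m_V^{k+1} \cong \bigwedge^k V$, with the kernel of the contraction $\omega \cdot -\colon \bigwedge^k V \to \bigwedge^{k-l} TA$, where $\omega$ generates the top exterior power of the annihilator of $TA$. Applied to each affine hyperplane $H_i$, the same lemma identifies $(K_i \cap \m_V^k)/(K_i \cap \m_V^{k+1})$ with $\ker(\omega_i \cdot -\colon \bigwedge^k V \to \bigwedge^{k-1} TH_i)$, where $\omega_i$ generates the annihilator of $TH_i$. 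Since the $TH_i$ meet along $TA$, the family $\omega_1, \ldots, \omega_l$ is a basis of the annihilator of $TA$, so $\omega = \omega_1 \wedge \cdots \wedge \omega_l$.

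The heart of the argument is the linear-algebra identity
$$\ker(\omega \cdot -) \;=\; \sum_{i=1}^l \ker(\omega_i \cdot -) \quad \text{inside } \bigwedge^k V.$$
I plan to verify this by choosing a basis $e_1, \ldots, e_n$ of $V$ with $\omega_i(e_j) = \delta_{ij}$ for $1 \leq i,j \leq l$ and $e_{l+1}, \ldots, e_n$ a basis of $TA$. On the basis $\{e_I\}_{|I|=k}$ of $\bigwedge^k V$, the contraction $\omega \cdot e_I$ vanishes if and only if $\{1,\ldots,l\} \not\subset I$, while $\omega_i \cdot e_I$ vanishes if and only if $i \notin I$; both sides of the identity are thus spanned by $\{e_I : \{1,\ldots,l\} \not\subset I\}$.

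Once this identity is established, the inductive step closes formally. The class of $P$ in the graded piece decomposes as $\sum_i \bar Q_i$ with each $\bar Q_i$ lifting to some $Q_i \in K_i \cap \m_V^k$. The difference $P - \sum_i Q_i$ lies in $K \cap \m_V^{k+1}$ (using $K_i \subset K$ to stay inside $K$), and by the induction hypothesis it belongs to $\sum_i K_i \cap \m_V^{k+1} \subset \sum_i K_i \cap \m_V^k$. Hence $P$ itself lies in $\sum_i K_i \cap \m_V^k$, closing the induction. I expect the main obstacle to be the kernel identity for the iterated contraction; everything else is a routine propagation through the filtration powered by Lemma~\ref{lem:contraction}.
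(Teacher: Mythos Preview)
Your proof is correct and follows essentially the same line as the paper: both arguments compare the two filtrations through their graded pieces, identify each side inside $\bigwedge^k V$ via Lemma~\ref{lem:contraction}, and reduce to the identity $\ker(\omega\cdot-)=\sum_i\ker(\omega_i\cdot-)$. The only cosmetic differences are that the paper phrases the passage as ``surjectivity on graded pieces'' rather than an explicit descending induction, and it quotes Proposition~\ref{prop:F_and_contraction} for the kernel identity whereas you verify it directly in a basis adapted to $TA$; these are the same computation.
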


\begin{proof}
	Let us denote by $K$ the subspace $\langle\, \x^v\colon v\in V\setminus A\,\rangle$ of $\F_2[V]$, also equal to $\sum_{i=1}^l\langle\, \x^v\colon v\in V\setminus H_i\,\rangle$. It is endowed with two decreasing filtrations:%
	\begin{equation*}
		K_{k}\coloneqq \langle\, \x^v\colon v\in V\setminus A\,\rangle\cap\m_V^k
	\end{equation*}%
	 and%
	\begin{equation*}
		K'_{k}\coloneqq \sum_{i=1}^l\left(\langle\, \x^v\colon v\in V\setminus H_i\,\rangle\cap\m_V^k\right).
	\end{equation*}%
	We need to show that they coincide. Since $K'_{k}$ is included in $K_{k}$ for all $k\in\N$, we only need to show that the induced morphisms between the graded pieces are surjective. Because of Propositions~\ref{prop:mult_generator} and~\ref{prop:intersection_linear_sub-space}, $K'_{k}$ is equal to $\sum_{i=1}^l\x^{o_i}\m^k_{TH_i}$, where, for all $1\leq i\leq l$, $o_i$ is a point of $V\setminus H_i$ and $TH_i$ is the direction of $H_i$. It follows that the image of $K'_{k}$ in $\bigwedge^k V$ is%
	\begin{equation*}
		\sum_{i=1}^l\bigwedge^kTH_i\, .
	\end{equation*}%
	 As a consequence of the definition of the other filtration $(K_{i})_{i\geq 0}$, its associated graded vector space is embedded in the exterior algebra $\bigwedge V$. Moreover, if $\omega$ is the generator of $\bigwedge^l\left(\smallslant{V}{TA}\right)^*$ and $k$ is a non-negative integer, we have, from Diagram~(\ref{F$k$}) of Lemma~\ref{lem:contraction}, the  exact sequence%
	\begin{equation}\label{E$k$}
		\begin{tikzcd}
		0 \ar[r]& K_k \ar[r,"c_k"] & \bigwedge^k V \ar[r,"\omega\cdot-" above]& \bigwedge^{k-l} TA \ar[r]& 0
		\end{tikzcd}
	\end{equation}%
	 and the commutative diagram%
	\begin{equation*}
		\begin{tikzcd}[row sep=.5em]
			K'_k \ar[dd,"a_k" left] \ar[dr,"b_k" above right] \\
			& \displaystyle \bigwedge^k V \\
			K_k\rlap{,} \ar[ur,"c_k" below right]
		\end{tikzcd}
	\end{equation*}
where $a_k$ is induced from the inclusions $(K'_{k}\subset K_{k})_{k\in\N}$, $b_k$ is induced from ${(K'_{k}\subset \m^k_V)_{k\in\N}}$, and $c_k$ is induced from ${(K_{k}\subset \m^k_V)_{k\in\N}}$. The morphism $c_k$ is injective. Proposition~\ref{prop:F_and_contraction} and (\ref{E$k$}) imply that $c_k$ and $b_k$ have the same image, so  $a_k$ is onto. This implies that all the inclusions $K'_{k}\subset K_{k}$, for all $k\in \N$, are onto, hence equalities.
\end{proof}

\subsubsection*{The induced filtrations}

\begin{dfn}\label{dfn:filt_KP_OP}
	In Definition~\ref{dfn:real_sheaves}, the cubical cosheaf $\K^{\R P}$ and the cubical sheaf $\O_{\R P}$ were given by the formulae $\K^{\R P}=\F_2[F_1^P]$ and $\O_{\R P}=\O(F_1^P)$. The functoriality of the filtrations of $\F_2[-]$ and $\O$ ensures that this cosheaf and this sheaf are, respectively, endowed with a decreasing filtration
	\begin{equation*}
		0\subset \K_{n}^{\R P}\subset \cdots\subset \K_{0}^{\R P}=\K^{\R P}
	\end{equation*}%
	 and an increasing filtration%
	\begin{equation*}
		0\subset \O^{0}_{\R P}\subset \cdots\subset \O^{n}_{\R P}=\O_{\R P},
	\end{equation*}%
	 and that the associated graded pieces satisfy, for all $k\in \N$, %
	\begin{equation}\label{K1}
		0\longrightarrow \mathcal{K}_{k+1}^{\R P} \longrightarrow \mathcal{K}_{k}^{\R P} \xrightarrow[\bv^P_k]{} F_k^P \longrightarrow 0 
	\end{equation}%
	 and%
	\begin{equation}\label{O1}
		0\longrightarrow \O^{k-1}_{\R P} \longrightarrow \O^{k}_{\R P} \xrightarrow[\bv^k_P]{} F^k_P \longrightarrow 0\, .
	\end{equation}%
	 The projections $\bv_k^P$ and $\bv^k_P$ are given by the projections onto the graded pieces composed with the natural isomorphisms $\eta$ and $(\eta^*)^{-1}$. We denote by%
	\begin{itemize}
	\item $(E^r_{p,q}(\R P))_{p,q,r\geq 0}$ the spectral sequence converging towards $(H_q(\R P;\F_2))_{q\geq 0}$ that arises from the bounded filtration of $\K_{\R P}$, 
	\item $(E_r^{p,q}(\R P))_{p,q,r\geq0}$ the dual spectral sequence converging towards $(H^q(\R P;\F_2))_{q\geq0}$ that arises from the bounded filtration of $\O_{\R P}$.
	\end{itemize}
        The spectral sequence $(E_r^{p,q}(\R P))_{p,q,r\geq 0}$ is a spectral ring because $\O_{\R P}$ is a sheaf of filtered algebras; see Proposition~\ref{prop:filt_alg_fun}.
\end{dfn}

\begin{rem}\label{rem:image_throught_bv}
	If $\x^o\prod_{i=1}^k(1+\x^{v_i})$ belongs to $\K^{\R P}_{k}(\sigma^p;\sigma^q)$, its image under $\bv^P_k(\sigma^p;\sigma^q)$ is $\bigwedge_{i=1}^kv_i$. This is a simple consequence of Propositions~\ref{prop:nat_iso_group_ring} and~\ref{prop:mult_generator}.
\end{rem}

These filtrations can be transported on $\R X_\varepsilon$.

\begin{dfn}\label{dfn:filt_KX_OX}
	The filtration of $\K^{\R P}$ from Definition~\ref{dfn:filt_KP_OP} induces a filtration of the sub-cosheaf ${\K^\X \subset \K^{\R P}}$ by intersection. Dually, the filtration of $\O_{\R P}$ induces a filtration of $\O_\X$ by projection. With these definitions the inclusion ${i_*\colon \K^\X\rightarrow \K^{\R P}}$ and the projection ${i^*\colon \O_{\R P}\rightarrow \O_\X}$ are morphisms of filtered cosheaves and sheaves, respectively. We denote by%
	\begin{equation*}
		\left(E^r_{p,q}(\X)\right)_{p,q,r\geq 0} \quad\textnormal{and}\quad \left(E_r^{p,q}(\X)\right)_{p,q,r\geq 0} 
	\end{equation*}%
	 the spectral sequences associated with the filtrations of $\K^\X$ and of $\O_\X$, respectively. The sheaves and cosheaves morphisms $i^*$ and $i_*$ induce spectral sequences morphisms 
	\begin{equation*}
		i_{p,q}^r \colon E^r_{p,q}(\X) \longrightarrow E^r_{p,q}(\R P) \quad \textnormal{and}\quad i^{p,q}_r \colon E_r^{p,q}(\R P) \longrightarrow E_r^{p,q}(\X).
	\end{equation*}
	 The multiplicative property of the filtration of $\O_{\R P}$ is passed on to the filtration of $\O_{\X}$ as ${i^*\colon \O_{\R P}\rightarrow \O_\X}$ is a morphism of sheaves of algebras. Therefore, $(E_r^{p,q}(\X))_{p,q,r\geq0}$ is also a spectral ring. Furthermore, the collection $(i^{p,q}_r)_{p,q,r\geq 0}$ is a morphism of spectral rings.
\end{dfn}

The following lemma will ensure that the filtrations $(\K^\X_k)_{k\geq 0}$ and $(\O_\X^{k})_{k\geq 0}$ are dual to each other.

\begin{lem}\label{lem:dual_filt_X}
	Let $W_0\subset V_0$ be two finite-dimensional vector spaces over $\F$. Let $V_1$ be a subspace of\, $V_0$ and $W_1$ denote $V_1\cap W_0$. Then the image of\, $V_1^\perp$ in $W_0^*$ is $W_1^\perp$.
\end{lem}

\begin{proof}
	The inclusion of the image of $V_1^\perp$ in $W_1^\perp$ is straightforward. Let us show that we can obtain every form of $W_1^\perp$ as the restriction to $W_0$ of a form of $V_1^\perp$. We can write $V_0$ as the direct sum $W_1\oplus U\oplus U'$ in such a way that $W_1\oplus U$ is $V_1$ and $W_1\oplus U'$ is $W$. If $\alpha\in W_1^\perp$, we extend it by $0$ on $U$ to obtain a form in $V_1^\perp$ that restricts to the original one.  
\end{proof}

The main theorem of \cite{Ren-Sha_bou_bet} rests on the computation of the first page of the Renaudineau--Shaw spectral sequence and more specifically the graded pieces of $\K^\X$. However, we gave a different definition of the filtration of $\K^\X$. Before recalling the results of \cite{Ren-Sha_bou_bet} about the spectral sequence $(E^r_{p,q}(\X))_{p,q,r\geq 0}$, we should check that the two definitions coincide. Let us recall their construction.

\begin{dfn}[\textit{cf.} {\cite[Definition 4.5]{Ren-Sha_bou_bet}}]\label{dfn:filt_X_RS}
	Let us denote by $((\K^\X_k)')_{k\geq 0}$ the decreasing filtration of $\K^\X$ constructed as follows:
	\begin{enumerate}
		\item Let $\sigma^1\leq\sigma^q$ be a pair of simplices of $K$. The set $\Arg_\varepsilon(\sigma^1;\sigma^q)$ is an affine hyperplane of the vector space $F^P_1(\sigma^1;\sigma^q)=\smallslant{N\otimes\F_2}{\Sed(\sigma^1;\sigma^q)}$ (\textit{cf.}~Lemma~\ref{lem:desc_arg}). Let us denote its vectorial direction by $TA$ and choose an origin $o\in\Arg_\varepsilon(\sigma^1;\sigma^q)$. Let $k\in\N$, and let $(\K^\X_k)'(\sigma^1;\sigma^q)$ denote the image of the $\supth{k}$ power augmentation ideal $\m_{TA}^k$ by the isomorphism $\F_2[TA]\rightarrow \K^\X(\sigma^1;\sigma^q)$ obtained from the multiplication by $\x^o$.
		\item Let $\sigma^p\leq\sigma^q$ be a pair of simplices of $K$ and $k\in \N$. The subspace $(\K^\X_k)'(\sigma^p;\sigma^q)$ of $\K^\X(\sigma^p;\sigma^q)$ is obtained as the sum%
		\begin{equation*}
			\left(\K^\X_k\right)'(\sigma^p;\sigma^q)\coloneqq \sum_{\sigma^1\leq\sigma^p}\left(\K^\X_k\right)'(\sigma^1;\sigma^q).
		\end{equation*}
	\end{enumerate}%
\end{dfn}

\begin{rem}
	In Definition~\ref{dfn:filt_X_RS}, we actually defined the cubical subdivision of the filtration given in \cite[Definition 4.5]{Ren-Sha_bou_bet}.
\end{rem}

\begin{prop}\label{prop:same_filtrations2}
	Let $\sigma^p\leq \sigma^q$ be a pair of simplices of $K$. For all integers $k\geq 0$, %
	\begin{equation*}
	\left(\K^\X_k\right)'(\sigma^p;\sigma^q)=\K^\X_k(\sigma^p;\sigma^q).
	\end{equation*}%
\end{prop}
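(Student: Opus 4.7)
The plan is to reduce the equality of the two filtrations to Proposition~\ref{prop:same_filtrations}. I first treat the case $p = 1$ by a direct computation using that $\x^o$ is a unit in $\F_2[V]$, then extend to arbitrary $p$ by exploiting the fact that $\K^\X(\sigma^p;\sigma^q)$ decomposes as a sum of its edge-contributions.

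For $p = 1$, fix $\sigma^1 \leq \sigma^q$ and write $V = F_1^P(\sigma^1;\sigma^q)$ and $A = \Arg_\varepsilon(\sigma^1;\sigma^q) = o + TA$. By construction $\K^\X(\sigma^1;\sigma^q) = \x^o\F_2[TA]$, so Definition~\ref{dfn:filt_KX_OX} gives $\K^\X_{(k)}(\sigma^1;\sigma^q) = \x^o\F_2[TA] \cap \m_V^k$, while Definition~\ref{dfn:filt_X_RS} gives $(\K^\X_{(k)})'(\sigma^1;\sigma^q) = \x^o\m_{TA}^k$. Since $(\x^o)^2 = \x^{2o} = 1$ in characteristic $2$, the element $\x^o$ is a unit of $\F_2[V]$ and therefore stabilises every ideal, so $\x^o\m_V^k = \m_V^k$. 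Combined with Proposition~\ref{prop:intersection_linear_sub-space}, which gives $\F_2[TA] \cap \m_V^k = \m_{TA}^k$, an elementary two-inclusion argument yields
\begin{equation*}
\x^o\F_2[TA] \cap \m_V^k \;=\; \x^o\bigl(\F_2[TA] \cap \m_V^k\bigr) \;=\; \x^o\m_{TA}^k.
\end{equation*}

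For general $p$, the description of $\Arg_\varepsilon(\sigma^p;\sigma^q)$ inside the proof of Lemma~\ref{lem:desc_arg} readily shows that $\Arg_\varepsilon(\sigma^p;\sigma^q) = \bigcup_{\sigma^1 \leq \sigma^p} \Arg_\varepsilon(\sigma^1;\sigma^q)$, hence $\K^\X(\sigma^p;\sigma^q) = \sum_{\sigma^1 \leq \sigma^p} \K^\X(\sigma^1;\sigma^q)$. Using the $p = 1$ case together with Definition~\ref{dfn:filt_X_RS}, the statement reduces to
\begin{equation*}
\Big(\sum_{\sigma^1 \leq \sigma^p} \K^\X(\sigma^1;\sigma^q)\Big) \cap \m_V^k \;=\; \sum_{\sigma^1 \leq \sigma^p} \bigl(\K^\X(\sigma^1;\sigma^q) \cap \m_V^k\bigr).
\end{equation*}
The forms $\omega(\sigma^1)$ for edges $\sigma^1$ emanating from a fixed vertex of $\sigma^p$ form a basis of $T\sigma^p(\F_2)$, so one can select $p$ such edges $\sigma^1_1,\dots,\sigma^1_p$ whose associated hyperplanes $V \setminus \Arg_\varepsilon(\sigma^1_i;\sigma^q)$ intersect exactly along $V \setminus \Arg_\varepsilon(\sigma^p;\sigma^q)$. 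Proposition~\ref{prop:same_filtrations} applied to this family gives the equality for the partial sum over the $p$ chosen edges; the remaining edges only contribute further summands sitting inside the left-hand side (since $V \setminus \Arg_\varepsilon(\sigma^1;\sigma^q) \subseteq V \setminus \Arg_\varepsilon(\sigma^p;\sigma^q)$ for every edge $\sigma^1$ of $\sigma^p$), so the full equality persists.

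The main technical care needed is tracking the complement/affine-hyperplane duality in the $\F_2$ setting when bringing Proposition~\ref{prop:same_filtrations} into play; once the identifications are matched, the statement is a direct consequence of the earlier results.
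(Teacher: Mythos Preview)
Your proof is correct and follows essentially the same route as the paper: both arguments identify $\K^\X(\sigma^1;\sigma^q)\cap\m_V^k$ with $\x^o\m_{TA}^k$ via Propositions~\ref{prop:mult_generator} and~\ref{prop:intersection_linear_sub-space}, then invoke Proposition~\ref{prop:same_filtrations} for the $p$ edges emanating from a fixed vertex of $\sigma^p$ and absorb the remaining edges into the resulting sum. The only cosmetic difference is that you separate out the $p=1$ case explicitly before treating general $p$, whereas the paper handles that identification inline at the end.
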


\begin{proof}
	Let $k\geq 0$ be an integer. From Definition~\ref{dfn:filt_KX_OX}, we have%
	\begin{equation*}
		\K^\X_k(\sigma^p;\sigma^q)=\left\langle\x^v\colon v\in\Arg_\varepsilon(\sigma^p;\sigma^q)\right\rangle\cap \m^k_V\, ,
	\end{equation*}%
	 where $V$ is $\smallslant{N\otimes\F_2}{\Sed(\sigma^p;\sigma^q)}$. Lemma~\ref{lem:desc_arg} ensures that $\Arg_\varepsilon(\sigma^p;\sigma^q)$ is the complementary subset of an affine subspace of $V$. Let $\sigma^0$ be a vertex of $\sigma^p$. We can apply Proposition~\ref{prop:same_filtrations} to find that%
	\begin{equation*}
		\left\langle\x^v\colon v\in\Arg_\varepsilon(\sigma^p;\sigma^q)\right\rangle\cap \m^k_V=\sum_{\sigma^0\subset\sigma^1\subset\sigma^p}\left\langle\x^v\colon v\in\Arg_\varepsilon(\sigma^1;\sigma^q)\right\rangle\cap \m^k_V\, .
	\end{equation*}%
	 Indeed,  for all edges $\sigma^0\subset\sigma^1\subset\sigma^p$, the sets $V\setminus\Arg_\varepsilon(\sigma^1;\sigma^q)$ are hyperplanes intersecting along the set $V\setminus\Arg_\varepsilon(\sigma^p;\sigma^q)$. We can add the remaining ${\left\langle\x^v\colon v\in\Arg_\varepsilon(\sigma^1;\sigma^q)\right\rangle\cap \m^k_V}$ on both sides to find that%
	\begin{equation*}
		\left\langle\x^v\colon v\in\Arg_\varepsilon(\sigma^p;\sigma^q)\right\rangle\cap \m^k_V=\sum_{\sigma^1\subset\sigma^p}\left\langle\x^v\colon v\in\Arg_\varepsilon(\sigma^1;\sigma^q)\right\rangle\cap \m^k_V\, .
	\end{equation*}%
	 Because of Propositions~\ref{prop:mult_generator} and~\ref{prop:intersection_linear_sub-space}, the intersection of the subspace ${\langle\x^v\colon v\in\Arg_\varepsilon(\sigma^1;\sigma^q)\rangle}$ with $\m^k_V$ corresponds to $(\K^\X_k)'(\sigma^1;\sigma^q)$. Using the second construction step of Definition~\ref{dfn:filt_X_RS}, we finally find that the two filtrations coincide.
\end{proof}

A.~Renaudineau and K.~Shaw showed that  the graded pieces of the filtration of $\K^\X$ are isomorphic to the cosheaves $(F_k^X)_{k\geq 0}$; \textit{cf.} \cite[Lemma~4.8 and Proposition~4.10]{Ren-Sha_bou_bet}. To do so they introduced Borel--Viro morphisms for $\X$; \textit{cf.} \cite[Definition 4.9]{Ren-Sha_bou_bet}. The paragraph following this definition characterises these morphisms as follows: let $\sigma^p\leq\sigma^q$ be two simplices of $K$ if $\x^o\prod_{i=1}^k(1+\x^{v_i})$ belongs to $\K^{\X}_{k}(\sigma^p;\sigma^q)$; its image under $\bv^X_{k}(\sigma^p;\sigma^q)$ is $\bigwedge_{i=1}^kv_i\in F_k^X(\sigma^p;\sigma^q)\subset F_k^P(\sigma^p;\sigma^q)$. Note that \cite[Lemma 4.4]{Ren-Sha_bou_bet} ensures that the elements of this form span $\K^{\X}_{k}(\sigma^p;\sigma^q)$. Together with Remark~\ref{rem:image_throught_bv}, this fact implies the following proposition.

\begin{prop}\label{prop:graded_cosheaves}
	For all integers $k\geq0$, the restriction of the Borel--Viro morphism $\bv^P_k$ to $\K^\X_k$ is the Borel--Viro morphism $\bv_k^X$ introduced in \cite[Definition 4.9]{Ren-Sha_bou_bet}. In particular, we have the following commutative diagram with exact rows:
	\begin{equation}\label{K2}
		\begin{tikzcd}
			0 \ar[r] & \K^{\R P}_{k+1} \ar[r] & \K^{\R P}_{k} \ar[r,"\bv^P_k" above] & F^P_k \ar[r] & 0 \\
			0 \ar[r] & \K^{\X}_{k+1} \ar[r] \ar[u,"i_*" right] & \K^{\X}_{k} \ar[r, "\bv^X_k" above] \ar[u,"i_*" right] & F^X_k \ar[r] \ar[u, "i_k" right] & 0 \\
			 & 0 \ar[u] & 0 \ar[u] & 0\rlap{.} \ar[u] & 
		\end{tikzcd}
	\end{equation}%
\end{prop}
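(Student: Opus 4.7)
The plan is to prove the statement cell by cell: for each pair $\sigma^p\leq\sigma^q$ of simplices of $K$ I identify the graded piece $\K^\X_{(k)}(\sigma^p;\sigma^q)/\K^\X_{(k+1)}(\sigma^p;\sigma^q)$ with $F^X_k(\sigma^p;\sigma^q)$ inside $F^P_k(\sigma^p;\sigma^q)$, and then assemble the local maps into a morphism of cosheaves. Naturality in $(\sigma^p;\sigma^q)$ and commutativity of Diagram~(K2) will then follow from the fact that everything is obtained by restriction-corestriction from the natural transformation $\bv^P_k$ of Definition~\ref{dfn:filt_KP_OP}.

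Fix such a pair and set $V\coloneqq\t(\F_2)/\Sed(\sigma^p;\sigma^q)$. By Lemma~\ref{lem:desc_arg}, the set $\Arg_\varepsilon(\sigma^p;\sigma^q)$ is the complement in $V$ of an affine sub-space $A$ of codimension $p$ whose direction is $TA=\{v\in V\,|\,\alpha(v)=0,\,\forall\alpha\in T\sigma^p(\F_2)\}$. I apply Lemma~\ref{lem:contraction} to this $A$ with $l=p$: its $K$ is $\K^\X(\sigma^p;\sigma^q)$, and its distinguished form $\omega$ generates $\bigwedge^p T\sigma^p(\F_2)$, so $\omega$ is precisely $\omega(\sigma^p)$. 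Combining the top row of Diagram~(F$k$) from Lemma~\ref{lem:contraction} with Proposition~\ref{prop:same_filtrations2} yields the exact sequence
\begin{equation*}
0 \longrightarrow \frac{\K^\X_{(k)}(\sigma^p;\sigma^q)}{\K^\X_{(k+1)}(\sigma^p;\sigma^q)} \longrightarrow F^P_k(\sigma^p;\sigma^q) \xrightarrow{\omega(\sigma^p)\cdot-} \bigwedge\nolimits^{k-p} TA \longrightarrow 0,
\end{equation*}
where the injection on the left is precisely the map induced by $\bv^P_k$. By Proposition~\ref{prop:F_and_contraction}, the kernel of $\omega(\sigma^p)\cdot-$ is $F^X_k(\sigma^p;\sigma^q)$, so $\bv^P_k$ sends $\K^\X_{(k)}(\sigma^p;\sigma^q)$ onto $F^X_k(\sigma^p;\sigma^q)$ with kernel $\K^\X_{(k)}(\sigma^p;\sigma^q)\cap\K^{\R P}_{(k+1)}(\sigma^p;\sigma^q)=\K^\X_{(k+1)}(\sigma^p;\sigma^q)$.

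Defining $\bv^X_k$ as this restriction-corestriction of $\bv^P_k$ gives exactness of the bottom row of Diagram~(K2); the right square commutes by construction, and the left square commutes tautologically since both vertical arrows are the inclusion $i_*$. The main substantive point in the whole argument is the identification of the form $\omega$ of Lemma~\ref{lem:contraction} with $\omega(\sigma^p)$, which amounts to matching the description of $TA$ in Lemma~\ref{lem:desc_arg} with the defining property of $\omega(\sigma^p)$; once this identification is made, everything else follows by stringing together Proposition~\ref{prop:same_filtrations2}, Lemma~\ref{lem:contraction}, and Proposition~\ref{prop:F_and_contraction}.
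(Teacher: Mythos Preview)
Your proof is correct and follows essentially the same route as the paper's: work cell by cell, invoke Lemma~\ref{lem:desc_arg} to identify $\Arg_\varepsilon(\sigma^p;\sigma^q)$ as the complement of an affine subspace $A$, apply Diagram~(\ref{F$k$}) of Lemma~\ref{lem:contraction}, and then use Proposition~\ref{prop:F_and_contraction} to recognise the kernel of contraction against $\omega$ as $F^X_k(\sigma^p;\sigma^q)$. The one superfluous step is your appeal to Proposition~\ref{prop:same_filtrations2}: since the filtration $\K^\X_{(k)}$ is \emph{defined} in Definition~\ref{dfn:filt_KX_OX} as the intersection $\K^\X\cap\K^{\R P}_{(k)}$, the identification $\K^\X_{(k)}(\sigma^p;\sigma^q)=K\cap\m^k_V$ is immediate and does not require comparing with the Renaudineau--Shaw filtration.
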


This proposition has the following dual counterpart.
		
\begin{prop}\label{prop:graded_sheaves}	
	The Borel--Viro morphism $\bv_P^k$ of Definition~\ref{dfn:filt_KP_OP} composed with $i^k\colon F^k_P\rightarrow F^k_X$ can be factored by $i^*\colon \O_{\R P}^{k}\rightarrow \O_\X^{k}$; we denote this factorisation by $\bv_X^k$. As a consequence, we have the following commutative diagram with exact rows and columns:%
	\begin{equation}\label{O2}
		\begin{tikzcd}
			0 \ar[r] & \O^{k-1}_{\R P} \ar[r] \ar[d,"i^*" right] & \O^{k}_{\R P} \ar[r,"\bv_P^k" above] \ar[d,"i^*" right] & F^k_P \ar[r] \ar[d, "i^k" right] & 0 \\
			0 \ar[r] & \O^{k-1}_\X \ar[r] \ar[d] & \O^{k}_\X \ar[r, "\bv_X^k" below] \ar[d] & F^k_X \ar[r] \ar[d] & 0 \\
			 & 0 & 0 & 0\rlap{.} & 
		\end{tikzcd}
	\end{equation}
\end{prop}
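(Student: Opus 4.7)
The plan is to derive this proposition as the cell-wise $\F_2$-linear dual of Proposition~\ref{prop:graded_cosheaves}. Since each space involved is finite-dimensional over $\F_2$, the functor $\Hom_{\F_2}(-;\F_2)$ is exact, and the whole diagram (\ref{K2}) can be dualised termwise to produce (\ref{O2}), provided we have matched the dualised objects and arrows with the ones arising from Definitions~\ref{dfn:filt_KP_OP} and~\ref{dfn:filt_KX_OX}.

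The first step is to correctly identify the right-hand column of (\ref{O2}). By Definition~\ref{dfn:filt_KX_OX} we have $\O^{(k)}_\X = i^*\bigl(\O^{(k)}_{\R P}\bigr)$, and by Definition~\ref{dfn:filt_KP_OP} combined with Definition~\ref{dfn:dual_filtration}, the subspace $\O^{(k)}_{\R P} \subset \O_{\R P} = (\K^{\R P})^*$ is the annihilator of $\K^{\R P}_{(k+1)}$. Applying Lemma~\ref{lem:dual_filt_X} cell-by-cell with $W_0 = \K^\X(\sigma^p;\sigma^q)$, $V_0 = \K^{\R P}(\sigma^p;\sigma^q)$ and $V_1 = \K^{\R P}_{(k+1)}(\sigma^p;\sigma^q)$ — so that $W_1 = V_1 \cap W_0 = \K^\X_{(k+1)}(\sigma^p;\sigma^q)$ by construction of the filtration of $\K^\X$ — shows that $\O^{(k)}_\X$ is exactly the annihilator of $\K^\X_{(k+1)}$ inside $(\K^\X)^*$. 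Consequently the quotient $\O^{(k)}_\X/\O^{(k-1)}_\X$ is canonically isomorphic to $\Hom_{\F_2}\bigl(\K^\X_{(k)}/\K^\X_{(k+1)};\F_2\bigr) = (F^X_k)^* = F^k_X$, and one defines $\bv^k_X$ as the composition of the quotient map with this identification, that is, as the adjoint of $\bv^X_k$.

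The factorisation assertion then follows by duality. The bottom-right commutative square of (\ref{K2}), encoding $\bv^P_k \circ i_* = i_k \circ \bv^X_k$, dualises to $i^k \circ \bv^k_P = \bv^k_X \circ i^*$ once $\bv^k_P$ and $\bv^k_X$ are viewed as adjoints of $\bv^P_k$ and $\bv^X_k$ through the identifications above. Dualising the three horizontal short exact rows of (\ref{K2}) produces the three rows of (\ref{O2}), and dualising the vertical $i_*$ and $i_k$ arrows produces the vertical $i^*$ and $i^k$ arrows. The main technical point — more bookkeeping than conceptual — is to verify that the abstract Deligne identification of the graded pieces of a dual filtration with the duals of the graded pieces of the original filtration coincides with the concrete Borel-Viro morphism $\bv^k_P$ defined in Definition~\ref{dfn:filt_KP_OP} via $\eta$ and $(\eta^*)^{-1}$; by the very construction of $\bv^k_P$ this is essentially tautological, but it should be checked explicitly on generators of the form $\prod_{i}(1+\x^{v_i})$ and their dual monomials to rule out any misalignment.
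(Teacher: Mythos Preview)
Your argument is correct and follows the same idea as the paper's: both obtain (\ref{O2}) by dualising the homological side. The only difference is packaging --- the paper dualises the finer cell-by-cell diagram~(\ref{F$k$}) of Lemma~\ref{lem:contraction} (retaining the extra $\bigwedge^{k-p}(TA)^*$ column and the explicit $\textnormal{ext}_A$ map), whereas you dualise the already-assembled diagram~(\ref{K2}) of Proposition~\ref{prop:graded_cosheaves} together with Lemma~\ref{lem:dual_filt_X} and the Deligne formalism; your route is a bit more economical, the paper's a bit more explicit. One small imprecision worth tightening: the rows of~(\ref{K2}) do not \emph{literally} dualise to the rows of~(\ref{O2}) (the dual of $0\to\K_{(k+1)}\to\K_{(k)}\to F_k\to 0$ is $0\to F^k\to(\K_{(k)})^*\to(\K_{(k+1)})^*\to 0$); what you are actually using --- and do state correctly a few lines earlier --- is that the graded piece of the dual filtration is canonically the dual of the graded piece, which is exactly the content of the rows of~(\ref{O2}).
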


\begin{proof}
	Let $\sigma^p\leq\sigma^q$ be a pair of simplices of the triangulation $K$ and $k\in\N$. By dualising Diagram~(\ref{F$k$}) of Lemma~\ref{lem:contraction} and using Proposition~\ref{prop:F_and_contraction}, we derive the following commutative diagram with exact rows and columns:%
	\begin{equation*}
		\begin{tikzcd}
			 & 0 & 0 & 0 \\
			0 \ar[r] & \displaystyle \bigwedge^{k-p} (TA)^* \ar[r,"\omega\wedge -" above] \ar[u] & \displaystyle F_P^k(\sigma^p;\sigma^q) \ar[u] \ar[r,"i^k" above] & F_X^k(\sigma^p;\sigma^q)  \ar[r] \ar[u] & 0  \\
			0 \ar[r] & \displaystyle \O^{k-p}(TA) \ar[r,"\text{ext}_A" above] \ar[u,"\bv_{TA}^{k-p}" left] & \displaystyle \O_{\R P}^{k}(\sigma^p;\sigma^q) \ar[r,"i^*" above] \ar[u,"\bv_P^k" left] & \O_\X^{k}(\sigma^p;\sigma^q) \ar[r] \ar[u,"\bv_X^k" left] & 0  \\
			0 \ar[r] & \displaystyle \O^{k-1-p}(TA) \ar[r,"\text{ext}_A" above] \ar[u] & \displaystyle \O_{\R P}^{k-1}(\sigma^p;\sigma^q) \ar[r,"i^*" above] \ar[u] & \O_\X^{k-1}(\sigma^p;\sigma^q) \ar[r] \ar[u] & 0  \\\
			 & 0 \ar[u] & 0 \ar[u] & 0\rlap{.} \ar[u] 
		\end{tikzcd}
	\end{equation*}	
	In the diagram, $A$ denotes the $p$-codimensional affine subspace of $F_1^P(\sigma^p;\sigma^q)$ defined as the complement of $\Arg_\varepsilon(\sigma^p;\sigma^q)$. The morphism $\ext_A$ is the composition of the translation isomorphism 
	\begin{equation*}
		f\in\O(TA)\longmapsto [x\mapsto f(x+o)]\in\O(A),
	\end{equation*}
for a choice of $o\in A$, with the extension by $0$ to a map of $\O\left(F_1^P(\sigma^p;\sigma^q)\right)$. The vector $\omega$ is the generator of the dual of $\bigwedge^p\smallslant{F_1^P(\sigma^p;\sigma^q)}{TA}$.
\end{proof}

\subsubsection*{Properties of the spectral sequences} The two following theorems describe the already known properties of the Renaudineau--Shaw spectral sequences.

\begin{thm}[\textit{cf.} {\cite[Section 5.15]{Bor-Hae_cla_hom}}]\label{cor:struct_spec_seq_P}
	The spectral sequences $(E_{p,q}^r(\R P))_{p,q,r\geq 0}$ and $(E^{p,q}_r(\R P))_{p,q,r\geq 0}$ degenerate at the first page $($i.e.~$\partial^r$ and $\df_r$ vanish for all integers $r\geq 1)$. Moreover, the graded algebra $(H^q(\R P;\F_2);\cup)_{q\geq 0}$ is isomorphic to $(H^{q,q}(P;\F_2);\cup)_{q\geq 0}$.
\end{thm}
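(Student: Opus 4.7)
The plan is to exhibit a natural splitting of the augmentation filtration on $\K^{\R P}$, from which the degeneration of both spectral sequences becomes essentially formal, and then to reduce the algebra isomorphism to the classical diagonal concentration of tropical cohomology.

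First I would observe that in characteristic $2$ the group algebra $\F_2[V]$ of a finite dimensional $\F_2$-vector space $V$ is naturally isomorphic, as a graded vector space, to the exterior algebra $\bigwedge V$: the assignment $v_1\wedge\cdots\wedge v_k \mapsto \prod_{i=1}^k(1+\x^{v_i})$ is alternating because $(1+\x^v)^2=1+\x^{2v}=0$, vanishes on decomposables with linearly dependent entries by direct expansion, and lifts $\eta_V$ from Proposition~\ref{prop:nat_iso_group_ring}, so it is injective and an isomorphism by dimension count. Being natural in $V$, this splitting propagates to an isomorphism of cellular cosheaves $\bigoplus_{k\geq 0}F^P_k \cong \K^{\R P}$ that carries the direct-sum filtration $\bigoplus_{k'\geq k}F^P_{k'}$ onto the augmentation filtration $(\K^{\R P}_{(k)})_{k\geq 0}$ of Definition~\ref{dfn:filt_KP_OP}.

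Since the boundary of $\Omega_*(K;\K^{\R P})$ is induced term-by-term from the cosheaf structure, this decomposition then lifts to a splitting of chain complexes $\Omega_*(K;\K^{\R P})\cong \bigoplus_k\Omega_*(K;F^P_k)$ under which the filtration is the tautologically split direct-sum filtration. Its spectral sequence therefore collapses at the first page: $E^r_{p,q}(\R P)\cong H_{p,q}(P;\F_2)$ and $\partial^r=0$ for all $r\geq 1$. By Proposition~\ref{prop:dual_spec_seq} the cohomological spectral sequence $\big(E_r^{p,q}(\R P)\big)_{p,q,r\geq 0}$ is dual to the homological one, hence also degenerates at $E_1$.

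For the algebra isomorphism I would use the spectral ring structure of $\big(E_r^{p,q}(\R P)\big)$. The splitting exhibits $\bigoplus_k F^k_P$ as the associated graded sheaf of algebras of $\O_{\R P}$, so the product induced on $E_\infty$ coincides with the cup product of tropical cohomology. Combined with the degeneration, this gives $H^q(\R P;\F_2)\cong \bigoplus_k H^{k,q}(P;\F_2)$ as graded algebras, and the final reduction to $H^{q,q}(P;\F_2)$ follows from the classical vanishing $H^{k,q}(P;\F_2)=0$ for $k\neq q$. The hard part will be precisely this last statement: everything else becomes formal once one notices the characteristic-$2$ identification $\F_2[V]\cong\bigwedge V$, but the diagonal concentration is the only genuinely non-formal input, and requires either a local acyclicity computation in the stars of the vertices of $K$ or, equivalently, the Stanley--Reisner presentation of the cohomology of the smooth toric variety $Y_P$.
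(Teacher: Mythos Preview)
The paper does not prove this theorem; it is quoted as a classical result of Borel--Haefliger, so there is no in-paper argument to compare against. Your proposed proof, however, has a genuine gap at the very first step.

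The assignment $(v_1,\ldots,v_k)\mapsto\prod_{i=1}^k(1+\x^{v_i})$ is indeed symmetric and vanishes on linearly dependent tuples, but it is \emph{not multilinear}, so it does not descend to a linear map $\bigwedge^kV\to\F_2[V]$. Concretely, for $v_1,v_1',v_2$ linearly independent one has
\[
(1+\x^{v_1+v_1'})(1+\x^{v_2})\;-\;\big[(1+\x^{v_1})(1+\x^{v_2})+(1+\x^{v_1'})(1+\x^{v_2})\big]
\;=\;(1+\x^{v_1})(1+\x^{v_1'})(1+\x^{v_2})\neq 0.
\]
This is exactly why Proposition~\ref{prop:nat_iso_group_ring} only yields an isomorphism on the \emph{associated graded}. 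Worse, no splitting compatible with the cosheaf maps can exist: the extension morphisms of $F_1^P$ are quotients $\t(\F_2)\twoheadrightarrow\t(\F_2)/\Sed(\sigma^q)$, and already for the $3$-simplex one of these is the quotient by $\langle e_1+e_2+e_3\rangle$. Under $\F_2[\F_2^3]\to\F_2\big[\F_2^3/\langle(1,1,1)\rangle\big]$ the degree-one element $y_1+y_2+y_3$ (with $y_i=1+\x^{e_i}$) is sent to $z_1+z_2+z_3=(1+\x^{\bar e_1})(1+\x^{\bar e_2})\in\m^2$, so no grading of the source by $y$-degree can push forward to a grading of the target. Hence the filtered cosheaf $\K^{\R P}$ is genuinely non-split and your mechanism for forcing $E_1$-degeneration collapses.

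The repair is to reverse the logical order you sketch at the end. The diagonal concentration $H^{p,q}(P;\F_2)=0$ for $p\neq q$ is not merely the ``hard part'' of the algebra statement: once you have it, the $E_1$-page is supported on $\{p=q\}$, and every differential $\df_r$ (of bidegree $(-r,+1)$ with $r\geq 1$) has either trivial source or trivial target, so degeneration at $E_1$ is automatic. The filtration on $H^*(\R P;\F_2)$ then has a single nonzero graded piece in each degree, which gives the ring isomorphism with $\big(H^{q,q}(P;\F_2),\cup\big)$ directly, without any need for a cosheaf splitting.
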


\begin{thm}[Structure of the spectral sequences of $\X$, \textit{cf.} {\cite[Lemma~6.3 and Proposition 4.12]{Ren-Sha_bou_bet}}]\label{cor:struct_spec_seq_X}
	The first page $( E^1_{p,q}(\X))_{p,q\geq 0}$ is isomorphic to the tropical homology $( H_{p,q}(X;\F_2))_{p,q\geq 0}$. Moreover, for all integers $r\geq 2$, the only possibly non-trivial groups of the $\supth{r}$ page $(E^r_{p,q}(\X))_{p,q\geq 0}$ are located on the line segments ${\{p=q \textnormal{ and } 0\leq q\leq n-1\}}$ and ${\{p+q=n-1 \textnormal{ and } 0\leq q\leq n-1\}}$. Figure~\ref{fig:spec_seq_RX} depicts the shape of such a spectral sequence. Hence, the only possibly non-trivial boundary operators $(\partial^r_{p,q})_{p,q\geq0}$ are%
	\begin{equation*}
		\partial^r_{\frac{n-r}{2},\frac{n-r}{2}}\quad\textnormal{and}\quad\partial^r_{\frac{n-r}{2}-1,\frac{n+r}{2}},
	\end{equation*}%
	 when $r$ is congruent to $n$ modulo 2. By duality, the same holds for the dual page $(E_r^{p,q}(\X))_{p,q\geq0}$. However, this time the possibly non-trivial differentials are%
	\begin{equation*}
		\df_r^{\frac{n+r}{2},\frac{n-r}{2}-1}\quad\textnormal{and}\quad\df_r^{\frac{n+r}{2}-1,\frac{n+r}{2}-1},
	\end{equation*}%
	 under the same hypothesis $r\equiv n\;(\bmod 2)$.
\end{thm}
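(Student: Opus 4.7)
The plan is to prove the four assertions of Theorem~\ref{cor:struct_spec_seq_X} in order, using the filtered structure of $\K^\X$ and its dual together with the tropical Lefschetz hyperplane section theorem and tropical Poincaré duality for $X$. For the identification of the first page with tropical homology, I unwind Definition~\ref{dfn:filt_KX_OX} and apply Proposition~\ref{prop:graded_cosheaves}: the graded cosheaves of the filtration $(\K^\X_{(k)})_{k\geq 0}$ are precisely the tropical cosheaves $F^X_p$. With the index convention of the introductory section this yields
\[
	E^1_{p,q}(\X) = H_q\big(K;\K^\X_{(p)}/\K^\X_{(p+1)}\big) = H_q(K;F^X_p) = H_{p,q}(X;\F_2)\,,
\]
and the analogous computation on the cohomological side uses Proposition~\ref{prop:graded_sheaves}.

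For the support of the higher pages I combine three ingredients. First, the tropical Lefschetz hyperplane section theorem of \cite{Arn-Ren-Sha_Lef_sec} asserts that $i_{p,q}:H_{p,q}(X;\F_2)\to H_{p,q}(P;\F_2)$ is an isomorphism for $p+q<n-1$ and a surjection for $p+q=n-1$. Second, the tropical homology of the smooth polytope $P$ is concentrated on the diagonal, that is $H_{p,q}(P;\F_2)=0$ whenever $p\neq q$ -- a feature reflected in the Borel--Haefliger isomorphism recalled in Theorem~\ref{cor:struct_spec_seq_P}. Combining the two gives $H_{p,q}(X;\F_2)=0$ for $p+q<n-1$ and $p\neq q$. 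Third, I apply the tropical Poincaré duality for $X$ of \cite{Jel-Rau-Sha_Lef_11}, $H_{p,q}(X;\F_2)\cong H^{n-1-p,n-1-q}(X;\F_2)$, together with the dual cohomological Lefschetz statement that $i^{p,q}$ is an isomorphism for $p+q<n-1$, to propagate the vanishing to $p+q>n-1$ with $p\neq q$. The first page is therefore supported on the union of the diagonal $\{p=q\}$ and the anti-diagonal $\{p+q=n-1\}$, clipped to $[0,n-1]$ since $X$ has tropical dimension $n-1$. As every higher page is a sub-quotient of the first, the same support constraint holds for all $r\geq 2$.

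The location of the possibly non-trivial differentials then reduces to a bidegree census. Since $\partial^r$ has bidegree $(+r;-1)$, the map $\partial^r_{p,q}:E^r_{p,q}(\X)\to E^r_{p+r,q-1}(\X)$ can only be non-zero when both $(p,q)$ and $(p+r,q-1)$ lie in the support just established. The configurations ``both on the diagonal'' and ``both on the anti-diagonal'' are immediately ruled out for $r\geq 2$; the remaining two configurations both force $r\equiv n\pmod 2$ and pin the starting bidegree to $((n-r)/2,(n-r)/2)$ or $((n-r)/2-1,(n+r)/2)$. The dual statement for the cohomological spectral sequence follows from Proposition~\ref{prop:dual_spec_seq}: the support passes through the duality, and the parallel census for $\df_r$ of bidegree $(-r;+1)$ produces the admissible pair $((n+r)/2,(n-r)/2-1)$ and $((n+r)/2-1,(n+r)/2-1)$. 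The main non-trivial step is the support argument, which rests on two deep external theorems (tropical Lefschetz and tropical Poincaré duality for $X$) together with the diagonal concentration of the tropical homology of $P$; the remaining parts reduce to bookkeeping on bidegrees.
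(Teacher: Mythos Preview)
Your argument is correct and is essentially the standard one. Note that the present paper does not prove Theorem~\ref{cor:struct_spec_seq_X}; it cites it from \cite{Ren-Sha_bou_bet} (Lemma~6.3 and Proposition~4.12), so there is no in-paper proof to compare against. Your proof reproduces the approach of the original reference: Proposition~\ref{prop:graded_cosheaves} identifies the graded pieces of $\K^\X$ with the $F^X_p$, giving $E^1_{p,q}(\X)\cong H_{p,q}(X;\F_2)$; the support on the diagonal and anti-diagonal then follows from the vanishing $H_{p,q}(P;\F_2)=0$ for $p\neq q$ (implicit in Theorem~\ref{cor:struct_spec_seq_P}), the tropical Lefschetz theorem for the range $p+q<n-1$, and tropical Poincar\'e duality for the range $p+q>n-1$; the bidegree census for $\partial^r$ and $\df_r$ is straightforward. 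One minor remark: your appeal to Theorem~\ref{cor:struct_spec_seq_P} for the diagonal concentration of $H_{\bullet,\bullet}(P;\F_2)$ is legitimate but slightly indirect---the degeneration at $E^1$ together with $H^q(\R P;\F_2)\cong H^{q,q}(P;\F_2)$ forces $H_{p,q}(P;\F_2)=0$ for $p\neq q$ by a dimension count, and it may be cleaner to cite this vanishing directly.
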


\begin{figure}[!ht]
	\centering
	\begin{subfigure}[t]{0.45\textwidth}
		\centering
		\begin{equation*}
			\begin{tikzcd}[row sep=3, column sep=3]
				& & & E_1^{3,3} & & & \\
				& & 0 & &  0 \ar[ll] & & \\
				& 0 & & E_1^{2,2} \ar[ll] & & 0 \ar[ll] & \\
				E_1^{0,3} & & E_1^{1,2} \ar[ll] & & E_1^{2,1 \ar[ll]} & & E_1^{3,0} \ar[ll] \\
				& 0 & & E_1^{1,1} \ar[ll] & & 0 \ar[ll] & \\
				& & 0 & &  0 \ar[ll] & & \\
				& & & E_1^{0,0} & & & \\
			\end{tikzcd}
		\end{equation*}
		\caption{The first page.}
	\end{subfigure}
	\hfill
	\begin{subfigure}[t]{0.45\textwidth}
		\centering
		\begin{equation*}
			\begin{tikzcd}[row sep=3, column sep=3]
				& & & E_2^{3,3} & & & \\
				& & 0 & &  0 \ar[llld] & & \\
				& 0 & & E_2^{2,2} \ar[llld] & & 0 \ar[llld] & \\
				E_2^{0,3} & & E_2^{1,2} & & E_2^{2,1} \ar[llld] & & E_2^{3,0} \ar[llld] \\
				& 0 & & E_2^{1,1} & & 0 \ar[llld] & \\
				& & 0 & &  0 & & \\
				& & & E_2^{0,0} & & & \\
			\end{tikzcd}
		\end{equation*}
		\caption{The second page.}
	\end{subfigure}
	\hfill
	\begin{subfigure}[t]{0.45\textwidth}
		\centering
		\begin{equation*}
			\begin{tikzcd}[row sep=3, column sep=3]
				& & & E_3^{3,3} & & & \\
				& & 0 & &  0 \ar[lllldd] & & \\
				& 0 & & E_3^{2,2} & & 0 \ar[lllldd] & \\
				E_3^{0,3} & & E_3^{1,2} & & E_3^{2,1} & & E_3^{3,0} \ar[lllldd] \\
				& 0 & & E_3^{1,1} & & 0 & \\
				& & 0 & &  0 & & \\
				& & & E_3^{0,0} & & & \\
			\end{tikzcd}
		\end{equation*}
		\caption{The third page.}
	\end{subfigure}
	\caption{Some pages of the spectral sequence of a T-hypersurface of dimension 3.}
	\label{fig:spec_seq_RX}
\end{figure}

Following Theorem~\ref{cor:struct_spec_seq_X}, we will say that a page of the spectral sequences of $\X$ is \emph{irrelevant}\index{Irrelevant} if its differentials or boundary operators are all trivial because they have either a trivial source or a trivial target. The other ones, which we call the \emph{relevant}\index{Relevant} pages, are those for which $0\leq r\leq 1$, or $2\leq r\leq n-1$ and $r\equiv n\; (\bmod 2)$.

\begin{thm}[Poincar\'e duality]\label{thm:symmetry}
	For all $r\geq 1$, the $E_r$-pages of the Renaudineau--Shaw spectral sequence computing the cohomology of\, $\X$ satisfy Poincar\'e duality. That is to say:%
	\begin{enumerate}
		\item The vector space $E^{p,q}_{r}(\X)$ vanishes whenever $p>n-1$ or $q>n-1$.
		\item The vector space $E^{n-1,n-1}_{r}(\X)$ has dimension~$1$.
		\item The bilinear pairing $\cup\colon E^{p,q}_{r}(\X)\otimes E^{n-1-p,n-1-q}_{r}(\X)\rightarrow E^{n-1,n-1}_{r}(\X)$ is non-degenerate.
	\end{enumerate}%
	 In particular, $E^{p,q}_{r}(\X)$ is isomorphic to $E^{n-1-p,n-1-q}_{r}(\X)$, and $\df^{p,q}_r$ and $\df^{n-1-p+r,n-2-q}_r$ have the same rank and kernel dimension.
\end{thm}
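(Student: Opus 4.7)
The plan is to apply the Chain Reaction Lemma (Lemma~\ref{lm:casc_poinca_dual}) to the increasingly filtered graded differential algebra $\big(\Omega^\ast(K; \O_\X); \df; \cup\big)$, with the parameters of the lemma set to $r_0 = 1$, $m = n-1$, and upper bound $n-1$ on the second degree. Since the spectral sequence $(E_r^{p,q}(\X))_{p,q,r}$ is a spectral ring (noted at the end of Definition~\ref{dfn:filt_KX_OX}), the cup product descends to every page, and the lemma propagates Poincaré duality from $r_0$ to every $r \ge r_0$. The entire task therefore reduces to verifying its three hypotheses at $r_0 = 1$; the closing statement comparing $\df^{p,q}_r$ with $\df^{n-1-p+r, n-2-q}_r$ will then be read off from Proposition~\ref{prop:symm_spec_seq} applied with the same parameters.

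I would first identify $E_1^{p,q}(\X)$ with the tropical cohomology $H^{p,q}(X; \F_2)$. By Proposition~\ref{prop:graded_sheaves}, the Borel-Viro morphism $\bv_X^p$ gives an isomorphism of sheaves $\O_\X^{(p)}/\O_\X^{(p-1)} \cong F_X^p$, so $E_0^{p,q}(\X) \cong \Omega^q(K; F_X^p)$ with $\df_0$ the cellular coboundary, and taking cohomology yields $E_1^{p,q}(\X) \cong H^{p,q}(X; \F_2)$. The tropical Poincaré duality for the smooth tropical hypersurface $X$ dual to $K$ (Theorem 5.3 of \cite{Jel-Rau-Sha_Lef_11}) then supplies exactly the three required ingredients at the first page: vanishing for $p > n-1$ or $q > n-1$, one-dimensionality of $H^{n-1, n-1}(X; \F_2)$, and non-degeneracy of the cup-product pairing.

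The step I expect to require the most care is checking that the multiplicative structure which the spectral ring places on $E_1^{p,q}(\X)$ coincides, under the above identification, with the tropical cup product on $H^{\ast, \ast}(X; \F_2)$ used in Jell-Rau-Shaw's duality. On the zeroth page the product comes cell-wise from multiplication in $\gr \O_\X$, so what is needed is that $\bigoplus_p \bv_X^p$ is an isomorphism of sheaves of graded $\F_2$-algebras onto $\bigoplus_p F_X^p$. This follows by composing the natural isomorphism $(\eta^\ast)^{-1} : \gr \O_{\R P} \to \bigwedge F_P^\ast$, which respects products by the proposition immediately following Proposition~\ref{prop:natural_iso_O_1}, with the quotient morphism $i^\ast : \O_{\R P} \to \O_\X$, a morphism of sheaves of filtered algebras by Definition~\ref{dfn:real_sheaves}, and then chasing the diagram in Proposition~\ref{prop:graded_sheaves}. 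Although elementary, this is the place where the identification of the $E_1$-page with tropical cohomology must be upgraded from a bigraded vector space isomorphism to a bigraded algebra isomorphism. Once this upgrade is in hand, all three hypotheses of Lemma~\ref{lm:casc_poinca_dual} hold at $r_0 = 1$, the lemma delivers conclusions (1)--(3) at every $r \ge 1$, and Proposition~\ref{prop:symm_spec_seq} yields the final statement on ranks and kernel dimensions.
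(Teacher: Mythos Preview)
Your proposal is correct and follows essentially the same route as the paper's proof: identify $E_1^{p,q}(\X)$ with $H^{p,q}(X;\F_2)$ via Proposition~\ref{prop:graded_sheaves}, invoke tropical Poincar\'e duality (Jell--Rau--Shaw) to verify the hypotheses of Lemma~\ref{lm:casc_poinca_dual} at $r_0=1$, and then read off the rank statement from Proposition~\ref{prop:symm_spec_seq}. If anything, you are more explicit than the paper about the one point that deserves care---namely that the Borel--Viro identification is an isomorphism of sheaves of graded algebras, so that the spectral-ring product on $E_1$ really matches the tropical cup product---whereas the paper simply asserts ``this isomorphism respects the cup product.''
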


\begin{proof}
	By construction, the first page of the spectral sequence $(E^{p,q}_{1}(\X)_{p,q\geq 0}$ is the cohomology of the sheaf of graded algebras associated with $\O_\X$. Following Proposition~\ref{prop:graded_sheaves}, this sheaf is isomorphic to the sheaf of graded algebras $\bigoplus_{k=1}^{n-1} F_X^k$. On that account, $E^{p,q}_1(\X)$ is isomorphic, for all $p,q\in\N$, to the tropical cohomology group $H^{p,q}(X;\F_2)$. Furthermore, this isomorphism respects the cup product. Using \cite[Theorem 5.3]{Jel-Rau-Sha_Lef_11} and \cite[Theorem 3.3]{Brug-LdM-Rau_Comb_pac}, we see that  this page satisfies the Poincar\'e duality, and the theorem follows from Lemma~\ref{lm:casc_poinca_dual}.    
\end{proof}

\section{Degeneracy and real Lefschetz property}

\begin{ntns}
	Let $M$ be a free Abelian group of rank $n$. Let $P$ be a smooth polytope of the vector space $M\otimes\R$, $K$ be a primitive triangulation of $P$ with dual hypersurface $X$, and $\varepsilon\in C^0(K;\F_2)$ be a sign distribution on $K$.
\end{ntns}

\begin{dfn}\label{dfn:Real_Lef_Prop}
	Let $H$ be a hypersurface of $\R P$. We say that $H$ has the \emph{real Lefschetz property} if the morphisms induced by the inclusion $i^q:H^q(\R P;\F_2)\longrightarrow H^q(H;\F_2)$ are injective for all $q\leq\left\lfloor \frac{n-1}{2}\right\rfloor$.
\end{dfn}

As announced in the introduction, we will link the degeneracy of the Renaudineau--Shaw spectral sequence of $\X$ to the real Lefschetz property.

\begin{thm}[Vanishing criterion]\label{thm:charact_vanishing}
	Let $r\geq 2$ be an integer congruent to $n$ modulo $2$. The differentials of the page $E_r(\X)$ vanish if and only if ${i^q\colon H^q(\R P;\F_2)\rightarrow H^q(\X;\F_2)}$ is injective when $q$ equals $\frac{n-r}{2}$. 
\end{thm}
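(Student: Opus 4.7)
Set $s := (n-r)/2$, so $2s+r=n$. The plan is to reduce the vanishing of all differentials of $E_r(\X)$ to the vanishing of the single differential $\df_r^{n-s,s-1}\colon E_r^{n-s,s-1}(\X) \to E_r^{s,s}(\X)$, to detect this vanishing through the map $i^{s,s}_\infty$ coming from the morphism of spectral rings, and finally to identify the injectivity of $i^{s,s}_\infty$ with that of $i^s$ via a filtration analysis. By Theorem~\ref{cor:struct_spec_seq_X} the only potentially non-trivial differentials on $E_r(\X)$ are $\df_r^{n-s,s-1}$ and $\df_r^{n-s-1,n-s-1}$; a direct index check ($n-1-(n-s)+r=n-s-1$ and $n-2-(s-1)=n-s-1$) shows these are exchanged by the Poincaré-duality symmetry of Theorem~\ref{thm:symmetry}, so they share the same rank and both vanish iff $\df_r^{n-s,s-1}$ does.

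Next I would study the $(s,s)$-slot using the morphism $i^{p,q}_{r'}\colon E_{r'}^{p,q}(\R P) \to E_{r'}^{p,q}(\X)$. Theorem~\ref{cor:struct_spec_seq_P} gives $E_{r'}^{p,q}(\R P)=H^{p,q}(P;\F_2)$, concentrated on the diagonal. Since $2s=n-r\leq n-2$, the tropical Lefschetz theorem makes $i^{s,s}_1\colon H^{s,s}(P;\F_2)\to H^{s,s}(X;\F_2)=E_1^{s,s}(\X)$ an isomorphism. A case check then shows that every differential entering or leaving $E_{r'}^{s,s}(\X)$ is zero except $\df_r^{n-s,s-1}$: the outgoing $\df_1^{s,s}(\X)$ vanishes because $\df_1^{s,s}(\R P)=0$ and $i^{s,s}_1$ is surjective, while for $r'\geq 2$ the target $(s-r',s+1)$ is off both diagonal and antidiagonal; the incoming $\df_1^{s+1,s-1}(\X)$ vanishes since Lefschetz forces $E_1^{s+1,s-1}(\X)\cong H^{s+1,s-1}(P;\F_2)=0$, while for $2\leq r'\neq r$ the source $(s+r',s-1)$ is off both special lines. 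Consequently $E_{r'}^{s,s}(\X)=H^{s,s}(X;\F_2)$ for all $1\leq r'\leq r$ and $E_\infty^{s,s}(\X)=H^{s,s}(X;\F_2)/\textnormal{im}\,\df_r^{n-s,s-1}$, so $i^{s,s}_\infty$, being the composition of the isomorphism $i^{s,s}_1$ with the quotient map, is injective iff $\df_r^{n-s,s-1}=0$.

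Finally a filtration analysis identifies $i^{s,s}_\infty$ with $i^s$ at the level of injectivity. On the ambient side, $H^s(\R P;\F_2)\cong E_\infty^{s,s}(\R P)=H^{s,s}(P;\F_2)$ is concentrated at filtration level $s$. On the hypersurface side, $E_\infty^{p,s}(\X)=0$ for $p<s$ because $(p,s)$ is neither diagonal (as $p\neq s$) nor antidiagonal (since $n-1-s=s+r-1>s>p$). Hence $F^{s-1}H^s(\X;\F_2)=0$ and $F^sH^s(\X;\F_2)=E_\infty^{s,s}(\X)$, so $i^s$ factors as $H^s(\R P;\F_2)\xrightarrow{i^{s,s}_\infty} E_\infty^{s,s}(\X)\hookrightarrow H^s(\X;\F_2)$ and is injective iff $i^{s,s}_\infty$ is. Chaining the three reductions yields the theorem. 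The main obstacle will be the bookkeeping in the second step: one must rule out contributions at every other page and position to isolate $\df_r^{n-s,s-1}$, with Lefschetz playing a crucial role both at $r'=1$ (to kill the would-be incoming $\df_1^{s+1,s-1}$) and in making $i^{s,s}_1$ an isomorphism at every page.
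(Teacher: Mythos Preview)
Your proof is correct and follows essentially the same route as the paper's. Both arguments reduce to the single differential $\df_r^{n-s,s-1}$ via the symmetry of Theorem~\ref{thm:symmetry}, identify its vanishing with the injectivity of $i^{s,s}_{r+1}$ using the tropical Lefschetz isomorphism $i^{s,s}\colon H^{s,s}(P;\F_2)\xrightarrow{\cong}H^{s,s}(X;\F_2)$, and then pass to $i^s$ via the filtration on $H^s(\X;\F_2)$. The only difference is presentational: where the paper invokes Theorems~\ref{cor:struct_spec_seq_P} and~\ref{cor:struct_spec_seq_X} in one breath to rewrite diagram~(\ref{D}) and to assert $E^{s,s}_\infty(\X)\cong E^{s,s}_{r+1}(\X)$, you carry out the page-by-page bookkeeping explicitly, checking that every other differential touching the $(s,s)$-slot has source or target off both the diagonal and antidiagonal (with Lefschetz killing the page-$1$ neighbours $E_1^{s\pm 1,s\mp 1}(\X)$). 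Your explicit verification is a useful unpacking of what the paper's citations contain.
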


\begin{proof}
	Let $r\geq 2$ and $r\equiv n\;(\bmod 2)$. By Theorem~\ref{cor:struct_spec_seq_X}, the only possibly non-trivial differentials of the $\supth{r}$ page are%
	\begin{equation*}
	 	\df_r^{\frac{n+r}{2},\frac{n-r}{2}-1}\quad\textnormal{and}\quad\df_r^{\frac{n+r}{2}-1,\frac{n+r}{2}-1}.
	\end{equation*}%
	 Following Theorem~\ref{thm:symmetry}, they have the same rank, so one vanishes if and only the other does. Let $q$ be $\frac{n-r}{2}$; we have the commutative diagram%
	\begin{equation}\label{D}
		\begin{tikzcd}
			E_r^{q+r,q-1}(\R P) \ar[d,"\df_r^{q+r,q-1}" left] \ar[r,"i^{q+r,q-1}_r" above] & E_r^{q+r,q-1}(\X) \ar[d,"\df_r^{q+r,q-1}" right]\\
			E_r^{q,q}(\R P) \ar[r,"i^{q,q}_r" below] & E_r^{q,q}(\X)\rlap{.} 
		\end{tikzcd}
	\end{equation}%
	 Using Theorems~\ref{cor:struct_spec_seq_P} and~\ref{cor:struct_spec_seq_X} and Proposition~\ref{prop:graded_sheaves}, the commutative diagram~(\ref{D}) can be written as follows:%
	\begin{equation*}
		\begin{tikzcd}
			0 \ar[d] \ar[r] & E_2^{q+r,q-1}(\X) \ar[d,"\df_r^{q+r,q-1}" right]\\
			H^{q,q}(P;\F_2) \ar[r,"i^{q,q}" below] & H^{q,q}(X;\F_2)\rlap{.} 
		\end{tikzcd}
	\end{equation*}%
	 Since $2q=n-r<n-1$ by assumption, the tropical Lefschetz hyperplane section theorem, see \cite[Theorem 1.1]{Arn-Ren-Sha_Lef_sec} and \cite[Proposition 3.2]{Bru-Man_she_dec}, implies that $i^{q,q}$ is an isomorphism. As a consequence, $\df_r^{q+r,q-1}$ vanishes if and only if the map ${i^{q,q}_{r+1}}\colon E_{r+1}^{q,q}(\R P) \rightarrow E_{r+1}^{q,q}(\X)$ is injective. Furthermore, we have the following exact sequence:%
	\begin{equation*}
	0 \longrightarrow E^{q,q}_{\infty}(\X) \longrightarrow H^q(\X;\F_2) \longrightarrow E^{n-1-q,q}_\infty(\X) \longrightarrow 0.
	\end{equation*}%
	 Since $E^{q,q}_{\infty}(\X)$ is isomorphic to $E^{q,q}_{r+1}(\X)$, $H^q(\R P;\F_2)$ is isomorphic to $E_{r+1}^{q,q}(\R P)$, and the morphism ${i^q\colon H^q(\R P;\F_2)\rightarrow H^q(\X;\F_2)}$ respects the filtration, the morphism $i^{q,q}_{r+1}$ is injective if and only if $i^q$ is. 
\end{proof}

From this vanishing criterion, we deduce the following corollary.

\begin{cor}[Degeneracy criterion]\label{cor:charact_degeneracy}
	Let $r\geq 2$ be an integer. The Renaudineau--Shaw spectral sequence of\, $\X$ degenerates at the $\supth{r}$ page if and only if the maps ${i^q\colon H^q(\R P;\F_2)\rightarrow H^q(\X;\F_2)}$ are injective for all ${q\leq \left\lfloor\frac{n-r}{2}\right\rfloor}$.
\end{cor}

Corollary~\ref{cor:charact_degeneracy} can be interpreted as a comparison of two invariants of the pair $\X\subset\R P$.
\begin{dfn}\label{dfn:dege_index}
	We define the \emph{degeneracy index}\index{Degeneracy Index} of $\X$ as%
	\begin{equation*}
		r(\X)\coloneqq \min\left\{r_0\geq 0\relmiddle| \df^{p,q}_r=0,\,\forall p,q\in\N,\,\forall r\geq r_0\right\}.
	\end{equation*}
\end{dfn}

The second invariant was introduced by I.\,O.~Kalinin for projective hypersurfaces and named the \emph{rank}\index{Rank} of the hypersurface by O.~Viro.

\begin{dfn}\label{dfn:rank_X}
	The \emph{rank} of $\X$ is defined as%
	\begin{equation*}
		\ell(\X)\coloneqq \max\left\{q_0\geq 0\mid i^q\colon H^q(\R P;\F_2)\rightarrow H^q(\X;\F_2) \textnormal{ is injective for all }q\leq q_0\right\}.
	\end{equation*}%
\end{dfn}

\begin{prop}\label{prop:inj_r_1}
	If the dimension $n$ of $\X$ is odd and $r(\X)=1$, the map induced by the inclusion $i^\frac{n-1}{2}\colon H^{\frac{n-1}{2}}(\R P;\F_2) \rightarrow H^{\frac{n-1}{2}}(\X;\F_2)$ is injective.
\end{prop}

\begin{proof}
	The hypersurface $\X$ is maximal for the Smith--Thom inequality; \textit{cf.} \cite[Theorem~6.1]{Ren-Sha_bou_bet}. In the first page of the Renaudineau--Shaw spectral sequences, we have the following commutative diagram:%
	\begin{equation*}
		\begin{tikzcd}[column sep=5em]
			E^{\frac{n+1}{2},\frac{n-3}{2}}_1(\R P) \ar[r,"i^{\frac{n+1}{2},\frac{n-3}{2}}_1" above] \ar[d,"\df^{\frac{n+1}{2},\frac{n-3}{2}}_1" left] & E^{\frac{n+1}{2},\frac{n-3}{2}}_1(\X) \ar[d,"\df^{\frac{n+1}{2},\frac{n-3}{2}}_1" right] \\
			E^{\frac{n-1}{2},\frac{n-1}{2}}_1(\R P) \ar[r,"i^{\frac{n-1}{2},\frac{n-1}{2}}_1" below] & E^{\frac{n-1}{2},\frac{n-1}{2}}_1(\X)\rlap{.}
		\end{tikzcd}
	\end{equation*}%
	 Using the maximality hypothesis as well as  Theorems~\ref{cor:struct_spec_seq_P} and~\ref{cor:struct_spec_seq_X} and Proposition~\ref{prop:graded_sheaves}, this commutative diagram can be written as follows:%
	\begin{equation*}
		\begin{tikzcd}[column sep=5em]
			0 \ar[d] \ar[r] & H^{\frac{n+1}{2},\frac{n-3}{2}}(X;\F_2) \ar[d,"0" right]\\
			H^{\frac{n-1}{2},\frac{n-1}{2}}(P;\F_2) \ar[r,"i^{\frac{n-1}{2},\frac{n-1}{2}}" below] & H^{\frac{n-1}{2},\frac{n-1}{2}}(X;\F_2)\rlap{,}
		\end{tikzcd}
	\end{equation*}%
where $i^{\frac{n-1}{2},\frac{n-1}{2}}$ is injective by the tropical Lefschetz hyperplane section theorem, \cite[Theorem 1.1]{Arn-Ren-Sha_Lef_sec} and \cite[Proposition 3.2]{Brug-LdM-Rau_Comb_pac}. Since $\X$ is maximal, $H^{\frac{n-1}{2}}(\X;\F_2)$ is isomorphic to $H^{\frac{n-1}{2},\frac{n-1}{2}}(X;\F_2)$, and the morphism
	\begin{equation*}
		i^\frac{n-1}{2}\colon H^\frac{n-1}{2}(\R P;\F_2)\longrightarrow  H^\frac{n-1}{2}(\X;\F_2)
	\end{equation*}
        is conjugate to the morphism
	 \begin{equation*}
	 	i^{\frac{n-1}{2},\frac{n-1}{2}}\colon H^{\frac{n-1}{2},\frac{n-1}{2}}(P;\F_2) \longrightarrow H^{\frac{n-1}{2},\frac{n-1}{2}}(X;\F_2).
	\end{equation*}
As a consequence, $i^{\frac{n-1}{2}}$ is also injective.
\end{proof}

\begin{cor}
	If\, $\X$ is maximal relatively to the Smith--Thom inequality, then it satisfies the real Lefschetz property.
\end{cor}

\begin{proof}
  The T-hypersurface $\X$ is maximal if and only if $r(\X)=1$. In this case, the real Lefschetz property is ensured by Corollary~\ref{cor:charact_degeneracy} and Proposition~\ref{prop:inj_r_1}.
\end{proof}

\begin{rem}
This was already known for algebraic hypersurfaces of projective spaces; \textit{cf.} \cite[Remarks on the Homomorphisms $j_*$]{Kha_add_cong}.
\end{rem}

\begin{cor}\label{cor:inequalities}
	We have the inequalities%
	\begin{equation*}
		\ell(\X) \geq \left\lfloor \frac{n-r(\X)}{2}\right\rfloor,
	\end{equation*}%
	 with equality if $r(\X)\geq 3+\frac{1-(-1)^n}{2}$, and%
	\begin{equation*}
		r(\X) \leq \max\left(2;n-2\ell(\X)-1\right),
	\end{equation*}%
	 with equality if $\ell(\X)\leq\frac{n-5}{2}$.
\end{cor}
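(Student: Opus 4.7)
The main engine is the biconditional form of Corollary~\ref{cor:charact_degeneracy}: for every integer $r_0\geq 2$ with $r_0\equiv n\pmod{2}$,
\begin{equation*}
r(\X)\leq r_0 \iff \ell(\X)\geq \frac{n-r_0}{2}.
\end{equation*}
The plan is to feed carefully chosen values of $r_0$ into this equivalence. The supplementary fact I would extract from Theorem~\ref{cor:struct_spec_seq_X} is a parity constraint on $r(\X)$: if $r(\X)\geq 3$, minimality of $r(\X)$ forces page $r(\X)-1\geq 2$ to support a non-trivial differential, and the only pages $r\geq 2$ supporting non-trivial differentials satisfy $r\equiv n\pmod{2}$; therefore $r(\X)-1\equiv n\pmod{2}$, i.e.\ $r(\X)\not\equiv n\pmod{2}$, throughout the regime $r(\X)\geq 3$.

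For the first inequality I would split on $r(\X)$. In the main regime $r(\X)\geq 3$, the parity observation identifies the smallest admissible $r_0\geq r(\X)$ as $r_0=r(\X)+1$, and the biconditional yields $\ell(\X)\geq (n-r(\X)-1)/2=\lfloor (n-r(\X))/2\rfloor$ since $n-r(\X)$ is odd. When $r(\X)\leq 2$ I would take $r_0\in\{2,3\}$ according to the parity of $n$, obtaining $\ell(\X)\geq \lfloor n/2\rfloor-1$, which already matches the claim for $r(\X)=2$; the edge sub-cases $r(\X)\leq 1$ correspond to collapse at $E_1$ and are handled by combining this collapse with the tropical Lefschetz theorem. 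For the equality under the hypothesis $r(\X)\geq 3+\frac{1-(-1)^n}{2}$, I would apply Theorem~\ref{thm:charact_vanishing} to the last non-trivial page $r^*=r(\X)-1$, which now lies in the relevant range $2\leq r^*\leq n-2$ with $r^*\equiv n\pmod{2}$: it furnishes $q=(n-r^*)/2$ at which $i^q$ fails to be injective, so $\ell(\X)\leq q-1=(n-r(\X)-1)/2$, matching the lower bound.

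For the second inequality I would set $\ell=\ell(\X)$ and take $r_0=n-2\ell$ whenever $n-2\ell\geq 2$; this choice is automatically $\equiv n\pmod{2}$, so the biconditional gives $r(\X)\leq n-2\ell$, and the parity constraint sharpens this to $r(\X)\leq n-2\ell-1$ as soon as $r(\X)\geq 3$. If $r(\X)\leq 2$ the bound $r(\X)\leq \max(2,n-2\ell-1)$ is immediate, and if $n-2\ell\leq 1$ then taking the smallest admissible $r_0\in\{2,3\}$ still yields $r(\X)\leq 2$. For the equality under $\ell\leq (n-5)/2$, the definition of $\ell(\X)$ means that $i^{\ell+1}$ is not injective; the corresponding page $r=n-2\ell-2$ then satisfies $3\leq r\leq n-2$ and $r\equiv n\pmod{2}$, so Theorem~\ref{thm:charact_vanishing} forces a non-trivial differential there and hence $r(\X)\geq r+1=n-2\ell-1$, matching the upper bound. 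The substantive content is carried by Corollary~\ref{cor:charact_degeneracy} and Theorem~\ref{thm:charact_vanishing}; the main obstacle throughout is the fiddly parity bookkeeping.
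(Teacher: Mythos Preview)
Your proposal is correct and follows the same strategy as the paper: both proofs run on the biconditional of Corollary~\ref{cor:charact_degeneracy} together with Theorem~\ref{thm:charact_vanishing}, and both isolate the maximal case $r(\X)=1$ to be handled via tropical Lefschetz. The organization differs in one useful way: you extract the parity constraint ``$r(\X)\geq 3\Rightarrow r(\X)\not\equiv n\pmod 2$'' explicitly and use it as a sharpening device, whereas the paper hides the same information inside the floor function $f(r)=\lfloor(n-r)/2\rfloor$ and the set-theoretic argument $f(R)\subset L$. Your formulation makes the step from $r(\X)\leq n-2\ell$ to $r(\X)\leq n-2\ell-1$ transparent, while the paper reaches the same endpoint through a somewhat heavier case analysis on whether $\lfloor(n-r)/2\rfloor$ equals $(n-r)/2$.

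Two small points to tighten. First, in the sub-case $n-2\ell\leq 1$ with $n$ odd, taking $r_0=3$ only gives $r(\X)\leq 3$; you then need to invoke your parity constraint once more (page $2$ is irrelevant when $n$ is odd) to conclude $r(\X)\leq 2$. Second, your treatment of $r(\X)=1$ is terser than the paper's: the nontrivial content is the case $n$ odd, where one must show $i^{(n-1)/2}$ is injective, and the paper does this by observing that for $q=(n-1)/2$ the filtration on $H^q(\X;\F_2)$ has a single nonzero graded piece, so $i^q$ is conjugate to the tropical map $i^{q,q}$, which is injective by Lefschetz at the boundary index $2q=n-1$. Your sketch points at exactly this, but it is worth saying which $q$ is at stake.
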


\begin{proof}
	We begin by proving the first inequality. 
	Let us denote by $R$ and $L$ the sets%
	\begin{equation*}
		\left\{\begin{aligned}
			R&\coloneqq \left\{2\leq r_0\leq n\relmiddle| \df^{p,q}_r=0,\,\forall p,q\in\N,\,\forall r\geq r_0\right\},\\
			L&\coloneqq \left\{q_0\geq 0\relmiddle| i^q\colon H^q(\R P;\F_2)\rightarrow H^q(\X;\F_2) \textnormal{ is injective for all }q\leq q_0\right\}.
		\end{aligned}\right.
	\end{equation*}%
	We denote the map $r\mapsto \left\lfloor \frac{n-r}{2}\right\rfloor$ by $f$. Corollary~\ref{cor:charact_degeneracy} and Proposition~\ref{prop:inj_r_1} imply that $f(R)$ is a subset of $L$. Since $f$ is non-increasing, $\ell(\X)=\max (L)$ is at least equal to $f(\min R)=f(r(\X))$, and we find the first inequality. If $r(\X)\geq 3+\frac{1-(-1)^n}{2}$, then the differentials of at least one relevant page are non-trivial. Then, using Corollary~\ref{cor:charact_degeneracy}, we find that the map $i^q$ is not injective for the corresponding index $q$, and the equality follows. 
	
	To prove the second inequality, we use the converse implication of Corollary~\ref{cor:charact_degeneracy} to find that%
	\begin{equation*}
		\begin{split}
			r(\X)& \textstyle \leq \min\left\{r\geq 2\relmiddle| i^q \textnormal{ is injective for all }q\leq \left\lfloor \frac{n-r}{2} \right\rfloor\right\}\\
			&\textstyle \leq \min\left\{r\geq 2\relmiddle| \left\lfloor \frac{n-r}{2}\right\rfloor \leq \ell(\X)\right\}\\
			&\textstyle \leq \min\left(\left\{r\geq 2\relmiddle| \frac{n-r}{2} \leq \ell(\X)\right\} \cup\left\{r\geq 2\mid  \left\lfloor \frac{n-r}{2}\right\rfloor \leq \ell(\X)<\frac{n-r}{2}\right\}\right) \\
			&\textstyle \leq \min \left(\{\max(2;n-2\ell(\X))\} \cup \left\{r\geq 2\relmiddle|  \left\lfloor \frac{n-r}{2}\right\rfloor \leq \ell(\X)<\frac{n-r}{2}\right\}\right).
		\end{split}
	\end{equation*}
	 The set $\left\{r\geq 2\relmiddle|  \left\lfloor \frac{n-r}{2}\right\rfloor \leq \ell(\X)<\frac{n-r}{2}\right\}$ is either empty or reduced to ${\{n-2\ell(\X)-1\}}$. The latter case only occurs when $n-2\ell(\X)-1\geq 2$,  \textit{i.e.}~$\ell(\X)\leq\frac{n-3}{2}$. In that case, we find $r(\X)\leq n-2\ell(\X)-1$. If on the contrary $\ell(\X)\geq \frac{n-1}{2}$, then $\max(2;n-2\ell(\X))=2$ and $r(\X)\leq 2$. Therefore, $r(\X)$ is at most equal to $\max(2;n-2\ell(\X)-1)$. The associated equality follows from Corollary~\ref{cor:charact_degeneracy} since under the assumption $\ell(\X)\leq\frac{n-5}{2}$ there is at least one relevant page of index at least 2 with non-trivial differentials.
\end{proof} 

	Corollary~\ref{cor:charact_degeneracy} can be improved for a particular class of real toric varieties whose cohomology ring has a property related to the hard Lefschetz theorem; \textit{cf.} \cite[Corollary 4.13]{Wel_dif_ana}.

\begin{dfn}\label{dfn:iota}
	Let $Y$ be a topological space and $\alpha\in H^1(Y;\F_2)$. We denote by $\iota(\alpha)$ the minimum of the integers $q\geq -1$ for which there exists a non-zero class $\beta\in H^{q+1}(Y;\F_2)$ for which $\alpha\cup \beta$ vanishes. We denote the maximum of the $\iota(\alpha)$ for all $\alpha\in H^1(Y;\F_2)$ by $\iota(Y)$.
\end{dfn}

\begin{rem}
	For any $n$-dimensional smooth polytope $P$, the number $\iota(\R P)$ is at most $n-1$. The hard Lefschetz theorem might lead us to think that $\iota(\R P)$ is always at least equal to $\left\lfloor\frac{n}{2}\right\rfloor-1$ since the cohomology ring of $\R P$ with $\F_2$-coefficients is isomorphic, up to dividing the grading by 2, to the reduction modulo $2$ of the integral cohomology ring of its complex locus $\C P$. However, the powers of the class of an ample line bundle might be divisible by $2$ from a certain rank. This phenomenon is illustrated in point~\eqref{exs:iota(P)-4} of Examples~\ref{exs:iota(P)}. Nevertheless, we can deduce from the hard Lefschetz theorem that $\iota(\R P)$ being greater than $\left\lfloor\frac{n}{2}\right\rfloor-1$ is a constraint on the polytope $P$. Since the Betti numbers of $\R P$ form an unimodal sequence centered at $\frac{n}{2}$, if $\iota(\R P)\geq\left\lfloor\frac{n}{2}\right\rfloor$, then the $\supth{q}$ Betti numbers of $\R P$ must be equal for all $n-\iota(\R P)-1\leq q\leq \iota(\R P)+1$, as in point~\eqref{exs:iota(P)-3} of Examples~\ref{exs:iota(P)}. The only polytope $P$ satisfying $\iota(\R P)=n-1$ is the simplex.
\end{rem}

\begin{exs}\label{exs:iota(P)}
	Here we give examples of the numbers $\iota(Y)$ when $Y$ is the real locus of a smooth projective toric variety. 
	\begin{enumerate}
		\item\label{exs:iota(P)-1} If $P$ is a non-singular simplex, its associated toric variety is a projective space. The cohomology ring of $\R P$ is isomorphic to $\smallslant{\F_2[h]}{(h^{n+1})}$, where $n$ is the dimension of the simplex. We have $\iota(\R P)=n-1$.
		\item\label{exs:iota(P)-2} If $P$ is the product of two non-singular simplices, then the cohomology ring associated with $\R P$ is $\smallslant{\F_2[h_1,h_2]}{(h_1^{n_1+1},h_2^{n_2+1})}$. If $n_1\leq n_2$, any class of the form $h_2+uh_1$, $u\in\F_2$, has $\iota(h_2+uh_1)=n_2-1$ and the other non-zero classes have $\iota(h_1)=n_1-1$. Therefore, $\iota(\R P)=\max(n_1;n_2)-1$. 
		\item\label{exs:iota(P)-3} We say that a simple integer polytope $P$ is a \emph{blow-up}\index{Blow-up} of a simple integer polytope $Q$ if $P$ is obtained by chopping off one of the corners of $Q$. More precisely, the corner to be chopped off must be adjacent to edges of integral length at least $2$, and the chopping hyperplane must pass through integer points of these edges at equal integer distances from the corner. See Figure~\ref{fig:blow-up} for an example. The corner vertex of the polytope corresponds to a fixed point of the torus action on the associated toric variety. Chopping off the corner corresponds to blowing up the fixed point. One can find a description of the normal fan in \cite[Section 2.5]{Ful_tor_var}. The space $\R P$ is the connected sum of $\R Q$ with $\R\P^n$. Using the Mayer--Vietoris exact sequence, \textit{cf.} \cite[Theorem 33.1]{Munk_ele_alg}, we find that $\bigoplus_{q\in\N}H^q(\R P;\F_2)$ is the quotient of $(\bigoplus_{q\in\N}H^q(\R Q;\F_2))[x]$ by the ideal $(x\alpha \colon \alpha\in \bigoplus_{q\geq 1}H^q(\R Q;\F_2))+(x^n+[\R Q]^*)$, where $[\R Q]^*$ is the generator of $H^n(\R Q;\F_2)$. Therefore, if $\alpha\in H^1(\R Q;\F_2)$, then the class $\alpha+x\in H^1(\R P;\F_2)$ satisfies $\iota(\alpha+x)=\min(\iota(\alpha);n-2)$. However, the class $\alpha$, seen as a class of $P$, has $\iota$-number equal to either $0$ or $-1$. Since $x$ has $\iota$-number equal to~$0$, we find that $\iota(\R P)=\min(\iota(\R Q);n-2)$.
		
		\begin{figure}[!ht]
	\centering
	\begin{tikzpicture}[scale=1]
		\draw (0,0) rectangle (2,2);
		\fill (0,0) circle (.05);
		\fill (1,0) circle (.05);
		\fill (2,0) circle (.05);
		\fill (0,1) circle (.05);
		\fill (1,1) circle (.05);
		\fill (2,1) circle (.05);
		\fill (0,2) circle (.05);
		\fill (1,2) circle (.05);
		\fill (2,2) circle (.05);
		\draw[dotted, very thick] ($(0,1)+(-135:.5)$) -- ($(1,2)+(+45:.5)$);
		
		\draw[thick,->] (2.25,1) -- (3.75,1);
		
		\draw (4,0) -- (6,0) -- (6,2) -- (5,2) -- (4,1) -- cycle;
		\fill (4,0) circle (.05);
		\fill (5,0) circle (.05);
		\fill (6,0) circle (.05);
		\fill (4,1) circle (.05);
		\fill (5,1) circle (.05);
		\fill (6,1) circle (.05);
		\fill (5,2) circle (.05);
		\fill (6,2) circle (.05);
	\end{tikzpicture}
	\caption{A blow-up of $\mathbb{P}^1\times\mathbb{P}^1$.}
	\label{fig:blow-up}
\end{figure}
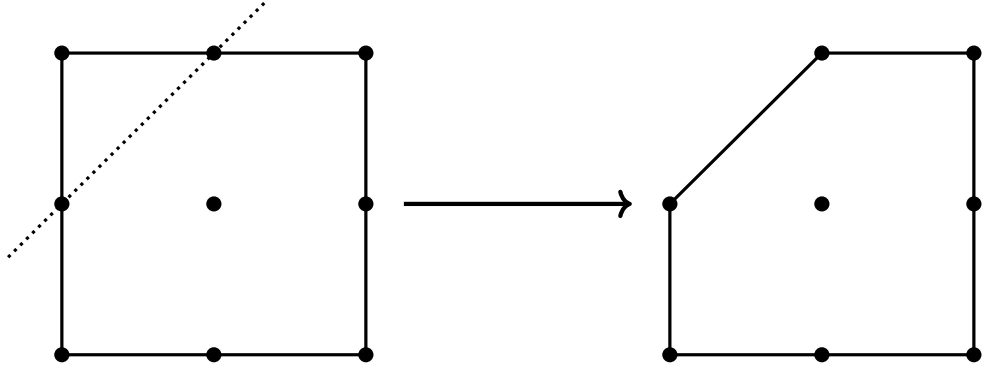
		
		\item\label{exs:iota(P)-4} Let $I^n$ be the $n$-dimensional cube. The cohomology ring $\bigoplus_{q\in\N}H^q(\R I^n;\F_2)$ is the exterior algebra $\bigwedge H^1(\R I^n;\F_2)$, so any class of degree $1$ squares to $0$. Therefore, $\iota(\R I^n)=0$.
	\end{enumerate}
\end{exs}

\begin{cor}\label{cor:charact_degeneracy2}
	Let $r_0\geq 2$ be an integer congruent to $n$ modulo $2$. If\, $\iota(\R P)$ is at least $\left\lfloor\frac{n}{2}\right\rfloor-2$, then the differentials $\df^r_{p,q}$ of the Renaudineau--Shaw spectral sequence computing the cohomology of\, $\X$ vanish for all ${p,q\in\N}$ and all ${r\geq r_0}$ if and only if the induced map ${i^q\colon H^q(\R P;\F_2)\rightarrow H^q(\X;\F_2)}$ is injective for ${q=\frac{n-r_0}{2}}$. In particular, all the differentials $\df^r_{p,q}$ are trivial for all $r\geq 2$ if and only if the induced map ${i^q\colon H^q(\R P;\F_2)\rightarrow H^q(\X;\F_2)}$ is injective for $q= \left\lfloor\frac{n}{2}\right\rfloor-1$.  
\end{cor}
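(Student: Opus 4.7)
The plan is to deduce the corollary from Corollary~\ref{cor:charact_degeneracy} by showing that, under the hypothesis $\iota(\R P) \geq \lfloor n/2 \rfloor - 2$, injectivity of $i^{q_0}$ at the single index $q_0 \coloneqq \frac{n - r_0}{2}$ is enough to force injectivity of $i^q$ for every $q \leq q_0$. The ``only if'' direction is immediate from Corollary~\ref{cor:charact_degeneracy}, so only the converse requires work.

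First I would record the arithmetic fact that, because $r_0 \geq 2$ and $r_0 \equiv n \pmod 2$, the index $q_0$ satisfies $q_0 \leq \lfloor n/2 \rfloor - 1$, the extremal cases being $r_0=2$ when $n$ is even and $r_0=3$ when $n$ is odd. By the very definition of $\iota(\R P)$, the hypothesis produces a class $\alpha \in H^1(\R P; \F_2)$ such that cup-product with $\alpha$ is injective on $H^r(\R P; \F_2)$ for every $r \leq \lfloor n/2 \rfloor - 2$. Iterating this injectivity along the chain $H^q \to H^{q+1}\to\cdots\to H^{q_0}$ (which remains within the controlled range precisely because $q_0 - 1 \leq \lfloor n/2 \rfloor - 2$), the cup-product with $\alpha^{q_0 - q}$ defines an injective map $H^q(\R P; \F_2) \hookrightarrow H^{q_0}(\R P; \F_2)$ for every $q \leq q_0$.

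Next, assume $i^{q_0}$ is injective and pick an arbitrary $\beta \in H^q(\R P; \F_2)$ with $i^q(\beta) = 0$, for some $q \leq q_0$. Since $i^*$ is a morphism of rings, I obtain
\begin{equation*}
i^{q_0}\bigl(\beta \cup \alpha^{q_0 - q}\bigr) \;=\; i^q(\beta) \cup i^1(\alpha)^{q_0 - q} \;=\; 0,
\end{equation*}
and injectivity of $i^{q_0}$ yields $\beta \cup \alpha^{q_0 - q} = 0$. The injectivity of $\cup\, \alpha^{q_0-q}$ established in the previous step then forces $\beta = 0$, so $i^q$ is injective for every $q \leq q_0$. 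Invoking Corollary~\ref{cor:charact_degeneracy} closes the main statement. For the ``in particular'' clause, one chooses $r_0 \in \{2,3\}$ matching the parity of $n$, which in both cases gives $q_0 = \lfloor n/2 \rfloor - 1$.

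The only real point of care is the $\iota$-bookkeeping: one must verify that the whole iterated cup-product chain remains inside the range $r \leq \lfloor n/2 \rfloor - 2$ where $\iota(\alpha)$ guarantees injectivity, which reduces to the inequality $q_0 \leq \lfloor n/2 \rfloor - 1$ already secured by the parity constraint. No genuine obstacle is expected beyond that; the content of the corollary is precisely that the Hard-Lefschetz-type assumption $\iota(\R P)\geq \lfloor n/2\rfloor-2$ propagates injectivity of $i^*$ from the single critical degree $q_0$ downward via multiplication by $\alpha$.
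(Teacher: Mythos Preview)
Your proposal is correct and follows essentially the same approach as the paper: both pick a class $\alpha\in H^1(\R P;\F_2)$ realising $\iota(\R P)\geq\lfloor n/2\rfloor-2$, use the ring-morphism property of $i^*$ together with injectivity of $\alpha\cup-$ on $H^r$ for $r\leq\lfloor n/2\rfloor-2$ to propagate injectivity of $i^{q_0}$ down to all $i^q$ with $q\leq q_0$, and then invoke Corollary~\ref{cor:charact_degeneracy}. The only cosmetic difference is that the paper phrases the downward propagation as a one-step induction (if $i^{q+1}$ is injective then so is $i^q$) via a commutative square, whereas you compose the steps at once by cupping with $\alpha^{q_0-q}$; the content is identical.
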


\begin{proof}
	Let $\omega$ be a class of $P$ for which $\iota(\omega)\geq\left\lfloor\frac{n}{2}\right\rfloor-2$. For all $q\in\N$, we have the following commutative diagram:%
	\begin{equation*}
		\begin{tikzcd}
			H^q(\R P;\F_2) \ar[r,"i^q" above] \ar[d,"\omega\cup-" left] & H^q(\X;\F_2) \ar[d,"i^1(\omega)\cup-" right] \\
			H^{q+1}(\R P;\F_2) \ar[r,"i^{q+1}" below] & H^{q+1}(\X;\F_2)\rlap{.}
		\end{tikzcd}
	\end{equation*}%
	 Therefore, if $q+1\leq\left\lfloor \frac{n}{2}\right\rfloor -1$ and $i^{q+1}$ is injective, then so is $i^q$ since $\omega\cup-$ is injective. Corollary~\ref{cor:charact_degeneracy2} follows from Corollary~\ref{cor:charact_degeneracy}.
\end{proof}

If $P$ is a non-singular simplex of odd size and $K$ is a convex triangulation, $\X$ is isotopic to a real algebraic hypersurface of odd degree in a real projective space by Viro's patchworking theorem, \cite[Theorem 4.3.A]{Vir_pat_rea} or \cite[Th\'eor\`eme 4.2]{Ris_con_hyp}. In this case, it is known that $\ell(\X)=n-1$; hence all the differentials of\, $(E_r^{p,q}(\X))_{p,q\geq 0}$ vanish for all $r\geq 2$. This sufficient criterion of degeneracy can be generalised to the class of polytopes satisfying $\iota(\R P)\geq \left\lfloor\frac{n}{2}\right\rfloor-1$ regardless of the convexity of the triangulation.

\begin{prop}\label{prop:ell_iota}
	We have the inequality%
	\begin{equation*}
		\ell(\X)\geq \iota[\omega_{\R X}].
	\end{equation*}%
	 In particular, it follows from Corollary~\ref{cor:charact_degeneracy2} that if $\iota[\omega_{\R X}]\geq \left\lfloor\frac{n}{2}\right\rfloor-1$, then all the differentials of $(E_r^{p,q}(\X))_{p,q\geq 0}$ vanish for all $r\geq 2$.
\end{prop}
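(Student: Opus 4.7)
The plan is to exploit the fact that $[\omega_{\R X}]$ is the Poincaré dual of the codimension-one submanifold $\X\subset \R P$ in order to factor the cup-product operator $[\omega_{\R X}]\cup-$ through the restriction $i^q$, and then to invoke the definition of $\iota$.

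First, unwinding Definition~\ref{dfn:iota} shows that for every integer $q\leq \iota[\omega_{\R X}]$ the map
\begin{equation*}
	[\omega_{\R X}]\cup-\colon H^q(\R P;\F_2)\longrightarrow H^{q+1}(\R P;\F_2)
\end{equation*}
is injective: any non-trivial element of its kernel in degree $q$ would witness $\iota[\omega_{\R X}]\leq q-1$, contradicting the assumption. Next, the Proposition following Definition~\ref{dfn:T-hypersurface} recalls that $\X$ is Poincaré dual to $[\omega_{\R X}]$ inside the closed manifold $\R P$; writing $i\colon \X\hookrightarrow \R P$ and $i_!\colon H^q(\X;\F_2)\to H^{q+1}(\R P;\F_2)$ for the inclusion and its cohomological Gysin (Umkehr) map, the standard projection formula yields the factorisation
\begin{equation*}
	[\omega_{\R X}]\cup\alpha\;=\;i_!\bigl(i^q\alpha\bigr),\qquad \alpha\in H^q(\R P;\F_2).
\end{equation*}
Combining the two observations, any $\alpha$ in the kernel of $i^q$ lies in the kernel of $[\omega_{\R X}]\cup-$ and therefore vanishes whenever $q\leq \iota[\omega_{\R X}]$. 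This is precisely the asserted inequality $\ell(\X)\geq \iota[\omega_{\R X}]$.

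The in particular clause is then immediate. Under the assumption $\iota[\omega_{\R X}]\geq \lfloor n/2\rfloor-1$ the inequality just established gives $\ell(\X)\geq \lfloor n/2\rfloor-1$, so $i^q$ is in particular injective at $q=\lfloor n/2\rfloor-1$. Moreover the monotonicity $\iota(\R P)\geq \iota[\omega_{\R X}]\geq \lfloor n/2\rfloor-1\geq \lfloor n/2\rfloor-2$ activates the hypothesis of Corollary~\ref{cor:charact_degeneracy2}, whose conclusion is exactly the vanishing of every differential $\df_r^{p,q}$ for all $r\geq 2$.

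I anticipate the only delicate point to be the projection formula $[\omega_{\R X}]\cup-=i_!\circ i^q$. In our setting $\R P$ is a compact manifold and $[\omega_{\R X}]$ is the Poincaré dual cohomology class of the closed submanifold $\X$, so this identity is a standard consequence of Poincaré--Lefschetz duality with $\F_2$-coefficients. If a hands-on verification is preferred, it can be extracted directly from the cellular presentation of $[\omega_{\R X}]$ given in Definition~\ref{canonical cocycle} together with the cellular cup--cap pairing of the pair $(\R P,\R P\setminus\X)$; the use of $\F_2$-coefficients removes any sign ambiguity.
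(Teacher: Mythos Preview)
Your proof is correct and follows essentially the same route as the paper. The paper makes explicit the Poincar\'e duality isomorphisms and writes the factorisation $[\omega_{\R X}]\cup- = i_!\circ i^q$ as a commutative pentagon involving cap products with the fundamental classes $[\R P]$ and $[\X]$ and the homological push-forward $i_{n-1-q}$; your use of the Gysin map $i_!$ simply packages these same ingredients more compactly.
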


	Before proving the proposition, we can note that, for all choices of sign distributions, the images of the fundamental classes of the T-hypersurfaces $\X$ yield the same homology class of $\R P$. This class is Poincar\'e dual to $[\omega_{\R X}]$; \textit{cf.} Remark~\ref{rem:poinc_dual_X}.

\begin{proof}
	For all $0\leq q\leq n-1$, we have the following commutative diagram:%
	\begin{center}
	\begin{tikzpicture}[commutative diagrams/every diagram]
		\node (P0) at (90:2.3cm) {$H^q(\R P;\F_2)$ };
  		\node (P1) at (90+72:2cm) {$H_{n-q}(\R P;\F_2)$} ;
  		\node (P2) at (90+2*72:2cm) {\makebox[5ex][r]{$H_{n-1-q}(\R P;\F_2)$}};
  		\node (P3) at (90+3*72:2cm) {\makebox[5ex][l]{$H_{n-1-q}(\X;\F_2)$\rlap{.}}};
  		\node (P4) at (90+4*72:2cm) {$H^q(\X;\F_2)$};
  		\path[commutative diagrams/.cd, every arrow, every label]
   			(P0) edge node[swap] {$-\cap[\R P]$} (P1)
    		(P1) edge node[swap] {$[\omega_{\R X}]\cap-$} (P2)
    		(P3) edge node {$i_{n-1-q}$} (P2)
    		(P4) edge node {$-\cap[\X]$} (P3)
    		(P0) edge node {$i^q$} (P4);
	\end{tikzpicture}
	\end{center}
	 In the diagram, $\cap$ denotes the cap product, $[\R P]$ is the generator of $H_n(\R P;\F_2)$, and $[\X]$ is the generator of $H_{n-1}(\X;\F_2)$. We note that, by construction, the image of $[\X]$ in the homology of $\R P$ does not depend on $\varepsilon$: it is Poincar\'e dual to $[\omega_{\R X}]$; see Remark~\ref{rem:poinc_dual_X}. Since both $\R P$ and $\X$ satisfy  Poincar\'e duality with $\F_2$-coefficients, both $-\cap [\R P]$ and $-\cap [\X]$ are isomorphisms. We also have the following commutative diagram:
	\begin{equation*}
		\begin{tikzcd}[column sep=4em]
			H^q(\R P;\F_2) \ar[r,"{[\omega_{\R X}]}\cup -" above] \ar[d,"-\cap{[\R P]}" left,"\cong" right] & H^{q+1}(\R P;\F_2) \ar[d,"-\cap{[\R P]}" right,"\cong" left] \\%
			H_{n-q}(\R P;\F_2) \ar[r,"{[\omega_{\R X}]}\cap-" below] & H_{n-q-1}(\R P;\F_2)\rlap{.}%
		\end{tikzcd}
	\end{equation*}
	Therefore, the morphism $i^q$ is injective for all $q\leq\iota[\omega_{\R X}]$, and $\ell(\X)\geq \iota[\omega_{\R X}]$.
\end{proof} 

\begin{cor}\label{cor:odd_deg}
	The Renaudineau--Shaw spectral sequence of a T-hypersurface of odd degree in a projective space degenerates at the second page.
\end{cor}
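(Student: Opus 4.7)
The plan is to deduce the corollary from the final sentence of Proposition~\ref{prop:ell_iota}: since that proposition asserts degeneracy whenever $\iota[\omega_{\R X}] \geq \lfloor n/2\rfloor - 1$, it is enough to compute $\iota[\omega_{\R X}]$ in the projective case. When $P$ is an odd dilate $d\cdot \Delta$ of a primitive $n$-simplex $\Delta$, one has $\R P = \R\P^n$, and the first item of Examples~\ref{exs:iota(P)} records that $H^*(\R\P^n;\F_2) \cong \F_2[h]/(h^{n+1})$ with $\iota(h) = n-1$. The task therefore reduces to proving that $[\omega_{\R X}]$ is the non-trivial element $h$ of $H^1(\R\P^n;\F_2)$.

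\textbf{Identifying the degree class.} By the Poincaré-duality proposition immediately following Definition~\ref{dfn:T-hypersurface}, the hypersurface $\X$ is Poincaré dual to $[\omega_{\R X}]$, so the identity $[\omega_{\R X}] = h$ is equivalent to $[\X] \neq 0$ in $H_{n-1}(\R\P^n;\F_2)$. I would verify this by an intersection-pairing computation: select a loop $\gamma$ in $\R P$ whose projection under $|\cdot|\colon \R P\to P$ traverses one edge of $d\Delta$ and whose class generates $H_1(\R\P^n;\F_2)$. Evaluating $\omega_{\R X}$ on $\gamma$ via Definition~\ref{canonical cocycle}, together with the identification of arguments of edges from Definition~\ref{dfn:RP_CW}, amounts to summing the forms $\omega(\sigma^1)(\arg \sigma^1_\R)$ across the edges traversed; this sum counts, modulo $2$, the lattice length $d$ of the edge of $\Delta$. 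Since $d$ is odd, $\langle[\omega_{\R X}],\gamma\rangle = 1$, so $[\omega_{\R X}] \neq 0$. The cocycle $\omega_{\R X}$ and its evaluation on $\gamma$ are purely combinatorial, so no convexity hypothesis on $K$ is needed.

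\textbf{Conclusion.} With $[\omega_{\R X}] = h$ in hand, $\iota[\omega_{\R X}] = n-1 \geq \lfloor n/2\rfloor - 1$, so the final sentence of Proposition~\ref{prop:ell_iota} yields the vanishing of every $\df_r^{p,q}$ with $r \geq 2$ in the cohomological spectral sequence $(E_r^{p,q}(\X))_{p,q,r\geq 0}$; Proposition~\ref{prop:dual_spec_seq} transports the conclusion to the dual homological spectral sequence, completing the proof. The only step with any content is the identification $[\omega_{\R X}] = h$, and the main — rather minor — obstacle is to carry out the loop-intersection count directly from the definitions rather than invoking Viro's Patchworking Theorem, which would impose an unnecessary convexity restriction on $K$.
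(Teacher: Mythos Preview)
Your proposal is correct and follows essentially the same route as the paper: the corollary is placed immediately after Proposition~\ref{prop:ell_iota} with no separate proof, the intended argument being exactly that $\R P=\R\P^n$, that Examples~\ref{exs:iota(P)}(1) gives $\iota(h)=n-1$, and that the odd-degree hypothesis forces $[\omega_{\R X}]=h$, so Proposition~\ref{prop:ell_iota} applies. You add the one detail the paper leaves implicit, namely the direct verification that $\langle[\omega_{\R X}],\gamma\rangle=d\pmod 2$ on the coordinate $\R\P^1$, which is correct and makes the argument independent of any convexity assumption on $K$.
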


\begingroup
\renewcommand{\arraystretch}{1.3}
\begin{table}[!ht]
	\begin{subtable}[t]{.45\textwidth}
		\centering
		\begin{tabular}{|c|c|c|}
			\hline
			$\P^n_{d}\times \P^{n+m}_{d'}$ & $d'$ even & $d'$ odd \\ \hline
			$d$ even & $0$ & $n+m-1$ \\ \hline
			$d$ odd & $n-1$ & $n+m-1$ \\ \hline 
		\end{tabular}
		\caption{The values of $\iota[\omega_{\R X}]$ in the product of an {$n$-projective} space with an $(n+m)$-projective space as a function of the bidegree $(d;d')$.}\label{sub-a}
	\end{subtable}
	\hfill
	\begin{subtable}[t]{.45\textwidth}
	  \centering
		\begin{tabular}{|c|c|c|}
			\hline
			$\P^n_{d}\times \P^{n+m}_{d'}$ & $d'$ even & $d'$ odd \\ \hline
			$d$ even & $\max(2;n+m-1)$ & $2$ \\ \hline
			$d$ odd & $\max(2;m+1)$ & $2$ \\ \hline 
		\end{tabular}
		\caption{The corresponding upper bound on $r(\X)$ deduced from Corollary~\ref{cor:inequalities} and Proposition~\ref{prop:ell_iota}.}
                \label{sub-b}
	\end{subtable}

\bigskip 
        
	\begin{subtable}[t]{.45\linewidth}
		\centering
		\begin{tabular}{|c|c|c|}
			\hline
			$\textnormal{Bl.}\P^n_{d}$ & $[\X]\cdot E=0$ & $[\X]\cdot E=E^2$ \\ \hline
			$d$ even & $0$ & $0$ \\ \hline
			$d$ odd & $0$ & $n-2$ \\ \hline 
		\end{tabular}
		\caption{The values of $\iota[\omega_{\R X}]$ in the blow-up of an {$n$-projective} space as a function of the parity of the degree $d$ and the intersection with the exceptional divisor $E$. \\}\label{sub-c}
	\end{subtable}
	\hfill
	\begin{subtable}[t]{.45\linewidth}
		\centering
		\begin{tabular}{|c|c|c|}
			\hline
			$\textnormal{Bl.}\P^n_{d}$ & $[\X]\cdot E=0$ & $[\X]\cdot E=E^2$ \\ \hline	$d$ even & $\max(2;n-1)$ & $\max(2;n-1)$ \\ \hline
			$d$ odd & $\max(2;n-1)$ & $2$ \\ \hline 
		\end{tabular}
		\caption{The corresponding upper bound on $r(\X)$ deduced from Corollary~\ref{cor:inequalities} and Proposition~\ref{prop:ell_iota}.}\label{sub-d}
	\end{subtable}
	\caption{The value of $\iota[\omega_{\R X}]$ as well as the corresponding upper bound on the degeneracy index in a product of projective spaces ((\subref{sub-a}) and (\subref{sub-b})) and in the blow-up of a projective space ((\subref{sub-c}) and (\subref{sub-d})).}
	\label{tab:BlP}
\end{table}
\endgroup

Combining Corollary~\ref{cor:inequalities} and Proposition~\ref{prop:ell_iota} yields a generalisation in some cases of the statement ``The Renaudineau--Shaw spectral sequence of a projective primitively patchworked hypersurface of odd degree degenerates at the second page'', namely the upper bound%
\begin{equation*}
	r(\X) \leq \max\left(2;n-2\iota[\omega_{\R X}]-1\right).
\end{equation*}%
 Therefore, we can sometimes directly know if the Renaudineau--Shaw spectral sequence degenerates at the second page by only looking at the degree of $\X$, \textit{i.e.}~at the class $[\omega_{\R X}]$. In Table~\ref{tab:BlP}  we describe the  possible values of $\iota[\omega_{\R X}]$ and the associated upper bound on the degeneracy index. In (\subref{sub-a}) and (\subref{sub-b}), we study the case of a product of non-singular simplices and in (\subref{sub-c}) and (\subref{sub-d}) the case of a blow-up of a non-singular simplex. Both these examples have an $H^1$ of dimension 2. However, for some polytopes, \textit{e.g.}~a cube of dimension at least $4$, this upper bound is completely vacuous whatever the degree of the hypersurface may be.

\section{Degeneracy for a family of triangulations}\label{sec7}

In this section we will construct a particular sequence of primitive triangulations of the non-singular simplices inspired from the triangulations on which I.~Itenberg constructs maximal surfaces of every degree; \textit{cf.} \cite[Section 5]{Ite_top_rea}. We will then show that any T-hypersurface constructed on these triangulations has degeneracy index at most equal to 2. To do so we will prove that every T-hypersurface constructed on these triangulations satisfies the real Lefschetz property. We begin with some technical propositions.

\vspace{5pt}

We recall that if $K$ is an abstract simplicial complex, its geometric realisation $|K|$ is a subspace of the real vector space $V$ freely generated by the vertices of $K$. It is given by the union of all the geometric simplices $|\sigma|:=\textnormal{Conv.Hull.}\{p:p\in \sigma\}$ for all abstract simplices $\sigma$ of $K$. These geometric simplices induce a canonical regular cellular structure on $|K|$.

\begin{prop}\label{prop:isotopy_graph}
	Let $K$ be a finite simplicial complex, $\Sigma K$ be its suspension, and ${\varepsilon\in C^0(\Sigma K;\F_2)}$. We denote by $a_+$ and $a_-$ the two suspension apexes. If ${\varepsilon(a_+)+\varepsilon(a_-)}$ equals $1$, then the hypersurface $X$, dual to $\df\varepsilon$, is isotopic to $|K|$ relatively to $|K|\cap X$.
\end{prop}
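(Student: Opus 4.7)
The plan is to construct an explicit ambient PL isotopy $F\colon\Sigma K\times[0,1]\to\Sigma K$ with $F_0=\id$, $F_1(X)=|K|$, and $F_t$ fixing $|K|\cap X$ pointwise. The key observation is that the hypothesis $\varepsilon(a_+)+\varepsilon(a_-)=1$ forces $X$ to play the role of a separating hypersurface between the two apexes, exactly as $|K|$ does.

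First I would identify $X$ as the topological boundary of an explicit PL region. Let
\[
R_\varepsilon\;=\;\bigcup_{\substack{v\text{ vertex of }\Sigma K\\\varepsilon(v)=\varepsilon(a_+)}} Q_v\,,
\]
where $Q_v$ denotes the union, over top simplices $\sigma^n$ of $\Sigma K$ containing $v$, of the top cubes $(v,\sigma^n)$ of the cubical subdivision of $\Sigma K$. A codimension-one cube $(\sigma^1,\sigma^n)$ lies on $\partial R_\varepsilon$ if and only if exactly one endpoint of the edge $\sigma^1$ satisfies $\varepsilon(\cdot)=\varepsilon(a_+)$, i.e., if and only if $\df\varepsilon(\sigma^1)=1$; hence $\partial R_\varepsilon=X$ as cubical subcomplexes. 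The hypothesis implies $a_+\in R_\varepsilon$ and $a_-\notin R_\varepsilon$, so $X$ separates the apexes in $\Sigma K$. The closed upper cone $C_+=|K|*a_+$ plays the analogous role for $|K|$: it has topological boundary $|K|$ and contains $a_+$ but not $a_-$.

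Next I would exploit the join structure $\Sigma K=|K|*\{a_+,a_-\}$ to parametrise $\Sigma K\setminus\{a_+,a_-\}$ as the product cylinder $|K|\times(-1,1)$, identifying $|K|$ with the equator $|K|\times\{0\}$ and realising the apexes $a_\pm$ as collapsed limits of the ends $s=\pm 1$. The hypersurface $X$ avoids $\{a_+,a_-\}$ since the vertices of any cube of $X$ are barycenters of simplices of $\Sigma K$ of dimension at least one, whereas $a_\pm$ are $0$-simplices. Extending $\varepsilon$ to a locally constant function $\tilde\varepsilon\colon\Sigma K\setminus X\to\F_2$ via the cubical cell structure, the values $\tilde\varepsilon(a_+)\neq\tilde\varepsilon(a_-)$ differ, so every vertical fibre $\{p\}\times[-1,1]$ crosses $X$ in a finite and odd number of points. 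The isotopy $F$ is then built in two stages. The first stage performs PL ambient cancellations of pairs of consecutive fibrewise crossings, supported in small neighbourhoods of the cancelling pairs, leaving a single crossing per fibre; this reduces $X$ to the graph of a PL function $g\colon|K|\to(-1,1)$. The second stage is an explicit fibrewise reparametrisation $F_t(p,s)=(p,\phi_{t,p}(s))$, where $\phi_{t,p}\colon[-1,1]\to[-1,1]$ is the piecewise linear homeomorphism fixing $\pm 1$ and sending $g(p)$ to $(1-t)g(p)$; at $t=1$ it flattens the graph onto the equator, while fixing the apexes and any point of $|K|\cap X$, which sits on the graph at altitude $g(p)=0$.

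The main technical obstacle is the first stage: the cancellation of pairs of fibrewise crossings by compatible ambient PL isotopies. Given the local product structure of the cubical subdivision around each pair of consecutive crossings along a fibre, I expect this to follow from a standard PL handle-cancellation argument applied to the altitude function $h\colon\Sigma K\to[-1,1]$ restricted to $X$, exploiting the simplicial regularity of $K$ to arrange that the local cancellations extend coherently across neighbouring fibres. Once the reduction to a graph is achieved, the second stage is a routine fibrewise computation.
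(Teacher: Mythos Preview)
Your overall plan --- use the join coordinates on $\Sigma K\setminus\{a_\pm\}\cong |K|\times(-1,1)$, show $X$ separates the apexes, and straighten it fibrewise to the equator --- is the same as the paper's. The second stage (flattening a graph to the equator, fixing points where the graph already sits at height~$0$) is fine and matches the paper's isotopy $H$ closely.

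The gap is entirely in your first stage. The claim that each vertical fibre meets $X$ in ``a finite and odd number of points'' is not correct as stated: whenever the straight-line fibre passes through the barycentre of a simplex of $\Sigma K$ it can run \emph{along} a cube of $X$, so the intersection is a segment, not a finite set (take $K=[v_0,v_1]$ with $\varepsilon(v_0)=\varepsilon(a_-)=1$, $\varepsilon(v_1)=\varepsilon(a_+)=0$, and look at the fibre through the midpoint). Worse, even if you perturb away these degeneracies, the cancellation step you invoke is not a standard result in this generality: $K$ is an arbitrary finite simplicial complex, so $|K|$ need not be a manifold and there is no PL handle/cobordism theory to appeal to. You would essentially be proving from scratch that any PL separating ``hypersurface'' in $|K|\times[-1,1]$ is fibrewise isotopic to a slice, which is neither obvious nor true without extra structure.

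The paper bypasses this entirely. It realises $|\Sigma K|$ in the free vector space on the vertices and observes that the hyperplane $\{f=0\}$ with $f=\sum_p(-1)^{\varepsilon(p)}x_p$ is already a graph over $|K|$: along each fibre $\varphi(p,t)$ the function $f$ is piecewise affine and runs from $(-1)^{\varepsilon(a_-)}$ to $(-1)^{\varepsilon(a_+)}$, hence crosses zero exactly once precisely because $\varepsilon(a_+)\ne\varepsilon(a_-)$. The dual hypersurface $X$ is not literally $\{f=0\}$, but an explicit PL self-homeomorphism $G$ of $|\Sigma K|$ (affine on barycentric simplices, sending each barycentre to a suitably weighted barycentre) carries $X$ onto $\{f=0\}$. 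This is the missing idea: instead of cancelling excess crossings, you should change coordinates so that there are none to begin with.
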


\begin{proof}
	Let $V$ denote the real vector spaces freely generated by the vertices of $\Sigma K$. The hypersurface $X$ is the support of the subcomplex of the cubical subdivision of $\Sigma K$ made of the cubes indexed by the pairs of simplices $\sigma^1\leq \sigma^n$ for which $\df\varepsilon(\sigma^1)=1$. To prove the proposition, we will start by constructing a self-homeomorphism $G$ of $|\Sigma K|$ for which $G(X)$ has a simpler description. 
	Let $\sigma\in\Sigma K$. If the restriction $\varepsilon|_\sigma$ is non-constant, we can find a set of positive real numbers $u(\sigma)$ indexed by the vertices of $\sigma$ whose sum equals $1$ and that satisfies%
			\begin{equation*}
				\sum_{p\in\sigma^{(0)}}(-1)^{\varepsilon(p)}u_p(\sigma)=0.
			\end{equation*}%
			 If the restriction $\varepsilon|_\sigma$ is constant, we set $u_p(\sigma)=1/(\dim\sigma +1)$ for all vertices $p$ of $\sigma$. We endow $|\Sigma K|$ with two regular cellular structures that subdivide its canonical cellular structure. Let $\Sd\Sigma K$ denote the barycentric subdivision of $K$.
	\begin{enumerate}
		\item The first subdivision $E_0$ is the pushforward of the canonical cellular structure of $\Sd\Sigma K$ by the piecewise-affine homeomorphism $v\colon |\Sd\Sigma K|\rightarrow |\Sigma K|$ that sends the vertex of $\Sd\Sigma K$ indexed by $\sigma\in K$ to the barycenter of $|\sigma|$.
		\item The second subdivision $E_1$ is the pushforward of the canonical cellular structure of $\Sd\Sigma K$ by the piecewise-affine homeomorphism $u\colon |\Sd\Sigma K|\rightarrow |\Sigma K|$ that sends the vertex of $\Sd\Sigma K$ indexed by $\sigma\in K$ to $\sum_{p\in\sigma}u_p(\sigma)p$.
	\end{enumerate}
	 The self-homeomorphism $G:=u\circ v^{-1}$ of $|\Sigma K|$ is then a cellular isomorphism from $E_0$ to $E_1$. The hypersurface $G(X)$ is the intersection of $|\Sigma K|$ with the hyperplane of $V$ given by the equation%
	\begin{equation*}
		f\coloneqq \sum_{p\in \Sigma K^{(0)}} (-1)^{\varepsilon(p)}x_p=0,
	\end{equation*}
	where $x_p$ denotes the coordinate associated with the vertex $p$. We can note that $G(|K|)=|K|$ as $G$ is cellular relatively to two subdivisions of the canonical cellular structure of $\Sigma K$, and $K$ is a subcomplex of $\Sigma K$. Thus, it suffices to prove the proposition for $G(X)=\{f=0\}$ instead of $X$. The hypersurface $\{f=0\}$ avoids the two apexes $a_+$ and $a_-$, so $|\Sigma K|\cap \{f=0\}$ equals $\left(|\Sigma K|\setminus\{a_\pm\}\right)\cap \{f=0\}$. Let us consider the following homeomorphism:%
	\begin{equation*}
		\begin{array}{rcl}
			\varphi\colon |K|\times]-1;1[ & \longrightarrow & {|\Sigma K| \setminus \{a_+;a_-\}} \\%
			(p;t) & \longmapsto & (1-|t|)p + \max(0;t)a_+ - \min(0;t)a_-.
		\end{array}
	\end{equation*}%
	We will prove that the inverse image of $\{f=0\}$ by $\varphi$ is the graph of a function $g\colon |K|\rightarrow ]-1;1[$. Since $\varepsilon(a_+)+\varepsilon(a_-)$ equals 1, $f(\varphi(p;t))$ is equal to $(1-|t|)f(p)+(-1)^{\varepsilon(a_+)}t$. If $f(\varphi(p;t))$ vanishes and $f(p)$ does not, $|t|$ is positive. In this case, the sign of $t$ is given by%
	\begin{equation*}
	\frac{t}{|t|}=\frac{(-1)^{\varepsilon(a_+)+1}f(p)}{|f(p)|}.
	\end{equation*}%
	By noting that $|t|=\frac{t}{|t|}t$, we deduce that
	
	\begin{equation*}
		\left\{ \begin{aligned}
			f(\varphi(p;t))&=0, \\
			f(p)&\neq 0
			\end{aligned}\right. \quad \textnormal{if and only if} \quad \left\{ \begin{aligned}
			f(p) +(-1)^{\varepsilon(a_+)}(1+|f(p)|)t&=0, \\
			f(p)&\neq 0.
			\end{aligned}\right.
	\end{equation*}
	If both $f(\varphi(p;t))$ and $f(p)$ vanish, then $t$ equals $0$. Thus, we find that $f(\varphi(p;t))$ vanishes if and only if
	\begin{equation*}		
		{f(p) +(-1)^{\varepsilon(a_+)}(1+|f(p)|)t=0}.
	\end{equation*}%
	 Therefore, the hypersurface $\{f=0\}$ is the graph of the function ${g\colon p\mapsto (-1)^{\varepsilon(a_+)+1}f(p)/(1+|f(p)|)}$ in the coordinates given by $\varphi$. We consider the map $H\colon [0;1]\times |K| \rightarrow |\Sigma K|$ defined for all $(t;p)\in[0;1]\times |K|$ by the formula%
	\begin{equation*}
		H(t;p)\coloneqq \varphi\left(p;tg(p)\right).
	\end{equation*}%
	For all $0\leq t\leq 1$, the map $H(t;-)$ is the embedding of the graph of $tg$ in the coordinates given by $\varphi$. the embedding $H(0;-)$ is the inclusion of $|K|$ in $|\Sigma K|$. The set $H(1;|K|)$ is the set $\{f=0\}$. If $p$ belongs to $|K|\cap X$, then $g(p)$ vanishes and $H(t;p)$ equals $p$ for all $t$.
\end{proof}

	We denote by $B^{n+1}$ the $(n+1)$-dimensional Euclidean ball and by $\pi$ the projection of $B^{n+1}$ onto $\R\P^{n+1}$ by antipodal identification of the points of $\partial B^{n+1}$. In this quotient representation of $\R\P^{n+1}$, we see $\R\P^n\subset \R\P^{n+1}$ as the image of $\partial B^{n+1}$ by $\pi$. Let $X$ be a subset of $\R\P^n$; we define $CX\subset \R\P^{n+1}$ as the image under $\pi$ of the cone at the origin $0$ over $\pi^{-1}(X)$. 

\begin{lem}\label{lem:Cone_over_projective}
	If\, $X\subset \R\P^n$ is homeomorphic to $\R\P^p$ and homologous to a linear subspace, then $CX$ is homeomorphic to $\R\P^{p+1}$ and homologous to a linear subspace as well.
\end{lem}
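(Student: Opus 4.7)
The statement has two parts---a topological identification $CX\cong\R\P^{p+1}$ and a homological identity $[CX]=[\R\P^{p+1}]$ in $H_{p+1}(\R\P^{n+1};\F_2)$---both of which will follow after identifying the preimage $\pi^{-1}(X)\subset S^n=\partial B^{n+1}$. I would first show $\pi^{-1}(X)\cong S^p$. The restriction $\pi|_{\pi^{-1}(X)}\to X$ is a double cover classified by the pullback $i^*w_1\in H^1(X;\F_2)$, where $w_1$ generates $H^1(\R\P^n;\F_2)$ and $i\colon X\hookrightarrow\R\P^n$ is the inclusion. A cap-product computation using Poincaré duality on $X$ and on $\R\P^n$ shows, for $p\geq 1$,
\begin{equation*}
	i_*\big(i^*w_1\cap [X]\big)=w_1\cap i_*[X]=w_1\cap [\R\P^p]=[\R\P^{p-1}]\neq 0,
\end{equation*}
so $i^*w_1\neq 0$, the double cover is non-trivial and hence connected, and thus homeomorphic to $S^p$ (the case $p=0$ is immediate).

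Next I would identify $CX$ topologically. The cone $\widehat{CX}\coloneqq\{tx\colon t\in[0,1],\,x\in\pi^{-1}(X)\}\subset B^{n+1}$ receives a continuous bijection from the abstract cone over $S^p$; being a continuous bijection from a compact space to a Hausdorff one, this is a homeomorphism, so $\widehat{CX}\cong B^{p+1}$. The map $\pi$ restricted to $\widehat{CX}$ is a bijection on its interior and the antipodal $2$-to-$1$ quotient on its boundary $\pi^{-1}(X)\subset S^n$. The antipodal involution of $S^n$ restricts to a free $\Z/2$-action on $S^p$ whose quotient is $X\cong\R\P^p$; by covering-space uniqueness for $p\geq 1$, this action is conjugate to the standard antipodal on $S^p$ through a self-homeomorphism of $S^p$, which extends radially to $B^{p+1}$. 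Therefore $CX\cong B^{p+1}/(\text{antipodal on boundary})\cong\R\P^{p+1}$.

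For the homological statement, I would construct a chain-level ``cone'' operator $\mathcal{C}\colon C_k(\R\P^n;\F_2)\to C_{k+1}(\R\P^{n+1};\F_2)$: for a singular simplex $\sigma\colon\Delta^k\to\R\P^n$, choose its two lifts $\tilde\sigma^{\pm}\colon\Delta^k\to S^n$, cone each to the origin in $B^{n+1}$ via $(t,x)\mapsto t\tilde\sigma^{\pm}(x)$ after triangulating the prism $[0,1]\times\Delta^k$, and set $\mathcal{C}(\sigma)\coloneqq\pi_*(\hat C\tilde\sigma^{+}+\hat C\tilde\sigma^{-})$. The standard cone formula $\partial\hat C\tilde\sigma^{\pm}=\tilde\sigma^{\pm}+\hat C(\partial\tilde\sigma^{\pm})$ together with the cancellation $\pi_*(\tilde\sigma^{+}+\tilde\sigma^{-})=\pi_*\pi^{-1}(\sigma)=2\sigma=0$ in $\F_2$ yields $\partial\mathcal{C}=\mathcal{C}\partial$, so $\mathcal{C}$ is a chain map inducing $\mathcal{C}_*\colon H_*(\R\P^n;\F_2)\to H_{*+1}(\R\P^{n+1};\F_2)$. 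A direct computation on a triangulation of the linear $\R\P^p$ gives $\mathcal{C}_*[\R\P^p]=[\R\P^{p+1}]$, and the same construction applied to $X$ produces the cycle $CX$ (the cone $\widehat{CX}\cong B^{p+1}$ covers $CX$ one-to-one on its interior). Since $[X]=[\R\P^p]$ in $H_p(\R\P^n;\F_2)$ by hypothesis, we conclude $[CX]=\mathcal{C}_*[X]=\mathcal{C}_*[\R\P^p]=[\R\P^{p+1}]$. The main obstacle is the bookkeeping of this chain-level cone operator---verifying the $\F_2$-cancellation $\pi_*\pi^{-1}(\sigma)=0$, the chain-map identity, and that $\mathcal{C}(X)$ really coincides with the geometric cycle $CX$.
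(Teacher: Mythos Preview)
Your proof is correct. For the connectedness of $\pi^{-1}(X)$ your use of the classifying class $i^*w_1$ together with the projection formula $i_*(i^*w_1\cap[X])=w_1\cap i_*[X]=[\R\P^{p-1}]\neq 0$ is a streamlined repackaging of the paper's argument: the paper runs the transfer short exact sequence for the double cover, reduces connectedness to $\rk\big(j_*:H_1(X;\F_2)\to H_1(\R\P^n;\F_2)\big)=1$, and then establishes that rank by the very same projection formula (with $\alpha=w_1^{p-1}$). You also make explicit, via uniqueness of the connected double cover of $\R\P^p$, why the restricted involution on $\pi^{-1}(X)\cong S^p$ is conjugate to the standard antipodal and hence why $CX\cong\R\P^{p+1}$; the paper leaves this implicit. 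The genuine divergence is the homology statement. The paper argues cohomologically: a commutative $H^1$ square (which already encodes $i^*w_1\neq 0$) shows the generator of $H^1(\R\P^{n+1};\F_2)$ restricts to the generator of $H^1(CX;\F_2)$, and then the truncated-polynomial ring structure of $H^*(\R\P^{p+1};\F_2)$ propagates this to all degrees $\le p+1$. Your chain-level cone operator $\mathcal{C}$ also works---the $\F_2$ cancellation $\pi_*(\tilde\sigma^++\tilde\sigma^-)=0$ and the chain-map identity are correct (including in degree $0$, where the two cone-point terms cancel), and $\mathcal{C}$ applied to a fundamental cycle of $X$ visibly yields a fundamental cycle of $CX$ since $\hat C(\tilde c^++\tilde c^-)$ is a fundamental chain for $\widehat{CX}\cong B^{p+1}$ and $\pi$ is injective on its interior. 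The paper's route is shorter because it recycles the $H^1$ computation; yours is more geometric but carries the bookkeeping you flag. One cosmetic slip: the cone on $\Delta^k$ is simply $\Delta^{k+1}$, not a triangulated prism, though the boundary formula $\partial\hat C=\id+\hat C\partial$ you invoke is the right one.
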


\begin{proof}
	The space $\pi^{-1}(X)$ is a double cover of $X$. If it is connected and $p>1$, then $\pi^{-1}(X)\rightarrow X$ is the universal cover of $X$ and $\pi^{-1}(X)$ is a sphere. If $p=1$ and it is connected, $\pi^{-1}(X)$ is a circle. In these cases, $CX$ is homeomorphic to $\R\P^{p+1}$. Now we prove that if $X$ is homologous to a linear subspace of $\R\P^n$, then $\pi^{-1}(X)$ is connected. We have the commutative diagram of exact sequences of singular chain complexes%
	\begin{equation*}
		\begin{tikzcd}
			0 & \ar[l] C_*(\R\P^n;\F_2) & \ar[l,"\pi_*" above] C_*(\Sph^n;\F_2) & \ar[l,"i" above] C_*(\R\P^n;\F_2) & \ar[l] 0 \\
			0 & \ar[l] \ar[u,"j_*" right] C_*(X;\F_2) & \ar[l,"\pi_*" below] \ar[u] C_*(\pi^{-1}(X);\F_2) & \ar[l, "i" below] \ar[u,"j_*" right] C_*(X;\F_2) & \ar[l] 0\rlap{,}
		\end{tikzcd}
	\end{equation*}%
	 where the vertical morphism are given by the corresponding inclusions of topological spaces and the morphism $i$ sends a singular simplex $\sigma$ to the sum of its two lifts to $\Sph^n$. It yields the following commutative diagram of exact sequences%
	\begin{equation*}
		\begin{tikzcd}
			0 & \ar[l] H_0(\R\P^n;\F_2) & \ar[l,"\cong" above] H_0(\Sph^n;\F_2) & \ar[l,"0" above] H_0(\R\P^n;\F_2) & \ar[l,"\cong" above] H_1(\R\P^n;\F_2) & \ar[l] \cdots \\
			0 & \ar[l] \ar[u,"\cong" right] H_0(X;\F_2) & \ar[l,"\pi_*" below] \ar[u] H_0(\pi^{-1}(X);\F_2) & \ar[l, "i" below] \ar[u,"\cong" right] H_0(X;\F_2) & \ar[l] \ar[u,"j_*" right] H_1(X;\F_2) & \ar[l] \cdots\rlap{.}
		\end{tikzcd}
	\end{equation*}%
	This implies in particular that $b_0(\pi^{-1}(X);\F_2)=1+\rk(i)=2-\rk(j_*)$. By assumption the image of the fundamental class of $X$ in the homology of $\R\P^n$ is $[\R\P^p]$, so its intersection with $[\R\P^{n+1-p}]$ is the class of the line $[\R\P^1]$. If $\alpha$ is the Poincar\'e dual of $[\R\P^{n+1-p}]$, we have
        \begin{equation*}
	j_*(j^*\alpha\cap[X])=[\R\P^{n+1-p}]\cdot[\R\P^p]=[\R\P^1],
\end{equation*} so $j_*$ has rank 1 and $\pi^{-1}(X)$ is connected. 
	
	\vspace{5pt}
	
	To see that $CX$ is still homologous to a linear subspace, we can notice that we have the following commutative square:
	\begin{equation*}
		\begin{tikzcd}
		H^1(\R\P^{n+1};\F_2)\ar[d] \ar[r,"\cong" above] & H^1(\R\P^n;\F_2) \ar[d,"\cong" right]\\
		H^1(CX;\F_2)\ar[r,"\cong" below] & H^1(X;\F_2)\rlap{.}
		\end{tikzcd}
	\end{equation*}
	 So the generator of $H^1(\R\P^{n+1};\F_2)$ is sent to the generator of $H^1(CX;\F_2)$. Using the structure of the cohomology rings of projective spaces, we see that the restriction morphism
	\begin{equation*}
		H^q(\R\P^{n+1};\F_2)\longrightarrow H^q(CX;\F_2)
	\end{equation*}
is an isomorphism for all $q\leq p+1$.
\end{proof}
\begin{figure}[!ht]
	\centering
	\begin{tikzpicture}[scale=2]
		\draw (0,1,0) -- (1,1,0);
		\draw (1,1,0) -- (0,1,1);
		\draw (0,1,1) -- (0,1,0);
		\draw[dashed] (1,0,0) -- (0,0,0) -- (0,0,1);
		\draw (1,0,0) -- (0,0,1);
		\draw (1,0,0) -- (1,1,0);
		\draw (0,0,1) -- (0,1,1);
		\draw[dashed] (0,0,0) -- (0,1,0);
		\fill (0,0,0) circle (.05) node[anchor=south west] {$(v_0;0)$};
		\fill (1,0,0) circle (.05) node[anchor=north west] {$(v_2;0)$};
		\fill (0,0,1) circle (.05) node[anchor=north east] {$(v_1;0)$};
		\fill (0,1,0) circle (.05) node[anchor=south west] {$(v_0;1)$};
		\fill (1,1,0) circle (.05) node[anchor=south west] {$(v_2;1)$};
		\fill (0,1,1) circle (.05) node[anchor=south east] {$(v_1;1)$};
		\draw[->, thick] (1.25,.5,0) -- (2.25,.5,0);
		\coordinate (v) at (3,0,0);
		\coordinate (u) at (1,.5,0);
		\coordinate (u1) at (0,0,0);
		\coordinate (u2) at (1.35,-.5,0);

		\draw ($(v)+(u)+(0,1,0)$) -- ($(v)+(u)+(1,1,0)$);
		\draw ($(v)+(u)+(1,1,0)$) -- ($(v)+(u)+(0,1,1)$);
		\draw ($(v)+(u)+(0,1,1)$) -- ($(v)+(u)+(0,1,0)$);
		\draw ($(v)+(u)+(1,1,0)$) -- ($(v)+(u)+(1,0,0)$);
		\draw ($(v)+(u)+(0,1,1)$) -- ($(v)+(u)+(1,0,0)$);
		\draw[dashed] ($(v)+(u)+(0,1,0)$) -- ($(v)+(u)+(1,0,0)$);
		\fill ($(v)+(u)+(0,1,0)$) circle (.05);
		\fill ($(v)+(u)+(1,1,0)$) circle (.05);
		\fill ($(v)+(u)+(0,1,1)$) circle (.05);
		\fill ($(v)+(u)+(1,0,0)$) circle (.05);
		
		\draw[dashed] ($(v)+(u1)+(0,1,0)$) -- ($(v)+(u1)+(0,0,1)$);
		\draw ($(v)+(u1)+(0,0,1)$) -- ($(v)+(u1)+(0,1,1)$);
		\draw ($(v)+(u1)+(0,1,1)$) -- ($(v)+(u1)+(0,1,0)$);
		\draw ($(v)+(u1)+(0,0,1)$) -- ($(v)+(u1)+(1,0,0)$);
		\draw ($(v)+(u1)+(0,1,1)$) -- ($(v)+(u1)+(1,0,0)$);
		\draw ($(v)+(u1)+(0,1,0)$) -- ($(v)+(u1)+(1,0,0)$);
		\fill ($(v)+(u1)+(0,1,0)$) circle (.05);
		\fill ($(v)+(u1)+(0,0,1)$) circle (.05);
		\fill ($(v)+(u1)+(0,1,1)$) circle (.05);
		\fill ($(v)+(u1)+(1,0,0)$) circle (.05);
		
		\draw ($(v)+(u2)+(0,1,0)$) -- ($(v)+(u2)+(0,0,1)$);
		\draw[dashed] ($(v)+(u2)+(0,0,1)$) -- ($(v)+(u2)+(0,0,0)$);
		\draw[dashed] ($(v)+(u2)+(0,0,0)$) -- ($(v)+(u2)+(0,1,0)$);
		\draw ($(v)+(u2)+(0,0,1)$) -- ($(v)+(u2)+(1,0,0)$);
		\draw[dashed] ($(v)+(u2)+(0,0,0)$) -- ($(v)+(u2)+(1,0,0)$);
		\draw ($(v)+(u2)+(0,1,0)$) -- ($(v)+(u2)+(1,0,0)$);
		\fill ($(v)+(u2)+(0,1,0)$) circle (.05);
		\fill ($(v)+(u2)+(0,0,1)$) circle (.05);
		\fill ($(v)+(u2)+(0,0,0)$) circle (.05);
		\fill ($(v)+(u2)+(1,0,0)$) circle (.05);
		
	\end{tikzpicture}
	\caption{Triangulation of the product of a segment with a triangle.}
	\label{fig:sub_segm_x_triang}
\end{figure}
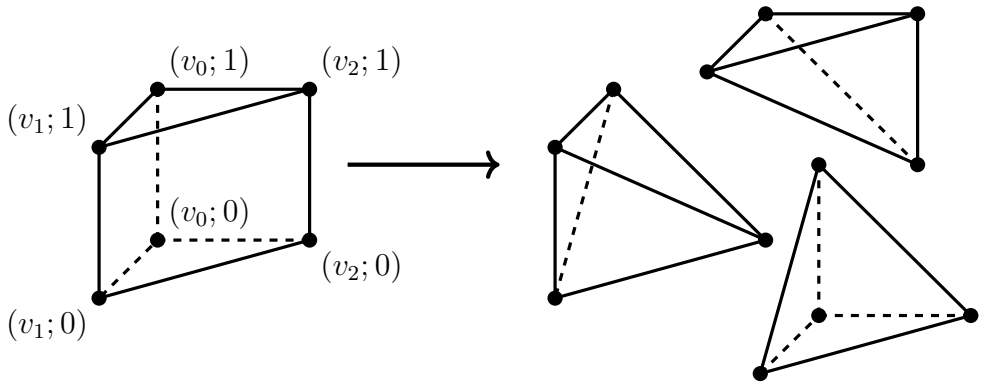

We recall that for a simplex $\sigma^n$ whose vertices are ordered, we have a canonical triangulation of the prism $\sigma^n\times[0;1]$. Its maximal simplices are the joins ${[v_0;\ldots;v_i]\times\{1\}*[v_i;\ldots;v_n]\times\{0\}}$, where $0\leq i\leq n$ and $[v_1;\ldots;v_k]$ is the face of $\sigma$ indexed by the vertices $v_1,\ldots,v_k$. See Figure~\ref{fig:sub_segm_x_triang} for an example. This triangulation allows us to construct triangulations of the standards simplices inductively.

\begin{dfn}\label{dfn:K+L}
	Let $n,d\geq 1$ be two integers and $\P^n_d$ be the convex hull of $\{0;de_1;\ldots;de_n\}\subset \R^n$. If $K$ is a triangulation of $\P^n_d$ and $L$ is a triangulation of $\P^{n-1}_{d+1}$, we define $K+L$ as the unique triangulation of $\P^n_{d+1}$ for which%
	\begin{enumerate}
		\item the restriction of $K+L$ to $\P^n_{d+1}\cap\{x_n\geq 1\}$ is the translation of $K$;
		\item the restriction of $K+L$ to $[e_n,e_n+de_1,\ldots,e_n+de_i]*[(d+1)e_i,\ldots,(d+1)e_{n-1}]$, for all $0\leq i\leq n-1$, is the join of the restriction of the translation of $K$ to $[e_n,e_n+de_1,\ldots,e_n+de_i]$ with the restriction of~$L$ to $[(d+1)e_i,\ldots,(d+1)e_{n-1}]$. 
	\end{enumerate}%
	 Figure~\ref{fig:triang_K+L} depicts an example of such a triangulation. 
\end{dfn}

\begin{figure}[!ht]
	\centering
	
	\begin{tikzpicture}[scale=1.5]
		
		\fill (0,0,0) circle (.05);
		\fill (0,0,2) circle (.05);
		\fill (2,0,0) circle (.05);
		\fill (0,2,0) circle (.05);
		\draw[dashed,  thick] (0,0,0) -- (0,0,2);
		\draw[dashed,  thick] (0,0,0) -- (2,0,0); 
		\draw[dashed,  thick] (0,0,0) -- (0,2,0); 
		\draw[ thick] (2,0,0) -- (0,0,2);
		\draw[ thick] (2,0,0) -- (0,2,0); 
		\draw[ thick] (0,2,0) -- (0,0,2);
		\draw (0,1,0) -- (1,1,0);
		\draw (1,1,0) -- (0,1,1);
		\draw (0,1,0) -- (0,1,1);
		\draw (0,0,0) -- (0,1,1);
		\draw (0,0,0) -- (1,1,0);
		\draw (1,0,1) -- (0,1,1);
		\draw (1,0,1) -- (1,1,0);
		\draw (0,0,0) -- (1,0,1);
		\draw (1,0,0) -- (1,0,1);
		\draw (1,0,0) -- (1,1,0);
		\draw (0,0,1) -- (1,0,1);
		\draw (0,0,1) -- (0,1,1);
		\draw (-.25,1,1) node[anchor=east] {$K$};
		
		\coordinate (u) at (-.77,-4.5,0);
		\fill ($(u)+(0,0,0)$) circle (.05);
		\fill ($(u)+(3,0,0)$) circle (.05);
		\fill ($(u)+(0,3,0)$) circle (.05);
		\draw[ thick]  ($(u)+(0,0,0)$) -- ($(u)+(3,0,0)$);
		\draw[ thick]  ($(u)+(0,3,0)$) -- ($(u)+(3,0,0)$);
		\draw[ thick]  ($(u)+(0,0,0)$) -- ($(u)+(0,3,0)$);
		\draw ($(u)+(1,0,0)$) -- ($(u)+(0,1,0)$);
		\draw ($(u)+(2,0,0)$) -- ($(u)+(0,2,0)$);
		\draw ($(u)+(1,0,0)$) -- ($(u)+(1,2,0)$);
		\draw ($(u)+(2,0,0)$) -- ($(u)+(2,1,0)$);
		\draw ($(u)+(0,1,0)$) -- ($(u)+(2,1,0)$);
		\draw ($(u)+(0,2,0)$) -- ($(u)+(1,2,0)$);
		\draw ($(u)+(1.75,1.75,0)$) node {$L$};
		\draw[->, thick] ($(u)+(2,3.25,0)$) -- ($(u)+(3.5,3.25,0)$);
		 
		\coordinate (v) at (4,0,0);
		 
		\draw ($(v)+(1.5,1.5,0)$) node {$K+L$};
		\fill ($(v)+(0,2,0)$) circle (.05); 
		\draw[dashed,  thick] ($(v)+(0,0,0)$) -- ($(v)+(0,2,0)$); 
		\draw[ thick] ($(v)+(2,0,0)$) -- ($(v)+(0,2,0)$); 
		\draw[ thick] ($(v)+(0,2,0)$) -- ($(v)+(0,0,2)$);
		\draw ($(v)+(0,0,0)$) -- ($(v)+(2,0,0)$);
		\draw ($(v)+(0,0,0)$) -- ($(v)+(0,0,2)$);
		\draw ($(v)+(2,0,0)$) -- ($(v)+(0,0,2)$);
		\draw ($(v)+(0,1,0)$) -- ($(v)+(1,1,0)$);
		\draw ($(v)+(1,1,0)$) -- ($(v)+(0,1,1)$);
		\draw ($(v)+(0,1,0)$) -- ($(v)+(0,1,1)$);
		\draw ($(v)+(0,0,0)$) -- ($(v)+(0,1,1)$);
		\draw ($(v)+(0,0,0)$) -- ($(v)+(1,1,0)$);
		\draw ($(v)+(1,0,1)$) -- ($(v)+(0,1,1)$);
		\draw ($(v)+(1,0,1)$) -- ($(v)+(1,1,0)$);
		\draw ($(v)+(0,0,0)$) -- ($(v)+(1,0,1)$);
		\draw ($(v)+(1,0,0)$) -- ($(v)+(1,0,1)$);
		\draw ($(v)+(1,0,0)$) -- ($(v)+(1,1,0)$);
		\draw ($(v)+(0,0,1)$) -- ($(v)+(1,0,1)$);
		\draw ($(v)+(0,0,1)$) -- ($(v)+(0,1,1)$);
		
		\coordinate (v3) at (4.5,-.5,0);
		
		\draw ($(v3)+(0,0,0)$) -- ($(v3)+(0,0,2)$);
		\draw ($(v3)+(2,0,0)$) -- ($(v3)+(0,0,2)$);
		\draw ($(v3)+(2,0,0)$) -- ($(v3)+(0,0,0)$);
		\draw ($(v3)+(0,0,0)$) -- ($(v3)+(3,-1,0)$);
		\draw ($(v3)+(0,0,2)$) -- ($(v3)+(3,-1,0)$);
		\draw ($(v3)+(1,0,0)$) -- ($(v3)+(3,-1,0)$);
		\draw ($(v3)+(0,0,1)$) -- ($(v3)+(3,-1,0)$);
		\draw ($(v3)+(1,0,1)$) -- ($(v3)+(3,-1,0)$);
		\draw ($(v3)+(1,0,1)$) -- ($(v3)+(0,0,0)$);
		\draw ($(v3)+(1,0,1)$) -- ($(v3)+(1,0,0)$);
		\draw ($(v3)+(1,0,1)$) -- ($(v3)+(0,0,1)$);
		\draw[ thick] ($(v3)+(2,0,0)$) -- ($(v3)+(3,-1,0)$);
		\fill ($(v3)+(3,-1,0)$) circle (.05);
		
		\coordinate (v2) at (4,-1.5,0);
		
		\draw[ thick] ($(v2)+(0,-1,0)+(0,0,3)$) -- ($(v2)+(0,-1,0)+(3,0,0)$);
		\draw ($(v2)+(0,-1,0)+(0,1,0)$) -- ($(v2)+(0,-1,0)+(1,0,2)$);
		\draw ($(v2)+(0,-1,0)+(0,1,0)$) -- ($(v2)+(0,-1,0)+(2,0,1)$);
		\draw ($(v2)+(0,-1,0)+(0,1,0)$) -- ($(v2)+(0,-1,0)+(0,0,3)$);
		\draw[ thick] ($(v2)+(0,-1,0)+(0,1,2)$) -- ($(v2)+(0,-1,0)+(0,0,3)$);
		\draw ($(v2)+(0,-1,0)+(0,1,0)$) -- ($(v2)+(0,-1,0)+(3,0,0)$);
		\draw ($(v2)+(0,-1,0)+(0,1,0)$) -- ($(v2)+(0,-1,0)+(0,1,2)$);
		\draw ($(v2)+(0,-1,0)+(3,0,0)$) -- ($(v2)+(0,-1,0)+(0,1,1)$);
		\draw ($(v2)+(0,-1,0)+(2,0,1)$) -- ($(v2)+(0,-1,0)+(0,1,1)$);
		\draw ($(v2)+(0,-1,0)+(1,0,2)$) -- ($(v2)+(0,-1,0)+(0,1,1)$);
		\draw ($(v2)+(0,-1,0)+(0,0,3)$) -- ($(v2)+(0,-1,0)+(0,1,1)$);
		\draw ($(v2)+(0,-1,0)+(3,0,0)$) -- ($(v2)+(0,-1,0)+(0,1,2)$);
		\draw ($(v2)+(0,-1,0)+(2,0,1)$) -- ($(v2)+(0,-1,0)+(0,1,2)$);
		\draw ($(v2)+(0,-1,0)+(1,0,2)$) -- ($(v2)+(0,-1,0)+(0,1,2)$);
		\fill ($(v2)+(3,-1,0)$) circle (.05);
		\fill ($(v2)+(0,-1,3)$) circle (.05);
		
		\coordinate (v1) at (4,-3.5,0);
		
		\draw[dashed,  thick] ($(v1)+(0,0,0)$) -- ($(v1)+(0,-1,0)$);
		\draw[dashed,  thick] ($(v1)+(0,-1,0)$) -- ($(v1)+(0,-1,3)$);
		\draw[dashed,  thick] ($(v1)+(0,-1,0)$) -- ($(v1)+(3,-1,0)$);
		\draw[ thick] ($(v1)+(3,-1,0)$) -- ($(v1)+(0,-1,3)$);
		\fill ($(v1)+(0,-1,0)$) circle (.05);
		\fill ($(v1)+(0,-1,3)$) circle (.05);
		\fill ($(v1)+(3,-1,0)$) circle (.05);
		
		\draw ($(v1)+(0,-1,0)+(1,0,0)$) -- ($(v1)+(0,-1,0)+(0,0,1)$);
		\draw ($(v1)+(0,-1,0)+(2,0,0)$) -- ($(v1)+(0,-1,0)+(0,0,2)$);
		\draw ($(v1)+(0,-1,0)+(1,0,0)$) -- ($(v1)+(0,-1,0)+(1,0,2)$);
		\draw ($(v1)+(0,-1,0)+(2,0,0)$) -- ($(v1)+(0,-1,0)+(2,0,1)$);
		\draw ($(v1)+(0,-1,0)+(0,0,1)$) -- ($(v1)+(0,-1,0)+(2,0,1)$);
		\draw ($(v1)+(0,-1,0)+(0,0,2)$) -- ($(v1)+(0,-1,0)+(1,0,2)$);
		
		\draw ($(v1)+(0,-1,0)+(1,0,0)$) -- ($(v1)+(0,-1,0)+(0,1,0)$);
		\draw ($(v1)+(0,-1,0)+(2,0,0)$) -- ($(v1)+(0,-1,0)+(0,1,0)$);
		\draw ($(v1)+(0,-1,0)+(0,0,1)$) -- ($(v1)+(0,-1,0)+(0,1,0)$);
		\draw ($(v1)+(0,-1,0)+(0,0,2)$) -- ($(v1)+(0,-1,0)+(0,1,0)$);
		\draw ($(v1)+(0,-1,0)+(1,0,2)$) -- ($(v1)+(0,-1,0)+(0,1,0)$);
		\draw ($(v1)+(0,-1,0)+(2,0,1)$) -- ($(v1)+(0,-1,0)+(0,1,0)$);
		\draw ($(v1)+(0,-1,0)+(1,0,1)$) -- ($(v1)+(0,-1,0)+(0,1,0)$);
		\draw ($(v1)+(0,-1,0)+(3,0,0)$) -- ($(v1)+(0,-1,0)+(0,1,0)$);
		\draw ($(v1)+(0,-1,0)+(0,0,3)$) -- ($(v1)+(0,-1,0)+(0,1,0)$);
		
		\end{tikzpicture}
	
	\caption{A triangulation $K+L$. The boundary edges of the simplices are thickened.}
	\label{fig:triang_K+L}
\end{figure}

\begin{lem}\label{lem:convexity_sum}
	Let $P$ be a full-dimensional polytope of $\R^n$ endowed with a triangulation $K$ and $f,g\colon P\rightarrow \R$ be two functions, affine on every simplex of\, $K$. If $f$ is convex and, for any convex subset $A$ of $P$, $g|_A$ is convex as soon as $f|_A$ is affine, then there is an $\eta>0$ for which $f+\eta g$ is convex. 
\end{lem}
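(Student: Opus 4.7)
The plan is to reformulate convexity of a piecewise affine function on $K$ as a collection of local inequalities, one per interior codimension-one face, and then control those inequalities one face at a time.

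Concretely, for each interior codimension-one face $\tau$ of $K$, let $\sigma_1,\sigma_2$ be the two top-dimensional simplices of $K$ sharing $\tau$, let $v_2\in\sigma_2$ be the vertex opposite $\tau$, and let $\ell_1$ denote the unique affine extension to $\R^n$ of $f|_{\sigma_1}$. I set $c_\tau(f):=f(v_2)-\ell_1(v_2)$, and define $c_\tau(g)$ analogously. A standard chord argument shows that a function on $P$ affine on every simplex of $K$ is convex if and only if $c_\tau\geq 0$ for every interior codim-$1$ face $\tau$. Since $f$ is convex by hypothesis, $c_\tau(f)\geq 0$ for every $\tau$. Because the number of such $\tau$ is finite, it will suffice to exhibit $\eta>0$ with $c_\tau(f)+\eta\,c_\tau(g)\geq 0$ for all $\tau$.

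The crux is to handle the faces $\tau$ where $c_\tau(f)=0$. For such $\tau$, $f$ has a common affine extension $\ell$ on $\sigma_1\cup\sigma_2$. I would first show that $\ell\leq f$ on all of $P$: pick an interior point $x_0$ of $\sigma_1$ and, for any $y\in P$ and small $t>0$, combine $(1-t)x_0+ty\in\sigma_1$ (so $f=\ell$ there) with the convexity inequality $f((1-t)x_0+ty)\leq(1-t)f(x_0)+tf(y)$ to derive $\ell(y)\leq f(y)$. Consequently $A:=\{x\in P:f(x)=\ell(x)\}$ is a convex subset of $P$ (the zero-locus of the nonnegative convex function $f-\ell$) containing $\sigma_1\cup\sigma_2$, and $f|_A=\ell|_A$ is affine. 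By the hypothesis of the lemma, $g|_A$ is then convex. Reversing the previous argument and using that $g$ coincides on $\sigma_1\subset A$ with its affine extension $m_1$ of $g|_{\sigma_1}$, I obtain $g\geq m_1$ on $A$; in particular $g(v_2)\geq m_1(v_2)$, i.e.\ $c_\tau(g)\geq 0$.

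It then remains to pick $\eta$. For each of the finitely many faces $\tau$ with $c_\tau(f)>0$, the inequality $c_\tau(f)+\eta\,c_\tau(g)\geq 0$ holds as soon as $\eta\leq c_\tau(f)/|c_\tau(g)|$ when $c_\tau(g)<0$, and is automatic when $c_\tau(g)\geq 0$; for the faces with $c_\tau(f)=0$, the previous step yields $c_\tau(g)\geq 0$, so the inequality is again automatic. Setting $\eta$ to be the minimum of the finitely many positive upper bounds just described produces the required $\eta>0$. The main obstacle lies in the middle paragraph: converting the \emph{global} hypothesis (convexity of $g$ on every convex subset where $f$ is affine) into the \emph{local} convexity condition $c_\tau(g)\geq 0$ across a single face $\tau$. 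The convex set $A=\{f=\ell\}$ performs that reduction, via the general principle that a convex function on a convex set which coincides with an affine function on a full-dimensional subregion must dominate that affine function on the whole set.
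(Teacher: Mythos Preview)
Your proof is correct and follows essentially the same strategy as the paper's: reduce convexity of a PL function to a finite family of nonnegativity conditions, argue that whenever $f$'s condition vanishes the hypothesis forces $g$'s to be nonnegative (via the same affinity locus $\{f=\ell\}$, which the paper calls an ``affinity component''), and then take $\eta$ small enough to handle the strictly positive cases. The only cosmetic difference is that you test convexity locally across each interior codimension-one face, whereas the paper uses the equivalent global criterion $h=\max_\sigma a(h;\sigma)$ ranging over all vertex--maximal-simplex pairs.
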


\begin{proof}
	Let $\sigma$ be a maximal simplex of $K$. For all functions $h\colon P\rightarrow \R$ affine on every simplex of $K$, we denote by $a(h;\sigma)$ the unique affine function of $\R^n$ satisfying $h|_\sigma=a(h;\sigma)|_\sigma$. For all vertices $v$ of $K$ and all maximal simplices $\sigma$ of $K$, we denote by $\delta(h;v;\sigma)$ the difference $h(v)-a(h;\sigma)(v)$. The function $h$ is convex if and only if%
	\begin{equation*}
		h=\max\limits_{\sigma\in K}\,a(h;\sigma),
	\end{equation*}%
	  which is itself equivalent to $\delta(h;v;\sigma)\geq 0$ for all vertices $v$ and maximal simplices $\sigma$. Moreover, if $h$ is convex, then $\delta(h;v;\sigma)$ vanishes if and only if $v$ and $\sigma$ belong to the same affinity component of $h$. Indeed, since $h$ is convex, its affinity components are the projections of the $n$-dimensional faces of the epigraph of $h$, \textit{i.e.}~the sets $\{p\in P\mid h(p)=a(h;\sigma)(p)\}$, where $\sigma$ is a maximal simplex of $K$. By assumption, we have $\delta(g;v;\sigma)\geq 0$ whenever $\delta(f;v;\sigma)=0$. We will distinguish two cases:\begin{enumerate}
	 \item If $\min\{\delta(g;v;\sigma)\colon v,\sigma\in K\}\geq 0$, the function $g$ is convex. Thus, $f+\eta g$ is convex for all $\eta>0$. 
	 \item In the other case, $f$ cannot be affine, and there is at least one couple $(v;\sigma)$ for which $\delta(f;v;\sigma)$ is positive. Any positive number $\eta$ satisfying%
	 \begin{equation*}
	 	0<\eta\leq \frac{\min\{\delta(f;v;\sigma)>0\colon v,\sigma\in K\}}{-\min\{\delta(g;v;\sigma)\colon v,\sigma \in K\}}
	 \end{equation*}%
	  yields a convex function $f+\eta g$, since $\delta(f+\eta g;v;\sigma)=\delta(f;v;\sigma)+\eta\delta(g;v;\sigma)\geq 0$ for all vertices $v$ and all maximal simplices $\sigma$.\qedhere
	 \end{enumerate}
\end{proof}

\begin{prop}\label{prop:convexity}
	If both $K$ and $L$ are convex, then so is $K+L$.
\end{prop}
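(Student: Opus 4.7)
The plan is to apply Lemma~\ref{lem:convexity_sum} with $K+L$ and two piecewise-affine functions $f, g : \P^n_{d+1} \to \R$ constructed from the convex PL-functions $\nu_K$ and $\nu_L$ witnessing the convexity of $K$ and $L$. The function $f$ will realize convexly a \emph{coarser} subdivision $T_f$: on the top part $\{x_n \geq 1\} = e_n + \P^n_d$, $T_f$ coincides with the translate $K'$ of $K$; on each $J_i$, $T_f$ coincides with the unrefined join $K|_{[e_n, \ldots, e_n + de_i]} * [(d+1)e_i, \ldots, (d+1)e_{n-1}]$, with no $L$-subdivision of the bottom face. The function $g$ will then refine each $J_i$-piece according to $L$.

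To construct $f$, assign vertex values $f(w) := \nu_K(w - e_n) + C(w_n - 1)$ at each vertex $w$ of $K'$, and $f((d+1) e_i) := -C_i$ at each outer bottom vertex, with positive constants $0 < C_0 < C_1 < \cdots < C_{n-1}$ and $C > 0$ to be chosen, and extend piecewise-affinely to each simplex of $T_f$. On the top, $f$ is $\nu_K(\cdot - e_n)$ plus a linear function, hence convex and inducing $K'$. The strict monotonicity of the $C_i$ creates convex dihedral bends between adjacent $J_{i-1}$ and $J_i$; a sufficiently large $C$ ensures a convex break across $\{x_n = 1\}$; and within each $J_i$, the bends between neighboring coarse simplices inherit convexity from $\nu_K$ on the top face of $J_i$ through the cone construction.

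To construct $g$, set $g(v) := 0$ for every vertex $v$ of $K'$ and $g(v) := \nu_L(v)$ for every vertex $v$ of $L$, and extend piecewise-affinely on each simplex of $K+L$. Then $g$ vanishes identically on the top. On each coarse simplex $\sigma * [(d+1)e_i, \ldots, (d+1)e_{n-1}]$ of $T_f$, a point $x$ decomposes uniquely as $x = tp + (1-t)q$ with $p \in \sigma$, $q \in [(d+1)e_i, \ldots, (d+1)e_{n-1}]$ and $t = x_n$, and one checks $g(x) = (1-t)\,\nu_L(q)$. A short convex-combination argument analogous to the one showing cones over convex functions are convex, combined with the convexity of $\nu_L$, yields that $g$ is convex on this coarse simplex.

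The two functions then satisfy the hypotheses of Lemma~\ref{lem:convexity_sum}: $f$ is convex on $\P^n_{d+1}$ and affine on each simplex of $K+L$, with maximal affinity regions equal to the simplices of $T_f$; $g$ is affine on each simplex of $K+L$ and convex on each affinity region of $f$. The lemma produces $\eta > 0$ for which $f + \eta g$ is convex, and its affinity regions refine those of $f$ by the $L$-structure inside each $J_i$, hence coincide with the simplices of $K+L$, proving the convexity of $K+L$. The main technical obstacle is the convexity of $f$: it amounts to a finite system of dihedral-convexity inequalities at the interfaces of $T_f$, solvable by taking the gaps $C_{i+1} - C_i$ and the constant $C$ large enough relative to the slopes of $\nu_K$.
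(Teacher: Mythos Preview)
Your overall strategy---apply Lemma~\ref{lem:convexity_sum} with an intermediate subdivision certified by $f$ and then refine via $g$---is sound, and your $g$ works: on each coarse cell $\sigma*[(d+1)e_i,\ldots,(d+1)e_{n-1}]$ the formula $g(x)=(1-x_n)\,\nu_L(q)$ is the perspective of $\nu_L$, hence convex there. The route diverges from the paper's in the choice of intermediate subdivision. The paper uses a far coarser one, the triangulation $M$ of $\P^n_{d+1}$ into only $n+1$ maximal simplices (the undivided translated top $e_n+\P^n_d$ together with the $n$ joins $J_i$), for which an explicit convex certificate is immediate:
\[
\nu(x)\;=\;|x_n-1|\;+\;\sum_{i=1}^{n-1}\Bigl|(d+1)(x_n-1)+\textstyle\sum_{k=i}^{n-1}x_k\Bigr|,
\]
a sum of absolute values of affine forms. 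All of the $K$- and $L$-refinement is then packed into a single function $[\nu_K+\nu_L]$, equal to the translated $\nu_K$ on the top, to $\nu_L$ on the bottom face, and extended by convex interpolation across each $J_i$; the same perspective argument you use shows it is convex on every simplex of $M$, and one invocation of the lemma finishes.

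By contrast your $T_f$ already carries the full $K$-structure, so producing a convex $f$ that strictly certifies it is itself a nontrivial task---essentially the $K$-half of the original problem. You defer this to choosing $C$ and the $C_i$ ``large enough relative to the slopes of $\nu_K$'', but that is precisely the step that needs proof: at the interface between $J_i$ and $J_{i+1}$ the dihedral convexity inequality compares the value $-C_i$ at $(d+1)e_i$ against an affine extrapolation determined both by the $\nu_K$-values on a simplex $\sigma'$ of $K|_{[e_n,\ldots,e_n+de_{i+1}]}$ and by $-C_{i+1},\dots,-C_{n-1}$, and it is neither obvious nor shown that your ansatz resolves all of these simultaneously. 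A clean repair is to take $f=\nu+\eta'\,[\nu_K]$, where $[\nu_K]$ is the convex interpolation of $\nu_K$ (zero on the bottom vertices), and obtain $\eta'$ from the lemma---but then you have rebuilt the paper's argument in two passes rather than one.
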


\begin{proof}
	The triangulation $K+L$ of $\P^n_{d+1}$ is a subdivision of the first triangulation $M$ of $\P^n_{d+1}$ made of the translation of $\P^n_d$ by $e_n$ and the joins of the second point of Definition~\ref{dfn:K+L} subdividing the prism ${\P^n_{d+1}\cap\{0\leq x_n\leq 1\}}$. The triangulation $M$ is convex. It has exactly $n+1$ maximal simplices; the interior of each is a connected component of the complement in $\P^n_{d+1}$ of a union of $n$ affine hyperplanes. A direct computation yields their equations: $(d+1)(x_n-1)+\sum_{k=i}^{n-1} x_k$, for all $1\leq i\leq n-1$, and $x_n=1$. On that account, the function%
	\begin{equation*}
		\nu\colon x\in\P^n_d \longmapsto |x_n-1|+\sum_{i=1}^{n-1}\left|(d+1)(x_n-1)+\sum_{k=i}^{n-1}x_k\right|
	\end{equation*}%
	 certifies the convexity of $M$. Let $\nu_K$ and $\nu_L$ be two functions respectively certifying the convexity of $K$ and~$L$. We consider $[\nu_K+\nu_L]$, the unique function $\P^{n}_{d+1}\rightarrow \R$ affine on every simplex of $K+L$ whose restriction to $\P^{n}_{d+1}\cap\{x_n\geq 1\}$ is the translation of $\nu_K$ by $e_n$ and whose restriction to $\P^{n}_{d+1}\cap\{x_n=0\}$ is $\nu_L$. In other words, $[\nu_K+\nu_L]$ is defined on the joins of the bottom prism $\P^{n}_{d+1}\cap\{0\leq x_n\leq 1\}$  by the convex interpolation of $\nu_K(-+e_n)$ and $\nu_L$. By construction, this function is convex on the simplices of $M$. Therefore, following Lemma~\ref{lem:convexity_sum}, we can find an $\eta>0$ small enough so that $\nu+\eta[\nu_K+\nu_L]$ is convex. This function certifies the convexity of $K+L$.
\end{proof}

\begin{rem}
	Lemma~\ref{lem:convexity_sum} and Proposition~\ref{prop:convexity} are contained in \cite{Kem-Knu-Mum-Sai_tor_emb}.
\end{rem}


\begin{dfn}[Viro triangulations]\label{dfn:Viro_triang}
	The \emph{Viro triangulations}\index{Triangulation!Viro} $(V^n_d)_{n,d\geq 1}$ of $(\P^n_d)_{n,d\geq 1}$ are defined by the following recursive procedure: $V_1^n$ is the trivial triangulation, $V^1_d$ is the unique primitive triangulation of the segment, and $V^{n+1}_{d+1}=V^{n+1}_{d}+V^{n}_{d+1}$ for all $n,d\geq 1$. By recursion and Proposition~\ref{prop:convexity}, $V_d^n$ is primitive, integral, and convex for all $n,d\geq 1$.
\end{dfn}
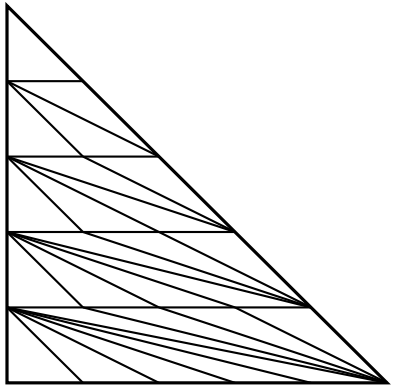
\begin{figure}[H]
	\centering
	\begin{tikzpicture}[scale=.7]
		\draw[thick] (0,0) -- (5,0) -- (0,5) -- cycle;
		\draw (0,1) -- (4,1);
		\draw (0,2) -- (3,2);
		\draw (0,3) -- (2,3);
		\draw (0,4) -- (1,4);
		\draw (0,4) -- (1,3);
		\draw (0,4) -- (2,3);
		\draw (0,3) -- (1,2);
		\draw (0,3) -- (2,2);
		\draw (0,3) -- (3,2);
		\draw (1,3) -- (3,2);
		\draw (0,2) -- (1,1);
		\draw (0,2) -- (2,1);
		\draw (0,2) -- (3,1);
		\draw (0,2) -- (4,1);
		\draw (1,2) -- (4,1);
		\draw (2,2) -- (4,1);
		\draw (0,1) -- (1,0);
		\draw (0,1) -- (2,0);
		\draw (0,1) -- (3,0);
		\draw (0,1) -- (4,0);
		\draw (0,1) -- (5,0);
		\draw (1,1) -- (5,0);
		\draw (2,1) -- (5,0);
		\draw (3,1) -- (5,0);
	\end{tikzpicture}
	\caption{The Viro triangulation $V^2_5$.}
\end{figure}

\begin{prop}\label{prop:heredity_triangulation}
	Let $n\geq 2$ and $d\geq 1$. The injective affine map%
	\begin{equation*}
		f^{n-1}_d\colon \P_d^{n-1} \longrightarrow \P_d^{n},
	\end{equation*}%
	 that sends $0$ to $de_1$ and $de_i$ to $de_{i+1}$, for all $1\leq i\leq n-1$, is simplicial with respect to $V_d^{n-1}$ and $V_d^n$.
\end{prop}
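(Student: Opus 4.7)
The plan is to proceed by induction on $d$, with a nested induction on $n$ inside the inductive step. A direct coordinate computation shows that, writing $(x_1,\ldots,x_{n-1})$ for coordinates on $\P^{n-1}_d$, the map has the explicit form $f^{n-1}_d(x_1,\ldots,x_{n-1})=(d-\sum x_i,\,x_1,\ldots,x_{n-1})$, so its image is exactly the face $\{y\geq 0,\,\sum y_j=d\}$ of $\P^n_d$ opposite the vertex $0$. From this I would derive two coordinate identities to be used throughout: first, $f^{n-1}_{d+1}(x+e_{n-1})=f^{n-1}_d(x)+e_n$ for every $x\in\P^{n-1}_d$; second, the restriction of $f^{n-1}_{d+1}$ to the face $\{x_{n-1}=0\}\cong\P^{n-2}_{d+1}$ of $\P^{n-1}_{d+1}$ equals $f^{n-2}_{d+1}$ followed by the inclusion of $\P^{n-1}_{d+1}$ as the face $\{y_n=0\}$ of $\P^n_{d+1}$.

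The base case $d=1$ is immediate: both $V^{n-1}_1$ and $V^n_1$ are trivial, and $f^{n-1}_1(\P^{n-1}_1)$ is the face $[e_1,\ldots,e_n]$ of $\P^n_1$. For the inductive step $d\mapsto d+1$ I would use the recursion $V^n_{d+1}=V^n_d+V^{n-1}_{d+1}$ and, for $n\geq 3$, the recursion $V^{n-1}_{d+1}=V^{n-1}_d+V^{n-2}_{d+1}$ to decompose simplices of $V^{n-1}_{d+1}$ into a top slab (the translated copy of $V^{n-1}_d$ lying in $\{x_{n-1}\geq 1\}$) and $(n-1)$ prisms, tracking the image of each piece. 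The nested induction on $n$ starts at $n=2$, where the outer edge $[(d+1)e_1,(d+1)e_2]$ of $V^2_{d+1}$ consists of the outer edge of the translated piece $V^2_d+e_2$ (which contributes $d$ primitive segments by the outer hypothesis applied to $(2,d)$) together with the single primitive edge $[e_2+de_1,(d+1)e_1]$ coming from the prism indexed $i=1$, giving $d+1$ primitive segments that exactly match the image $f^1_{d+1}(V^1_{d+1})$.

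For $n\geq 3$, assuming the inner hypothesis at $(n-1,d+1)$, I would take a simplex $\tau$ of $V^{n-1}_{d+1}$ and split into two cases. If $\tau=\sigma+e_{n-1}$ lies in the top slab, compatibility~(i) together with the outer hypothesis at $(n,d)$ gives $f^{n-1}_{d+1}(\tau)=f^{n-1}_d(\sigma)+e_n\in V^n_d+e_n\subseteq V^n_{d+1}$. If instead $\tau=\alpha*\beta$ belongs to the $j$-th prism of $V^{n-1}_{d+1}$ for some $0\leq j\leq n-2$, with $\alpha$ a simplex of $(V^{n-1}_d|_{[0,de_1,\ldots,de_j]})+e_{n-1}$ and $\beta$ a simplex of $V^{n-2}_{d+1}|_{[(d+1)e_j,\ldots,(d+1)e_{n-2}]}$, then by (i) and the outer hypothesis $f^{n-1}_{d+1}(\alpha)$ is a simplex of $(V^n_d|_{[de_1,\ldots,de_{j+1}]})+e_n$, while by (ii) and the inner hypothesis $f^{n-1}_{d+1}(\beta)$ is a simplex of $V^{n-1}_{d+1}|_{[(d+1)e_{j+1},\ldots,(d+1)e_{n-1}]}$. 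These are respectively the outer face of the top and the full bottom of the $(j+1)$-th prism of $V^n_{d+1}$, so the join $f^{n-1}_{d+1}(\tau)=f^{n-1}_{d+1}(\alpha)*f^{n-1}_{d+1}(\beta)$ is a simplex of the join triangulation of that prism, hence of $V^n_{d+1}$.

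The hardest part will be bookkeeping the interaction between the two nested layers of the ``$+$'' construction and the reindexing $j\mapsto j+1$ that the map induces on prism indices; once the two coordinate identities are in place the remaining matching of pieces is routine.
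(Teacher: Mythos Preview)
Your proof is correct. The two coordinate identities you isolate are exactly what is needed, and the bookkeeping with the prism index shift $j\mapsto j+1$ goes through as you describe (with the standing convention $e_0=0$ implicit in Definition~\ref{dfn:K+L}).

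The paper takes a shorter route. Rather than tracking individual simplices, it defines $K^{n}_d$ to be the pull-back of $V^{n+1}_d$ along $f^n_d$, and observes that this family satisfies the \emph{same} recursion and initial conditions as the Viro family: $K^{n}_{d+1}=K^{n}_d+K^{n-1}_{d+1}$, $K^n_1$ trivial, $K^1_d$ the unique primitive segment subdivision. Uniqueness of the solution to the recurrence then forces $K^n_d=V^n_d$. Your identities (i) and (ii) are precisely what one would use to verify that the pull-back respects the ``$+$'' construction, so the substance is the same; the paper just repackages it as a one-line uniqueness argument, while you unfold the recursion explicitly. Your version has the advantage of being self-contained and making the prism reindexing visible; the paper's version is cleaner to state once one is willing to take ``by construction'' on faith.
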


\begin{proof}
  The proposition is a consequence of the recursive construction of the Viro triangulations. Let us denote by $K^{n}_d$ the pullback to $\P^n_d$ of $V^{n+1}_d$ by $f^n_d$. By construction, $K^{n}_{d+1}$ equals $K^{n}_{d}+K^{n-1}_{d+1}$, $K^n_1$ is the trivial triangulation, and $K^1_d$ is the unique primitive triangulation of the segment. Therefore, $K_d^n$ is $V_d^n$ for all $d,n\geq 1$.
\end{proof}

\begin{thm}[Real Lefschetz property]\label{thm:rank_max}
	If $K$ is a Viro triangulation of $\P^{n+1}$, then the homological inclusion%
	\begin{equation*}
		i_q\colon H_q(\X;\F_2) \longrightarrow H_q(\R\P^n;\F_2),
	\end{equation*}%
	 is surjective for all $q\leq\left\lfloor\frac{n}{2}\right\rfloor$ and all $\varepsilon\in C^0(K;\F_2)$. That is to say, a projective hypersurface obtained from a primitive patchwork on a Viro triangulation always satisfies the real Lefschetz property.
\end{thm}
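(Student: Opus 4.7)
The plan is to prove the theorem by induction on $n$. The base cases $n \in \{1,2\}$ amount to surjectivity in degree at most $0$, which is immediate for a non-empty T-hypersurface. For the inductive step, I exploit the hereditary property of the Viro family given by Proposition~\ref{prop:heredity_triangulation}: the facet $F = [de_1,\ldots,de_n]$ of $\P^n_d$ carries the Viro triangulation $V^{n-1}_d$. The corresponding invariant divisor $D \subset \R\P^n$ is a sub-CW-complex homeomorphic to $\R\P^{n-1}$, and by direct inspection of Definition~\ref{dfn:T-hypersurface} the intersection $\X \cap D$ is the T-hypersurface $\R Y$ in $D$ built from $V^{n-1}_d$ and $\varepsilon|_F$ (after replacing $\varepsilon$ by a cohomologous representative if needed). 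The induction hypothesis gives surjectivity of $H_q(\R Y;\F_2) \to H_q(D;\F_2)$ for $q \leq \lfloor(n-2)/2\rfloor$; composing with the isomorphism $H_q(D;\F_2) \cong H_q(\R\P^n;\F_2)$, valid in that range since it is at most $n-2$, and factoring through $H_q(\X;\F_2)$, yields $i_q$-surjectivity for $q \leq \lfloor(n-2)/2\rfloor$. When $n$ is even this already equals $\lfloor(n-1)/2\rfloor$ and the inductive step closes.

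When $n$ is odd one must additionally cover $q=(n-1)/2$. The idea is to strengthen the inductive statement: require that for each $q \leq \lfloor(n-1)/2\rfloor$ the generator of $H_q(\R\P^n;\F_2)$ is represented by a subset of $\X$ that is itself a linearly embedded real projective subspace of $\R\P^n$. Applying the strengthened hypothesis inside $\R Y \subset D$, select $\zeta \subset \R Y$ that is a linearly embedded $\R\P^{(n-3)/2}$ in $D$. Lemma~\ref{lem:Cone_over_projective}, applied with apex equal to the torus-fixed point $p_0 \in \R\P^n$ associated with the vertex $0 \in \P^n_d$, then yields a linearly embedded $\R\P^{(n-1)/2}$ in $\R\P^n$ realising $[\R\P^{(n-1)/2}]$, namely the cone $C\zeta$. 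It remains to realise $C\zeta$ inside $\X$; for that I use the recursion $V^{n+1}_{d+1} = V^{n+1}_d + V^n_{d+1}$, which endows a neighbourhood of $0$ in $\P^n_d$ with a tower of suspension-type layers over Viro triangulations of smaller parameters. Iterated application of Proposition~\ref{prop:isotopy_graph} along these layers, each time modifying $\varepsilon$ within its cohomology class so that the pair of apex signs disagrees, ambient-isotopes the portion of $\X$ above the star of $p_0$ onto the straight cone over $\zeta$ relatively to their common boundary with $\R Y$.

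The main difficulty lies in this last step: propagating the strengthened geometric statement through the induction while keeping the sign distribution compatible with the suspension structure of the Viro recursion. The cohomologous modifications of $\varepsilon$ required by Proposition~\ref{prop:isotopy_graph} at a given layer must not destroy the inductively constructed linear projective subspaces already sitting in the lower-dimensional piece $\R Y$, and the cones produced at successive layers must glue consistently. Carrying this combined bookkeeping through the Viro recursion is expected to be the main technical work of the proof.
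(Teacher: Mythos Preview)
Your overall plan (induction, then cone plus suspension isotopy for the missing degree) is the same as the paper's, and your reduction of the even-$n$ step to the odd one via the facet $F=[de_1,\dots,de_n]$ is exactly how the paper handles even-dimensional ambients. The gap is in the odd-$n$ step, and it stems from two related misreadings.

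First, the apex-sign condition in Proposition~\ref{prop:isotopy_graph} is \emph{not} something you can arrange by modifying $\varepsilon$. The T-hypersurface $\X$ depends on $\varepsilon$ only through $\df|\varepsilon|^*$, so changing $\varepsilon$ within its cohomology class does nothing to $\X$; and on a simply connected piece of $\R K$ a primitive of $\df|\varepsilon|^*+\omega_{\R X}$ is unique up to a global constant, so the value $\tilde\varepsilon(a_+)+\tilde\varepsilon(a_-)$ is forced, not chosen. In the paper the two apexes are the two lifts of the \emph{single} vertex $e_{2n+3}$ (which lies in the relative interior of the edge $[0,de_{2n+3}]$, hence has exactly two lifts); the sign disagreement then falls out automatically from the $\omega_{\R X}$ term, since the edge from any base vertex to $e_{2n+3}$ has a nonzero $e_{2n+3}$-component.

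Second, your cone from the torus-fixed point $p_0$ goes from $D$ across the whole polytope to the vertex $0$, which has a unique lift; there is no suspension there, and the star of $p_0$ in $\R K$ is just the $2^n$ lifts of the single primitive simplex $[0,e_1,\dots,e_n]$, far from $D$. The ``tower of suspensions'' you invoke does not match this geometry, which is why you are left describing it as the main unfinished work. The paper avoids this entirely by inducting in steps of two: it places $L_n$ in the $\R\Proj^{2n+1}$ associated to the codimension-$2$ face $[de_1,\dots,de_{2n+2}]$ (which carries $V^{2n+1}_d$ via Proposition~\ref{prop:heredity_triangulation} applied to the bottom facet), so that $\pi^{-1}(CL_n)$ lies entirely inside the equatorial ball $B^{2n+2}$, and then a \emph{single} application of Proposition~\ref{prop:isotopy_graph} to the suspension $\Sigma B^{2n+2}$ with apexes $\pm e_{2n+3}$ moves it into $\pi^{-1}(\X)$. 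That choice of face is the missing idea in your argument.
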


\begin{proof}
	Let $d\leq 1$ be an integer. We will prove the following statement by induction on $n\geq 0$:
\begin{center}\begin{minipage}{12cm}
		\emph{For all $\varepsilon \in C^0(V^{2q+1}_d;\F_2)$, the hypersurface $\X$ of\, $\R\P^{2n+1}$ contains a subspace $L_n$ that is homeomorphic to $\R\P^n$ and homologous to a linear subspace in $\R\P^{2n+1}$.}
        \end{minipage}	\end{center}
	
	Since $\X$ is never empty, the initial statement is satisfied. Let us assume that the statement is true for $n\geq 0$. We consider a sign distribution $\varepsilon\in C^0(V^{2n+3};\F_2)$ and the associated T-hypersurface $\X$ of $\R P^{2n+3}$. We consider several faces of $\P^{2n+3}_d$:
\begin{multicols}{2}
	\begin{enumerate}
		\item Let $H$ be the face of $\P^{2n+3}_d$ opposite to the vertex $de_{2n+3}$. It is, by construction, a $(2n+2)$-simplex of size $d$ endowed with the triangulation $V^{2n+2}_d$.
		\item Let $Q$ be the face of $\P^{2n+3}_d$ opposite to the vertex $0$. It is also a $(2n+2)$-simplex of size~$d$.
		\item Let $R$ be the face of $\P^{2n+3}_d$ opposite to the edge $[0;de_{2n+3}]$. It is a $(2n+1)$-simplex of size $d$. It is also the face of $H$ opposite to the vertex $0$. Thus, we can apply Proposition~\ref{prop:heredity_triangulation} to find that it is endowed with the triangulation $V^{2n+1}_d$.
\end{enumerate}
	\begin{figure}[H]\centering
	\begin{tikzpicture}[scale=4]
		\draw (0,0,0) -- (1,0,0);
		\draw (0,0,0) -- (0,1,0);
		\draw (0,0,0) -- (0,0,-1);
		\draw[very thick] (1,0,0) -- (0,0,-1);
		\draw (0,1,0) -- (0,0,-1);
		\draw (0,1,0) -- (1,0,0);
		\draw (0,0,0) node[anchor=north east] {$0$};
		\draw (1,0,0) node[anchor=north west] {$de_1$};
		\draw (0,0,-1) node[anchor=east] {$de_2\;$};
		\draw (0,1,0) node[anchor=south east] {$de_3$};
		\draw (.5,0,0) node[anchor=north]{$H$};
		\draw (.5,.65,0) node[anchor=north]{$Q$};
		\draw (.95,.25,0) node[anchor=north]{$R$};
		\fill[pattern=dots] (1,0,0) -- (0,1,0) -- (0,0,-1);
		\fill[pattern={Lines[angle=0, line width=.5pt]}] (1,0,0) -- (0,0,0) -- (0,0,-1);
	\end{tikzpicture}
	\caption{Positions of the faces $H$, $Q$, and $R$.}
	\label{fig:HQR}
	\end{figure}

	\end{multicols}
	Figure~\ref{fig:HQR} depicts the positions of $H$, $Q$, and $R$ in $\P^{3}_d$. The intersection $\X\cap\R R$ is the T-hypersurface of $\R R$ associated with the sign distribution $\varepsilon|_R$. Since $R$ is endowed with a Viro triangulation, we can apply the recursion hypothesis and find a subspace $L_n\subset \X\cap\R R$ that is homeomorphic to $\R\P^n$, and homologous to a linear subspace in $\R R$. Since $\R R$ is embedded as a linear subspace in $\R\P^{2n+3}$, $L_n$ is also homologous to a linear subspace in $\R\P^{2n+3}$. We recall that, by Definition~\ref{dfn:RP_CW}, the projective space $\R P^{2n+3}$ is obtained as the quotient of $\F_2^{2n+3}\times \P_d^{2n+3}$ by the equivalence relation%
	\begin{equation*} (v;x)\sim(v';x') \quad\textnormal{if and only if}\quad x=x' \textnormal{ and } v-v'\in \Sed(x).
	\end{equation*}
	For a point $x\in \P^{2n+3}_d$, $\Sed(x)$ denotes the reduction modulo $2$ of the lattice contained in the orthogonal\footnote{We implicitly identify $\R^{2n+3}$ with its dual using the integral scalar product $\langle\cdot;\cdot\rangle$ induced by its canonical basis.} of the smallest face that contains $x$. For instance, if $H$ is the smallest face that contains $x$, $\Sed(x)$ is $\F_2(0;\ldots;0;1)$. We define an intermediate quotient using the following equivalence relation:
	\begin{equation*}
		(v;x)\sim_B(v';x') \quad\textnormal{if and only if}\quad x=x' \textnormal{ and } \left\{ \begin{array}{ll} v-v'\in \Sed(x) & \textnormal{if }x\notin Q, \\ v=v' & \textnormal{otherwise}.\end{array}\right. 
	\end{equation*}	
	We denote the quotient $(\F_2^{2n+3}\times \P_d^{2n+3})/\sim_B$ by $B$, and the natural projection by $\pi \colon B\rightarrow \R\P^{2n+3}$. For all $(v;x)\in\F_2^{2n+3}\times \P^{2n+3}_d$, we may also denote $\pi(v;x)$ by $[v;x]$. Let $||\cdot||$ denote the Euclidean norm of $\R^{2n+3}$. The space $B$ is homeomorphic to the unit ball of $\R^{2n+3}$ via
	\begin{equation*}
		\begin{array}{rcl}
			f\colon B & \longrightarrow & B^{2n+3} \\
			~[v;x] & \longmapsto & \displaystyle \frac{\sum_{k=1}^{2n+3} x_i}{||x||}\left((-1)^{v_{k}}x_{k}\right)_{1\leq k\leq 2n+3}.
		\end{array} 
	\end{equation*}
\begin{figure}[H]
	\centering
	\begin{subfigure}[t]{0.45\textwidth}\centering
	\begin{tikzpicture}[scale=2.5]
		\draw[dashed] (1,0,0) --(-1,0,0);
		\draw[dashed] (0,1,0) --(0,-1,0);
		\draw[dashed] (0,0,1) --(0,0,-1);
		\draw (0,1,0) -- (0,0,1);
		\draw (-1,0,0) -- (0,0,1);
		\draw (-1,0,0) -- (0,1,0);
		\draw[dashed] (0,1,0) -- (0,0,-1) -- (-1,0,0) ;
		\draw (0,0,1) -- (1,0,0);
		\draw (1,0,0) -- (0,1,0);
		\draw[dashed] (1,0,0) -- (0,0,-1);
		\draw (-1,0,0) -- (0,-1,0);
		\draw (0,-1,0) -- (0,0,1);
		\draw (0,-1,0) -- (1,0,0) ;
		\draw[dashed] (0,-1,0) -- (0,0,-1);
	\end{tikzpicture}
	\caption{The ball $B$ obtained by gluing $8$ simplices.}
	\label{fig:ball_B}
	\end{subfigure}
	\hfill
	\begin{subfigure}[t]{0.45\textwidth}\centering
	\begin{tikzpicture}[scale=2.5]
		\draw[very thick, dotted] (-.7,-.7,.7) -- (.7,.7,-.7);
		\fill (-.333,-.333,.333) circle (0.035);
		\fill (.333,.333,-.333) circle (0.035);
		\draw (0,1,0) -- (0,0,1);
		\draw (-1,0,0) -- (0,0,1);
		\draw (-1,0,0) -- (0,1,0);
		\draw[dashed] (0,1,0) -- (0,0,-1) -- (-1,0,0) ;
		\draw (0,0,1) -- (1,0,0);
		\draw (1,0,0) -- (0,1,0);
		\draw[dashed] (1,0,0) -- (0,0,-1);
		\draw (-1,0,0) -- (0,-1,0);
		\draw (0,-1,0) -- (0,0,1);
		\draw (0,-1,0) -- (1,0,0) ;
		\draw[dashed] (0,-1,0) -- (0,0,-1);
	\end{tikzpicture}
	\caption{Antipodal identifications of the boundary of $B$.}
	\label{fig:anti_id}
	\end{subfigure}
	\hfill
	\begin{subfigure}[t]{.45\textwidth}\centering
	\begin{tikzpicture}[scale=2.5]
			\draw[very thick] (-1,0,0) -- (0,.5,0) -- (1,0,0);
			\draw[very thick] (1,0,0) -- (0,-.5,0) ;
			\draw[very thick, dashed](0,-.5,0) -- (-1,0,0);
			\draw[very thick] (0,0,1) -- (0,-.5,0);
			\draw[dashed, very thick] (0,0,-1) -- (0,-.5,0);

			\draw[very thick] (0,0,-1) -- (0,.5,0) -- (0,0,1);
			\draw[dashed, very thick] (-1,0,0) -- (0,0,-1); 
			\draw (0,1,0) -- (0,0,1);
			\draw[very thick] (-1,0,0) -- (0,0,1);
			\draw (-1,0,0) -- (0,1,0);
			\draw[dashed] (0,1,0)  -- (0,0,-1) ;
			\draw[very thick] (0,0,1) -- (1,0,0);
			\draw (1,0,0) -- (0,1,0);
			\draw[ very thick] (1,0,0) -- (0,0,-1);
			\draw (-1,0,0) -- (0,-1,0);
			\draw (0,-1,0) -- (0,0,1);
			\draw (0,-1,0) -- (1,0,0) ;
			\draw[dashed] (0,-1,0) -- (0,0,-1);
		\end{tikzpicture}
		\caption{The suspension $S_B$ of the equatorial disc as a subcomplex in $B$.}
		\label{fig:susp_B}
	\end{subfigure}
	\caption{The different polyhedral spaces considered in the proof of Theorem~\ref{thm:rank_max}.}
\end{figure}
	In dimension $3$, $B$ is an octahedron; see Figure~\ref{fig:ball_B}. Let $v_0\in\F_2^{2n+3}$ denote the vector $(1;\ldots;1)$. For every $[v;x]\in B$, we set $-[v;x]$ to be $[v+v_0;x]$. This endows $B$ with an involution for which $\pi$ induces an isomorphism between $\R P^{2n+3}$ and the quotient of $B$, where we identify $[v;x]\in\partial B$ with $-[v;x]$. The homeomorphism $f$ satisfies $f(-[v;x])=-f([v;x])$. Thus, $\pi$ can be thought of as the quotient of $B^{2n+3}$ by antipodal identification of the boundary points; \textit{cf.} Figure~\ref{fig:anti_id}. The inverse image of $\R Q$ by $\pi$ is the boundary $\partial B$. We are in the framework of Lemma~\ref{lem:Cone_over_projective}. Let us denote by $L^0_{n+1}\subset \R P^{2n+3}$ the cone $CL_n$. It is homeomorphic to $\R\P^{n+1}$ and homologous to a linear subspace of $\R\P^{2n+3}$. Our goal now is to move $L_{n+1}^0$ inside $\X$. The inverse image $\pi^{-1}L^0_{n+1}$ is a topological ball contained in the horizontal ball $\pi^{-1}\R H$ (note that $f(\pi^{-1}\R H)$ corresponds to $B^{2n+3}\cap \{x_{2n+3}=0\}$). By the construction of the Viro triangulation, the convex hull of $\{e_{2n+3}\}\cup H$ is the support of subcomplex $S$ of $V^{2n+3}_d$. Abstractly, $S$ is the simplicial join of $e_{2n+3}$ and $V^{2n+2}_d$. The image $S_B$ of $\F_2^{2n+3}\times S$ in $B$ is then a simplicial complex that is isomorphic to the suspension of the triangulation $\pi^{-1}\R V^{2n+1}_d$ of $\pi^{-1}\R H$. Its two apexes are, in the ``coordinates'' $[v;x]$ of~$B$, $[0;e_{2n+3}]$ and $[\overline{e}_{2n+3};e_{2n+3}]$, where $\overline{e}_{2n+3}$ is the reduction modulo $2$ of $e_{2n+3}$. Figure~\ref{fig:susp_B} illustrates this assertion. The intersection $\pi^{-1}\X\cap S_B$ is the dual hypersurface of $\df\eta$, where $\eta$ is the $0$-cochain of $S_B$ defined by $\eta\colon [v;p]\mapsto \varepsilon(p)+\langle p;v\rangle$. We have 
\begin{equation*}
		\eta([0;e_{2n+1}])+\eta([\bar{e}_{2n+3};e_{2n+3}])=\langle \bar{e}_{2n+3};\bar{e}_{2n+3}\rangle =1.
\end{equation*}
Thus, by Proposition~\ref{prop:isotopy_graph}, we can find a continuous map $g\colon [0;1]\times \pi^{-1}\R H \rightarrow |S_B|$ such that
\begin{enumerate}
	\item $x\mapsto g(0;x)$ is the inclusion $\pi^{-1}\R H \subset |S_B|$, 
	\item $x\mapsto g(1;x)$ is a homeomorphism between $\pi^{-1}\R H$ and $\pi^{-1}\X\cap|S_B|$, 
	\item $x\mapsto g(t;x)$ is an embedding for all $t\in[0;1]$, 
	\item $g(t;x)=x$ for every $t\in[0;1]$ and every $x\in \pi^{-1}\X\cap\pi^{-1}\R H$. 
	\end{enumerate}
	For every $t\in [0;1]$, we consider the equivalence relation $\sim_t$ on $\pi^{-1}L^0_{n+1}$ defined by 
	\begin{equation*}
		x\sim_t x' \quad\textnormal{if and only if}\quad \pi\left(g(t;x)\right)=\pi\left(g(t;x')\right). 
	\end{equation*}
	Two points $x$ and $x'$ of $\pi^{-1}L^0_{n+1}$ are $(\sim_t)$-equivalent if and only if $g(t;x)=g(t;x')$, or $g(t;x)$ and $g(t;x')$ both belong to $\partial B=\pi^{-1}\R Q$ and $g(t;x)=\pm g(t;x')$. We note that $\pi^{-1}L^0_{n+1}\cap\pi^{-1}\R Q=\pi^{-1}L_n$ is a subset of $\pi^{-1}\X\cap\pi^{-1}\R H$. Thus, the third and fourth properties of $g$ imply that $x\sim_t x'$ if and only if $x\sim_0 x'$, \textit{i.e.}~$\pi(x)=\pi(x')$. Hence, the composition $\pi\circ g$ induces a homotopy $h\colon [0;1]\times L^0_{n+1} \rightarrow \R\P^{2n+3}$ for which every partial map $x\mapsto h(t;x)$ is an embedding. The second property of $g$ ensures that $L^1_{n+1}:=h(1;L^0_{n+1})$ is included in $\X$. This space is homeomorphic to $\R\P^{n+1}$ by construction. Moreover, it is homotopic to $L^0_{n+1}$ in $\R\P^{2n+3}$; thus it is also homologous to a linear subspace. 
	
	\vspace{5pt}
	
	 Now, we know that every T-hypersurface $\X$ of $\R\P^{2n+1}$ constructed on a Viro triangulation $V^{2n+1}_d$ contains a subspace $L_n$ homeomorphic to $\R\P^{n}$ and homologous to a linear subspace. From this, we deduce the following commutative diagram, where all maps are induced by inclusions:%
	\begin{equation*}
		\begin{tikzcd}
			H_q(\X;\F_2) \ar[dr,"i_q" above] \\
			& H_q(\R\P^{2n+1};\F_2) \\
			H_q(L_n;\F_2)\rlap{.} \ar[ur,"j_q" below] \ar[uu]
		\end{tikzcd}
	\end{equation*}%
	 The map $j_q$ is surjective for all $0\leq q\leq n$, so $\ell(\X)\geq n=\left\lfloor \frac{\dim\X}{2}\right\rfloor$.
	
	\vspace{5pt}
	
	For the case of odd dimensions, let $\X$ be a T-hypersurface of $\R\P^{2n+2}$ constructed on a Viro triangulation $V^{2n+2}_d$. By the construction of the Viro triangulation, the face $H$ of $\P^{2n+2}_d$ opposite to the vertex $de_{2n+2}$ carries the Viro triangulation $V^{2n+1}_d$. Moreover, $\X\cap\R H$ is a T-hypersurface of $\R H$. Thus, we have the following commutative diagram, where all maps are induced by inclusions:
	\begin{equation*}
		\begin{tikzcd}
			H_q(\X;\F_2) \ar[r,"i_q" above] & H_q(\R\P^{2n+2};\F_2)\\
			H_q(\X\cap\R H;\F_2) \ar[u] \ar[r,"j_q" below] & H_q(\R H;\F_2)\rlap{.} \ar[u,"k_p" right]
		\end{tikzcd}
	\end{equation*}
	The maps $j_q$ and $k_q$ are surjective for all $0\leq q\leq n$; thus $\ell(\X)\geq n=\left\lfloor \frac{\dim\X}{2}\right\rfloor$.
\end{proof}

Combining this statement with Corollary~\ref{cor:charact_degeneracy2}, we find the following corollary. 

\begin{cor}\label{cor:dege_viro_even}
	The Renaudineau--Shaw spectral sequences computing the homology and the cohomology of the hypersurface $\X\subset\R\P^n$ constructed from a Viro triangulation $K$ and a sign distribution $\varepsilon\in C^0(K;\F_2)$ degenerates at the second page. 
\end{cor}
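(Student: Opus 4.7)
The plan is to assemble the corollary as a short combination of the two previously established results, with a bit of care about matching index ranges and moving between the homology and cohomology versions.

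First, I would check that the hypothesis of Corollary~\ref{cor:charact_degeneracy2} is satisfied. Here $P=\P^n$ is a smooth $n$-simplex, so by the first item of Examples~\ref{exs:iota(P)} we have $\iota(\R P)=n-1$, which is $\geq \lfloor n/2\rfloor-2$ for every $n\geq 1$. Consequently, to prove that the cohomology spectral sequence $(E_r^{p,q}(\X))_{p,q,r\geq 0}$ degenerates at the second page it suffices to prove that the single cohomological inclusion $i^q\colon H^q(\R P;\F_2)\to H^q(\X;\F_2)$ is injective at the index $q=\lfloor n/2\rfloor-1$.

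Next I would feed in Theorem~\ref{thm:rank_max}. It states that the homological inclusion $i_q\colon H_q(\X;\F_2)\to H_q(\R\P^n;\F_2)$ is surjective for all $q\leq \lfloor(n-1)/2\rfloor$. Since everything is over $\F_2$, the Universal Coefficient Theorem identifies $i^q$ with the $\F_2$-dual of $i_q$, and surjectivity of $i_q$ is equivalent to injectivity of $i^q$ in the same range. A quick parity check shows that $\lfloor n/2\rfloor -1\leq \lfloor (n-1)/2\rfloor$ whether $n$ is even or odd: if $n=2k$ both sides equal $k-1$, and if $n=2k+1$ the left side is $k-1$ while the right side is $k$. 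Hence the index $q=\lfloor n/2\rfloor-1$ lies inside the range where rank maximality applies, and $i^q$ is injective there. Corollary~\ref{cor:charact_degeneracy2} now gives degeneracy of $(E_r^{p,q}(\X))_{p,q,r\geq 0}$ at the second page.

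Finally I would derive the homological version. By Proposition~\ref{prop:dual_spec_seq}, the spectral sequences $(E^r_{p,q}(\X))$ and $(E_r^{p,q}(\X))$ are dual of one another and their differentials $\partial^r$ and $\df_r$ are adjoints, so they have the same rank. Therefore all $\partial^r$ for $r\geq 2$ vanish as soon as all $\df_r$ for $r\geq 2$ do, and the homology spectral sequence also degenerates at the second page, which is exactly the statement of Corollary~\ref{cor:dege_viro_even}. I do not expect any real obstacle here: the argument is essentially bookkeeping on indices, with Theorem~\ref{thm:rank_max} and Corollary~\ref{cor:charact_degeneracy2} carrying all of the actual content.
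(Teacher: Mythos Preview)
Your proposal is correct and follows essentially the same approach as the paper, which simply combines Theorem~\ref{thm:rank_max} with Corollary~\ref{cor:charact_degeneracy2}. You have filled in the index bookkeeping and the passage to the homology spectral sequence via Proposition~\ref{prop:dual_spec_seq} more explicitly than the paper does, but the argument is the same.
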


A.~Renaudineau and K.~Shaw's conjecture can be rephrased as $r(\X)\leq 2$ or equivalently the inequality ${\ell(\X)\geq\left\lfloor \frac{n}{2}\right\rfloor-1}$. However, as in Theorem~\ref{thm:rank_max}, we believe the stronger inequality ${\ell(\X)\geq\left\lfloor \frac{n-1}{2}\right\rfloor}$ might even be true in full generality, \textit{i.e.}~that every T-hypersurface satisfies the real Lefschetz property.


\newcommand{\etalchar}[1]{$^{#1}$}
\providecommand{\bysame}{\leavevmode\hbox to3em{\hrulefill}\thinspace}

\end{document}